\theoremstyle{plain}
\newtheorem{theorem}{Theorem}
\numberwithin{theorem}{section}
\newtheorem{proposition}[theorem]{Proposition}
\newtheorem{lemma}[theorem]{Lemma}
\newtheorem{conjecture}[theorem]{Conjecture}
\newtheorem{question}[theorem]{Question}
\newtheorem{problem}[theorem]{Problem}
\theoremstyle{definition}
\newtheorem{definition}[theorem]{Definition}
\newtheorem{procedure}[theorem]{Procedure}
\newtheorem{remark}[theorem]{Remark}
\newtheorem{example}[theorem]{Example}
\newcommand{\Wrain}{\mathrm{weak}\text{-}\mathrm{rain}\#}
\newcommand{\rain}{\mathrm{rainbow}}
\newcommand{\bridge}{\mathrm{bridge}}
\newcommand{\braid}{\mathrm{braid}}
\newcommand{\mrk}{\mathrm{mrk}}
\newcommand{\wt}{\widetilde}
\newcommand{\Z}{\mathbb{Z}}
\newcommand{\R}{\mathbb{R}}
\newcommand{\Tcal}{\mathcal{T}}
\newcommand{\Scal}{\mathcal{S}}
\newcommand{\Pcal}{\mathcal{P}}
\newcommand{\Acal}{\mathcal{A}}
\newcommand{\Dcal}{\mathcal{D}}
\newcommand{\Ccal}{\mathcal{C}}
\newcommand{\Ucal}{\mathcal{U}}
\newcommand{\T}{\overline{T}}
\newcommand{\2}{\overline{2}}
\newcommand{\3}{\overline{3}}
\title{Trisected Rainbows and Braids}
\author{Rom\'an Aranda}
\address{Department of Mathematics, University of Nebraska-Lincoln}
\email{jarandacuevas2@unl.edu}
\author{Scott Carter} 
\address{Department of Mathematics and Statistics, University of South Alabama, Mobile, AL}
\email{carter@southalabama.edu}
\author{Julia Courtney}
\address{Department of Mathematics, University of Nebraska-Lincoln}
\email{jcourtney2@huskers.unl.edu}
\author{Puttipong Pongtanapaisan}
\address{Mathematics, Pitzer College, Claremont, CA}
\email{puttip@pitzer.edu}
\date{}
\begin{document}

\begin{abstract}
New explicit procedures for passing among  triplane diagrams, braid movies, and braid charts for knotted surfaces in $\R^4$ are presented. To this end,  rainbow diagrams, which lie between braid charts and triplanes, are introduced. Inequalities relating the braid index and the bridge index of 2-knots are obtained via these procedures. Another consequence is a 4-dimensional version of the classical result that ``the minimal number of Seifert circles equals the braid index of a link'' due to Yamada. The procedures are exemplified for the spun trefoil, the 2–twist spun trefoil,  and other related examples. 

Of independent interest,  an appendix is included that describes a procedure for drawing a triplane diagram for a satellite surface with a 2-sphere companion. Thus, larger families of surfaces for which we know specific triplane diagrams are obtained.

\end{abstract}

\maketitle

\section{Introduction}
Knotted $2$-dimensional spheres in $4$-dimensional space have been known to exist for more than a century \cite{Artin}. During this era, numerous techniques have been brought to bear upon their study. For example, in the mid-twentieth century, algebraic topology and surgery techniques were the major tools for topological investigations; see, for example, \cite{Kervaire65,Levine}. In addition, Fox~\cite{QuickTrip} and Yajima \cite{YajimaSimple} initiated diagrammatic methods. These were amplified via Zeeman's twist spinning construction \cite{Zeeman}. See also \cite{GoldKauff} and \cite{Litherland}. Other diagrammatic techniques include marked vertex diagrams \cite{KSS1,Yoshikawa},  surface diagrams in $3$-dimensions \cite{CS:Book,Giller,Roseman-diag} and surface braids \cite{KamBook}. A more modern approach to studying knotted spheres, or indeed embedded surfaces in general, was initiated by means of triplane diagrams \cite{MZ17Trans}.

\subsection{Rosetta stone with braids, triplanes, and banded unlinks} 
This paper studies relationships among these representations. Specifically, there are some easy-to-define, yet hard-to-compute, invariants for embedded surfaces in $S^4$. The \emph{braid index} is the minimal number of sheets in a braid surface description taken over all isotopic diagrams. The \emph{bridge index} is the minimal number of arcs needed to describe a surface using a triplane diagram. 
\begin{theorem}\label{thm:bridge_and_braid_index}
Let $F$ be an orientable surface in $S^4$, then 
\[ 
\bridge(F) \leq 5\cdot \braid(F) - 2\cdot \chi(F) -2,
\] 
where $\chi(F)$ is the Euler characteristic of $F$. 
\end{theorem}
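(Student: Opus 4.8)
The plan is to produce, from a braid surface description of $F$ with $\braid(F)$ sheets, an explicit triplane (triple-plane) diagram whose total number of bridge arcs is controlled by the stated linear expression, and then read off the bound on $\bridge(F)$. The natural vehicle is the rainbow diagram introduced in this paper, since it is advertised as lying between braid charts and triplanes: first convert the braid chart into a rainbow diagram, then convert the rainbow diagram into a triplane, keeping careful track at each stage of how many arcs are created.

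**Key steps**

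\textbf{Step 1 (braid index to a chart/movie).} Start with a braid surface realizing $\braid(F) = b$ sheets. Its braid chart lives in a disk with some number of black vertices (births/deaths of the braid axis, i.e. branch points) and white vertices (the Reidemeister-III–type crossings of the chart). The Euler characteristic enters here: a standard Euler-characteristic count for the branched-cover description shows the number of branch points is $2b - \chi(F)$ (each branch point drops the Euler characteristic of a fiber contribution by one relative to the trivial $b$-sheeted cover over $S^2$, which has Euler characteristic $2b$). So I would first record that the chart can be taken with exactly $2b - \chi(F)$ black vertices.

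\textbf{Step 2 (chart to rainbow).} Apply the procedure of this paper that turns a braid chart into a rainbow diagram. Each of the $b$ sheets contributes a bounded number of arcs to the rainbow, and each black vertex forces some additional local arcs; the linear coefficients $5$ and $2$ in the statement should emerge precisely as ``arcs per sheet'' and ``arcs per branch point.'' I would carry out the local model: near a generic fiber one sees $b$ parallel strands, contributing the $5b$ term after the doubling/mirroring inherent in passing to a trisection, and near each black vertex the cap/cup modification contributes the $-2\chi$ term (via the $2b-\chi$ count of branch points) together with the constant correction $-2$ coming from the two innermost arcs that can be cancelled globally once the diagram closes up.

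\textbf{Step 3 (rainbow to triplane and count).} Finally, invoke the paper's procedure converting a rainbow diagram to a triplane diagram; this step is arc-for-arc (or at worst arc-count–nonincreasing), so the bridge number of the resulting triplane is at most the arc count assembled in Step 2. Since $\bridge(F)$ is the minimum over all triplane diagrams, this yields $\bridge(F) \le 5b - 2\chi(F) - 2$ with $b = \braid(F)$, which is the claim. Orientability of $F$ is used to guarantee that the braid surface description exists in the required (oriented) form and that the branch points are all of the simple type counted in Step 1.

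**Main obstacle**

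The hard part will be Step 2: getting the constants exactly right rather than merely up to an additive constant. It is easy to produce \emph{some} linear bound of the shape $c_1 \braid(F) - c_2 \chi(F) + c_3$; pinning down $c_1 = 5$, $c_2 = 2$, and especially the sharp constant $c_3 = -2$ requires a careful global bookkeeping of which arcs introduced in the local models can be isotoped away or merged when the pieces are glued around the braid axis. I expect the proof to handle this by an explicit tabulation in the rainbow picture — counting maxima/minima of the arcs over the height function that defines the trisection — and then subtracting the arcs that are provably redundant, with the Euler-characteristic substitution from Step 1 converting the branch-point count into the $-2\chi(F)$ term.
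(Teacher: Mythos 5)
Your overall route is the same as the paper's: the theorem is deduced from the chain $\bridge(F)\leq\rain(F)\leq 5\cdot\braid(F)-2\cdot\chi(F)-2$, where the first inequality is immediate because a rainbow diagram \emph{is} a triplane diagram (your Step 3 is right that this is arc-for-arc), and the second is proved by running braid chart $\to$ braided banded unlink $\to$ rainbow while counting strands. You also correctly identify the one place $\chi(F)$ enters: a chart of braid index $b$ yields a braided banded unlink with $b$ births, $b$ deaths, and $n$ bands, so $\chi(F)=2b-n$, i.e.\ $n=2b-\chi(F)$.

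The gap is in your Step 2, and it is exactly the step you flag as the obstacle. The constants do not arise the way you describe. In the paper's Procedure~\ref{alg:rain_from_movies}, the $b$-strand braided unlink already gives $b$ bridge arcs with no ``doubling/mirroring,'' and the entire excess comes from putting the bands into banded braided-bridge position: each band after the first may require at most two Markov stabilizations (one in Step 3(b) to create the arc $\alpha$, one in Step 3(c) to make the projected cores disjoint and acyclic), so $\rain(F)\leq b+2(n-1)$. Substituting $n=2b-\chi(F)$ gives $b+2\bigl(2b-\chi(F)\bigr)-2=5b-2\chi(F)-2$. In particular the coefficient $5$ is $1+2\cdot 2$, with the $4b$ coming from the stabilization cost applied to the $2b-\chi$ bands, not from ``$5$ arcs per sheet''; and the $-2$ comes from the first band being free of charge ($2(n-1)$ rather than $2n$), not from cancelling two innermost arcs after gluing. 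Your proposed mechanism ($5b$ from mirroring the sheets, $-2\chi$ directly from branch points at $2$ arcs each, $-2$ from a global cancellation) double-counts and would not close up to the stated bound; the missing ingredient is the explicit two-stabilizations-per-band estimate for the band-dualization process.
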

To relate braid charts and triplane diagrams, we introduce a new concept. 

\begin{definition} 
A \emph{$b$-stranded rainbow diagram} or \emph{$b$-braided triplane diagram} of an oriented surface $F\subset S^4$ is a triple $\Tcal=(T_1,T_2,T_3)$ such that 
\begin{enumerate} 
\item each $T_i$ is a $b$-string braided tangle, and the axes of all three tangles coincide (see Figure~\ref{fig:rainbow1}), 
\item for all $i,j$, the union $T_i \cup \T_j$ is a closed braid that corresponds to a stabilization of a trivial braid. 
\end{enumerate}
\end{definition} 
\begin{figure}[h]
\centering
\includegraphics[width=.6\textwidth]{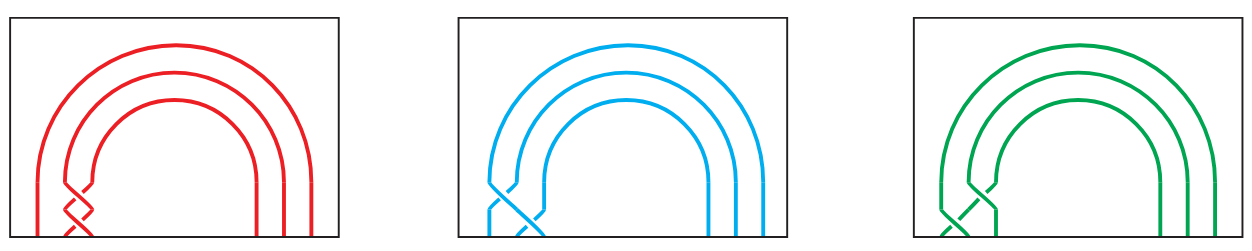}
\caption{A rainbow diagram for the standard unknotted torus.}
\label{fig:rainbow1}
\end{figure}
The \emph{rainbow number} of $F$ is the smallest $b$ such that $F$ admits a $b$-stranded rainbow diagram. We will see that in the same way triplane diagrams yield broken surface diagrams of the underlying surface~\cite{Joseph_Classical_knot_theory}, rainbow diagrams will extend to braid charts. Thus proving the following relation. 

\newtheorem*{thm_main_ineq}{Theorem \ref{thm:main_ineq}}
\begin{thm_main_ineq}
Let $F$ be an orientable surface, then 
\[ 
\braid(F) \leq \rain(F)\leq 5\cdot\braid(F) - 2\cdot\chi(F) -2, 
\]
where $\chi(F)$ is the Euler characteristic of $F$. 
\end{thm_main_ineq}

The main course of this work can be found upon a menu of concrete methods for putting a triplane diagram in rainbow position. We show how to go from a braided rainbow to form a surface braid, and we review methods to move among other diagrammatic descriptions of a fixed embedding. In particular, we have developed several key examples and demonstrated, via a sequence of procedures, how to move cyclically among these representations; see Figure~\ref{fig:Rosetta} for reference. This paper, then, is a state-of-the-art cookbook in which many recipes for the standard fare of knotted surfaces and $2$-knots are described. \textexclamdown Buen provecho!  

\begin{figure}[h]
\centering
\includegraphics[width=7cm]{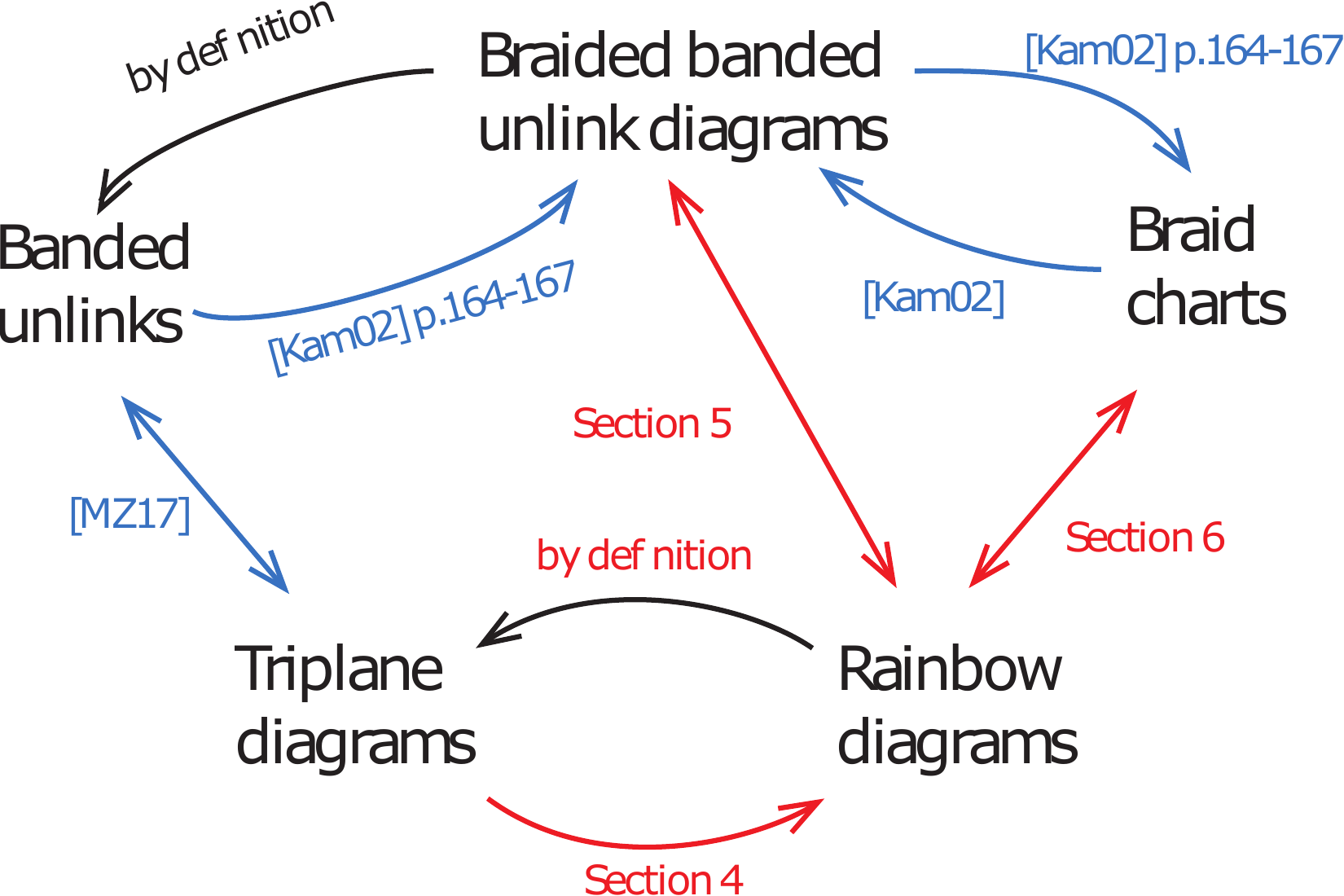}
\caption{How the various diagrammatic descriptions of knotted surfaces are interconnected.}
\label{fig:Rosetta}
\end{figure}

\subsection{Seifert circles count rainbow numbers}
It is a classical result of Yamada that the minimal number of Seifert circles equals the braid index of a link. One of the menu items of this work is a version of Yamada's method for triplane diagrams. We introduce \emph{weak-rainbow diagrams} which are triplane diagrams where each tangle $T_i$ is braided. In Section 3, the distinctions between rainbow diagrams and weak-rainbow diagrams will be emphasized. 

The first step of Seifert's algorithm for oriented link diagrams is to resolve all crossings in an orientation-preserving way to get a disjoint union of embedded loops in the plane. Given a diagram of an oriented $b$-stranded tangle diagram in a disk,  each crossing can be resolved, in an orientation-preserving manner, to get $b$ embedded arcs and some number $s$ of Seifert circles. We say that an oriented triplane diagram is \emph{clustered} if all its negative punctures are separated from the positive punctures. Note that a rainbow diagram is, by construction, clustered. Let  $\Wrain(F)$ ---{\emph{the weak rainbow number}}--- indicate the minimum number of arcs in a weak rainbow diagram for $F$.

\newtheorem*{thm_Seifert_circles}{Theorem \ref{thm:seifertcircles}}
\begin{thm_Seifert_circles}
Let $F$ be an orientable surface in $S^4$. Then, 
\[ \Wrain(F) = \min\left\{b(\Tcal)+\sum_i s_i(\Tcal): \Tcal \text{ is clustered triplane for } F\right\}, \]
where $s_i\left(\Tcal\right)$ is the number of Seifert circles of the $i$-th tangle of $\Tcal$. 
\end{thm_Seifert_circles}

\subsection{Rainbows in 4-dimensional symplectic topology} 
Islambouli and Starkston introduced the notion of bisections with divides for a class of symplectic 4-manifolds called Weinstein domains~\cite{Islambouli_divides}. These are determined by three cut systems, which satisfy some extra conditions on the contact topology of each pair of cut systems. Still, a crucial ingredient is another curve $d$ intersecting each curve in the cut system twice. If the contact topology condition is not met, one still gets an achiral bisection with divides, which in turn has connections to achiral Lefschetz fibrations (see~\cite[Rem 5.5]{contactcutgraphweinstein}. As rainbow diagrams lift to achiral bisections with divides, where the braid axis lifts to $d$, the results in this work could be used to build achiral bisections with divides in the branched cover. On the other hand, one can use rainbow diagrams (with some positivity conditions) to construct properly embedded symplectic surfaces in Weinstein domains; see~\cite{bridge_divides}.  

\subsection*{Outline of paper} 
Section~\ref{sec:background} reviews the necessary background of triplane diagrams needed for this work. Section~\ref{sec:rainbow_diags} is an introduction to rainbow diagrams, where we explain moves between rainbow diagrams and provide examples of rainbows for spun knots and satellite surfaces. Section~\ref{sec:rainbows_and_triplanes} presents three ways to rainbow a triplane diagram; each method generalizes a different algorithm to braid knots in 3-space. Sections~\ref{sec:rains_and_movies} and~\ref{sec:rainbows_and_charts} translate between rainbows, banded unlink diagrams, and braid charts. Although each section includes a brief review of the necessary background on surface diagrams, we encourage the beginner reader to see \cite{CS:Book} for more details. Section~\ref{sec:Ex12} is an example of the methods in Sections~\ref{sec:rains_and_movies} and~\ref{sec:rainbows_and_charts} performed on one surface: the 2-twist spun trefoil. To increase the list of examples of triplane and rainbow diagrams, Appendix~\ref{appen:satellite} provides a procedure for drawing a triplane diagram of satellite surfaces. 

\subsection*{Acknowledgments}
This work began at the Trisectors Workshop 2025 in Austin, Texas; the authors are grateful to the organizers of this event for hosting it. RA would like to thank Patricia Cahn and Agniva Roy for conversations that inspired some of this work. 

\tableofcontents

\section{Background on triplane diagrams}\label{sec:background}
A \emph{trivial tangle} is a tuple $(B,T)$, or simply $T\subset B$, where $B$ is a 3-ball containing properly embedded arcs $T$ such that, fixing the endpoints of $T$, we may isotope $T$ into $\partial B$. We will be interested in trivial tangles up to isotopy rel boundary. 
For a link $L\subset S^3$, a decomposition $(S^3,L)=(B_1,T_1) \cup_\Sigma \overline{(B_2,T_2)}$, where each $(B_i,T_i)$ is a trivial tangle, is called a \emph{bridge splitting}. If each tangle $T_i$ has $b$ strands, we call this decomposition a \emph{$b$-bridge splitting} of $L$. 
The symbol $\T$ denotes the mirror image of a trivial tangle $T$.

A trivial disk system is a collection of properly embedded disks in a 4-ball $\Dcal\subset B^4$ that can be isotoped rel-boundary to lie in $\partial B^4$. Meier and Zupan gave a four-dimensional analog of bridge splittings for surfaces in $S^4$ where every surface can be decomposed as the union of three trivial disk systems. In Definition~\ref{def:bridge_trisection}, we use the convention on the indices from \cite{Joseph_Classical_knot_theory}.

\begin{definition}[\cite{MZ17Trans}]\label{def:bridge_trisection}
Let $F$ be an embedded surface in $S^4$. A $(b;c_1,c_2,c_3)$-bridge trisection of $F$ is a decomposition $(S^4,F)=(X_1,\Dcal_1)\cup (X_2,\Dcal_2)\cup (X_3,\Dcal_3)$ satisfying the following conditions for each $i\in \Z/3\Z$: 
\begin{enumerate}
\item each $(X_i,\Dcal_i)$ is a trivial disk system with $c_i=|\Dcal_i|$, 
\item $(B_i,T_i)=(X_i,\Dcal_i)\cap (X_{i-1},\Dcal_{i-1})$ is a $b$-stranded trivial tangle, and 
\item $(\Sigma,\{p_1,\dots, p_{2b}\})=\cap_{i=1}^3 (X_i,\Dcal_i)$ is a 2-sphere with $2b$-punctures. 
\end{enumerate}
\end{definition}
The surface $\Sigma$ is called the \emph{central surface} or \emph{bridge sphere}. It follows from the definition that each \mbox{$T_i\cup \T_{i+1}$} is a $c_i$-component unlink in $S^3=X_i\cup X_{i+1}$. Thus, by a result of Livingston, the union of the tangles $\bigcup_{i=1}^3 (B_i,T_i)$ determines the embedding of $F$ up to isotopy; see \cite{Livingston} or \cite[Lem 2.5]{MZ17Trans}.

\begin{definition}\begin{sloppypar} 
A triplane diagram is an ordered tuple of trivial tangles with the same endpoints \mbox{$\Tcal=(T_1,T_2,T_3)$} such that, for each $i\in \Z/3\Z$, $T_i\cup \T_{i+1}$ is an unlink in $S^3$. \end{sloppypar}
\end{definition}

As triplane diagrams determine the bridge trisected surface, we will often abuse notation and use $\Tcal$ to denote a triplane diagram and a bridge trisection simultaneously. In this case, the bridge trisected surface described by $\Tcal$ is denoted by $F_\Tcal$. 
Any two triplane diagrams $\Tcal$ and $\Tcal'$ representing istopic surfaces are related by a sequence of \emph{triplane moves} \cite[Thm 1.7]{MZ17Trans}.
\begin{enumerate}
\item \emph{Interior Reidemeister moves}: isotopies on each tangle away from its endpoints. 
\item \emph{Mutual braid transpositions}: to apply the same braid move $\sigma_j^\pm$ (point swap) between the same adjacent strands of each tangle. 
\item \emph{Perturbation/deperturbation moves}: local moves that increase/decrease the number of strands in each tangle by one. This move is depicted in Figure~\ref{fig:perturbs}, up to permutation of the tangles. 
\end{enumerate}

\begin{figure}[ht]
\centering
\includegraphics[width=.8\textwidth]{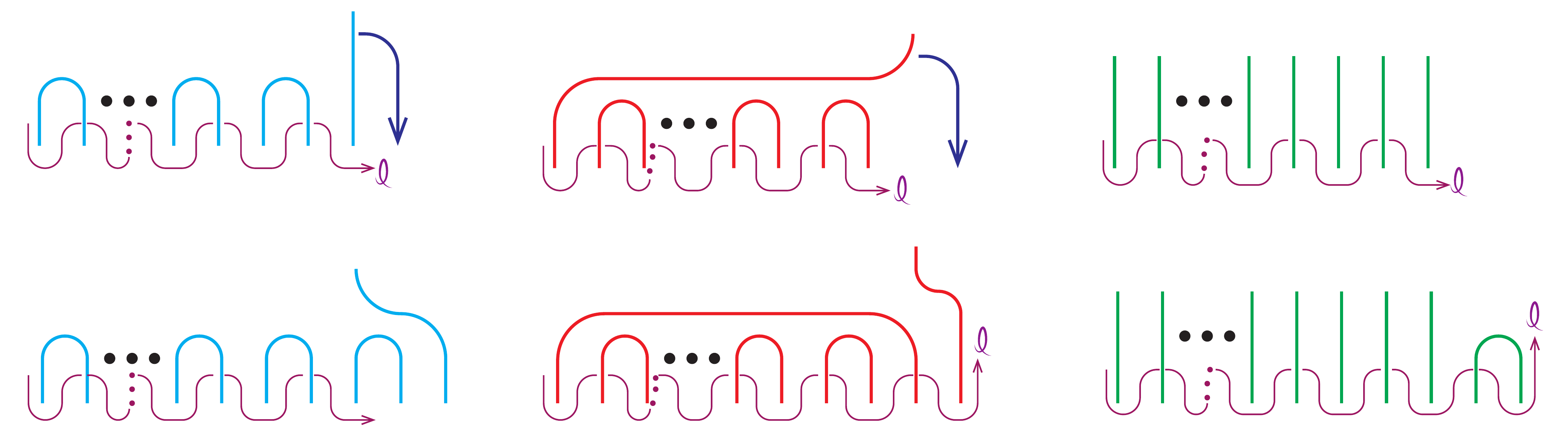}
\caption{Local model for perturbation/deperturbation of triplane diagrams.}
\label{fig:perturbs}
\end{figure}

One can interpret a perturbation move in the following way: the tangles of $\Tcal$ change by pressing two tangles towards the bridge surface and popping a small trivial arc in the third tangle. For example, the perturbation in Figure~\ref{fig:perturbs} can be achieved by pressing/popping each tangle along the blue arcs in the figure. For the time being, ignore the snaking purple arrows that is labeled $\ell$ at the bottom of each triplane. With this interpretation in mind, a special type of perturbation, called a \emph{0-sector perturbation}, can be defined: 
\begin{enumerate}
\item[(4)] 0-sector perturbation: consider $\rho$ to be an arc in $B_i$ the interior of which is disjoint from the tangle $T_i$ and which connects an interior point of $T_i$ with a point in $\Sigma$ that is away from $\{p_1,\dots, p_{2b}\}$. Let $T'_i$ be the tangle with $(b+1)$ strands that results from pressing $T_i$ along $\rho$ with some number of twists (possibly zero), and for $j\neq i$, let $T'_j$ be the result of adding a small trivial arc to $T_j$ with endpoints equal to the two new endpoints of $T'_i$. Then $\Tcal'=(T'_1,T'_2,T'_3)$ is the result of a 0-sector perturbation of $\Tcal$ along $\rho$. See columns 4 and 6 of Figure~\ref{fig:AlexMethod_9_1_part2} for an example of a 0-sector perturbation. 
\end{enumerate}
It was shown in Theorem 4.10 of \cite{AE24} that 0-sector perturbations preserve the isotopy type of the bridge trisected surface. They correspond to isotopies of $F$, supported in a neighborhood of the 3-ball $B_i$ containing $\rho$, that drag $F$ through the bridge sphere to create a new pair of intersections.

\begin{remark}[Warning on 0-sector perturbations]
The result of a 0-perturbation on a triplane diagram may not be a triplane diagram. The reason for this is that the tangle $T'_i$ may not be trivial anymore, as the arc $\rho$ may create some local knotting in $T_i$. 
That said, one can see that the tuple $\Tcal'$ still satisfies that each pair $T'_j\cup \overline{T'}_{j-1}$ is a bridge splitting for an unlink in $S^3$; in fact, $T'_i\cup \overline{T'}_{i\pm 1}$ is isotopic to $T_i\cup \T_{i\pm 1}$. Thus, the union of the tangles $\bigcup_{i=1}^3 (B_i,T'_i)$ still determines an embedding of $F$. This remark is relevant to this work as some of the intermediate tuples in the procedures in Section~\ref{sec:rainbows_and_triplanes} may not be triplane diagrams in the strict sense, while the final tuples will.
\end{remark}

The quantity $b$ in Definition~\ref{def:bridge_trisection} is called the \emph{bridge index} of the decomposition. The minimal $b$ such that $F$ admits a $b$-bridge trisection is the \emph{bridge number of $F$}, denoted by $\bridge(F)$. To estimate bridge numbers of surfaces, we will utilize the meridional rank of surface-links $\mrk(F)$, which is the minimum number of meridians of $F$ needed to generate $\pi_1(S^4-F)$. The following result can be found in~\cite{joseph2025meridional}.

\begin{lemma}\label{lem:mrk_ineq}
Let $F$ be a surface-link in $S^4$. Then, $$3\cdot \mrk(F) -\chi(F) \leq \bridge(F)$$ where $\chi(F)$ denotes the Euler-characteristic of $F$.
\end{lemma}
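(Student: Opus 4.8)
The plan is to deduce the inequality from the sharper-looking bound $\mrk(F)\le c_i$ for \emph{every} index $i$ of an arbitrary $(b;c_1,c_2,c_3)$-bridge trisection of $F$. Granting this, an Euler-characteristic count gives $\chi(F)=c_1+c_2+c_3-b$ (inclusion--exclusion on $F=\Dcal_1\cup\Dcal_2\cup\Dcal_3$, whose pairwise intersections are the three $b$-strand tangles and whose triple intersection is the $2b$ puncture points). Summing $\mrk(F)\le c_i$ over $i\in\Z/3\Z$ then yields $3\,\mrk(F)\le c_1+c_2+c_3=\chi(F)+b$, i.e.\ $3\,\mrk(F)-\chi(F)\le b$; applying this to a bridge trisection with $b=\bridge(F)$ completes the argument.

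To prove $\mrk(F)\le c_i$ I would analyze $\pi_1(S^4\setminus F)=\pi_1(S^4\setminus N(F))$ through the pieces $W_i:=X_i\setminus N(\Dcal_i)$, so that $W_{i-1}\cap W_i=V_i:=B_i\setminus N(T_i)$ and $W_1\cap W_2\cap W_3=P:=\Sigma\setminus N(\{p_1,\dots,p_{2b}\})$. Three facts are needed. \textbf{(1)} Since $\Dcal_i$ is a \emph{trivial} disk system in the $4$-ball $X_i$, the piece $W_i$ deformation retracts onto $\partial X_i\setminus N(\partial\Dcal_i)$; as $\partial\Dcal_i=T_i\cup\T_{i+1}$ is a $c_i$-component unlink in $\partial X_i\cong S^3$, the group $\pi_1(W_i)$ is free of rank $c_i$, generated by meridians of the disks of $\Dcal_i$, each of which is a meridian of $F$. \textbf{(2)} Since $T_i$ is a \emph{trivial} $b$-strand tangle, $V_i$ is a genus-$b$ handlebody whose fundamental group is generated by the $b$ meridians of the arcs of $T_i$; each of these is homotopic in $V_i$ to a small loop around one of the punctures, so the inclusion induces a surjection $\pi_1(P)\twoheadrightarrow\pi_1(V_i)$. \textbf{(3)} Combining (1) with the decomposition $\partial X_i\setminus N(\partial\Dcal_i)=V_i\cup_P V_{i+1}$, van Kampen gives $\pi_1(W_i)=\pi_1(V_i)*_{\pi_1(P)}\pi_1(V_{i+1})$; since $\pi_1(P)\to\pi_1(V_{i+1})$ is onto by (2), the inclusion $\pi_1(V_i)\to\pi_1(W_i)$ is onto.

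The final step is van Kampen applied to $S^4\setminus N(F)=W_1\cup W_2\cup W_3$ (after fattening the pieces to open sets; every multiple intersection, including $P$, is path-connected). In the resulting colimit the only relations identify, for each $i$, the two copies of $\pi_1(V_i)$ sitting inside $\pi_1(W_{i-1})$ and $\pi_1(W_i)$. Because $\pi_1(V_i)\to\pi_1(W_i)$ is onto by (3), every element of $\pi_1(W_i)$ pushed into $\pi_1(S^4\setminus F)$ already lies in the image of $\pi_1(W_{i-1})$; running this around the $3$-cycle forces the three images to coincide, and since together they generate, $\pi_1(S^4\setminus F)$ equals the image of $\pi_1(W_i)$. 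That image is a quotient of $\pi_1(W_i)$, a free group of rank $c_i$ generated by $c_i$ meridians of $F$, so $\mrk(F)\le c_i$.

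I expect the main technical obstacle to be facts (2) and (3): carefully justifying that $W_i$ retracts onto $\partial X_i\setminus N(\partial\Dcal_i)$ \emph{compatibly} with the inclusions of $V_i$ and $V_{i+1}$, and that the arc-meridians of a trivial tangle both generate $\pi_1(V_i)$ and are homotopic into $P$ --- each of which rests on the defining ``boundary-parallel'' property of trivial disk systems and trivial tangles. A more combinatorial alternative would be to read a Wirtinger-type presentation of $\pi_1(S^4\setminus F)$ directly off a triplane diagram $(T_1,T_2,T_3)$ and observe that the relations coming from the tangle $T_i$ already express all $2b$ bridge meridians in terms of the $c_i$ generators surviving from the unlink $T_i\cup\T_{i+1}$ --- the four-dimensional echo of the classical bound $\mrk(L)\le\bridge(L)$ for links in $S^3$.
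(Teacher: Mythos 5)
Your argument is correct and follows the paper's proof exactly in outline: sum the bound $\mrk(F)\le c_i$ over the three sectors and combine with $\chi(F)=c_1+c_2+c_3-b$. The only difference is that the paper simply cites \cite[Cor 5.3]{MZ17Trans} for $\mrk(F)\le\min\{c_1,c_2,c_3\}$, whereas you reprove that bound via the van Kampen decomposition of the complement --- your argument there is sound and is essentially the content of the cited corollary.
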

\begin{proof}
Let $\Tcal$ be a $(\bridge(F);c_1,c_2,c_3)$-bridge trisection of $F$. We know that $\mrk(F)$ is bounded from above by $\min\{c_1,c_2,c_3\}$ \cite[Cor 5.3]{MZ17Trans}. Thus, 
\[ 
3\cdot \mrk(F) \leq \left( c_1+c_2+c_3 -b\right) +b = \chi(F) +b.
\]
\end{proof}

\section{Rainbow diagrams}\label{sec:rainbow_diags}

We will consider trivial tangles which are ``braided'' with respect to a fixed axis. Precisely, a \emph{braided trivial tangle} is a trivial tangle $(B,T)$ with a fixed homeomorphism \mbox{$(B,T)\to (D^2,\{p_1,\dots, p_b\})\times [0,1]$}, where $\{p_i\}_{i=1}^b$ lie in the interior of $D^2$. The curve $\ell=\partial D^2\times \{1/2\}$ is called the \emph{braid axis}. 
A braided trivial tangle is determined by an element of the braid group in $b$ strands, $B_b$. Two braided trivial tangles are considered equal if they are determined by the same braid in $B_b$. By definition, if $T_1$ and $T_2$ are braided trivial tangles represented by braid elements $x_1, x_2\in B_b$, then the union $T_1\cup \T_2$ is the braid closure of the braid element $x_1\cdot x_2^{-1}\in B_b$. 
\begin{definition}
A \emph{braided-bridge trisection} of a closed surface $F\subset S^4$ is a bridge trisection $F=\Dcal_1\cup \Dcal_2\cup \Dcal_3$ such that 
\begin{enumerate}
    \item each $T_i=\Dcal_i\cap \Dcal_{i-1}$ is a braided trivial tangle with respect to the same axis, and 
    \item each $\Dcal_i$ is a braided collection of trivial disks. 
\end{enumerate}
\end{definition}
The spine of a braided-bridge trisection is called a \emph{rainbow diagram}. We say that a $b$-stranded braid $L$ in $S^3$ is \emph{fully destabilizable} if it is braid isotopic to $(b-|L|)$ Markov stabilizations of the $|L|$-stranded crossingless braid. By definition, rainbow diagrams are triplane diagrams satisfying that $T_i\cup \T_{i-1}$ is a fully destabilizable braid. The braid movie of these destabilizations traces the braided disk system $\Dcal_i$. 
\emph{A weak-rainbow diagram} is a triplane diagram such that each tangle is braided around the same axis. Weak-rainbow diagrams can be turned into rainbows via a finite set of moves (Lemma~\ref{lem:strong_rainbow}).

\begin{remark}
Rainbow and weak-rainbow diagrams are distinct notions, as there exist braid representatives of the unlink that are not stabilized. For example, Morton gave a 4-braid whose closure is unknotted but not stabilized in \cite{Morton_1986}, and Rudolph gave a 4-braid for a 2-component unlink that is not stabilized \cite[Ex 5.2]{Rudolph}. Thus, weak-rainbow diagrams containing these braids will not be rainbows. 
\end{remark}

We can define a surface invariant by minimizing among all (weak) rainbow diagrams of a fixed orientable surface $F\subset S^4$. By definition, the following holds. 
\begin{equation}\label{eq:inequality1}
    \bridge(F)\leq \Wrain(F) \leq \rain(F).
\end{equation}

\begin{question}
    Can the quantities in Equation~\eqref{eq:inequality1} be arbitrarily far apart?
\end{question}

\subsection{Rainbow moves}
For the following descriptions, it will be beneficial to represent tangles in a rainbow diagram $\Tcal=(T_1,T_2,T_3)$ as elements in the $b$-stranded braid group $(\beta_1,\beta_2,\beta_3)\in B_b^3$, or as words in the standard generators of the braid group $(w_1,w_2,w_3)$. With this nomenclature in mind, each triplane move has a rainbow version we now describe; we will refer to them as \emph{rainbow moves}.
\begin{enumerate}
    \item {Interior Reidemeister moves} correspond to relations in the braid group; i.e., we replace $(w_1,w_2,w_3)$ with $(w'_1,w'_2,w'_3)$ using the standard braid relations in $B_b$. 
    \item {Mutual braid moves} get replaced with mutual braid moves that keep the tangles braided. These \emph{mutual braided-braid moves} correspond to replacing $(\beta_1,\beta_2,\beta_3)$ with $(x\beta_1y, x\beta_2y,x\beta_3y)$ for some braid elements $x,y\in B_b$. 
    \item {Perturbations} correspond to local modifications on the tangles as in Figure~\ref{fig:rain_perturbs} and will be called \emph{braided perturbations}. They are obtained from the model in Figure~\ref{fig:perturbs} by tightening the axis $\ell$ which no longer should be ignored. There are two ways of tightening $\ell$: so that the arrow points into or out of the page. There is also a second choice for $\ell$ obtained by changing the crossings with the tangles in Figure~\ref{fig:perturbs}. These choices lead us to four local models for a \emph{braided-perturbation} in Figure~\ref{fig:rain_perturbs}. One can see that braided perturbation changes two of the three braids $T_i\cup \T_{i+1}$ by a Markov stabilization, while the third one gains a fully destabilizable unknotted component. 
\end{enumerate}
\begin{figure}[ht]
\centering
\includegraphics[width=.7\textwidth]{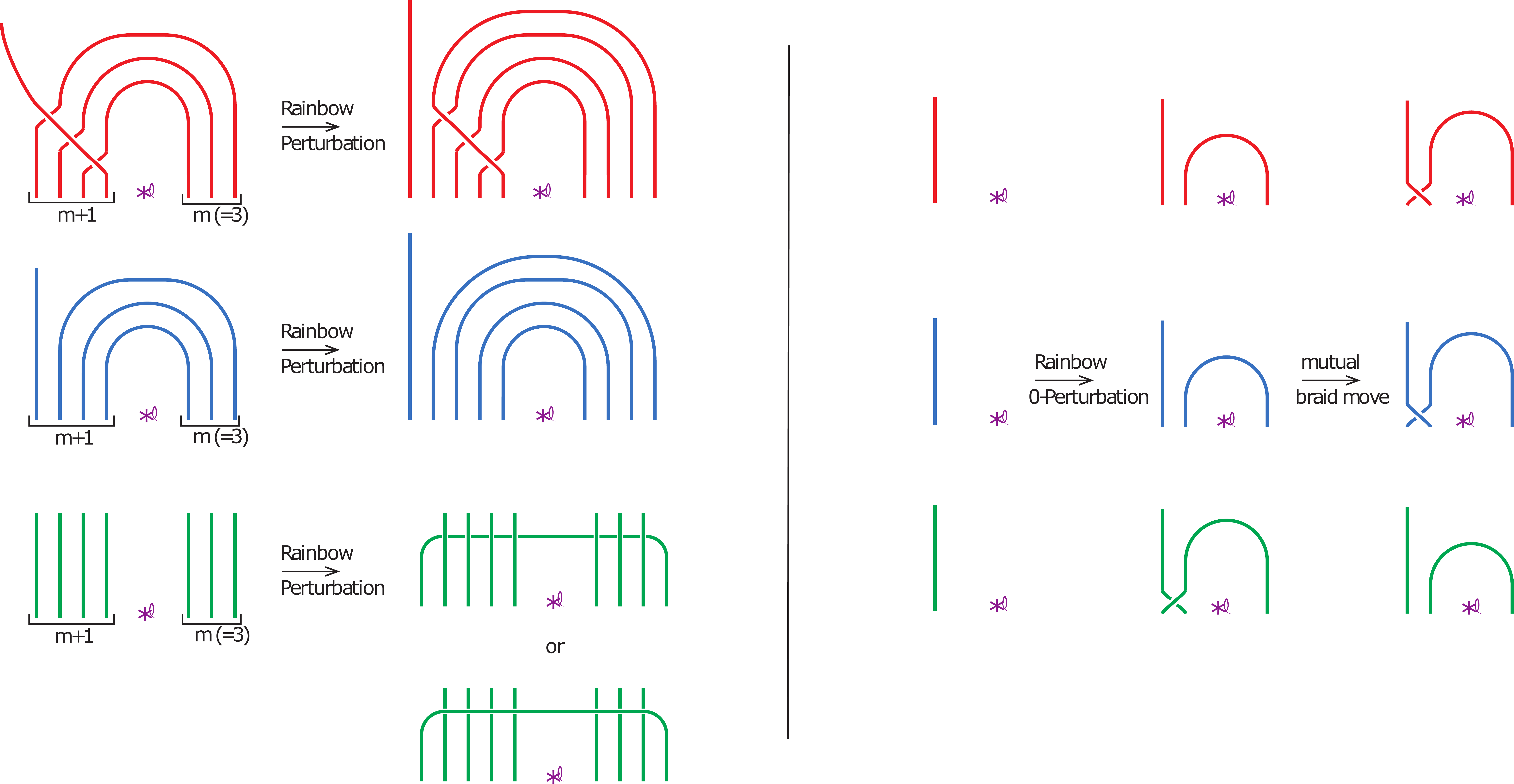}
\caption{Local models of braided-perturbations. Notice how the resulting rainbow diagram can be chosen so that the new strand goes under or over all the old strands. The integer $m$ is the number of crossings involved in the local model before perturbation. For $m>0$, all the over-crossings in the left diagrams can also be chosen to be under-crossings. The particular case $m=0$ is drawn on the right.}
\label{fig:rain_perturbs}
\end{figure}

A valuable kind of braided-perturbation is the case $m=0$ in Figure~\ref{fig:rain_perturbs}, which corresponds to a \emph{braided version of a 0-sector perturbation}. In practice, this can be described modulo permutation on the indices by replacing the tuples $(P_1Q_1,\beta_2,\beta_3)$ in $B_b^3$ with $(P_1\sigma_b^{\pm1}Q_1,\beta_2,\beta_3)$ in $B_{b+1}^3$, where $\beta_1=P_1Q_1\in B_b$. 

\begin{lemma}\label{lem:strong_rainbow}
Let $\Tcal$ be a weak-rainbow diagram. Then $\Tcal$ can be turned into a rainbow diagram after finitely many 0-sector braided perturbations.
\end{lemma}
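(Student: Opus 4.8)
The plan is to reduce the lemma to a statement about individual braid words. A weak-rainbow diagram $\Tcal=(\beta_1,\beta_2,\beta_3)\in B_b^3$ fails to be a rainbow precisely when some pairwise closure $\widehat{\beta_i\beta_{i-1}^{-1}}$, while an unlink, is not fully destabilizable. So the goal is to enlarge the number of strands — using only $0$-sector braided perturbations, which by the discussion preceding the lemma replace a tuple $(P_1Q_1,\beta_2,\beta_3)$ by $(P_1\sigma_b^{\pm1}Q_1,\beta_2,\beta_3)$ (up to cyclic permutation of the indices) — until all three pairwise closures become stabilizations of the trivial braid. Throughout, the bridge-trisected surface $F_\Tcal$ is unchanged (by Theorem 4.10 of \cite{AE24}), and one must check at each stage that the tuple remains a weak-rainbow diagram, i.e.\ each tangle stays braided around the common axis; this is automatic since inserting $\sigma_b^{\pm1}$ on the right of one braid keeps it in $B_{b+1}$, and the new trivial arcs added to the other two tangles are braided with respect to the same axis by construction.

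The key step is the following: if $L=\widehat{x}$ is an unlink presented as the closure of $x\in B_b$, then there is a sequence of Markov moves taking $x$ to a crossingless braid, and \emph{stabilizations alone suffice} provided one is allowed enough of them — this is Birman–Menasco's theorem (the Markov Theorem Without Stabilization, or more simply the classical fact that any two braid representatives of a link become conjugate after enough stabilizations). More concretely, write $\widehat{\beta_1\beta_3^{-1}}$; since it is an unlink, there is a braid word $u\in B_N$ (for some $N\ge b$) obtained from $\beta_1\beta_3^{-1}$ by positive and negative stabilizations and braid isotopy, with $u$ a crossingless braid. The content of a $0$-sector perturbation on the index-$1$ tangle is exactly that we may replace $\beta_1$ by $P_1\sigma_b^{\pm1}Q_1$ where $\beta_1=P_1Q_1$; since conjugation is realized by mutual braided-braid moves and a stabilization $\beta_1\mapsto \beta_1\sigma_b^{\pm1}$ is the special case $Q_1=1$, the moves available to us generate (on the level of a single tangle, with the other two ``tracking'' via small trivial arcs) all of Markov stabilization together with braid-group relations and conjugation. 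Hence we can arrange $\beta_1\beta_3^{-1}$ to be fully destabilizable. One then repeats for the pair $(\beta_1,\beta_2)$ and the pair $(\beta_2,\beta_3)$, taking care that perturbations performed for one pair do not spoil a pair already fixed.

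That last point — simultaneity — is where the real obstacle lies. A $0$-sector perturbation on the index-$i$ tangle changes $\beta_i$ and only appends a trivial arc to $\beta_{i-1}$ and $\beta_{i+1}$; but a trivial arc \emph{is} a Markov stabilization on the closures $\widehat{\beta_i\beta_{i-1}^{-1}}$ and $\widehat{\beta_{i+1}\beta_i^{-1}}$, so those two closures each acquire one more stabilization, which preserves full destabilizability once achieved — hence fixing pairs in order is safe. The careful bookkeeping is: first perform $0$-sector perturbations along the index-$1$ tangle to make $T_2\cup\overline{T_1}$ fully destabilizable; these add stabilized arcs to $T_2$ and $T_3$, so $T_1\cup\overline{T_3}$ and $T_3\cup\overline{T_2}$ gain stabilizations but nothing worse. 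Next perform $0$-sector perturbations along the index-$2$ tangle to fix $T_3\cup\overline{T_2}$; this further stabilizes $T_1\cup\overline{T_3}$ and adds a stabilization to the already-good $T_2\cup\overline{T_1}$, which remains fully destabilizable. Finally perform $0$-sector perturbations along the index-$3$ tangle to fix $T_1\cup\overline{T_3}$, stabilizing the other two good pairs harmlessly. After finitely many such moves all three pairwise unions are stabilizations of trivial braids, so $\Tcal$ has become a rainbow diagram, completing the proof. The one subtlety to verify in writing this up is that the ``trivial arc added to $T_j$'' in a $0$-sector perturbation really does correspond to a single positive or negative Markov stabilization of the relevant closed braid rather than some more complicated modification — this follows from inspecting the $m=0$ column of Figure~\ref{fig:rain_perturbs} and the formula $(P_1Q_1,\beta_2,\beta_3)\mapsto(P_1\sigma_b^{\pm1}Q_1,\beta_2,\beta_3)$.
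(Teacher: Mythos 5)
Your proposal is correct and follows essentially the same route as the paper's proof: observe that $0$-sector braided perturbations preserve full destabilizability of all three pairwise closures, invoke the classical fact that any braid representative of an unlink becomes fully destabilizable after finitely many Markov stabilizations, realize those stabilizations as $0$-sector perturbations pressing on a single tangle (with mutual moves and interior isotopies doing the conjugation), and treat the three pairs in succession. The only slip is a bookkeeping swap: a perturbation on $T_i$ Markov-stabilizes the two closures containing $T_i$ and adds a split $1$-braided unknot component to the third closure (rather than a stabilization), but since both operations preserve full destabilizability the argument is unaffected.
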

\begin{proof}
Notice that 0-sector braided perturbations change two of the three braids $L_i=T_i\cup \T_{i+1}$ by a Markov stabilization and the third one by the addition of a 1-braided unknot. Thus, they preserve the property of any $L_i$ to be fully destabilizable. 
Now, assume that $L_1$ is not fully destabilizable. It is a fact that any braid representative of the unlink, like $L_1$, admits a sequence of Markov stabilizations that make it fully destabilizable \cite{Birman}. This sequence can be upgraded to a sequence of 0-sector perturbations of $\Tcal$ (that press only on $T_1$) by adding some interior Reidemeister moves and mutual rainbow-braid moves to mimic the isotopies in the braid $L_1=T_1\cup \T_2$. We can repeat this process with the other two braids $L_2$ and $L_3$. 
\end{proof}

\begin{conjecture}\label{conj:uniqueness_rains}
Any two rainbow diagrams representing isotopic surfaces are related by a sequence of rainbow moves. 
\end{conjecture}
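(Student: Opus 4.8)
The plan is to start from the triplane-move theorem of Meier and Zupan \cite[Thm 1.7]{MZ17Trans}: if $F_\Tcal$ and $F_{\Tcal'}$ are isotopic, then $\Tcal$ and $\Tcal'$ are joined by a finite sequence $\Tcal=\Tcal^{(0)},\Tcal^{(1)},\dots,\Tcal^{(n)}=\Tcal'$ of triplane diagrams in which consecutive terms differ by an interior Reidemeister move, a mutual braid transposition, or a (de)perturbation. When $\Tcal$ and $\Tcal'$ are rainbow diagrams, the obstruction to using this sequence verbatim is that the intermediate $\Tcal^{(k)}$ need not be braided. I would therefore replace it by a sequence of rainbow diagrams joined by rainbow moves, which reduces to two tasks: (a) choose a rainbow ``lift'' $\Rcal^{(k)}$ of each $\Tcal^{(k)}$, with $\Rcal^{(k)}$ and $\Rcal^{(k+1)}$ related by rainbow moves; and (b) show that any two rainbow lifts of the \emph{same} triplane diagram are related by rainbow moves, so that the lifts at the two ends can be connected to $\Tcal$ and $\Tcal'$ themselves. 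Both tasks come down to a single Markov-type statement, described below.

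For task (a) one applies one of the braiding procedures of Section~\ref{sec:rainbows_and_triplanes} to $\Tcal^{(k)}$ to obtain a weak-rainbow diagram, and then invokes Lemma~\ref{lem:strong_rainbow} to upgrade it to a genuine rainbow diagram $\Rcal^{(k)}$; the destabilizing braid movies tracing the disk systems $\Dcal_i$ are carried along automatically. The point where canonicity is lost — which procedure was used, where strands were pushed across the axis, which stabilizations were inserted by Lemma~\ref{lem:strong_rainbow}, and (in view of the warning remark) the fact that an intermediate tuple need not even be a genuine triplane — is exactly what task (b) must absorb.

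The move-by-move analysis behind (a) is easy in two cases: a mutual braid transposition is already braided and is a (sequence of) mutual braided-braid move(s) after a simultaneous conjugation, and a perturbation, once its endpoints are put in braided position, is realized by a braided perturbation as in Figure~\ref{fig:rain_perturbs} together with braid-relation moves and mutual braided-braid moves, the various local models (new strand over versus under, arrow in versus out) differing by braid relations and conjugations. The essential case is an interior Reidemeister move on a single tangle $T_i$: one needs that any isotopy rel $\partial$ between two braided trivial tangles lying over a prescribed tangle isotopy is a composition of braid relations and $0$-sector braided (de)perturbations, and — since every move must be performed mutually on $T_1,T_2,T_3$ while keeping each $T_j\cup\overline{T}_{j-1}$ fully destabilizable — one needs a version of this statement \emph{relative} to the $2b$ marked points of the bridge sphere and \emph{simultaneous} across the three tangles. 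Note that in such a statement the only available stabilization is a $0$-sector braided perturbation, not an ordinary Markov stabilization, which is why that move already sits in the list; this is also what handles task (b).

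The main obstacle is precisely this relative, simultaneous Markov-type theorem, and this is the step I expect to be hard. The classical Markov theorem is already delicate — proved via braid foliations (Birman--Menasco) or the Yamada--Vogel algorithm — and here one must additionally control the boundary behavior at the marked points, the simultaneity constraint (which restricts where destabilizations may be performed), and the requirement that no $T_j\cup\overline{T}_{j-1}$ ever cease to be fully destabilizable. An alternative route that may circumvent part of this is to pass through the braid-chart picture of Section~\ref{sec:rainbows_and_charts}: if the rainbow--chart dictionary can be shown to intertwine rainbow moves with Kamada's chart moves \cite{KamBook}, then two rainbow diagrams for isotopic surfaces give braided surfaces that become ambiently isotopic after braid stabilization, whose charts are therefore connected by chart moves and stabilizations, which one then pulls back to rainbow moves. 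Either way, the triplane-move theorem supplies the existence of \emph{some} path between $\Tcal$ and $\Tcal'$, and the entire content of the conjecture is the Markov-type work of making such a path braided.
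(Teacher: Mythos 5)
This statement is labelled a \emph{conjecture} in the paper: the authors give no proof, only the one-line remark that one ``may expect to adapt the proof of Theorem 1.6 from \cite{MZ17Trans}.'' Your proposal is a reasonable elaboration of exactly that strategy, but it is not a proof, and you say so yourself: everything is deferred to the ``relative, simultaneous Markov-type theorem'' that underlies both your task (a) (lifting each intermediate triplane $\Tcal^{(k)}$ to a rainbow $\Rcal^{(k)}$) and your task (b) (showing that any two rainbow lifts of the same triplane diagram are connected by rainbow moves). That statement is the entire content of the conjecture; without it, what you have is a reduction, not an argument. To be concrete about why it is not routine: the required statement is a Markov theorem for tangles rel the $2b$ marked points of the bridge sphere, performed \emph{simultaneously} on three tangles, under the side condition that every intermediate triple keeps each $T_j\cup\T_{j-1}$ a fully destabilizable braid, and with the only allowed stabilization being a $0$-sector braided perturbation (which touches all three tangles at once). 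No such theorem exists in the literature, and the classical proofs (Birman--Menasco braid foliations, Yamada--Vogel) do not obviously localize to a single tangle of a triple while preserving the other two constraints.

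A further caution about whether the listed move set even suffices: Lemma~\ref{lem:unperturbed_rain} exhibits a $(4;1,4,1)$-rainbow diagram of the unknotted sphere built from Morton's unstabilized unknot braid that is \emph{not} a braided perturbation, whereas every $(b;1,b,1)$-triplane diagram is perturbed. So the rainbow analogue of the destabilization lemmas that power the Meier--Zupan uniqueness proof fails as stated, and one is forced through exchange moves (as in \cite{BM_theunlink}) or through the separate MTWS conjecture the paper also leaves open. Exchange moves are generated by rainbow moves (Figure~\ref{fig:exchange_move}), so the move set is not obviously wrong, but your step (b) would have to reprove a Birman--Menasco-type result in this constrained setting rather than cite one. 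Your alternative route through braid charts has the same status: Kamada's chart-move theorem gives ambient isotopy of braided surfaces only after braid (Markov) stabilization of the surface braid, and pulling a chart stabilization back to a rainbow move is again an instance of the missing lemma. In short, the skeleton matches the direction the authors themselves suggest, but the conjecture remains open and your proposal does not close it.
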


Rainbows correspond to braid movies or braided banded unlink diagrams; see Section~\ref{sec:rains_and_movies} for details. So one may expect to adapt the Proof of Theorem 1.6 from \cite{MZ17Trans} to rainbows. 

It is a fact that $(b;1,b,1)$-triplane diagrams are always perturbed, in fact unknotted~\cite[Prop 4.1]{MZ17Trans}. On the other hand, the following lemma shows that this is now always the case for rainbow diagrams. 

\begin{lemma}\label{lem:unperturbed_rain}
There exists a $(4;1,4,1)$-rainbow diagram $\Tcal_{Morton}$ of an unknotted 2-sphere that is not a braided-perturbation.
\end{lemma}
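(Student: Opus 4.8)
The plan is to construct $\Tcal_{Morton}$ explicitly using Morton's famous $4$-braid whose closure is the unknot but which admits no destabilization. Recall Morton's braid word $\mu = \sigma_1 \sigma_2^{-1} \sigma_3 \sigma_2^{-1} \sigma_1 \sigma_2^{-1} \sigma_3 \sigma_2^{-1}$ (or the standard $\sigma_1\sigma_2^{-1}\sigma_1\sigma_3^{-1}\sigma_2\sigma_3^{-1}$ depending on the source) in $B_4$; its closure is unknotted, so writing $\mu = P_1 Q_1$ as a product of two subwords, we can try to set $T_1$ to be the braided tangle represented by $P_1$ and $T_3$ the one represented by $Q_1^{-1}$, so that $T_1 \cup \overline{T}_3$ is the closed braid $\widehat{\mu}$, an unknot. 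The trick is to arrange the middle tangle $T_2$ so that the remaining two unions $T_1 \cup \overline{T}_2$ and $T_2 \cup \overline{T}_3$ are trivial enough to be unlinks with one component each, and so that $c_1 = c_3 = 1$ while $c_2 = 4$.

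First I would choose $T_2$ to be the trivial braid (the identity in $B_4$). Then $T_1 \cup \overline{T}_2 = \widehat{P_1}$ and $T_2 \cup \overline{T}_3 = \widehat{Q_1^{-1}} = \widehat{Q_1}$ up to reversal. So I need $P_1$ and $Q_1$ to each have closure an unknot (giving $c_1 = c_3 = 1$ is not automatic — I need to know the number of components and that each is unknotted; since $T_i \cup \overline{T}_{i+1}$ is an unlink in any triplane diagram, I really just need to exhibit $P_1, Q_1$ whose closures are one-component unknots). A clean choice: take $P_1$ to be a half of Morton's word that is itself a destabilizable braid whose closure is the unknot, and $Q_1 = P_1^{-1}\mu$ likewise. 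One can, for instance, split $\mu$ so that each half is conjugate to a stabilization of a lower-strand trivial braid — Morton's word is built precisely so that the whole is not destabilizable but is a product of "innocent" pieces. Next I would verify the three defining conditions of a rainbow/braided-bridge trisection: (i) each $T_i$ is a braided trivial tangle with $4$ strands sharing the axis — immediate by construction; (ii) $T_1\cup\overline T_2$, $T_2\cup\overline T_3$, $T_3 \cup \overline T_1$ are unlinks — the first two by the choice of $P_1,Q_1$, and $T_3\cup\overline T_1 = \widehat{\mu^{-1}}$, the unknot (mirror/reverse of Morton's closure); (iii) moreover each $T_i \cup \overline{T}_{i-1}$ is \emph{fully destabilizable} as a braid (this is what upgrades "triplane" to "rainbow"), which holds for $\widehat{P_1}, \widehat{Q_1}$ by choice and for $\widehat{\mu^{\pm 1}}$ because a $4$-braid closing to a $1$-component unknot is always fully destabilizable after allowed Markov moves — wait, no: that is exactly false, and it is the crux. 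Morton's point is that $\widehat\mu$ is the unknot but $\mu$ is not \emph{destabilizable} in the narrow sense. But "fully destabilizable" as defined in the paper only requires braid isotopy to $(b - |L|)$ Markov stabilizations of the crossingless braid — and Birman's theorem (cited in the proof of Lemma~\ref{lem:strong_rainbow}) says every unlink braid becomes fully destabilizable \emph{after some stabilizations}, not that it already is. So the subtlety: I must make sure the $\overline T_{i-1}\cup T_i$ appearing actually \emph{are} fully destabilizable, which forces me to pick $P_1, Q_1, \mu$ carefully, OR accept $c_2 > $ what I want. This tension — getting $c_1=c_3=1$, $c_2=4$, all three unions unlinks, all three unions fully destabilizable, \emph{and} the total diagram not a braided-perturbation — is the real content.

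Here is how I would resolve it. The natural candidate is to \emph{deperturb as much as possible}: start from \emph{any} rainbow realization of Morton's unknot (which exists by Lemma~\ref{lem:strong_rainbow} applied to a weak-rainbow diagram built from $\mu$), and argue that it is not a braided-perturbation by a complexity/lower-bound argument. Concretely: if $\Tcal_{Morton}$ were a braided-perturbation, then by the analysis following Figure~\ref{fig:rain_perturbs}, two of its three braids $T_i \cup \overline T_{i+1}$ would be Markov \emph{stabilizations} of the braids in a $(3;\ast,\ast,\ast)$-rainbow diagram, and the third would split off a $1$-braided unknot. Since the deperturbed diagram would be a $(3;1,3,1)$- or $(3;1,2,1)$-type rainbow, and every $\leq 3$-stranded braid closing to the unknot \emph{is} fully destabilizable in the narrow sense (this is where I'd invoke the classification of low-strand braids — closures of $B_2$ and $B_3$ representatives of the unknot are destabilizable, essentially Birman–Menasco small-index results), the deperturbed braid in the $4$-slot would be genuinely destabilizable, hence so would $\mu$ after undoing one stabilization — contradicting Morton. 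So the $4$-braid slot \emph{must} contain $\widehat{\mu^{\pm 1}}$ realized so that, although it is fully destabilizable in the paper's after-stabilization sense, it is not obtained from a $3$-braid by a single Markov stabilization, and this obstructs being a braided-perturbation.

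The hard part will be making this last argument airtight: showing that Morton's non-destabilizability of $\mu$ genuinely obstructs $\Tcal_{Morton}$ from being a braided-perturbation, given that the paper's notion of "fully destabilizable" is the weaker after-stabilization one. I expect the cleanest route is to track the \emph{braid index contribution}: a braided-perturbation of a rainbow diagram with parameter $b' = 3$ produces one whose three braids have braid-index data forced by the $b'=3$ picture, and Morton's braid has the property that its closure cannot be presented as a $3$-braid stabilized once (since $\mu$ itself is a non-stabilized $4$-braid for the unknot, whose braid index is of course $1$, but the relevant obstruction is at the level of the \emph{diagram}, not the link). So I would phrase the invariant as: the minimal number of strands among braid representatives of $T_i\cup\overline T_{i+1}$ \emph{compatible with the rainbow structure of the other two tangles}, and show a braided-perturbation decreases a suitable such complexity, while Morton's example is already minimal. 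If that global argument proves too delicate, the fallback is the purely local/explicit one: write down $\Tcal_{Morton} = (P_1, \mathrm{id}, Q_1^{-1})$ with a specific splitting of Morton's word, check by hand (or cite Morton) that none of the four local braided-perturbation models of Figure~\ref{fig:rain_perturbs} appears as a subconfiguration after any sequence of rainbow moves reducing to $b=3$, using that a deperturbation would have to cancel a $\sigma_3^{\pm1}$ against the rest, which Morton's word provably resists.
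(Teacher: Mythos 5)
There is a genuine gap here, and it is worth contrasting with how simple the paper's construction is. The paper takes $T_1$ to be \emph{all} of Morton's word $\mu$ and takes $T_2=T_3$ to be the crossingless $4$-braid. Then $T_2\cup\T_3$ is the $4$-component crossingless unlink (so $c_2=4$), while $T_1\cup\T_2$ and $T_3\cup\T_1$ are both closures of Morton's braid (one unknotted component each), giving the advertised $(4;1,4,1)$. The non-perturbation claim is then one line: a braided-perturbation forces two of the three closed braids $T_i\cup\T_{i+1}$ to be Markov stabilizations, but Morton's closed braid is unstabilized, so at most one of the three (the crossingless unlink) could possibly qualify. Your explicit candidate $(P_1,\mathrm{id},Q_1^{-1})$ with $\mu=P_1Q_1$ does not work: if $\widehat{P_1}$ and $\widehat{Q_1}$ are single-component unknots as you arrange, the parameters are $(4;1,1,1)$, and $c_1+c_2+c_3-b=-1\neq\chi(S^2)$, so this is not even a diagram of a closed sphere, let alone the required $(4;1,4,1)$. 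The subsequent ``deperturb as much as possible and run a complexity lower bound'' plan is self-defeating (applying Lemma~\ref{lem:strong_rainbow} adds perturbations, so the output is a perturbation by construction and generally has more than $4$ strands), and the invariant you propose to track is never defined precisely enough to carry the argument.

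That said, you did isolate the correct obstruction --- that a braided-perturbation forces two of the three closed braids to be Markov stabilizations, which Morton's braid resists --- and you also put your finger on a real definitional tension that the paper's own proof glosses over: since $T_1\cup\T_j$ is Morton's \emph{unstabilized} closed braid, it is not fully destabilizable, so $\Tcal_{Morton}$ satisfies the definition of a weak-rainbow diagram but not, literally, the stated definition of a rainbow diagram. Your instinct that this is ``the crux'' is legitimate; the fix, however, is not to re-split $\mu$ (which destroys the $(1,4,1)$ pattern) but to either read the lemma as a statement about weak-rainbow diagrams or to note that the obstruction argument only uses the braided-tangle structure and the unstabilized-ness of $\widehat{\mu}$. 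As written, your proposal neither produces a valid diagram with the stated parameters nor completes the non-perturbation argument.
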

\begin{proof}
Let $T_1$ be Morton's unstabilized 4-braid of the unknot~\cite{Morton_1986}. In standard braid generators, $T_1$ is given by the braid word $\3\32\32111\21\2$. Let $T_2=T_3$ be crossingless rainbows. As $T_1\cup \T_j$ is the braid closure of Morton's braid, it is unstabilized for $j=2,3$. In particular, the rainbow diagram $\Tcal_{Morton}=(T_1,T_2,T_3)$ cannot be a rainbow-perturbation. 
\end{proof}

\subsection{Exchange moves}
The braid aficionado may recall another move that modifies the braid without changing its link type or its braid index. This move, known as an \emph{exchange move}, was used by Birman and Menasco to prove a version of Markov's theorem for the unlink that does not increase the braid index \cite{BM_theunlink}. For rainbow diagrams, we define an \emph{exchange move} by the modification near the braid axis given in Figure~\ref{fig:exchange_move}.

\begin{figure}[ht]
\centering
\includegraphics[width=.7\textwidth]{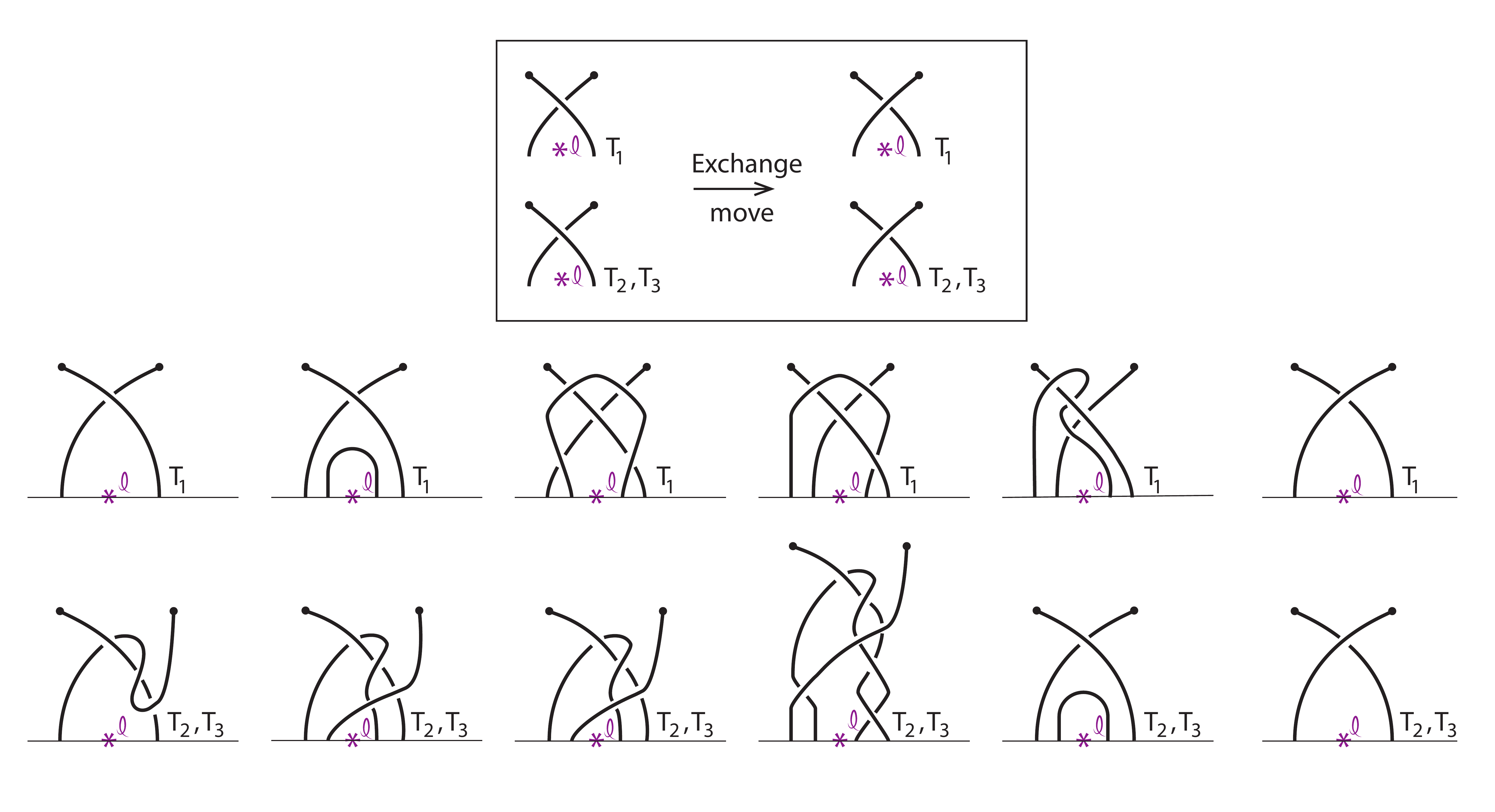}
\caption{(Top) Rainbow version of an exchange move. (Bottom) A sequence of rainbow moves that adds up to an exchange move.}
\label{fig:exchange_move}
\end{figure}

Exchange moves on (weak) rainbow diagrams do not change the underlying bridge trisected surface, as they are achieved by mutual braid moves that interfere with the braid axis. As in dimension three, exchange moves on rainbows replace a sequence of four rainbow moves: braid isotopy, perturbation, braid isotopy, and deperturbation. This sequence is described in Figure~\ref{fig:exchange_move}; the reader may compare it with Figure 13 of \cite{BB_BraidsSurvey}. 

Birman and Menasco used one exchange move to turn Morton's unknot into a fully destabilizable braid. The same can be done with the 4-braid rainbow diagram from Lemma~\ref{lem:unperturbed_rain}. More precisely, one exchange move is enough to make $\Tcal_{{\mbox{\rm Morton}}}$ into a completely decomposable rainbow diagram: rainbow-perturbations of the 1-braid rainbow diagram. Such an exchange move can be extracted from page 588 of \cite{BM_theunlink}. We leave it to the reader to draw the sequence of rainbow moves. The following conjecture is the rainbow version of the Markov Theorem Without Stabilizations for the unlink \cite[Thm 1]{BM_theunlink}.

\begin{conjecture}[MTWS]
Let $\Tcal$ be a rainbow diagram of an $r$-component unlink of 2-spheres. Let $\mathcal{U}_r$ be the crossingless rainbow diagram with $r$ strands. There is a finite sequence of rainbow diagrams 
\[ 
\Tcal=\Tcal_0 \to \Tcal_1 \to \cdots \to \Tcal_m=\mathcal U_r, 
\]
such that each $\Tcal_{k+1}$ is obtained from $\Tcal_k$ by (1) a braided-deperturbation, (2) an exchange move, or (3) interior Reidemeister moves and replacements $(\beta_1,\beta_2,\beta_3)\mapsto(x\beta_1y, x\beta_2y,x\beta_3y)$ for some braid words $x,y$.
\end{conjecture}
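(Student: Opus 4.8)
The plan is to transport the Birman--Menasco proof of the Markov Theorem Without Stabilizations for unlinks in $S^3$ through the braid--rainbow dictionary that is built into the definition of a rainbow diagram, arguing by induction on the bridge number $b = b(\Tcal)$. Moves of type (2) and (3) preserve $b$ while a braided-deperturbation lowers it by one, so the content of the inductive step is that any rainbow diagram of the $r$-component unlink of $2$-spheres with $b > r$ can, after finitely many $b$-preserving rainbow moves, be made to admit a braided-deperturbation --- a phenomenon already illustrated by $\Tcal_{\mathrm{Morton}}$ of Lemma~\ref{lem:unperturbed_rain}, a non-minimal rainbow that admits no braided-deperturbation until a single exchange move is applied. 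The base case is $b = r$: Lemma~\ref{lem:mrk_ineq} with $\mrk = r$ and $\chi = 2r$ gives $\bridge(r\text{-component unlink of } S^2) = r$; the inequalities $c_i \le b$ (an $n$-component unlink has bridge number $n$) together with $c_1 + c_2 + c_3 = b + \chi(F)$ then force $c_i = r$ for each $i$, so every $T_i \cup \overline{T}_{i+1}$ is an $r$-strand $r$-bridge unlink, and the fully-destabilizable hypothesis forces $\beta_i\beta_{i+1}^{-1}$ to be conjugate to --- hence equal to --- the trivial braid. Thus $\beta_1 = \beta_2 = \beta_3$, and one move of type (3) yields $\mathcal{U}_r$.

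For the inductive step, fix an index $i$ and look at the closed braid $L_i = T_i \cup \overline{T}_{i+1} = \widehat{\beta_i\beta_{i+1}^{-1}}$, which represents a $c_i$-component unlink in $S^3$. The three-dimensional Birman--Menasco theorem carries $L_i$ to the trivial $c_i$-strand braid by a sequence of exchange moves and destabilizations, none raising the strand number. The key step is to promote these to rainbow moves on the triple $\Tcal$. An exchange move performed near the braid axis is precisely the rainbow exchange move of Figure~\ref{fig:exchange_move}, which is a mutual braided-braid move and so leaves $F_\Tcal$ unchanged. A destabilization of $L_i$ can be promoted to a \emph{braided}-deperturbation of $\Tcal$ --- the $m = 0$ model of Figure~\ref{fig:rain_perturbs} --- precisely when, after a conjugation realized by a move of type (3), the destabilized strand sits inside $\beta_i$ and is unused by $\beta_{i+1}$ and $\beta_{i-1}$; in that situation the single deperturbation simultaneously destabilizes $L_i$ and $L_{i-1}$ and deletes a split fully-destabilizable circle from the remaining braid. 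So the argument would run Birman--Menasco on $L_1$, accumulate the resulting exchange moves and $b$-preserving braid isotopies, and at the first destabilization maneuver it into this synchronized position, where it becomes a braided-deperturbation; induction on $b$ then finishes.

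The principal obstacle is this synchronization --- handling the three tangles simultaneously. Birman--Menasco's theorem controls one closed braid $L_i$, but a destabilization of $L_i$ becomes a braided-deperturbation of $\Tcal$ only when the destabilizing disk can be chosen disjoint from the other two disk systems $\Dcal_{i-1}, \Dcal_{i+1}$, and generically the disk produced by the three-dimensional argument is not. What seems genuinely needed is a relative (equivariant) form of the Markov Theorem Without Stabilizations: choose the $r$ balls in $B^4$ bounded by the unlink of $2$-spheres, put them in a normal form compatible with the braided trisection, and run the braid-foliation machinery on all three pieces at once, extracting from the combinatorics of the foliations a sequence of exchange moves and braided-deperturbations --- exactly as Birman and Menasco extract their moves from a single spanning disk in $S^3$. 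Proving that the foliations can be simplified using only moves of the three permitted types is where the real work lies; a successful argument here would, moreover, be the natural path to Conjecture~\ref{conj:uniqueness_rains}, paralleling the adaptation of the proof of Theorem~1.6 of~\cite{MZ17Trans} mentioned above.
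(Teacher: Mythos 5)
First, a point of context: the paper does not prove this statement. It is stated explicitly as a conjecture (the rainbow analogue of Birman--Menasco's Markov Theorem Without Stabilizations \cite{BM_theunlink}), so there is no proof in the paper to compare yours against. What you have written is a strategy, not a proof, and you say as much in your final paragraph.

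Your base case is correct and worth recording: for $b=r$ the count $c_1+c_2+c_3=b+\chi(F)=3r$ together with $c_i\leq b$ forces $c_i=r$, so each $\beta_i\beta_{i+1}^{-1}$ is a fully destabilizable $r$-strand braid with $r$ components, hence conjugate to (therefore equal to) the identity, and one mutual braided-braid move produces $\mathcal{U}_r$. The gap is the entire inductive step, and it is exactly the gap that makes this a conjecture rather than a theorem. Running Birman--Menasco on a single closed braid $L_i=T_i\cup\T_{i+1}$ produces exchange moves and destabilizations for $L_i$ alone, and neither type of move automatically descends to a rainbow move on the triple: the exchange move of Figure~\ref{fig:exchange_move} is a specific local modification near the braid axis that acts on all three tangles simultaneously, whereas a Birman--Menasco exchange move on $L_i$ may occur in the interior of $\beta_i$ or straddle $\beta_i$ and $\beta_{i+1}^{-1}$ in a way that has no meaning for $\beta_{i-1}$; likewise a destabilizing disk for $L_i$ has no reason to be disjoint from, or compatible with, the disk systems $\Dcal_{i-1}$ and $\Dcal_{i+1}$. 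Your proposed fix --- a relative, three-tangle-equivariant version of the braid-foliation machinery applied to the $r$ spanning $3$-balls in $B^4$ --- is plausibly the right framework (and is indeed the same kind of adaptation the paper gestures at for Conjecture~\ref{conj:uniqueness_rains}), but it is the theorem one would need to prove, not a step you have carried out. As written, the argument establishes only the base case; the statement remains open.
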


\subsection{Artin spun knots}
\begin{lemma}\label{lem:spunknot_rain}
The spin of $K$ admits a rainbow diagram with \mbox{$(3\cdot \braid(K)-2)$} strands. 
\end{lemma}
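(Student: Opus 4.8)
The plan is to start from a braid representative of $K$ on $\braid(K)=:n$ strands and build the Artin spin of $K$ directly as a braided-bridge trisection. Recall that the spin $\tau(K)$ is obtained by removing a trivial ball-arc pair from $(S^3,K)$, leaving a tangle $(B^3, \kappa)$ with a single unknotted arc $\kappa$, and then spinning this around: $\tau(K)$ is the surface $(\kappa \times S^1) \cup (\partial\text{-bits})$ inside $(B^3 \times S^1) \cup (S^2 \times D^2) = S^4$. The first step is to present $K$ as the plat-closure-of-a-braid, or more precisely to split the braid axis circle $S^1$ (over which we spin) into three arcs $a_1, a_2, a_3$. Over each arc $a_i$ we get a copy of the braided tangle $(B^3,\kappa)$ crossed with an interval, and at the three dividing points we see three copies of the bridge sphere. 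This is exactly the way Meier--Zupan and subsequent authors produce bridge trisections of spun knots, but I want to do it \emph{braidedly}: keep the braid axis of $K$ coherent with the trisection axis $\ell$ throughout.

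The key steps, in order: (1) Fix an $n$-strand braid $\beta \in B_n$ whose closure is $K$; deleting a trivial arc from the closure turns $\widehat\beta$ into the $1$-bridge (w.r.t.\ the braid axis) tangle, and the spun surface is assembled from three sectors. (2) In each sector, the disk system $\Dcal_i$ is traced by a braid movie that carries $\beta$ (resp.\ the trivial braid) through a sequence of Markov stabilizations/band moves; I need to arrange that the tangles $T_i = \Dcal_i \cap \Dcal_{i-1}$ are braided on a common axis with strand count $b = 3n-2$. The count $3n-2$ should come out as follows: one sector carries the braid $\beta$ on $n$ strands, the next carries $\beta^{-1}$ (or the trivial braid) on $n$ strands, and the third is where we ``close up'' — gluing three copies of an $n$-strand picture along a sphere with $2b$ punctures forces $2b = 2n + 2n + 2n - (\text{identifications})$, and the standard spun-knot trisection of Meier--Zupan gives precisely $b = 3n-2$ (this matches, e.g., the $3 \cdot \braid - 2$ appearing as the leading term in Theorem~\ref{thm:bridge_and_braid_index} when $\chi = 0$ for $F$ a $2$-sphere is not quite it, so I'll double-check the bookkeeping against the known bridge number $b(\tau K) \le 3\,\mathrm{bridge}(K) - 2$ type formulas). (3) Verify condition (2) of the braided-bridge trisection definition: each $T_i \cup \T_{i+1}$ must be a fully destabilizable braid; here I use that the unknotting of the unlink pieces in the spun-knot trisection is explicit and can be realized by Markov destabilizations precisely because spinning a \emph{braid} produces disk systems that are themselves braided (the band attachments can be taken to be braided bands). (4) Conclude that the resulting triple is a rainbow diagram on $3n - 2$ strands.

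I expect the main obstacle to be step (3): upgrading the isotopies that trivialize the tangle-unions $T_i \cup \T_{i+1}$ from arbitrary isotopies in $S^3$ to sequences of Markov \emph{destabilizations} (plus braid isotopy), i.e.\ checking that the disk systems coming from the spinning construction are genuinely braided and not merely trivial. One safe route is to not build it by hand at all but instead to take the standard triplane diagram for $\tau(K)$ — the one whose tangles are $\beta$-flavored trivial tangles on $3n-2$ strands — observe that it is manifestly a weak-rainbow diagram (each $T_i$ is braided on the common axis, since each is obtained from a braid word), and then invoke Lemma~\ref{lem:strong_rainbow} to convert it to an honest rainbow diagram by $0$-sector braided perturbations. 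The only catch is that Lemma~\ref{lem:strong_rainbow} increases the strand count, which would break the clean ``$3\braid(K)-2$'' bound; so to get the sharp count I do need the direct argument that the spun-knot weak-rainbow is \emph{already} a rainbow, which is plausible because the destabilizations for spun knots are classical and local. I would present the direct construction, carefully drawing the three braid movies (one being the identity movie of width $n$, the other two realizing $\beta^{\pm 1}$ as braided-banded unlinks), and point to the figures in Section~\ref{sec:rainbows_and_triplanes} for the general mechanism, since the spun case is the prototypical example there.
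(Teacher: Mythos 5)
Your proposal follows essentially the same route as the paper: apply the Meier--Zupan spun-knot construction to the plat projection obtained from a minimal braid presentation of $K$, observe that the resulting $(3\cdot\braid(K)-2)$-strand triplane is braided about a common axis, and then verify directly that each $T_i\cup\T_{i+1}$ is fully destabilizable (the paper does this last step by picture in Figure~\ref{fig:spunknot_rain2}). You correctly identify the crux --- that full destabilizability must be checked by hand rather than invoked via Lemma~\ref{lem:strong_rainbow}, which would inflate the strand count past $3\cdot\braid(K)-2$ --- so the plan is sound and matches the paper's argument.
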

\begin{proof}
Meier and Zupan described how to turn a $b$-bridge plat projection of a knot into a $(3b-2)$-bridge triplane diagram for its spin surface \cite[Sec 5.1]{MZ17Trans}. Their method is applied to a $b$-bridge plat projection  that is obtained from a $b$-stranded braid of presentation of $K$ as follows. In the left side of Figure~\ref{fig:spunknot_rain1},  the crossingless strands  of the braid closure  are tucked in the back of the braid box. This way,  a plat projection for $K$ is obtained. The corresponding triplane diagram is depicted  on the right side of Figure~\ref{fig:spunknot_rain1}, where  the tangles are braided with respect to the purple axis. Figure~\ref{fig:spunknot_rain2} depicts the tangles $T_i\cup \T_{i+1}$ which are fully destabilizable. Hence, the triplane in Figure~\ref{fig:spunknot_rain1} is a rainbow diagram for the spun of $K$.
\end{proof}
\begin{figure}[ht]
\centering
\includegraphics[width=.7\textwidth]{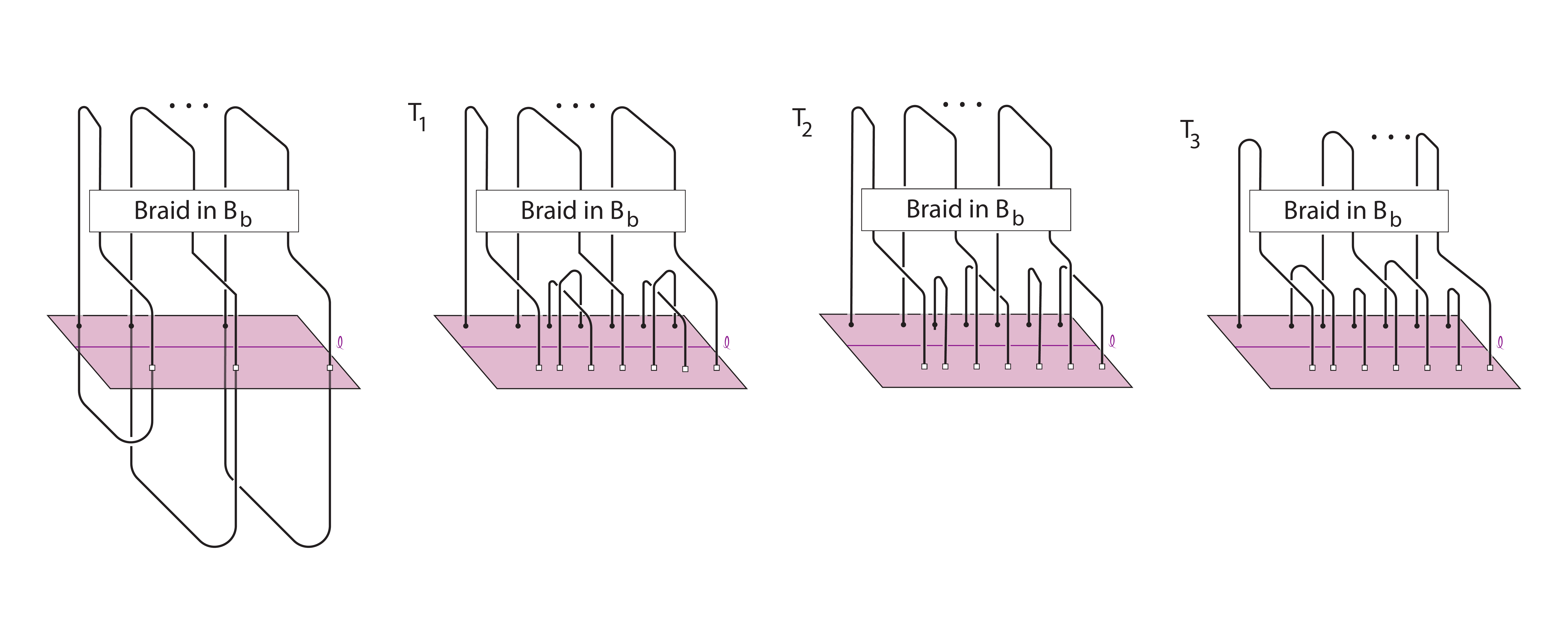}
\caption{(Left) A plat projection of a braided knot $K$ with $b$ strands. The box describes a $b$-stranded braid which goes over $(b-1)$ strands of $K$. (Right) A triplane diagram of the spin of $K$. The purple lines represent the braid axis. Hence, the triplane is ``one mutual braid move away'' from being braided on the nose.}
\label{fig:spunknot_rain1}
\end{figure}

\begin{figure}[ht]
\centering
\includegraphics[width=.7\textwidth]{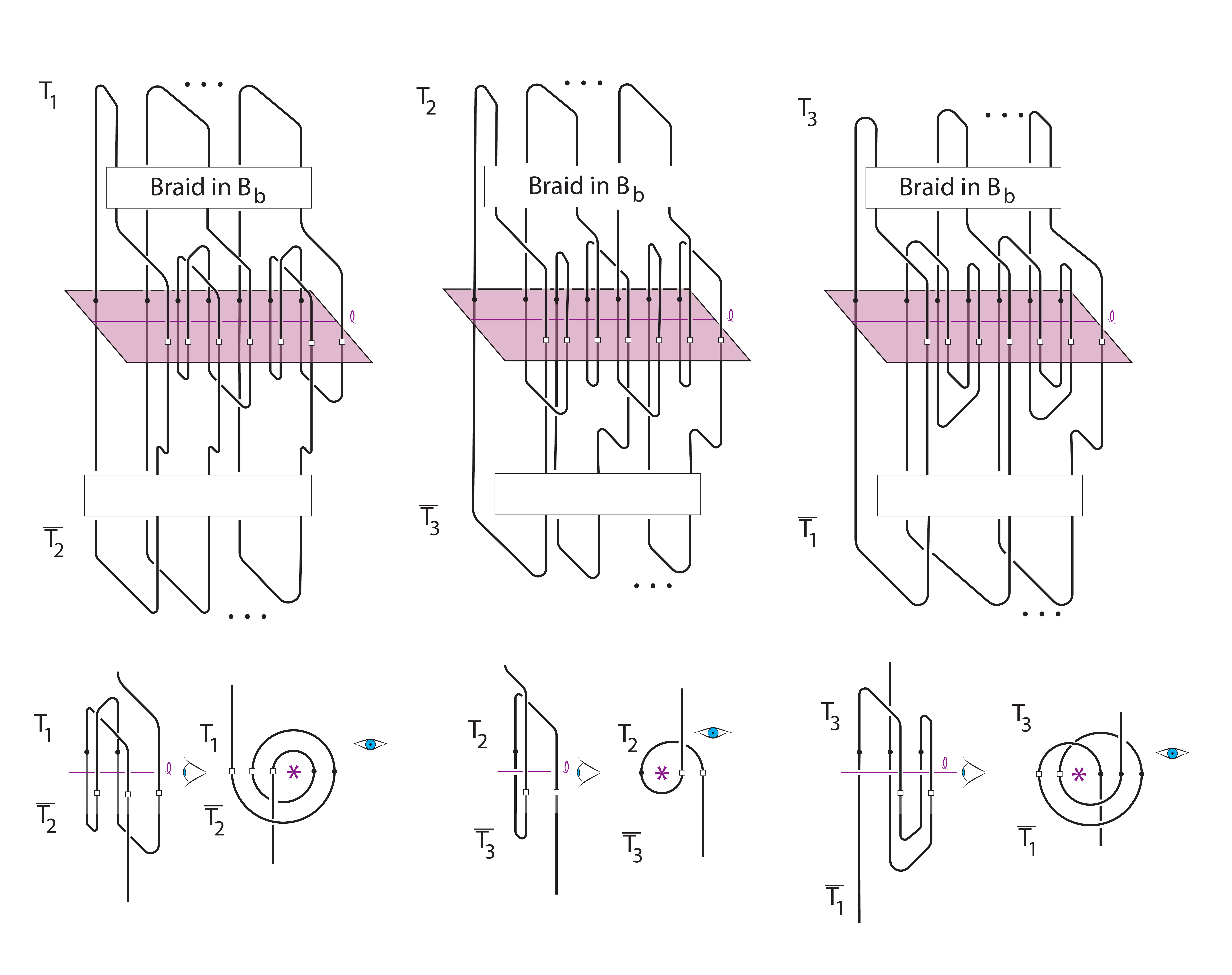}
\caption{Proof-by-picture of why the triplane diagram in Figure~\ref{fig:spunknot_rain1} is a rainbow; that is, each braid $T_i\cup \T_{i+1}$ is fully destabilizable. The bottom rows are close-ups of the short strands in the respective tangles: notice that they are Markov stabilizations of shorter strands. The blue symbols represent the eyes of the reader and how they see the braided strands.}
\label{fig:spunknot_rain2}
\end{figure}

Diao, Ernst, and Reiter studied knots with equal bridge and braid index \cite{diao2021knots}. These are called \emph{BB knots}. The following proposition shows that the spun of some BB knots has the same bridge, weak-rainbow, and rainbow index. 

\begin{proposition}
Let $K$ be a BB knot that satisfies the meridional rank conjecture. Then the following hold for $F=\Scal(K)$,
\[ 
3\cdot \braid(K) -2 = \bridge(F) = \Wrain(F) = \rain(F).
\]
\end{proposition}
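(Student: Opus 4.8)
The plan is to squeeze all four quantities between $3\cdot\braid(K)-2$ from above and from below. The upper bound is immediate from results already in hand: Lemma~\ref{lem:spunknot_rain} produces a rainbow diagram of $F=\Scal(K)$ with $3\cdot\braid(K)-2$ strands, so $\rain(F)\le 3\cdot\braid(K)-2$, and then Equation~\eqref{eq:inequality1} gives
\[
\bridge(F)\le \Wrain(F)\le \rain(F)\le 3\cdot\braid(K)-2 .
\]

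For the matching lower bound I would go through the meridional rank. Recall that the spinning construction yields a group isomorphism $\pi_1(S^4\setminus \Scal(K))\cong \pi_1(S^3\setminus K)$ under which a meridian of the knotted $2$-sphere $\Scal(K)$ is carried to a meridian of the classical knot $K$ (see \cite{Artin}, or \cite{CS:Book}). Hence a generating set of $\pi_1(S^4\setminus \Scal(K))$ by $m$ meridians of $F$ transports to a generating set of $\pi_1(S^3\setminus K)$ by $m$ meridians of $K$, which shows $\mrk(\Scal(K))\ge \mrk(K)$ (in fact equality holds, but only this inequality is needed). Since $K$ satisfies the meridional rank conjecture we have $\mrk(K)=\bridge(K)$, and since $K$ is a BB knot we have $\bridge(K)=\braid(K)$; therefore $\mrk(F)\ge \braid(K)$.

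Now $F=\Scal(K)$ is a $2$-sphere, so $\chi(F)=2$. Feeding $\mrk(F)\ge\braid(K)$ into Lemma~\ref{lem:mrk_ineq} gives
\[
3\cdot\braid(K)-2 \;\le\; 3\cdot\mrk(F)-\chi(F)\;\le\;\bridge(F).
\]
Combining this with the upper bound yields
\[
3\cdot\braid(K)-2\le \bridge(F)\le \Wrain(F)\le \rain(F)\le 3\cdot\braid(K)-2,
\]
so all four are equal to $3\cdot\braid(K)-2$, as claimed.

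The only delicate point — and the "main obstacle," though it is standard rather than deep — is the identification of meridians of the spun sphere with meridians of $K$ under the complement isomorphism, which is what licenses the transfer of the meridional rank bound (and hence of the meridional rank conjecture and the BB hypothesis) from the classical to the $4$-dimensional setting; everything else is a direct application of Lemmas~\ref{lem:mrk_ineq} and~\ref{lem:spunknot_rain} together with Equation~\eqref{eq:inequality1}.
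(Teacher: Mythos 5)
Your proof is correct and follows essentially the same route as the paper: both squeeze all four quantities between the lower bound $3\cdot\mrk(F)-2\le\bridge(F)$ from Lemma~\ref{lem:mrk_ineq} and the upper bound $\rain(F)\le 3\cdot\braid(K)-2$ from Lemma~\ref{lem:spunknot_rain} together with Equation~\eqref{eq:inequality1}. The only (cosmetic) difference is that where you derive $\mrk(\Scal(K))\ge\bridge(K)$ by hand via Artin's isomorphism of complements, the paper cites Proposition 5.5 of \cite{MZ17Trans} for the same fact.
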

\begin{proof}
Lemmas~\ref{lem:mrk_ineq} and~\ref{lem:spunknot_rain}, together with Inequality~\ref{eq:inequality1}, give us the following 
\[ 
3\cdot \mrk(\Scal(K)) -2 \leq \bridge(\Scal(K))\leq \Wrain (\Scal(K)) \leq \rain(\Scal(K))\leq 3\cdot \braid(K)-2.
\]
By Proposition 5.5 of \cite{MZ17Trans}, we know that the left-hand side is equal to $3\cdot \bridge(K)-2$. This is also equal to the right-hand side as $K$ is a BB knot. 
\end{proof}

\subsection{Rainbows of Satellite 2-knots}

This generalizes the well-known construction of satellites for classical links. Let $F$ be an orientable surface. Let $P$ be an embedded surface in $F\times D^2$ and let $C:F\rightarrow S^4$ be an embedding. Consider a diffeomorphism $\wt C:F\times D^2 \rightarrow N(C)$ such that the restriction to $F\times \{0\}$ is $C$. We call the image of $\wt C$ a \emph{satellite knot} $\Sigma(P;C)$ with pattern $P$ and companion $C$. For more information on satellite surfaces, see \cite[Sec 2.4.2]{CKS_Surfaces_in_4space}.

In Appendix~\ref{appen:satellite}, the authors described how to draw a triplane diagram for $\Sigma(P;C)$ when $C$ is a 2-knot. The first step is the observation that surfaces in $S^2\times D^2$ can be described with a triplane diagram $\Pcal$ together with a simple closed curve $\ell$, called an axis \emph{axis}, in the bridge sphere that avoids the endpoints of the tangles of $\Pcal$; see Lemma~\ref{lem:axed_triplanes}. In this situation, one can think of $\ell$ as the binding of a ``half-open book'' of the 3-ball containing each tangle; i.e. there is a diffeomorphism $B\to D^2\times [0,1]$ with $\ell\mapsto \partial D^2\times \{1/2\}$. The second step is to use this product structure on each 3-ball (containing the tangles of the pattern) to insert the tangles of $\Pcal$ into a framed neighborhood of the tangles of a triplane diagram for the companion 2-knot. See Appendix~\ref{appen:satellite} for details. 

In our setup, (weak) rainbow diagrams come equipped with an axis $\ell$. Thus, they already describe embeddings of surfaces in $S^2\times D^2$. The following is a direct consequence of Theorem~\ref{thm:sat} and Lemma~\ref{lem:frames_existence}.

\begin{proposition}\label{prop:rainbow_sat}
    Let $P$ and $C$ surfaces in $S^4$. If $C$ is a 2-knot and $P$ can be described as a weak-rainbow diagram, then $\Sigma(P,C)$ admits a weak-rainbow diagram with $\Wrain(P)\cdot \Wrain(C)$ strands. 
\end{proposition}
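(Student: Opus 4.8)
The plan is to combine the satellite construction from Appendix~\ref{appen:satellite} (invoked here via Theorem~\ref{thm:sat}) with the observation that a weak-rainbow diagram is, tautologically, an axed triplane diagram in the sense of Lemma~\ref{lem:axed_triplanes}. First I would take a weak-rainbow diagram $\Pcal$ for the pattern $P$ realizing $\Wrain(P)$ strands; since every tangle of $\Pcal$ is braided around a common axis $\ell$, the pair $(\Pcal,\ell)$ describes $P$ as an embedded surface in $S^2\times D^2$, which is exactly the input required by the satellite procedure. Next I would take any triplane diagram $\Ccal$ for the $2$-knot companion $C$ realizing $b(\Ccal)$ strands; by Lemma~\ref{lem:frames_existence} a framing of the tangles of $\Ccal$ exists, so Theorem~\ref{thm:sat} produces a triplane diagram for $\Sigma(P;C)$ by inserting a copy of $\Pcal$ into a framed neighborhood of each strand of each tangle of $\Ccal$.

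The key step is then to check that this output triplane diagram is itself a weak-rainbow diagram, i.e.\ that each of its three tangles is braided around a common axis. The point is that the companion's tangles, being parts of a $2$-knot triplane, can first be isotoped so that each is a trivial tangle presented as a plat/identity braid; more to the point, after possibly applying the procedures of Section~\ref{sec:rainbows_and_triplanes} (three ways to rainbow a triplane) I may assume $\Ccal$ is itself a weak-rainbow diagram — note that a $2$-knot always admits such a diagram since $\bridge(C)\le\Wrain(C)$ and the companion may be replaced by a rainbow presentation without changing $\Sigma(P;C)$. Granting that $\Ccal$ is a weak-rainbow diagram with $\Wrain(C)$ strands, the framing of each companion strand can be chosen compatibly with the braid axis of $\Ccal$, so that inserting the braided tangles of $\Pcal$ along the braided strands of $\Ccal$ yields tangles that are braided around the (common) axis inherited from $\Ccal$. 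A strand count gives $\Wrain(P)\cdot\Wrain(C)$: each of the $\Wrain(C)$ strands of a companion tangle is replaced by a parallel cable carrying the $\Wrain(P)$ strands of the corresponding pattern tangle.

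Finally I would verify the triplane/weak-rainbow axioms for the resulting tuple $\Tcal=(T_1,T_2,T_3)$: each $T_i$ is trivial because inserting a trivial (braided) tangle into a framed neighborhood of a trivial tangle produces a trivial tangle — this is part of the content of Theorem~\ref{thm:sat} — and each $T_i\cup\T_{i+1}$ is an unlink by the same token, since $T_i\cup\T_{i+1}$ is obtained by cabling the unlink $\,\Ccal_i\cup\overline{\Ccal}_{i+1}\,$ with trivial braided patterns along a consistent framing. Since every tangle is braided around the common axis coming from $\Ccal$, the tuple $\Tcal$ is a weak-rainbow diagram for $\Sigma(P;C)$, establishing $\Wrain(\Sigma(P;C))\le\Wrain(P)\cdot\Wrain(C)$.

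\textbf{Main obstacle.} The delicate point is the compatibility of framings with the braid axes: Theorem~\ref{thm:sat} produces \emph{some} triplane for $\Sigma(P;C)$, but to keep it weak-rainbow I need the framing of the companion's tangles to be ``braided'' as well — i.e.\ to run parallel to the product $D^2\times[0,1]$ structure cut out by the axis $\ell$ of $\Ccal$, rather than being an arbitrary framing. I expect this to follow because a braided trivial tangle has a canonical product framing (coming from the identification $(B,T)\cong(D^2,\{p_i\})\times[0,1]$), and Lemma~\ref{lem:frames_existence} can be applied using precisely this framing; but carefully tracking that this framing survives the gluing of the three sectors — so that the inserted pattern copies in adjacent sectors agree along the bridge sphere — is the step requiring the most care, and is where I would spend the bulk of a full write-up.
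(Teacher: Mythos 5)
Your proposal follows essentially the same route as the paper, which states this proposition as a direct consequence of Theorem~\ref{thm:sat} and Lemma~\ref{lem:frames_existence} applied to a minimal weak-rainbow diagram of $P$ (whose braid axis makes it an axed triplane with $b_1=b_2=\Wrain(P)$, so the strand count $b_2(b_C-1)+b_P$ from the construction of $S(P,\ell;C,f,x)$ equals $\Wrain(P)\cdot\Wrain(C)$) together with a minimal weak-rainbow diagram of the companion $C$. The framing obstacle you flag dissolves once you observe that the good framing from Lemma~\ref{lem:frames_existence} differs from the product framing only by inserting full twists of the $\Wrain(P)$-stranded cable in a vertical portion of each companion tangle near its negative endpoints, and a full twist is itself a braid word, so braidedness about the companion's axis is never disturbed.
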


\begin{question}\label{ques:rainbow_satel}
    Under what conditions is the (weak) rainbow number multiplicative with respect to satellite operations? In other words, when does the following hold? $$\rain\left(\Sigma(P, C)\right)=\rain(P)\cdot \rain(C).$$
\end{question}

Candidate surfaces to answer Question~\ref{ques:rainbow_satel} are the cable 2-knots. Defined by Kim in \cite{kim20glucktwist}, a \emph{cable 2-knot} is a satellite surface with a 2-knot as a companion and a pattern equal to an unknotted 2-sphere in $S^4$. The phrase ``unknotted companion $P$'' makes sense as $S^4=(S^2\times D^2)\cup (B^3\times S^1)$; thus, patterns in $S^2\times D^2$ can be thought of as patterns in $S^4$ up to isotopy away from a fixed loop $\ell\subset S^4$. Kim showed that the set of cables of a fixed 2-knot $C$ is parametrized by the integers~\cite[Lem 2.8]{kim20glucktwist}. In fact, they are determined by the homotopy class $[l]=m$ in $\pi_1(S^4-N(P))=\Z$. 

\begin{proposition}
    Let $C$ be an oriented 2-knot and let $C_m$ be its $m$-cable for some $m\in \Z-\{0\}$. Then 
    \[ 
    rainbow(C_m)\leq |m| \cdot rainbow(C).
    \]
\end{proposition}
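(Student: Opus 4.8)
The plan is to view the $m$-cable $C_m$ as a satellite surface $\Sigma(P_m; C)$ where $P_m$ is the unknotted 2-sphere pattern in $S^2\times D^2$ whose meridian $\ell$ maps to $m\in\pi_1(S^4-N(P_m))=\Z$, and then apply Proposition~\ref{prop:rainbow_sat}. To do this I need two ingredients: first, that $C_m$ genuinely is such a satellite so Proposition~\ref{prop:rainbow_sat} applies (this is Kim's parametrization, already cited), and second, an upper bound on $\Wrain(P_m)$ — or rather on the rainbow number of $P_m$ — in terms of $|m|$. Since Proposition~\ref{prop:rainbow_sat} is stated for weak-rainbow diagrams and yields $\Wrain(\Sigma(P,C))\le \Wrain(P)\cdot\Wrain(C)$, I will either need a rainbow-diagram analogue of that proposition or I will bound $\rain(C_m)$ by first passing through the weak-rainbow estimate and then invoking Lemma~\ref{lem:strong_rainbow} to upgrade to an honest rainbow; note, however, that $0$-sector braided perturbations increase the strand count, so to get the clean bound $|m|\cdot\rain(C)$ I should aim to produce a genuine rainbow diagram for the pattern $P_m$ with exactly $|m|$ strands, not just a weak one.

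The key step is therefore to exhibit an explicit $|m|$-stranded rainbow diagram for the unknotted pattern $P_m\subset S^2\times D^2$ whose axis $\ell$ is homotopic to $m$ times the generator of $\pi_1$. Concretely: take $|m|$ parallel braided strands and realize the pattern as the spine of the obvious product disk system, with the axis $\ell$ linking the collection $|m|$ times; the three tangles can be taken to be $(\Delta^{\pm 1}, \text{triv}, \text{triv})$ or some similarly simple triple so that each $T_i\cup\overline{T_j}$ is a fully destabilizable braid (indeed one can arrange two of the tangles to be crossingless, as in Lemma~\ref{lem:unperturbed_rain}, making $T_i\cup\overline{T_j}$ the closure of a single braid word that visibly destabilizes). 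I would draw this in a figure: $|m|$ strands encircled $m$ times by the axis, with the meridian relation $[\ell]=m$ read off directly. Granting this, Proposition~\ref{prop:rainbow_sat} (in its rainbow form) gives a rainbow diagram for $\Sigma(P_m;C)$ with $\rain(P_m)\cdot\rain(C)\le |m|\cdot\rain(C)$ strands, hence $\rain(C_m)\le|m|\cdot\rain(C)$.

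The main obstacle I anticipate is arranging the pattern diagram $P_m$ so that all three conditions hold simultaneously: the axis must wind $m$ times (forcing the pattern's normal framing data), each tangle must be trivial and braided, and each pairwise union $T_i\cup\overline{T_j}$ must be not just an unlink but a \emph{fully destabilizable} braid so that the diagram is a rainbow and not merely a weak-rainbow. Getting the winding number right while keeping the braids destabilizable is the delicate point — a naive choice gives a weak-rainbow whose pairwise unions are unlinks presented by non-destabilizable braids (the Morton/Rudolph phenomenon from the Remark), and then Lemma~\ref{lem:strong_rainbow} would cost extra strands. I expect this to work out because the unknotted sphere pattern is about as simple as possible; the winding can be absorbed into a single full twist region $\Delta$ in one tangle, and a single tangle carrying $\Delta^{\pm 1}$ against two crossingless tangles gives $T_i\cup\overline{T_j}=\widehat{\Delta^{\pm1}}$ or the trivial closed braid, both of which are fully destabilizable. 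Once the pattern figure is in hand, the rest is a direct appeal to the satellite construction of Appendix~\ref{appen:satellite} together with Theorem~\ref{thm:sat} and Lemma~\ref{lem:frames_existence}.
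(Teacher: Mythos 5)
Your overall strategy coincides with the paper's: realize $C_m$ as a satellite with companion $C$ and an unknotted-sphere pattern, invoke Proposition~\ref{prop:rainbow_sat}, and exhibit an explicit $|m|$-stranded rainbow diagram for the pattern. (Your worry that Proposition~\ref{prop:rainbow_sat} is stated for weak-rainbows is a fair one, and the paper glosses over it too; since you produce a genuine rainbow for the pattern and the companion can be taken in rainbow position, the satellite construction of Appendix~\ref{appen:satellite} keeps each pairwise union fully destabilizable, so this is not where the trouble lies.)

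The genuine gap is in your concrete candidate for the pattern. The triple $(\Delta^{\pm 1},\mathrm{id},\mathrm{id})$ in $B_{|m|}$ is not even a triplane diagram once $|m|\ge 3$: the closure of the half twist $\Delta$ in $B_3$ is $\sigma_1\sigma_2\sigma_1$, which destabilizes to $\sigma_1^2$ in $B_2$, i.e.\ the Hopf link, not an unlink; the full twist $\Delta^2$ closes up to the torus link $T(b,b)$, which is worse. Moreover the permutation of $\Delta_b$ has $\lceil b/2\rceil$ cycles, so even if the closure were trivial you would be describing a multi-component surface rather than a single $2$-sphere. The correct choice is the cycle braid: take $\Pcal_m=\bigl(\sigma_1\sigma_2\cdots\sigma_{|m|-1},\,\mathrm{id},\,\mathrm{id}\bigr)$. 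Its nontrivial pairwise unions are closures of $\sigma_1\cdots\sigma_{|m|-1}^{\pm 1}$, each of which is visibly a sequence of $|m|-1$ Markov stabilizations of the $1$-strand crossingless braid (each generator appears once, so one destabilizes from the top down), and the component counts $(1,|m|,1)$ give Euler characteristic $2$, hence a single unknotted sphere. Separately, your stated ``delicate point'' about arranging the winding number is not actually an obstacle: the braid axis of a coherently oriented $b$-stranded rainbow links the pattern $b$ times no matter what the braid words are, so $[\ell]=\pm|m|$ is forced by the strand count alone and needs no twist region to absorb it. With the cycle-braid pattern substituted for $\Delta^{\pm1}$, your argument closes up and agrees with the paper's.
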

\begin{proof}
    By Proposition~\ref{prop:rainbow_sat}, it remains to show that the $C_m$ has a pattern with rainbow number $|m|$. One can check that the desired triplane diagram $\Pcal_m$ is given by the triplet of elements in the $|m|$-stranded braid group $\left(\sigma_1\sigma_2\cdots \sigma_{|m|-1}, id, id\right)$. 
\end{proof}

\section{Rainbows and triplanes}\label{sec:rainbows_and_triplanes}
We provide three procedures to turn a triplane diagram into a weak-rainbow diagram. Each of these generalizes an idea in 3-dimensional braid knot theory. In Subsection~\ref{sec:Alexanders_method}, we adapt a classical method due to Alexander that braids a knot diagram around a given braid axis. In Subsection~\ref{sec:threading_shadows}, we use shadows and an algorithm due to Morton that finds a convenient braid axis for the knot. In Subsection~\ref{sec:Yamada}, we discuss an algorithm due to Yamada to braid a knot using Seifert circles for which Theorem~\ref{thm:seifertcircles} is a consequence.

\subsection{Alexander's method for triplanes}\label{sec:Alexanders_method}
The idea of Alexander's method to braid knots in $\R^3$ is to manipulate the given knot diagram so that every strand is oriented in a particular direction around a fixed origin (the braid axis). We now describe how to do the same for triplane diagrams. 
For this procedure, we will treat each tangle of $\Tcal$ as a tangle diagram in the upper-half plane with crossing information and boundary on the boundary of the plane. In particular, the boundary of each plane is the same $x$-axis. The goal is to manipulate the triplane diagram so that every strand of $\Tcal$ is oriented in a particular direction around a choice of origin. 

\begin{procedure}[Alexander's method for triplane diagrams]\label{alg:Alex_Method}
$\quad$

\textbf{Step 0.} Choose an \emph{orientation} of $\Tcal$; that is, a consistent choice of labels $\{+,-\}$ for the punctures of $\Sigma$. The strands of $\Tcal$ are oriented so that negative punctures are sources and positive ones are sinks. This is possible as $F$ is orientable \cite[Lem 2.1]{MTZ_cubic_graphs}.

\textbf{Step 1.} Apply mutual braid moves to $\Tcal$ so that all the negative punctures lie on the left of the positive punctures. Mark with a $\star$ a point in the $x$-axis separating the negative punctures from the positive ones. After some Reidemeister moves, the strands of $\Tcal$ are divided by segments which travel in a clockwise or anti-clockwise path around $\star$. In Figure~\ref{fig:Alex2}, we shaded the \emph{bad segments}: those traveling anti-clockwise. 

\textbf{Step 2.} Choose a bad segment from one of the tangles of $\Tcal$. As in Alexander's method, we want to push it through the axis $\star$ so that it is braided. This is achieved by some 0-sector perturbations as in Figure~\ref{fig:Alex2}. Notice that you may need more than one perturbation, as the rest of the tangle may get in the way. 

\begin{figure}[h]
\centering
\includegraphics[width=.7\textwidth]{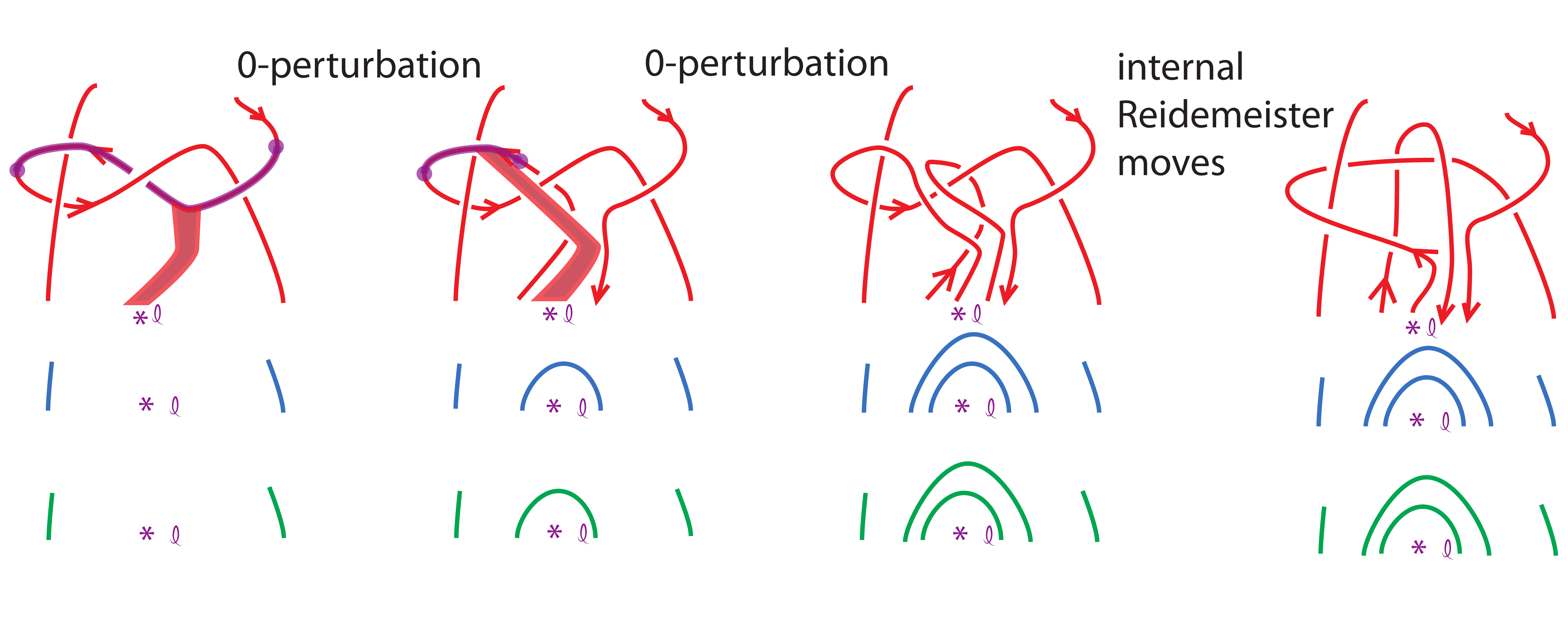}
\caption{How to use 0-sector perturbations to push a bad segment through the origin $\star$. Notice that, in this case, we need two perturbations to achieve our goal.
}
\label{fig:Alex2}
\end{figure}

\textbf{Step 3.} Perform step 2 until the triplane diagram has no bad segments. 
\end{procedure}

\begin{example}[Alexander's Method for $9_1$]
In Figures~\ref{fig:AlexMethod_9_1_part1} and~\ref{fig:AlexMethod_9_1_part2}, we run Procedure~\ref{alg:Alex_Method} to draw a weak-rainbow diagram for Yoshikawa's $9_1$ 2-knot~\cite{Yoshikawa}.
The input, depicted in the left column of Figure~\ref{fig:AlexMethod_9_1_part1}, is a triplane diagram for $9_1$ from \cite{ZupanREU2021}. Steps 0 and 1 are shown in Figure~\ref{fig:AlexMethod_9_1_part1}. In Figure~\ref{fig:AlexMethod_9_1_part2} we perform two 0-sector perturbations (in columns 4-7) to remove the bad segments. 
\end{example}

\begin{figure}[ht]
\centering
\includegraphics[width=.45\textwidth]{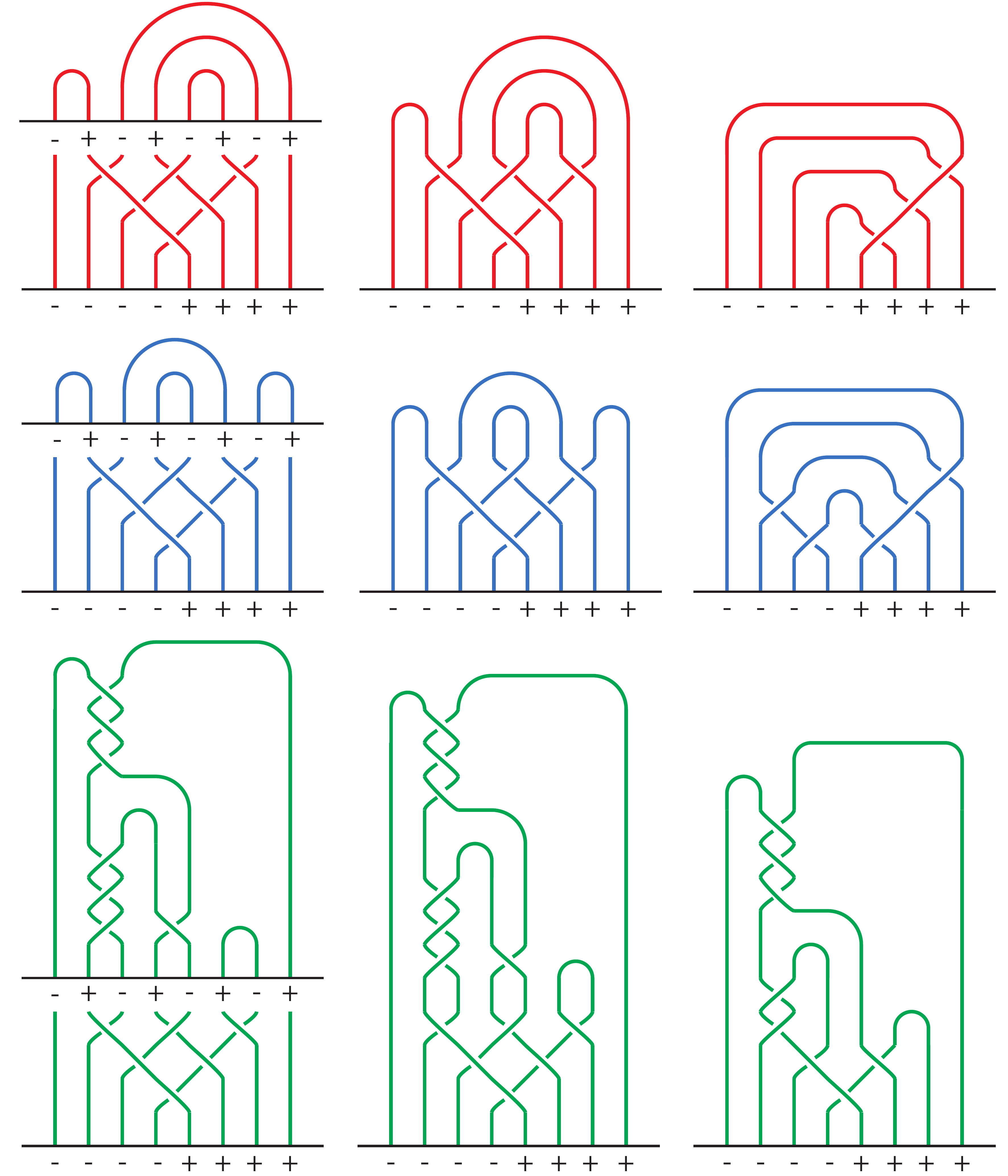}
\caption{Alexander's Method for $9_1$ (part 1/2): From left to right, we performed braid moves to cluster the negative punctures to the left of the positive punctures. The resulting red and blue tangles (right column) are rainbows but the green tangle is not.}
\label{fig:AlexMethod_9_1_part1}
\end{figure}

\begin{figure}[ht]
\centering
\includegraphics[width=.9\textwidth]{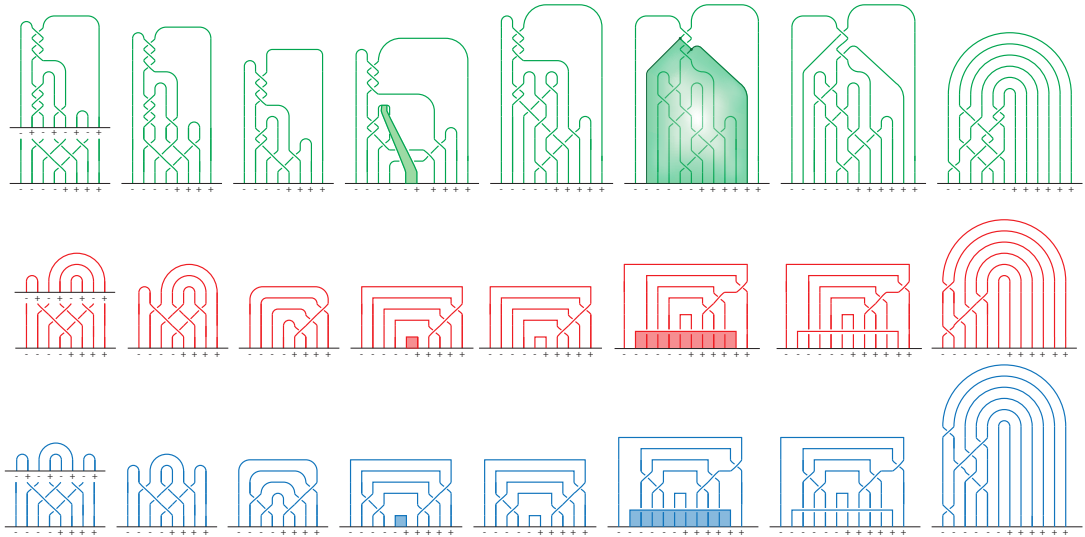}
\caption{Alexander's Method for $9_1$ (part 2/2): two 0-sector perturbations to remove the bad segments in the green tangle. The result (right column) is a weak-rainbow diagram.}
\label{fig:AlexMethod_9_1_part2}
\end{figure}

\begin{proposition}\label{prop:rainbowing_a_triplane}
The result of performing Alexander's method on a triplane diagram of an orientable surface $F$ is a weak-rainbow diagram for $F$.
\end{proposition}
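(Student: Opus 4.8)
The plan is to establish three things in sequence: that Procedure~\ref{alg:Alex_Method} terminates, that every tuple appearing in it represents the same surface $F$, and that the final tuple is a genuine triplane diagram whose three tangles are braided about a common axis. Granting these, the output is by definition a weak-rainbow diagram for $F$.

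Surface invariance is the easiest part. Steps 0 and 1 use only interior Reidemeister moves and mutual braid transpositions, which are triplane moves and hence fix the bridge trisected surface $F_{\Tcal}$; Step 2 uses 0-sector perturbations, which preserve the isotopy type of $F_{\Tcal}$ by Theorem~4.10 of \cite{AE24}, and which (by the Warning on 0-sector perturbations) also preserve, for every index $j$, the property that $T_j\cup\T_{j-1}$ is a bridge splitting of an unlink in $S^3$. Composing all moves used in the procedure, we get $F_{\Tcal'}=F$ at every stage, and all the ``unlink'' conditions needed for a triplane diagram persist.

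For termination I would run Alexander's classical counting argument in this setting. After Step~1 the radial foliation centered at $\star$ cuts every strand of every tangle into finitely many angle-monotone segments, each either \emph{good} (oriented clockwise about $\star$) or \emph{bad} (counter-clockwise); let $N(\Tcal)$ be the total number of bad segments in the three tangles. Given a bad segment $s$ in some $T_i$, one pushes $s$ across $\star$ exactly as in Alexander's theorem, here realized by a finite sequence of 0-sector perturbations along arcs in $B_i$ running from interior points of $s$ to points of $\Sigma$ near $\star$; the number of perturbations needed is bounded by the number of strands of $T_i$ that temporarily obstruct the push. Two local facts make this work: each such perturbation transfers a sub-arc of $s$ from the wrong side of $\star$ to the correct side, so after finitely many of them $s$ has been replaced by good segments; and the two short trivial arcs that the perturbation adds to the other two tangles join a point just left of $\star$ to a point just right of $\star$, hence may be drawn as good segments and create no new bad segments. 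Thus each pass strictly decreases $N$, so the procedure halts after at most $N(\Tcal)$ passes.

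When the procedure halts, no tangle has a bad segment, so every strand of every $T_i'$ is monotone about $\star$, running from a ``$-$'' endpoint (all clustered on the left of $\star$ by Step~1) to a ``$+$'' endpoint (all on the right). Reparametrizing the angular coordinate at $\star$ as a height function exhibits each $T_i'$ as a braided trivial tangle with respect to a common braid axis $\ell$ determined by $\star$; in particular each $T_i'$ is a trivial tangle. Combined with surface invariance, which gives that each $T_i'\cup\overline{T'}_{i+1}$ is an unlink, this shows that $\Tcal'=(T_1',T_2',T_3')$ is a triplane diagram all of whose tangles are braided about $\ell$, i.e.\ a weak-rainbow diagram, and that it represents $F$. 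The step I expect to demand the most care is the termination argument: one must verify, just as in the classical proof, that a single bad segment really is destroyed by finitely many 0-sector perturbations while no bad segment is created elsewhere, and that the perturbation arcs required can always be found inside $B_i$.
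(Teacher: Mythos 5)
Your proposal is correct and follows essentially the same route as the paper, whose entire proof is the one-line observation that mutual braid moves and 0-sector perturbations preserve the isotopy class of the bridge trisected surface. The additional material you supply --- the Alexander-style termination count on bad segments, and the check that a tangle with no bad segments is automatically braided (hence trivial, so the final tuple is a genuine triplane diagram despite the warning that intermediate tuples need not be) --- is detail the paper leaves implicit in the description of Procedure~\ref{alg:Alex_Method} itself, and it is accurate.
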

\begin{proof}
Ambient isotopies of $\Sigma$ correspond to mutual braid moves which, together with 0-sector perturbations, do not change the isotopy class of the bridge trisected surface.
\end{proof}

\subsection{Threading shadows}\label{sec:threading_shadows}
In~\cite{Morton_1986}, Morton gave a procedure to braid a knot $K$ in $S^3$ by `threading' a suitable unknotted curve $L$ through the strings of $K$ so that $K$ is braided relative to a braid axis equal to $L$. We can run a similar process for triplane diagrams by pressing the strands of a triplane onto the same bridge sphere. We now describe such a procedure; our presentation resembles that of \cite[Sec 5.4]{knotbook}.

Fix a triplane diagram $\Tcal=(T_1,T_2,T_3)$ for an orientable surface $F$ in $S^4$. A shadow diagram for $\Tcal$ is a tuple $\Scal=\Scal=(s_1,s_2,s_3)$ where each $s_i$ is an embedded collection of pairwise disjoint arcs in a $2b$-punctured sphere $\Sigma$ resulting from pushing the arcs of $T_i$ into the bridge sphere. At the level of shadow diagrams, a 0-sector perturbation of $\Tcal$ breaks one arc of $s_i$ into two new arcs connected by parallel arcs from $s_j$ and $s_k$ as in Figure~\ref{fig:0perturbation_shadows}. 

\begin{figure}[h]
\centering
\includegraphics[width=.4\textwidth]{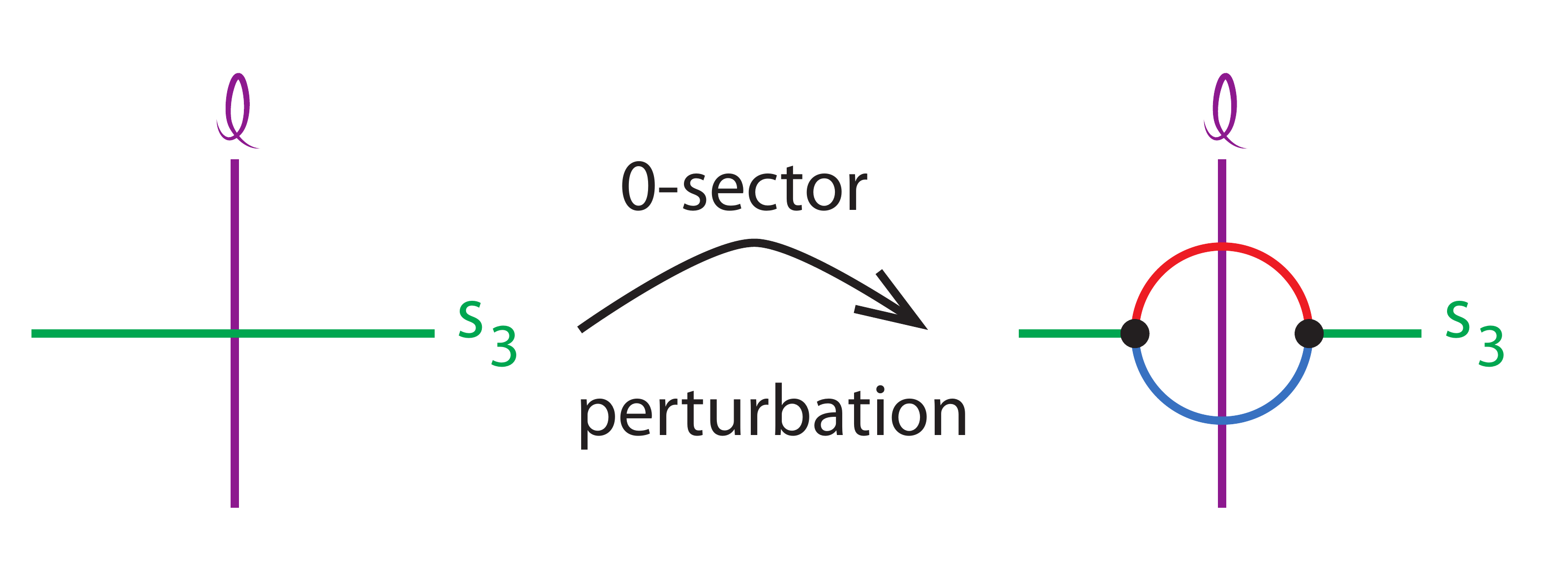}
\caption{The result of 0-sector perturbation on a shadow diagram.
}
\label{fig:0perturbation_shadows}
\end{figure}

\begin{procedure}[Morton's method for triplane diagrams]\label{alg:Morton_Method}
$\quad$

\textbf{Step 0.} Label the punctures of $\Sigma$ with $\{+,-\}$ so that each arc of $\Tcal$ contains both $\pm$. 

\textbf{Step 1.} Find a shadow diagram $\Scal$ for $\Tcal$ and loop $\ell$ in $\Sigma$ disjoint from the punctures such that: (a) each arc of $s_1$ or $s_2$ intersects $\ell$ exactly once, and (b) all punctures labeled with a $+$ lie on the same side of $\ell$. \\
To achieve this step without adding more punctures, one can consider shadows $s_1$ and $s_2$ with disjoint interiors such that $s_1\cup s_2$ form polygonal curves bounding pairwise disjoint polygons in $\Sigma$. Such shadows exist as $T_1\cup \T_2$ is an unlink \cite[Prop 2.3]{MZ17Trans}. In Figures~\ref{fig:shadows1} and~\ref{fig:shadows2}, we provide two possible choices for $\ell$ given the hexagon $s_1\cup s_2$.

\begin{figure}[h]
\centering
\includegraphics[width=.6\textwidth]{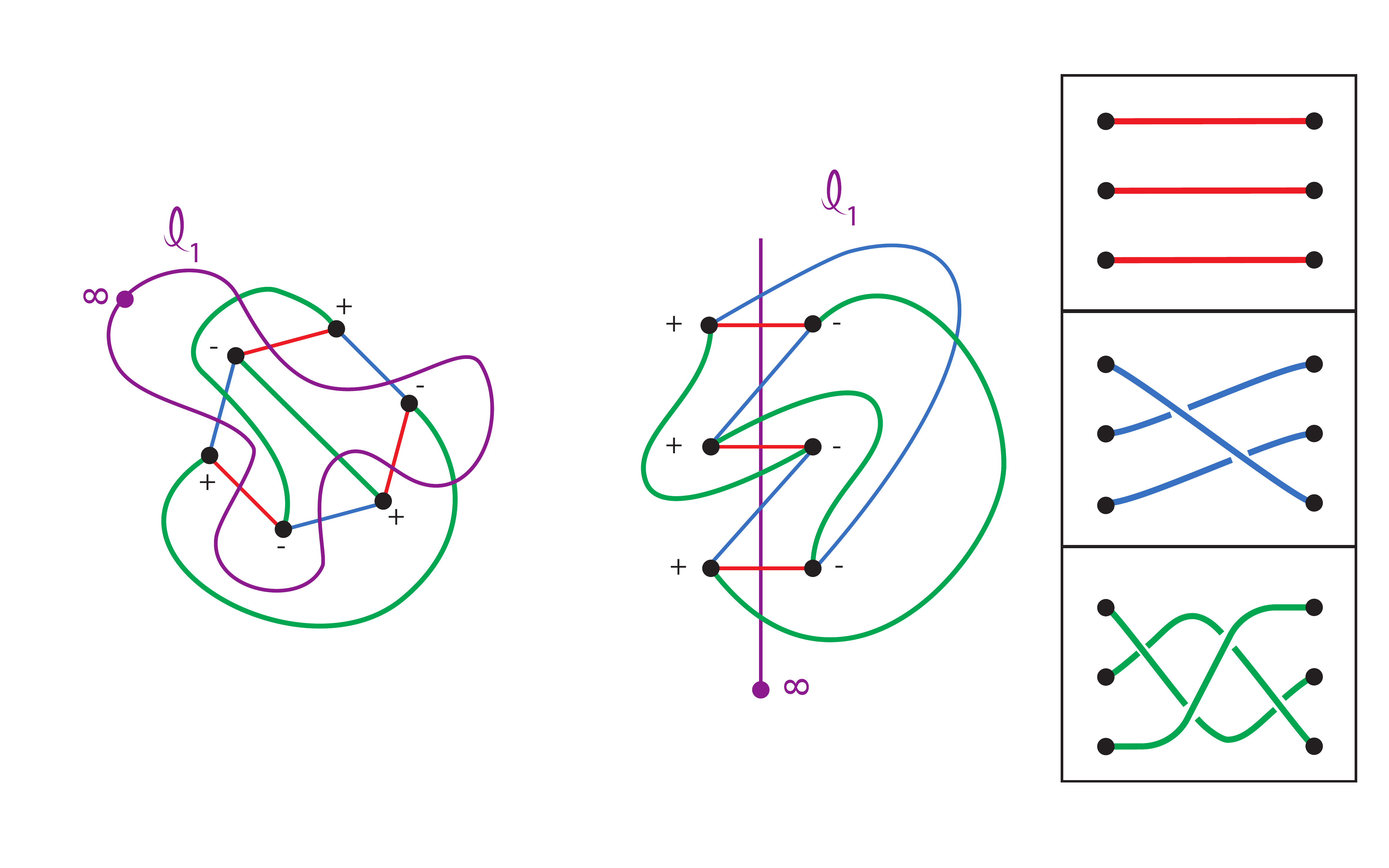}
\caption{Threading a shadow diagram for the unknotted torus using a loop $\ell_1$.  
}
\label{fig:shadows1}
\end{figure}

\textbf{Step 2.} 
Take $x\in s_3$ and enumerate the intersections of $x$ with $\ell$ in order of appearance; notice that $|x\cap l|$ must be a positive odd number. At each of the intersections with even numbering, perform a 0-sector perturbation as in Figure~\ref{fig:0perturbation_shadows}. 
The result is a (possibly new) shadow diagram where each arc of each $s_i$ intersects $\ell$ exactly once.

\begin{figure}[h]
\centering
\includegraphics[width=.8\textwidth]{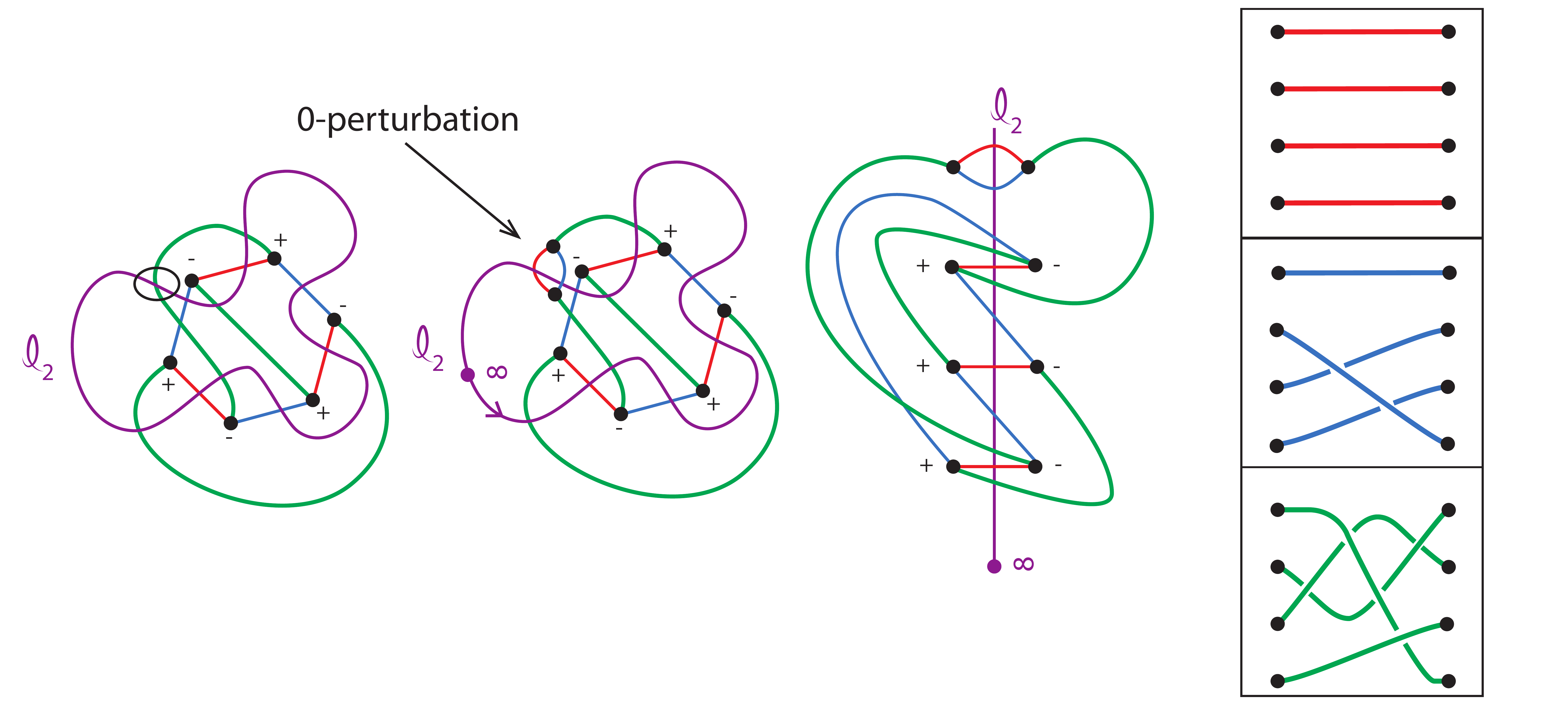}
\caption{Threading a shadow diagram for the unknotted torus using a loop $\ell_2$.
}
\label{fig:shadows2}
\end{figure}

\textbf{Step 3.} Perform an ambient isotopy of $\Sigma$ so that $\ell$ is a straight line passing through $\infty$, where all positive punctures lie on the same side of $\ell$. The resulting shadow diagram can be raised to form a tuple of tangles $\Tcal'=(T'_1,T'_2,T'_3)$ in the 3-ball which are braided with respect to the axis $\ell$.
\end{procedure}

\begin{proposition}
The result of performing Morton's method on a triplane diagram of an orientable surface $F$ is a weak-rainbow diagram for $F$.
\end{proposition}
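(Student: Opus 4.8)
The plan is to mirror the proof of Proposition~\ref{prop:rainbowing_a_triplane}: I would walk through Procedure~\ref{alg:Morton_Method} step by step, checking that each step preserves the isotopy type of the bridge trisected surface and that the final tuple satisfies the definition of a weak-rainbow diagram.

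Steps 0 and 1 do not modify the triplane. Step 0 only records an orientation, which exists because $F$ is orientable (exactly as in Step 0 of Procedure~\ref{alg:Alex_Method}), and Step 1 merely selects a convenient shadow presentation $\Scal=(s_1,s_2,s_3)$ of the three given tangles together with an auxiliary loop $\ell\subset\Sigma$; here one invokes that $T_1\cup\T_2$ is an unlink, so by \cite[Prop 2.3]{MZ17Trans} the shadows $s_1,s_2$ can be arranged into polygonal curves bounding pairwise disjoint polygons along which $\ell$ is threaded. Step 2 is a finite sequence of 0-sector perturbations, which preserve $F_\Tcal$ by Theorem 4.10 of \cite{AE24}; by the Remark on 0-sector perturbations, after each such move every pairwise union of tangles is still an unlink (the pairs meeting the perturbed tangle are isotopic to the old ones, and the remaining pair picks up a split unknot), so the property of being represented by a genuine bridge trisection survives. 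Step 3 performs an ambient isotopy of $\Sigma$, realized by mutual braid moves, and then ``raises'' the shadow back to tangles in the three balls; since a trivial tangle is recovered up to isotopy rel boundary from its shadow, $F$ is again unchanged.

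The actual content of the statement is that the output is braided, and this follows from the counting built into Step 2. An arc $x$ of $s_3$ meets $\ell$ in an odd number $2m+1$ of points; a 0-sector perturbation at each of the $m$ even-numbered intersections cuts $x$ into $m+1$ sub-arcs, each carrying exactly one (odd-numbered) intersection with $\ell$, and the small arcs added to $s_1$ and $s_2$ at those points also meet $\ell$ once; together with condition (a) of Step 1 on the original arcs of $s_1,s_2$, this shows that after Step 2 every arc of every $s_i$ crosses $\ell$ exactly once and joins a $-$ puncture to a $+$ puncture. Putting $\ell$ in standard position (condition (b): all $+$ punctures on one side) and using the half-open-book identification $B_i\cong D^2\times[0,1]$ with $\ell\mapsto\partial D^2\times\{1/2\}$, the $-$ punctures going to the $\{0\}$-cap and the $+$ punctures to the $\{1\}$-cap, a shadow arc crossing $\ell$ once raises to a strand monotone in the $[0,1]$-coordinate; hence each $T'_i$ raises to a braid, all about the same axis $\ell$. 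Since a braided tangle is in particular trivial and the unlink conditions survive, $\Tcal'=(T'_1,T'_2,T'_3)$ is a triplane diagram, hence a weak-rainbow diagram for $F$.

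The main obstacle is the feasibility of Step 1: that one can always thread a loop $\ell$ meeting each arc of $s_1\cup s_2$ exactly once while keeping all $+$ punctures on one side. I would treat this as the analogue of Morton's classical threading. By Step 0 each arc of $s_1$ or $s_2$ joins a $+$ puncture to a $-$ puncture, so the components of $T_1\cup\T_2$ are closed polygons whose vertices alternate in sign; one then runs $\ell$ along one side of each polygon, crossing each edge once, and Figures~\ref{fig:shadows1} and~\ref{fig:shadows2} exhibit the general recipe for the unknotted torus. A secondary point worth being careful about is that raising a shadow is canonical only up to the usual ambiguity of isotoping a trivial tangle off its boundary sphere; but we only need \emph{some} tangle realizing each shadow with each pairwise union the fixed unlink, and any consistent choice records a legitimate braid word for $T'_i$.
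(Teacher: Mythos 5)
Your proposal is correct and takes the same route as the paper: the paper's proof is a one-line reference to Proposition~\ref{prop:rainbowing_a_triplane} (ambient isotopies of $\Sigma$ are mutual braid moves, and these together with 0-sector perturbations preserve the isotopy class of the bridge trisected surface), which is exactly the skeleton of your argument. Your additional verification that the output is braided (the odd-intersection count after perturbing at even-numbered crossings, and the monotone lift in the half-open-book coordinate) is detail the paper leaves inside the statement of Procedure~\ref{alg:Morton_Method} rather than in the proof, but it is consistent with what the paper asserts there.
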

\begin{proof}
The proof is the same as that of Proposition~\ref{prop:rainbowing_a_triplane}.
\end{proof}

\begin{remark}
In practice, `raising the shadows to rainbows' in Step 3 may be a complicated task. In Figure~\ref{fig_shadows3}, procedures for doing so are outlined. The shadow diagram for the unknotted torus that uses the loop $\ell_2$ is replicated (and reflected vertically) on the left of the figure. In the top illustration, the blue arcs are removed.Then the points labeled $-1, \ldots, -n, +(n+1), \ldots , +(2n)$ are rearranged vertically. The loops formed by the red and green arcs are isotoped during the point rearrangement. The red arcs form an un-braided nested sequence of arcs. Remove the red arcs, and then arrange the green arcs by the braid action on a punctured disk while recording the braiding. The green rainbow is thereby created. 

In the bottom of the illustration, the green arcs are removed, and the process follows an analogous procedure. 
On the far right of the illustration, the diagrams from the right side of Figure~\ref{fig:shadows2} are drawn for comparison.
\end{remark}

\begin{figure}[h]
\centering
\includegraphics[width=1\textwidth]{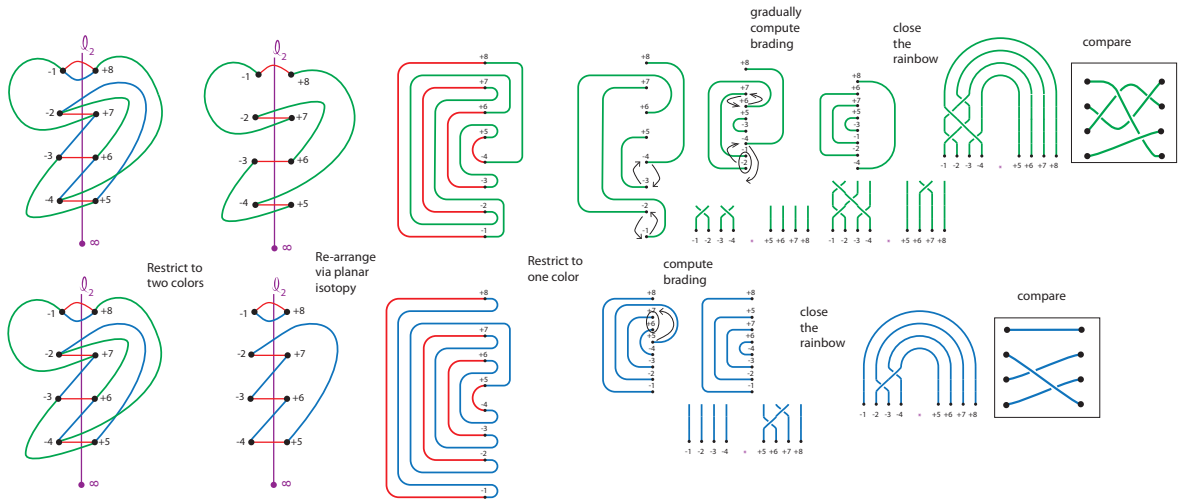}
\caption{How to draw the braid corresponding to the green and blue shadows in Figure~\ref{fig:shadows2}.
The red curves form a nested set of arches. Remove the ``other color" in the shadow. Rearrange the vertices by an isotopy of the graph. Remove the red curve, and apply braid action on the punctured disk until the green, or blue, are nested arches.
}
\label{fig_shadows3}
\end{figure}

\begin{example}[Morton's Method for $9^{0,1}_1$]
In Figures~\ref{fig:Threading_9_01_1_part2} amd ~\ref{fig:Threading_9_01_1_part1} we run Procedure~\ref{alg:Morton_Method} to draw a weak-rainbow diagram for Yoshikawa's $9^{0,1}_1$ surface link~\cite{Yoshikawa}.
The left column of Figure~\ref{fig:Threading_9_01_1_part1}, is a triplane diagram for $9^{0,1}_1$ from \cite{ZupanREU2021}. Columns 2 and 3 of Figure~\ref{fig:Threading_9_01_1_part1} are shadows of the triplane diagram on the left. To aid the reader, Figure~\ref{fig:Threading_9_01_1_part2} shows the intermediate steps to find  the shadows for the green tangle  depicted in item (A) of the figure. Two over-arcs are pushed into the plane and wrap around the endpoints in item (B). Then two more over-arcs are pushed forward while an under-arc is pushed backwards to get to item (C). The last over-arc is also pushed forward to get to item (D). Item (E) indicates the same curves, having lifted the horizontal picture into the plane of the page. Similar, but simpler, processes are applied to the red and blue tangles, and this union appears in item (F).

The vertical line in Figure~\ref{fig:Threading_9_01_1_part1} is the loop $\ell$ through infinity. Notice that each arc of the red/blue shadows intersects $\ell$ once as in Step 1. The black circles around the green shadows mark the even intersections between a green shadow and $\ell$. Columns 3 and 4 depict the result of 0-sector perturbations; these are the shadows for a weak-rainbow diagram. We leave it as an exercise to the reader to draw the corresponding rainbow triplane diagram of $9^{0,1}_1$.
\end{example}

\begin{figure}[ht]
\centering
\includegraphics[width=.8\textwidth]{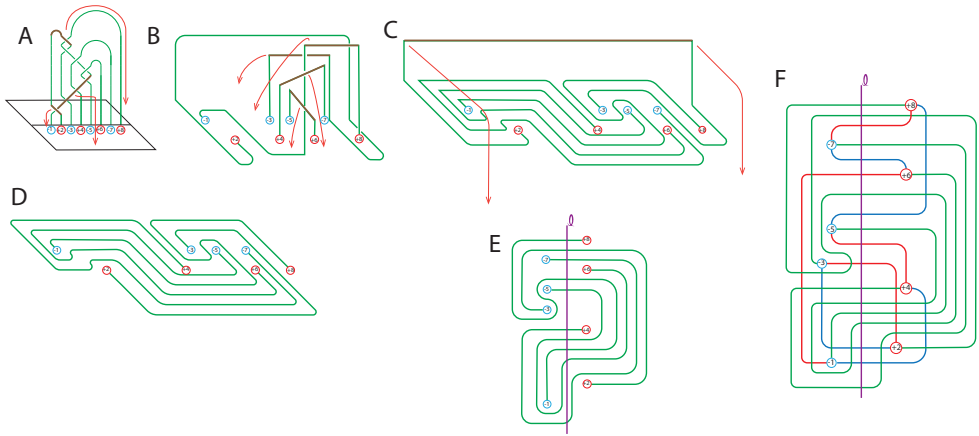}
\caption{Morton's Method for $9^{0,1}_1$ (extra): Drawing the shadows for the green tangle from left column of Figure~\ref{fig:Threading_9_01_1_part1}: we think of the tangle as lying on the vertical plane and slide the over-crossings arcs onto the horizontal plane until the arc surrounds a puncture.}
\label{fig:Threading_9_01_1_part2}
\end{figure}

\begin{figure}[ht]
\centering
\includegraphics[width=.8\textwidth]{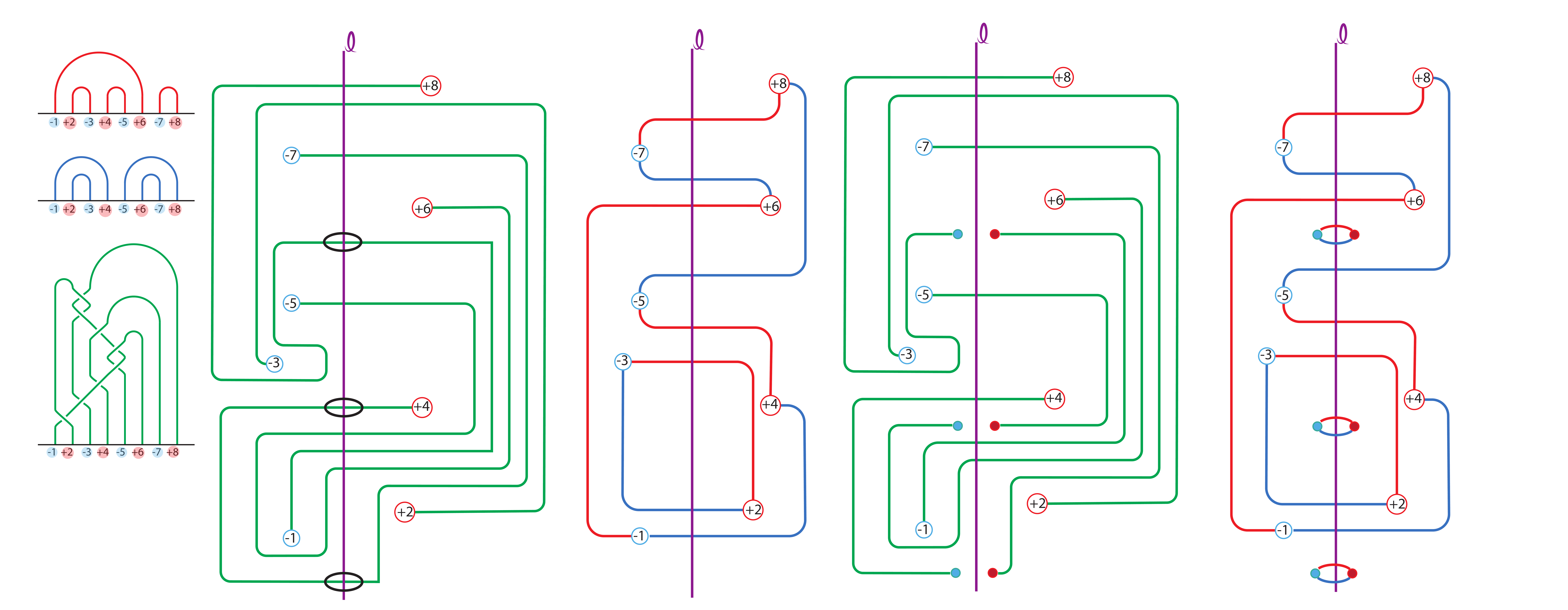}
\caption{Morton's Method for $9^{0,1}_1$: (Columns 2-3) Shadows of the triplane diagram in the left. (Columns 3-4) Shadows after 0-sector perturbations.}
\label{fig:Threading_9_01_1_part1}
\end{figure}

\subsection{Braiding Seifert circles}\label{sec:Yamada}

Let $\Tcal=(T_1,T_2,T_3)$ be an oriented triplane diagram. In this subsection, each $T_i$ will be considered as an oriented tangle diagram in a disk $D_i$ with crossing information and boundary on $\partial D_i$. Each crossing of the tangle $T_i$ can be resolved in an orientation-preserving way to get a collection of embedded oriented 1-manifolds in the disk. There are $b$ properly embedded arcs and some number $s_i$ of Seifert circles; see Figure~\ref{fig:Yamadadiag}. 

In Seifert's algorithm for knots, we remember the location of the crossings of the original link by drawing arcs where the crossings used to be, as in Figure~\ref{fig:Yamadadiag}. For example, in the center of Figure~\ref{fig:Yamadadiag}, the braid $1222\overline{1}$ is replaced by five horizontal line segments. Yamada expanded \cite{YamadaSeifert} on this idea by allowing the arcs to represent any braiding between the strands they touch. This way, the braid $1222\overline{1}$ can also be replaced with one horizontal segment crossing three vertical strands. On the right of Figure~\ref{fig:Yamadadiag}, a schematic appears in which a generic braid is replaced by a thicker segment in the same braid box. 

In this section, we will work with tuples $(\Scal,\Acal)$ where $\Scal$ is a collection of properly embedded oriented 1-manifolds in a disk $D$ with boundary equal to a fixed collection of $2b$ points, and $\Acal$ is a set of embedded arcs in the interior of $D$ such that $\Acal\cap \Scal$ is a finite set of points all the intersections of $a\in \Acal$ with $\Scal$ have the same sign. Imprecisely, if we walk along an arc $a\in \Acal$, every oriented component of $\Scal$ crossing $a$ passes from left to right, or vice versa. See Figure~\ref{fig:move2}(A) for an example of such a tuple $(\Scal,\Acal)$.  
\begin{figure}[ht]
\centering
\includegraphics[width=.7\textwidth]{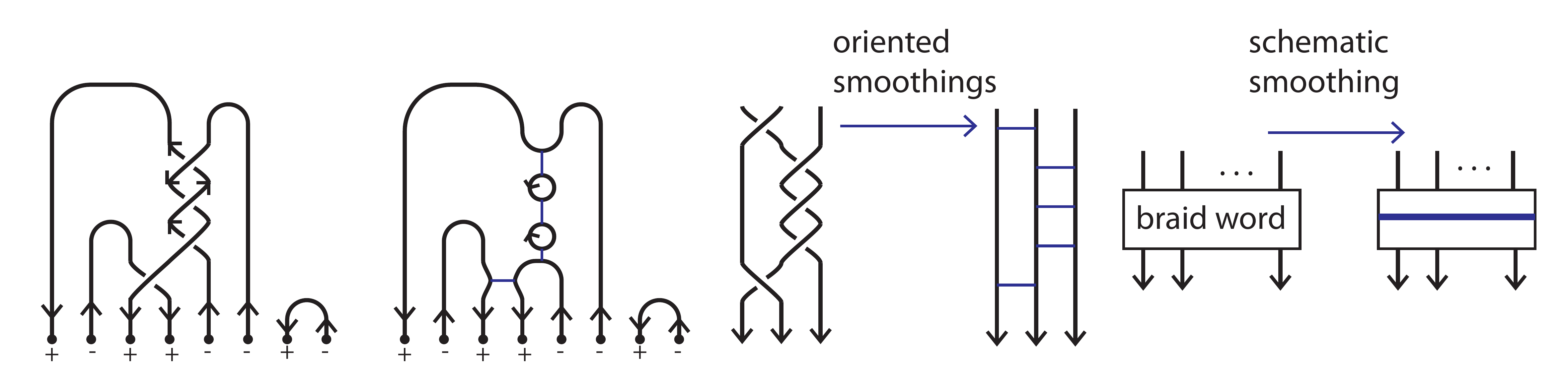}
\caption{Yamada diagram of a tangle.}
\label{fig:Yamadadiag}
\end{figure}

Given $(S,A)$ as above, we can draw many properly embedded tangles in the 3-ball by replacing each $a\in \Acal$ with a braid box between the strands of $\Scal$ it touches, as in Figure~\ref{fig:Yamadadiag}. We will refer to the circle components of $\Scal$ as \emph{Seifert circles}. If a tangle $T$ can be represented with a tuple $(\Scal, \Acal)$, we say that $(\Scal,\Acal)$ is a \emph{Yamada diagram for $T$}. 

We now discuss two ways in which we can trade a Seifert circle with an arc. Let $(\Scal_i, \Acal_i)$ be the Yamada diagram of the $i$-th tangle of $\Tcal$ and let $c\in \Scal_1$ be a Seifert circle. Recall that $\Scal_1\cup \Acal_1$ are subsets of a disk $D_1$. 

\textbf{Move 1.} Suppose that $\rho$ is an arc in the $D_1$ with interior disjoint from $\Scal_1\cup \Acal_1$ that connects $c$ with the boundary of the disk. Then 0-sector perturbation of $\Tcal$ along $\rho$ produces a $(b+1)$-triplane diagram with Yamada diagram having one less Seifert circle; see Figure~\ref{fig:move1}. 

\begin{figure}[ht]
\centering
\includegraphics[width=.7\textwidth]{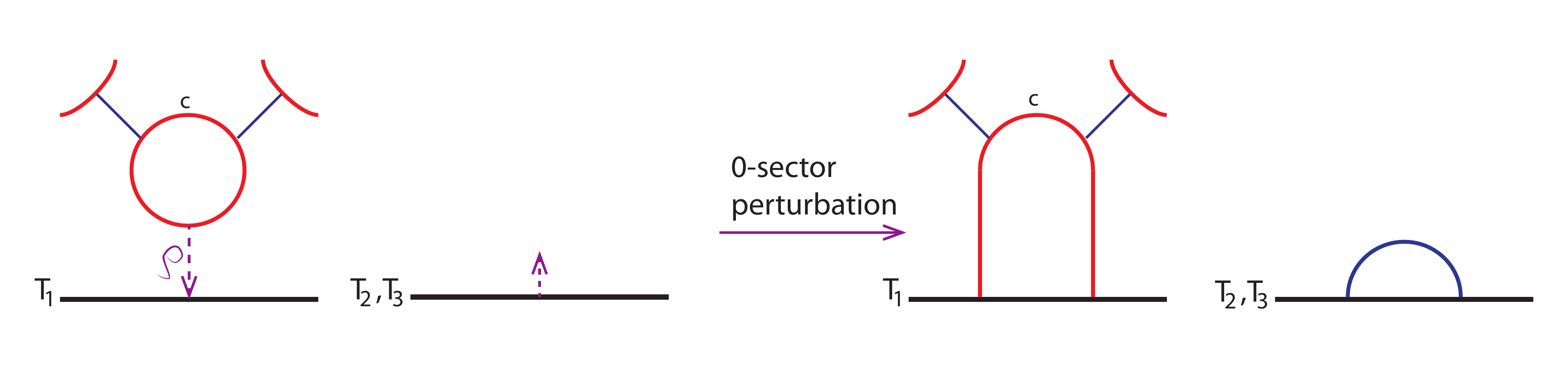}
\caption{Move 1: After a 0-sector perturbation, the Seifert circle $c$ turns into properly embedded arcs; thus, the new Yamada diagram has one less Seifert circle.}
\label{fig:move1}
\end{figure}

\textbf{Move 2.} Let $\rho$ be an arc in the disk with interior disjoint from $\Scal_1\cup \Acal_1$ that connects $c$ with an arc $a\in \Acal_1$. Suppose that, near $\rho$, the segments of $a$ and $c$ have the same orientation. Suppose further that all arcs of $\Acal$ in the bigon of $D_1-a$ not containing $\rho$ are parallel (as oriented arcs) to $a$. In short, assume that $\rho$ connects $c$ to a sub-rainbow of $\Scal_1$ as in Figure~\ref{fig:move2}. In this case, we can slide $\rho$ along the subrainbow containing $a$ to obtain a 0-sector perturbation arc as in the figure. Note that the subrainbow condition forces the orientation of the punctures of $T_1$ (and so $T_2$ and $T_3$). After the perturbation, we perform a braid move as in Figure~\ref{fig:move2}. After Reidemeister moves, we resolve the new crossings in $T_2$ and $T_3$, and we obtain new Yamada diagrams $\{(\Scal'_i, \Acal'_i)\}_{i=1}^3$ for $\Tcal'$ with one less Seifert circle than $\{(\Scal_i, \Acal_i)\}_{i=1}^3$. Notice that the arcs $x\in \Acal_1$ turn into arcs $x\in \Acal'_1$ that now cross the old strands of $c$ in the same way that $a$ does. This is true as the orientations of these strands agree with those already adjacent to the arc $x\in \Acal_1$. 

\begin{figure}[ht]
\centering
\includegraphics[width=1\textwidth]{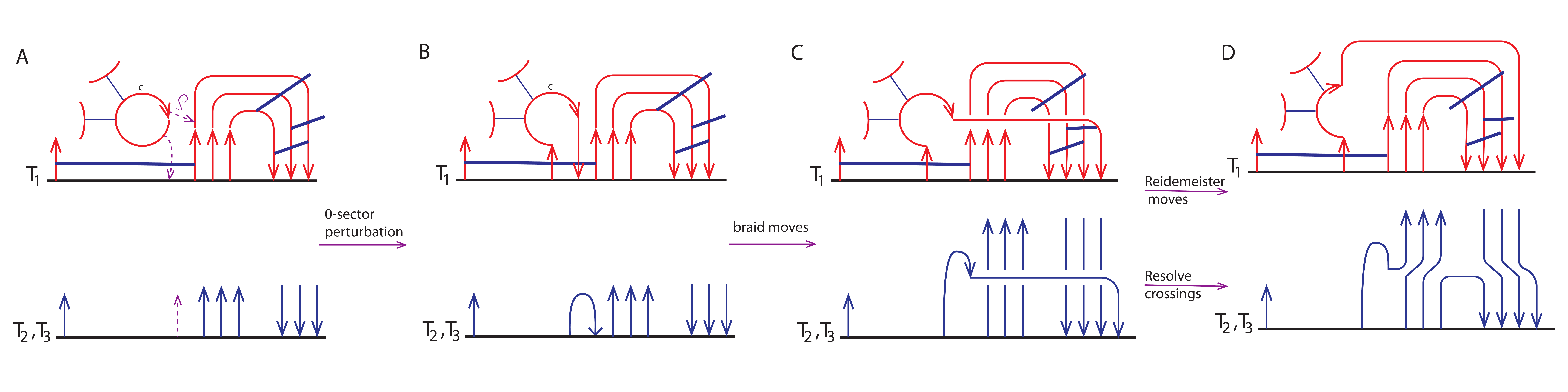}
\caption{Move 2: combination of triplane moves that trade a Seifert circle $c$ with a new bridge arc parallel (as an oriented arc) to $a$.}
\label{fig:move2}
\end{figure}

\begin{definition}
If $\{(\Scal_i,\Acal_i)\}_{i=1}^3$ admits arcs as in moves 1 or 2 above, we say that $\Tcal$ admits a simplification move of type 1 or 2, respectively. 
\end{definition}

We say that an oriented triplane diagram is \emph{clustered} if all the negative punctures are separated from the positive punctures. When the tangles are drawn in the upper-half plane, the signs of punctures are arranged like $\left(p,p,\dots, p, q, q, \dots, q, p, p, \dots,p\right)$ where $\{p,q\}=\{-,+\}$. 
Denote by $\Scal_i^A$ the arcs of $\Scal_i$. The clustered condition on the triplane diagram $\Tcal=(T_1,T_2,T_3)$ forces each Yamada diagram $(\Scal_i,\Acal_i)$ of $T_i$ to be \emph{rainbow-like}: arcs of $\Scal_i$ are pairwise parallel (as oriented arcs) and form a crossingless braid. Thus, the Seifert circles of $\Scal_i$ lie between two consecutive arcs of $\Scal_i^A$ or in an outermost bigon of $D-\Scal_i^A$. 

\begin{lemma}\label{lem:existance_simplification}
If a clustered triplane diagram has a Seifert circle, then it admits a simplification move.
\end{lemma}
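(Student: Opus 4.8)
The plan is to use the rainbow-like structure of the Yamada diagrams that the clustered hypothesis forces, and then argue that at least one Seifert circle must be accessible to either a type-1 or a type-2 simplification move. First I would fix notation: say $\Tcal=(T_1,T_2,T_3)$ is clustered with Yamada diagrams $(\Scal_i,\Acal_i)$, and suppose without loss of generality that $\Scal_1$ contains a Seifert circle $c$. By the discussion preceding the lemma, the clustered condition makes each $(\Scal_i,\Acal_i)$ rainbow-like: the arc components $\Scal_i^A$ are pairwise parallel oriented arcs forming a crossingless braid, so $D_1\setminus \Scal_1^A$ is a disjoint union of bigons together with two outermost regions, and every Seifert circle of $\Scal_1$ lies in one such region.

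Next I would set up an innermost/outermost argument. Among all Seifert circles of $\Scal_1$, choose one, call it $c$, that is innermost in the following sense: $c$ bounds a subdisk $\Delta\subset D_1$ whose interior contains no other Seifert circle of $\Scal_1$. (Such a $c$ exists because the Seifert circles are disjoint embedded circles; take one bounding a minimal subdisk.) Now examine which arcs of $\Acal_1$ enter $\Delta$. Case (a): no arc of $\Acal_1$ meets $\mathrm{int}(\Delta)$. Then the interior of $\Delta$ is disjoint from $\Scal_1\cup\Acal_1$ except for $c=\partial\Delta$ itself, so I can push out radially: running an arc $\rho$ from $c$ through the outer complement of $\Delta$ to $\partial D_1$ while avoiding the finitely many parallel rainbow arcs and other Seifert circles — possible because the rainbow arcs are nested and disjoint, so their complement in $D_1$ is connected and $c$ is reachable from $\partial D_1$ after at most sliding around the nested arches. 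Wait — more carefully, one must route $\rho$ outward through the bigon/outermost region in which $c$ sits; since within a single bigon the complement of the (parallel) arcs is an annular/disc region touching $\partial D_1$ only through the rainbow's "outer" side, I would instead route $\rho$ to an arc $a\in\Acal_1$ when the bigon is an interior bigon, landing us in Case (b). So the clean dichotomy is: either $c$ lies in an outermost region of $D_1\setminus\Scal_1^A$, giving a type-1 move directly (arc to $\partial D_1$); or $c$ lies in an interior bigon bounded by consecutive parallel arcs of $\Scal_1^A$, and I connect $c$ by $\rho$ to one of those arcs $a$.

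In the interior-bigon case I must verify the hypotheses of Move 2: that near $\rho$ the segments of $a$ and $c$ are co-oriented, and that all arcs of $\Acal_1$ lying in the bigon cut off by $a$ not containing $\rho$ are parallel to $a$. The co-orientation is automatic from the rainbow-like structure (consecutive parallel arcs of $\Scal_1^A$ carry opposite-side traversals, and the orientation of $c$ relative to $a$ is pinned by the sign condition on $\Acal\cap\Scal$ — all crossings of a given arc have the same sign). For the parallelism condition, I would again invoke innermost-ness: choose $a$ so that the sub-bigon on the far side of $a$ from $c$ is as small as possible among the arcs of $\Scal_1^A$ bounding the region containing $c$; then any arc of $\Acal_1$ in that sub-bigon connects two of the nested parallel strands there, and since those strands are all parallel crossingless arcs, such an $\Acal_1$-arc is (after an isotopy within the bigon) parallel to $a$. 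This gives exactly a sub-rainbow of $\Scal_1$ as in Figure~\ref{fig:move2}, so Move 2 applies. The hard part will be this last verification — pinning down precisely why the innermost choice makes the ambient arcs parallel to $a$ rather than some nested collection that obstructs the slide — but this is a finite, planar bookkeeping argument about nested parallel arcs in a bigon, not a deep point. Once a simplification move of type 1 or 2 is exhibited in every case, the lemma follows.
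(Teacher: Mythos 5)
Your case division (Seifert circle in an outermost region of $D_1-\Scal_1^A$ gives a type-1 move; circle in a region between two consecutive arcs $a_1,a_2$ gives a type-2 move) is the same as the paper's. But your selection of \emph{which} circle to work with is backwards, and this is a genuine gap. You choose an \emph{innermost} circle $c$, bounding a subdisk $\Delta$ containing no other Seifert circles. When the Seifert circles are nested, such a $c$ is trapped: any arc $\rho$ from $c$ to $\partial D_1$ or to an arc of $\Scal_1^A$ must cross every circle surrounding $c$, so no $\rho$ with interior disjoint from $\Scal_1\cup\Acal_1$ exists and neither move applies to $c$. (Your intermediate claim that the complement of the rainbow arcs is connected so ``$c$ is reachable from $\partial D_1$'' ignores exactly these other Seifert circles and the $\Acal_1$-arcs.) The paper instead takes, in the 4-gon cut off by $a_1\cup a_2$, a circle \emph{closest to the unlabeled boundary} of that region --- essentially an outermost, accessible circle --- which by construction admits arcs $\rho_1,\rho_2$ to \emph{both} $a_1$ and $a_2$ with interiors clear of $\Scal_1\cup\Acal_1$.

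Having access to both $a_1$ and $a_2$ matters for your second weak point: the co-orientation hypothesis of Move 2 is not ``automatic.'' Since $a_1$ and $a_2$ are parallel as oriented arcs, they are traversed in opposite directions as sides of the 4-gon, so an oriented circle $c$ between them agrees in orientation with exactly \emph{one} of them; your appeal to the sign condition on $\Acal\cap\Scal$ does not decide which. One must therefore be free to route $\rho$ to whichever of $a_1,a_2$ has the matching orientation --- this is precisely why the paper arranges clear paths to both and then says ``one of these arcs provides the move.'' Your ``smallest far sub-bigon'' criterion for choosing $a$ could force you onto the wrongly oriented arc. (Your worry about the parallelism/sub-rainbow hypothesis, by contrast, is a non-issue: in a clustered diagram all arcs of $\Scal_1^A$ are already pairwise parallel as oriented arcs, so that condition holds for free.) Replacing ``innermost'' by ``closest to the unlabeled boundary'' and adding the one-of-two orientation argument repairs the proof and recovers the paper's.
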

\begin{proof}
Suppose that $\Scal_1$ has Seifert circles. As $\Tcal$ is clustered, the circles lie inside one of the bigons of $D_1-\Scal_1^A$ or in one of the 4-gons. In the former case, we can find a simplification of type 1. 

Assume that there are Seifert circles between consecutive arcs $a_1$ and $a_2$ of $\Scal_1^A$. The region of $D$ cut off by $a_1\cup a_2$ is a 4-gon with two opposite sides $a_1$ and $a_2$ oriented the same way. In particular, any circle in such a 4-gon must have the same orientation as exactly one of $a_1$ or $a_2$. Now, consider a circle in this 4-gon that is closest to the unlabeled boundaries. This circle has paths $\rho_1$ and $\rho_2$ to each $a_1$ and $a_2$ such that $int(\rho_i)\cap (\Scal_1\cup \Acal_1)=\emptyset$. One of these arcs provides the simplification move of type 2. 
\end{proof}

\begin{procedure}[Yamada's method for triplane diagrams]\label{alg:Yamada_algorithm}
$\quad$ 

\textbf{Step 0.} Choose an orientation of $\Tcal$.

\textbf{Step 1.} Apply mutual braid moves to cluster all the negative punctures, away from the positive ones. This way, the triplane diagram is clustered. By Lemma~\ref{lem:existance_simplification}, if $\Tcal$ has Seifert circles, then it admits a simplification move. In fact, the move is of type 2 if a Seifert circle lies in a 4-gon of some $D_i-\Scal_i^A$, and of type 1 if there are Seifert circles inside one of the bigons. 

\textbf{Step 2.} Apply a simplification move to $\Tcal$. If the resulting triplane is still clustered, go to the next step. If the new triplane is not clustered, then the move was of type 1 with the circle and rainbow oriented as in Figure~\ref{fig:Seifert4}(A). As depicted in the parts (B)-(D) of the figure, after one braid move, the triplane is now clustered with a Yamada diagram having one less Seifert circle than in the previous step. 

\textbf{Step 3.} Repeat Step 2 until there are no Seifert circles. 
\end{procedure}

\begin{figure}[ht]
\centering
\includegraphics[width=.7\textwidth]{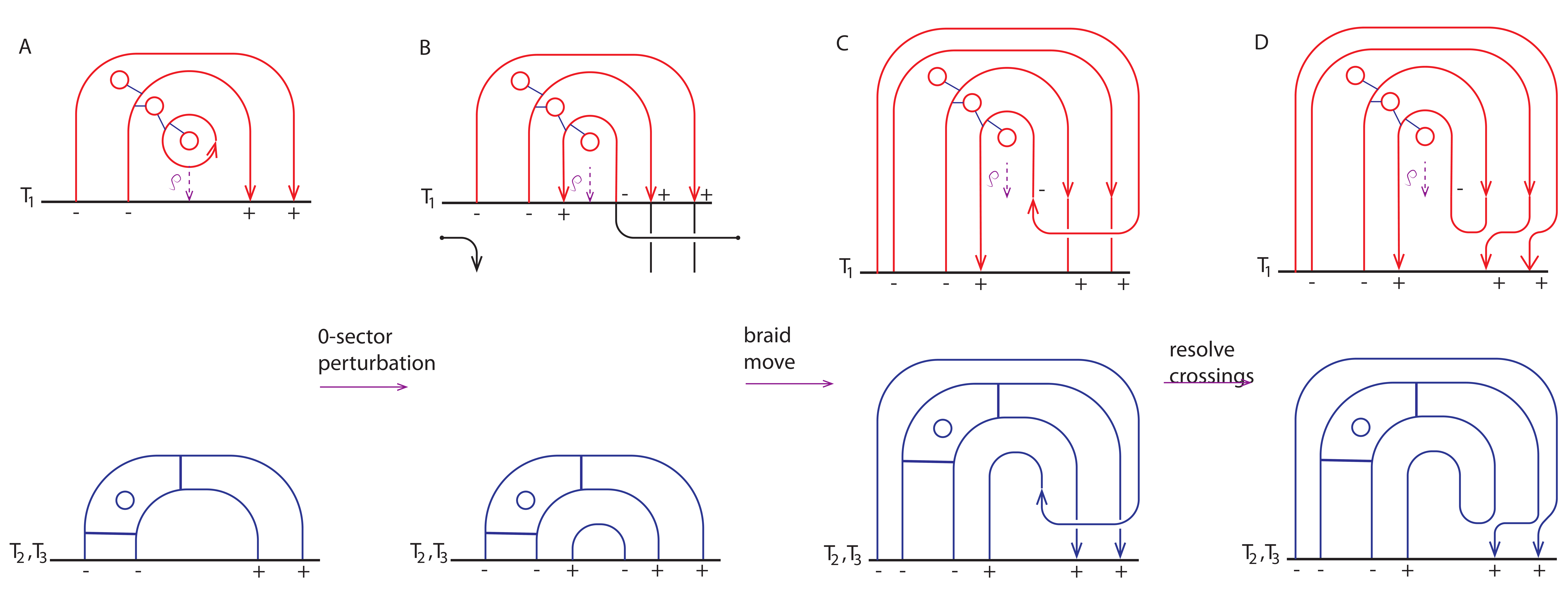}
\caption{(A) Type 1 simplification that does not preserve the property of being clustered. To fix this, we perform more triplane moves (B)-(D).}
\label{fig:Seifert4}
\end{figure}

\begin{example}[Yamada's Method for $9^{0,1}_1$]\label{example:yamada_9_01_1}
In Figures~\ref{fig:Yamada_9_01_1_part1}, \ref{fig:Yamada_9_01_1_part2}, and \ref{fig:Yamada_9_01_1_part3} we run Procedure~\ref{alg:Yamada_algorithm} to draw a weak-rainbow diagram for Yoshikawa's $9^{0,1}_1$ surface link~\cite{Yoshikawa}.
The input, depicted in the left column of Figure~\ref{fig:Yamada_9_01_1_part1}, is a triplane diagram for $9^{0,1}_1$ from \cite{ZupanREU2021}. In columns 2 and 3 of Figure~\ref{fig:Yamada_9_01_1_part1} we perform braid moves to cluster the negative endpoints to the left of the positive endpoints. The resulting red and blue tangles are rainbows but the green tangle is not; the rightmost figure is a Yamada diagram for the green tangle. To remind ourselves of the sign of the crossings we resolved, we use the convention of black arcs for positive crossings and purple arcs for negative crossings.  In Figure~\ref{fig:Yamada_9_01_1_part2}, we found an arc $\rho$ corresponding to a simplification move of type 2, which we perform in the following columns of the figure. The resulting Yamada diagram has one remaining Seifert circle. In Figure~\ref{fig:Yamada_9_01_1_part3} we found an arc $\rho$ corresponding to a simplification move of type 2, which we perform in the remaining columns of this figure. We recover the green tangle using the convention for the signs of the crossings. The resulting triplet of tangles (red, blue, green) is a weak-rainbow diagram for $9^{0,1}_1$.
\end{example}

\begin{figure}[ht]
\centering
\includegraphics[width=.7\textwidth]{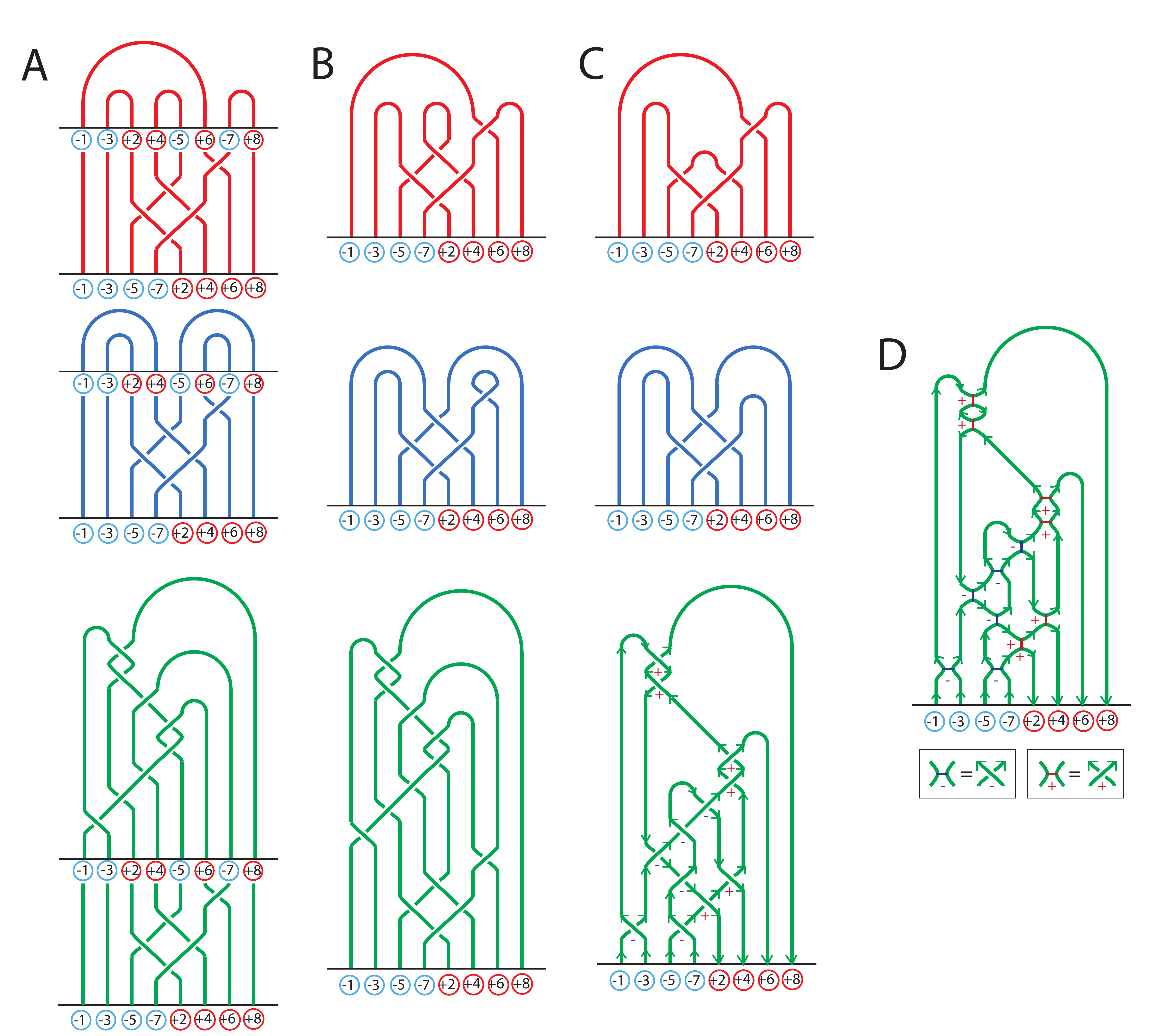}
\caption{Yamada's Method for $9^{0,1}_1$ (part 1/3): After braid moves to cluster the negative endpoints to the left of the positive endpoints, the resulting red and blue tangles are rainbows but the green tangle is not. (Right) A Yamada diagram for the green tangle.}
\label{fig:Yamada_9_01_1_part1}
\end{figure}

\begin{figure}[ht]
\centering
\includegraphics[width=.8\textwidth]{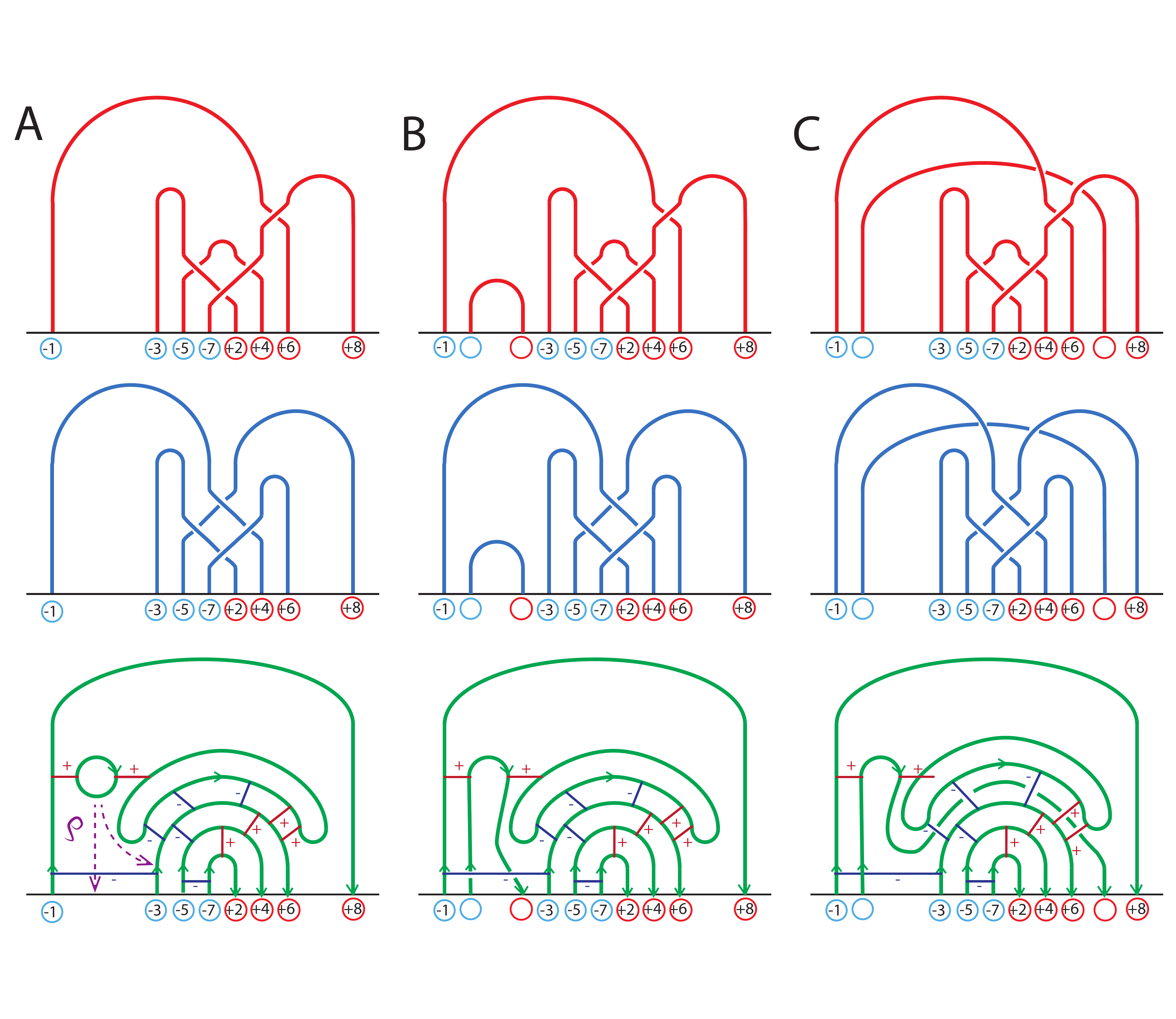}
\caption{Yamada's Method for $9^{0,1}_1$ (part 2/3): we found an arc $\rho$ corresponding to a simplification move of type 2, which we perform in the following columns.}
\label{fig:Yamada_9_01_1_part2}
\end{figure}

\begin{figure}[ht]
\centering
\includegraphics[width=.8\textwidth]{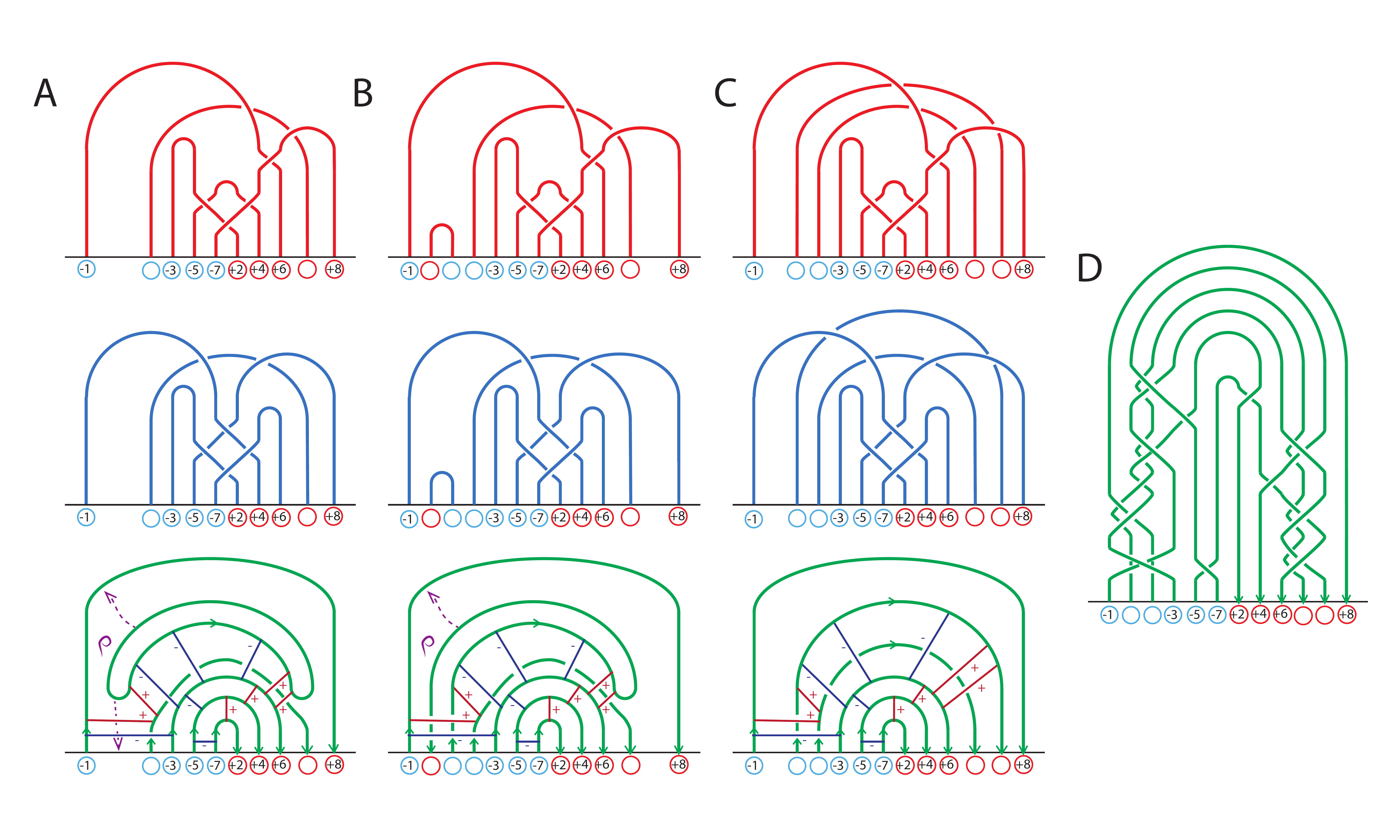}
\caption{Yamada's Method for $9^{0,1}_1$ (part 3/3): we found an arc $\rho$ corresponding to a simplification move of type 2, which we perform in the following columns. The result, rightmost (red, blue, green) tangles form a weak-rainbow diagram.}
\label{fig:Yamada_9_01_1_part3}
\end{figure}
\clearpage

\begin{proposition}\label{prop:Yamada_alg}
The result of performing Yamada's method on a $b$-bridge triplane diagram of an orientable surface is a weak-rainbow diagram for $F$ with $\left(b+\sum_i s_i\right)$ bridges.
\end{proposition}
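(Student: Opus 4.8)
The plan is to run a potential-function argument on Procedure~\ref{alg:Yamada_algorithm}, keeping a ledger of the pair $(b(\Tcal),N(\Tcal))$, where $N(\Tcal)=\sum_i s_i(\Tcal)$ is the total number of Seifert circles. First I would dispose of Step~1: the mutual braid moves used there are triplane moves, so they preserve $F$ and do not alter the bridge number; hence I may assume the input is already a clustered $b$-bridge triplane and that $s_i$ counts the Seifert circles of its $i$-th tangle. The goal is then to show that every iteration of Step~2 (i) preserves the bridge trisected surface $F$, (ii) decreases $N$ by exactly one while increasing $b$ by exactly one, and (iii) leaves the tuple clustered; granting (i)--(iii), the loop halts after exactly $N(\Tcal)$ iterations at a clustered tuple with no Seifert circles and bridge number $b+\sum_i s_i$, which I would then identify with a weak-rainbow diagram.

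For (i), every move appearing in Steps~1--2 is either a mutual braid move or an interior Reidemeister move --- both triplane moves --- or a $0$-sector perturbation, which preserves the isotopy class of the bridge trisected surface by Theorem~4.10 of \cite{AE24}; moreover, by the Warning remark in Section~\ref{sec:background}, a $0$-sector perturbation keeps each pairwise union $T_j\cup \T_{j-1}$ a bridge splitting of an unlink, so every intermediate tuple still determines $F$ even when an individual tangle is momentarily non-trivial. For (ii), Lemma~\ref{lem:existance_simplification} furnishes a simplification move of type~1 or~2 whenever $\Tcal$ is clustered with $N>0$, so Step~2 is always available until $N=0$; a type-$1$ move presses one Seifert circle through the bridge sphere, trading it for a single extra properly embedded arc (so $b\mapsto b+1$), while the two untouched tangles acquire only a crossingless trivial arc, giving $N\mapsto N-1$; a type-$2$ move is the same $0$-sector perturbation followed by one mutual braid move and interior Reidemeister moves, and the key point is that resolving the newly created crossings in the two untouched tangles produces no new Seifert circles, so again $b\mapsto b+1$ and $N\mapsto N-1$. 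For (iii), if a type-$1$ move breaks the clustered property (the local picture of Figure~\ref{fig:Seifert4}(A)), Step~2 applies one further mutual braid move (Figure~\ref{fig:Seifert4}(B)--(D)) that re-clusters the tuple without changing $b$ or $N$, so the next iteration can proceed.

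It then remains to see why the terminal tuple is a weak-rainbow diagram. Once $N=0$ and $\Tcal$ is clustered, the discussion preceding Lemma~\ref{lem:existance_simplification} shows each Yamada diagram $(\Scal_i,\Acal_i)$ is rainbow-like, and with no Seifert circles this means $\Scal_i$ consists solely of pairwise parallel arcs forming a crossingless braid, carrying the braid boxes $\Acal_i$. Raising this Yamada diagram exhibits $T_i$ as a tangle braided with respect to the axis separating the positive from the negative punctures; since all three tangles share the bridge sphere and this separating point, they are braided about one common axis, and --- by construction together with the Warning remark --- the terminal tuple is a genuine triplane diagram. Hence it is a weak-rainbow diagram for $F$, and by the bookkeeping above its bridge number is $b+\sum_i s_i$.

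\textbf{The main obstacle} I anticipate is the assertion inside (ii) that a simplification move never secretly \emph{creates} a Seifert circle in the two tangles it does not press. For a type-$2$ move this requires tracking orientations carefully through the inserted braid generator and the subsequent Reidemeister moves: the subrainbow hypothesis pins down the signs of all punctures, hence the orientations of the strands flanking the new braid box, and this is exactly what forces the new crossings to resolve into arcs rather than into circles, as in the description of Move~2. A secondary and smaller verification is that the single corrective braid move in Step~2 genuinely restores the clustered property in every instance of the bad type-$1$ configuration; this is a finite inspection of the local model in Figure~\ref{fig:Seifert4}. Everything else is routine bookkeeping with the potential function $(b,N)$.
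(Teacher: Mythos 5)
Your proof is correct and follows essentially the same route as the paper's, which compresses the entire argument into the single observation that each simplification move is a combination of triplane moves (and $0$-sector perturbations) raising the bridge index by one; your potential-function ledger on $(b,N)$ simply makes explicit the bookkeeping the paper leaves to the reader via Procedure~\ref{alg:Yamada_algorithm} and Lemma~\ref{lem:existance_simplification}. The one subtlety you flag --- that a type-2 move does not secretly create Seifert circles in the untouched tangles --- is already disposed of in the paper's description of Move~2, where the orientation argument you sketch appears essentially verbatim.
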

\begin{proof}
Each simplification move is a combination of triplane moves that increase the bridge index by one. 
\end{proof}

The following result follows directly from Proposition~\ref{prop:Yamada_alg} and the observation that only rainbows have Yamada diagrams with no Seifert circles.  

\begin{theorem}\label{thm:seifertcircles}
Let $F$ be an orientable surface in $S^4$. Then, 
\[ \Wrain(F) = \min\left\{b(\Tcal)+\sum_i s_i(\Tcal): \Tcal \text{ is clustered triplane for } F\right\}, \]
where $s_i\left(\Tcal\right)$ is the number of Seifert circles of the $i$-th tangle of $\Tcal$.
\end{theorem}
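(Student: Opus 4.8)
The plan is to establish the identity by proving two inequalities; thanks to Yamada's method (Procedure~\ref{alg:Yamada_algorithm}) this reduces to bookkeeping on results already in place. Throughout, write $f(\Tcal)=b(\Tcal)+\sum_i s_i(\Tcal)$ for a clustered triplane $\Tcal$ of $F$.

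First I would show $\Wrain(F)\le f(\Tcal)$ for every clustered triplane $\Tcal$ of $F$. Since $\Tcal$ is already clustered, Step~1 of Procedure~\ref{alg:Yamada_algorithm} is vacuous, and Lemma~\ref{lem:existance_simplification} guarantees that as long as some $\Scal_i$ still carries a Seifert circle there is a simplification move of type~1 or~2 available, with the auxiliary braid moves of Figure~\ref{fig:Seifert4} restoring clusteredness in the single problematic type-1 case. By the descriptions of Move~1 and Move~2, each such move increases the bridge index by one, decreases the total number of Seifert circles by one, and preserves the isotopy type of the bridge-trisected surface. Hence after exactly $\sum_i s_i(\Tcal)$ moves one reaches a triplane with no Seifert circles and $b(\Tcal)+\sum_i s_i(\Tcal)$ bridges, which by Proposition~\ref{prop:Yamada_alg} is a weak-rainbow diagram for $F$. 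Therefore $\Wrain(F)\le f(\Tcal)$, so $\Wrain(F)\le\min_{\Tcal}f(\Tcal)$.

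For the reverse inequality I would exhibit a clustered triplane on which $f$ attains the value $\Wrain(F)$. Choose a weak-rainbow diagram $\Tcal_0$ of $F$ with $b(\Tcal_0)=\Wrain(F)$. By definition it is a triplane diagram representing $F$, and each tangle is braided about a common axis $\ell$; orienting all braid strands coherently --- which is compatible with an orientation of $F$, since the underlying braided-bridge trisection is oriented --- puts every source on one side of $\ell$ and every sink on the other, so $\Tcal_0$ is clustered. In this diagram the orientation-preserving resolution of each crossing $\sigma_j^{\pm 1}$ is a pair of parallel strands, so Seifert's algorithm produces no circles in any tangle and $s_i(\Tcal_0)=0$ for all $i$. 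Thus $f(\Tcal_0)=b(\Tcal_0)=\Wrain(F)$, whence $\min_{\Tcal}f(\Tcal)\le\Wrain(F)$; combined with the previous paragraph this proves the theorem.

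The one point deserving care is the dictionary ``weak-rainbow diagram $\iff$ clustered triplane with no Seifert circles.'' The direction used in the third paragraph (a braided tangle admits a circle-free Seifert resolution, and the separating axis is common to all three tangles because they share endpoints) is routine. The converse --- the ``observation'' quoted just before the statement --- carries the real content: for a clustered triplane the discussion before Lemma~\ref{lem:existance_simplification} forces each $\Scal_i$ to be rainbow-like, so if in addition it has no circles then $\Scal_i$ is a crossingless braid of $b$ parallel strands and $T_i$ is exactly the braid recorded by $\Acal_i$, making $\Tcal$ a weak-rainbow. I expect nailing down this equivalence, rather than any fresh argument, to be the main obstacle; the rest is a direct appeal to Proposition~\ref{prop:Yamada_alg} and Lemma~\ref{lem:existance_simplification}.
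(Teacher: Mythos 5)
Your proof is correct and takes essentially the same route as the paper's: the inequality $\Wrain(F)\le b(\Tcal)+\sum_i s_i(\Tcal)$ comes from running Yamada's method on an arbitrary clustered triplane (Proposition~\ref{prop:Yamada_alg}), and the reverse inequality from observing that a minimal weak-rainbow diagram is itself a clustered triplane with no Seifert circles. The paper compresses this into one sentence; your write-up supplies the same bookkeeping, including the key ``weak-rainbow $\iff$ clustered with no Seifert circles'' dictionary that the paper calls an observation.
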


\section{Rainbows and braid movies} \label{sec:rains_and_movies}
\emph{Braid movies}, as their name suggests, are surfaces in $\R^4$ traced by movies of braids containing braid isotopies and saddle cobordisms between $\sigma_i^{\pm1}$ and the identity braid. See Figure~\ref{brokenAndChart} for broken  surface diagram descriptions of local braid movies or see Section 14 of \cite{KamBook} for more details. Kamada introduced a \emph{normal braided form} for braid movies \cite[Sec 21]{KamBook}. In short, these are banded unlink diagrams where the unlink is braided and the band is `short'; so that band surgery corresponds to adding one standard braid generator $\sigma_i^{\pm1}$ to the braided unlink. In this section, we will work with an alternative/weaker normal braided form to allow our bands to be conjugations of $\sigma_i^{\pm 1}$. These longer bands, referred to as \emph{pre-simple} in \cite[Sec. 21]{KamBook}, will enable us to move our bands around while preserving the original braided unlink. In a sense, this subtle difference is that between positive and quasi-positive knots. 

\begin{definition}\label{def:braided_bands}
Let $L$ be a braid in $S^3$ with respect to the braid axis $\ell$. The complement of $\ell$, $S^3-\ell$ is foliated by open disks with boundary equal to $\ell$; we call them \emph{disk pages}. A \emph{braided band} or \emph{pre-simple band} is an embedded strip $v:[-1,1]\times [-1,1]\to S^3$ such that,
\begin{enumerate}
    \item $Im(v)\cap L = v\left( [-1,1]\times \{-1,1\}\right)$, 
    \item \emph{the core of $v$}, equal to $v\left(\{0\}\times [-1,1]\right)$, lies in a disk page of $\ell$, and 
    \item \emph{the band surgered link} $L[v]:=\left[L-v\left( [-1,1]\times \{-1,1\}\right)\right] \cup v\left( \{-1,1\}\times [-1,1]\right)$ is braided with braid axis equal to $\ell$. 
\end{enumerate}
\end{definition}
It follows from the definition that $v$ is equal to a half-twisted band along its core. Thus, one can describe a braided band with (1) an arc in a disk-page of $\ell$ that connects different braid points, and (2) a sign, $+$ or $-$, depending on the band twisting counter-clockwise or clockwise when traversing its core. For example, the sign in the higher band  on the right of Figure~\ref{fig:spuntref_rain_to_band} is positive while the lower band is negative. Observe that $L[v]$ is a new braid obtained from $L$ by inserting a sub-braid of the form $w\sigma_i^{\pm1}w^{-1}$. 

\begin{definition}
    A \emph{braided banded unlink diagram} is a pair $(L,\nu)$ satisfying 
    \begin{enumerate}
        \item $L$ is a fully destabilizable braided unlink, 
        \item $\nu$ is a collection of pairwise disjoint braided bands for $L$, and 
        \item $L[\nu]$ is a fully destabilizable braided unlink. 
    \end{enumerate}
\end{definition}

Braided banded unlink diagrams describe braid movies; in fact, they determine braided surfaces in four dimensions. Every orientable surface in $S^4$ admits a braided banded unlink diagram~\cite[Sec 21]{KamBook}. Denote by $F(L,\nu)$ the surface in $S^4$ determined by a braided banded unlink diagram $(L,\nu)$. In this section, we will describe how to go back and forth between braided banded unlink diagrams and rainbow diagrams. We will need the following technical lemma. 

\begin{lemma}\label{lem:existence_bands_for_unlink}
Let $L$ be a $c$-component fully destabilizable $b$-strand braid and let $U$ be the crossingless braid with $b$-components. There is a collection $\nu$ of $(b-c)$ braided bands such that $U[\nu]$ is braid isotopic to $L$. Moreover, the surface in $F(U,\nu)\subset B^4$ is a trivial disk system.
\end{lemma}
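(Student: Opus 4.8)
The plan is to prove the two assertions separately but using the same geometric picture. First, I would establish that $U[\nu]$ can be made braid-isotopic to $L$ by a kind of ``reverse Seifert/deperturbation'' argument. Start from $L$, a $c$-component fully destabilizable $b$-strand braid. By definition, $L$ is braid-isotopic to $(b-c)$ Markov stabilizations of the $c$-component crossingless braid $U_c$. I want to reverse-engineer bands: each Markov stabilization introduces one new strand attached to an existing component by a single $\sigma_i^{\pm 1}$. Reading the stabilization backward, a stabilized strand looks exactly like a component of the crossingless $b$-braid $U$ that has been band-summed (via a half-twisted braided band of the appropriate sign) onto the rest of the link. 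So I would argue inductively on $b-c$: if $b-c=0$ then $L$ is braid-isotopic to $U$ and $\nu=\emptyset$. If $b-c>0$, peel off one destabilization of $L$ to get a braid $L'$ on $b-1$ strands with $c$ components; by induction there is a collection $\nu'$ of $(b-1-c)$ braided bands with $U'[\nu']$ braid-isotopic to $L'$, where $U'$ is the crossingless $(b-1)$-braid. Now the destabilizing move that produced $L'$ from $L$ is, read backward, exactly attaching the $b$-th crossingless strand of $U$ to $U'[\nu']$ by one additional half-twisted braided band $v_b$ lying in a single disk-page of $\ell$; the sign of $v_b$ records whether the Markov stabilization was $\sigma_{b-1}$ or $\sigma_{b-1}^{-1}$. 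Setting $\nu=\nu'\cup\{v_b\}$ and checking disjointness (the new band sits near the new strand, away from the support of $\nu'$) gives the claim. The key point justifying condition (3) of Definition~\ref{def:braided_bands} at each step is precisely that band surgery along a half-twisted braided band inserts a sub-braid $w\sigma_i^{\pm1}w^{-1}$, and here $w$ is trivial for the elementary stabilization, so the result stays braided with axis $\ell$.

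For the ``moreover'' clause, I would argue that $F(U,\nu)\subset B^4$ is a trivial disk system. The surface $F(U,\nu)$ is built by capping the braided unlink $U$ with trivial disks below, running the band moves $\nu$, and then capping the resulting braided unlink $U[\nu]\simeq L$ above. Each of the $b-c$ bands, being a single half-twisted band realizing a Markov stabilization, is a ``trivial'' band in the sense that the band move is an elementary stabilization saddle; such a saddle, attached to a trivial disk, merely deformation-retracts (it is the four-dimensional analogue of the fact that a Markov stabilization of a braid closure is an isotopy, not a genuine topology change). More carefully: each band $v_j$ can be isotoped to be ``short'' (Kamada's simple/normal form) since it realizes an elementary stabilization, so the banded unlink diagram $(U,\nu)$ is, after these isotopies, a normal-braided-form description in which every band is trivial; a banded unlink diagram in which all bands are trivial describes a trivial disk system. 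Alternatively, and perhaps more cleanly, I would use the fact that $U[\nu]\simeq L$ is fully destabilizable, so the annular/disk cobordism from $U$ to $L$ traced by the bands, together with the caps, has the correct Euler characteristic $\chi = c$ (one disk per component, since $b-c$ bands reduce $b$ trivial disks to $c$), is connected-component-wise a disk, and bounds in $B^4$ with no local knotting because the whole picture lives in a collar $S^3\times[0,1]$ where at each level the link is a braid and the banded unlink is braid-isotopic to something fully destabilizable — hence the disks can be pushed into $\partial B^4 = S^3$.

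The main obstacle I expect is the ``moreover'' clause: verifying that no extraneous knotting is introduced by the bands and that the disk system is genuinely trivial, not merely of the right homeomorphism type. The subtlety is that a collection of individually-trivializing bands need not give a trivial disk system if the bands interact badly; I will need to exploit that the bands arise from a \emph{sequence} of Markov destabilizations of a single braid (so they can be ordered and made disjoint, each supported near a distinct strand) rather than an arbitrary disjoint collection. Concretely I would make the induction carry the stronger statement that not only is $U[\nu]\simeq L$ but the bands $v_1,\dots,v_{b-c}$ can be chosen with pairwise disjoint supports each realizing an \emph{elementary} (length-one, untwisted-conjugator) stabilization, and then cite Kamada's correspondence between banded unlink diagrams with all-trivial bands and trivial disk systems~\cite[Sec 21]{KamBook}. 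A short verification that ``elementary-stabilization bands'' are trivial bands in Kamada's sense finishes the proof; everything else is bookkeeping.
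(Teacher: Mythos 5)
Your proposal is correct and follows essentially the same route as the paper's (very terse) proof: the key observation in both is that a Markov stabilization is exactly the attachment of one braided band joining a small meridian of the braid axis (a new crossingless strand) to the rest of the braid, and the lemma then follows by induction on $b-c$. Your more careful treatment of the ``moreover'' clause --- carrying through the induction that each band is an elementary stabilization band with disjoint support, so the traced disk system is trivial --- fills in a step the paper leaves implicit, but it is the same argument.
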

\begin{proof}
Recall that a Markov stabilization of a braid $B$ corresponds to adding one braided band to the link $B\cup U'$ where $U'$ is a small meridian of the braid axis. The result follows by induction on $(b-c)\geq 0$.
\end{proof}

\subsection{Braid movie from a rainbow diagram}
In particular, a rainbow can be turned into a banded unlink diagram.

\begin{lemma}\label{lem:existence_bands_for_tangle}
Let $\Tcal$ be a $(b;c_1,c_2,c_3)$-rainbow diagram for an oriented surface $F\subset S^4$. After mutual braid moves that preserve the braid axis of $\Tcal$, there exists a collection $\nu$ of $(b-c_2)$ braided bands for the tangle $T_2$ such that $T_2[\nu]$ is isotopic to $T_3$ rel boundary. 
\end{lemma}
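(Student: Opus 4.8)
The plan is to exploit the defining property of a rainbow diagram: for each pair $i,j$, the closed braid $T_i\cup\overline{T_j}$ is fully destabilizable, i.e.\ braid isotopic to a sequence of Markov stabilizations of a crossingless braid. Applying this to the pair $(2,3)$, the braid $L:=T_2\cup\overline{T_3}$ is a $b$-strand, $c$-component fully destabilizable braided unlink, where $c$ is the number of components of this unlink. I would first identify $c$ with $c_2$ (equivalently $c_3$): since $\Tcal$ is a $(b;c_1,c_2,c_3)$-bridge trisection and $T_2\cup\overline{T_3}$ bounds the trivial disk system $\Dcal_2$, it is a $c_2$-component unlink; and because $\Dcal_2$ is braided, $L$ is a fully destabilizable braid in $S^3$ with axis $\ell$.

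Next I would invoke Lemma~\ref{lem:existence_bands_for_unlink} applied to $L$. That lemma produces a collection $\nu_0$ of $(b-c_2)$ braided bands attached to the crossingless $b$-component braid $U$ such that $U[\nu_0]$ is braid isotopic to $L=T_2\cup\overline{T_3}$, and moreover $F(U,\nu_0)$ is a trivial disk system. The point is now to transport this band picture from the abstract closed braid $L$ back onto the tangle $T_2$ inside its $3$-ball $B_2$. The closed braid $T_2\cup\overline{T_3}$ sits in $S^3=B_2\cup B_3$; the ``crossingless'' reference copy $U$ can be taken so that the half of $S^3$ on the $T_3$ side is the crossingless $b$-strand tangle, so $U$ restricted to $B_2$ is exactly $T_2$ after the braid isotopy realizing $U[\nu_0]\simeq L$. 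This braid isotopy is where the clause ``after mutual braid moves that preserve the braid axis of $\Tcal$'' is used: performing it simultaneously on all three tangles of $\Tcal$ keeps the triple a rainbow diagram with the same axis, and afterwards the bands $\nu_0$ can be isotoped to lie entirely in $B_2$ (their cores live in disk pages, which we may push off the $T_3$ side). Setting $\nu$ to be this pushed-in copy of $\nu_0$, band surgery on $T_2$ along $\nu$ replaces the $T_3$-side of the crossingless closed braid by its $(b-c_2)$ stabilizations, i.e.\ $T_2[\nu]\cup\overline{T_3}$ becomes the closed braid $U$ itself — the crossingless $b$-component unlink — which forces $T_2[\nu]$ to be isotopic to $T_3$ rel boundary, since two tangles with the same boundary whose double is the trivial closed braid must agree (trivial tangles are determined by the unlink they co-bound with a fixed trivial tangle, via Livingston's theorem quoted in Section~\ref{sec:background}).

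The main obstacle, and the step deserving the most care, is the bookkeeping that turns ``$L$ is fully destabilizable'' into ``$T_2$ is obtained from a crossingless tangle by $(b-c_2)$ braided band surgeries.'' The subtlety is that the destabilizations guaranteed by Lemma~\ref{lem:existence_bands_for_unlink} take place in the closed braid in $S^3$, whereas the bands $\nu$ must be realized as \emph{braided bands for the tangle $T_2$} in the sense of Definition~\ref{def:braided_bands} — their cores in disk pages of $\ell$, disjoint from one another, and with $T_2[\nu]$ still braided. One must check that the mutual braid moves on $\Tcal$ can be chosen so that, after them, every band of $\nu_0$ has its core in a disk page that misses $B_3$ entirely, so it is legitimately a band for $T_2$; this is exactly the ``pre-simple / conjugation of $\sigma_i^{\pm1}$'' flexibility emphasized in the discussion before Definition~\ref{def:braided_bands}, and it is why we need the weaker normal braided form rather than Kamada's strict one. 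Once this is arranged, counting gives exactly $b-c_2$ bands and the isotopy $T_2[\nu]\simeq T_3$ rel $\partial$ follows formally.
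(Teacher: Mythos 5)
Your skeleton is the same as the paper's: apply Lemma~\ref{lem:existence_bands_for_unlink} to the fully destabilizable closed braid formed by $T_2$ and $T_3$, then use axis-preserving mutual braid moves to transport the resulting bands onto the tangle $T_2$. But the step you yourself flag as ``deserving the most care'' --- realizing the braid isotopy $U[\nu_0]\simeq L$ by mutual braid moves so that the bands land inside $B_2$ --- is exactly where the paper's proof does its work, and you leave it asserted rather than proved. The paper closes it with one clean algebraic fact: two braids are conjugate if and only if their closures are isotopic in the complement of the axis (Morton). Writing $T_2, T_3$ as words $y,z\in B_b$, one first makes $y=\mathrm{id}$, represents the bands of $\nu$ by a word $v$ that is a product of $(b-c_2)$ conjugates of distinct generators, finds $w$ with $\overline{w}zw=v$, and applies the single mutual braid move ``conjugate everything by $w$''; then $T'_2=\mathrm{id}$ and $T'_3=v$ is \emph{literally} the band surgery of the crossingless tangle, so no separate geometric argument about disk pages missing $B_3$ is needed. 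You should make this conjugacy step explicit; without it, ``the braid isotopy can be performed via mutual braid moves'' is precisely the unproved claim. A second, smaller issue: your final appeal to Livingston to conclude $T_2[\nu]\simeq T_3$ is the wrong tool (Livingston's theorem concerns trivial disk systems in $B^4$, not tangles in $B^3$, and two trivial tangles whose union is an unlink need not be isotopic rel boundary); the correct justification is that after the conjugating move the two tangles are given by the \emph{same braid word}, hence are equal as braided tangles.
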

\begin{proof}
We describe the process of finding such a set of bands. We will denote by $x,y,z\in B_b$ the elements of the braid group describing $T_1$, $T_2$, and $T_3$, respectively. First perform mutual braid moves on $\Tcal$ so that $T_2$ is a crossingless braid; that is, $y=id$. As $T_3\cup \T_2$ is fully destabilizable and by Lemma~\ref{lem:existence_bands_for_unlink}, there exist a collection $\nu$ of $(b-c_2)$ braided bands such that $U[\nu]$ and $T_3\cup \T_2$ are braid isotopic. We can represent the bands in $\nu$ by the group element $v\in B_b$; $v$ is the product of $(b-c_2)$ conjugations  of distinct standard generators of $B_b$. Recall that two braids are conjugate if and only if their closures are isotopic away from the braid axis \cite{morton1978infinitely}. Thus, since $T_3\cup \T_2$ and $U[\nu]$ are the braid closures of $z$ and $v$, respectively, there is a braid element $w\in B_b$ such that $\overline{w}zw=v$. 

We use $w$ from above to perform a mutual braid move to $\Tcal$ as follows: we concatenate two copies of $w$, one to the $b$ left-most punctures and the other to the $b$ right-most punctures. See Figure~\ref{fig:existance_bands} for reference. The new tangles $T'_1$, $T'_2$, and $T'_3$ are described by the braid elements $\overline{w}xw$, $\overline{w}w=id$, and $\overline{w}zw=v$, respectively. The last equation implies that $T'_3$ is equal to $T'_2[\nu]$. 

\begin{figure}[ht]
\centering
\includegraphics[width=.7\textwidth]{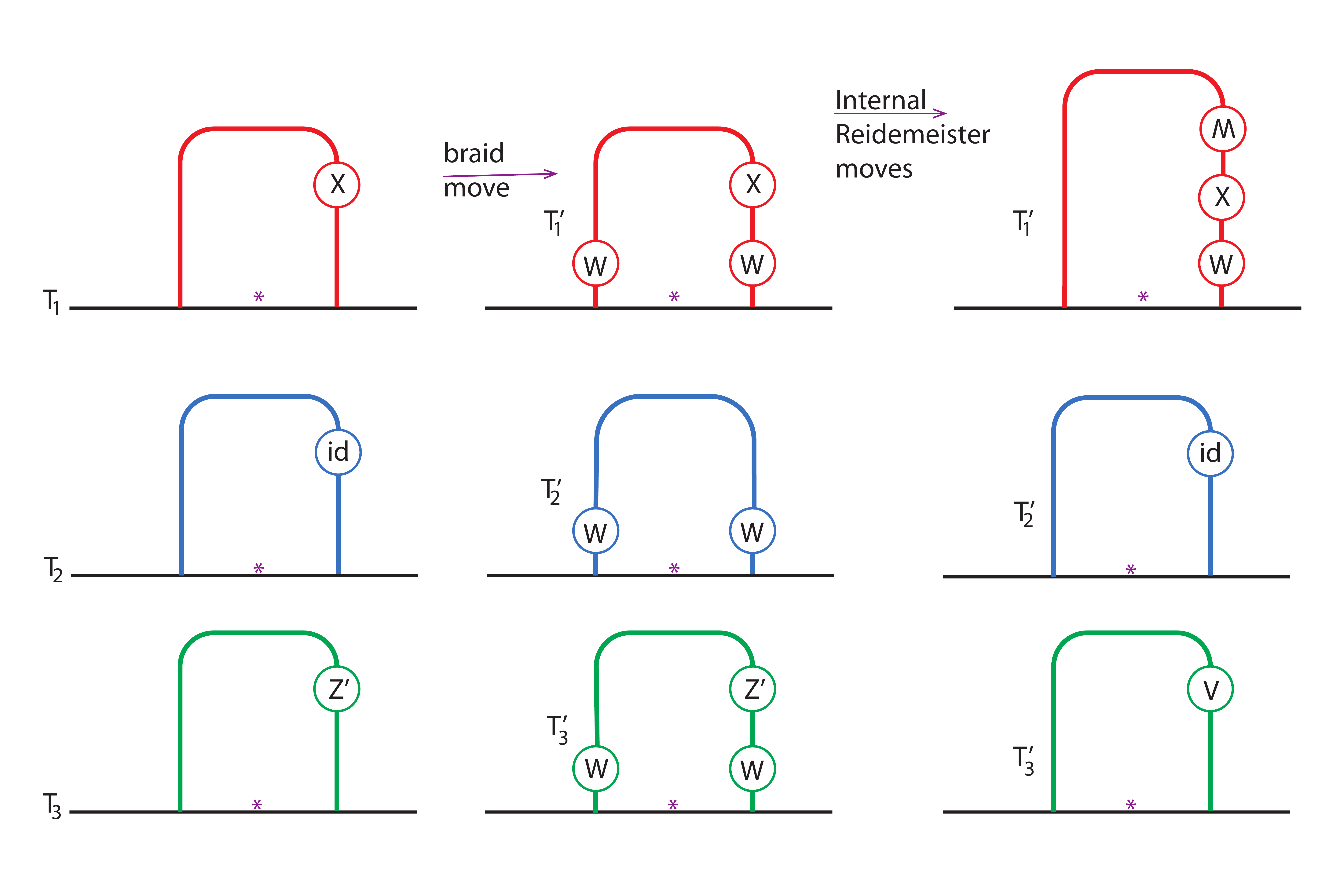}
\caption{We can think of a braided tangle as described by an element of the braid group $B_b$. Then, mutual braid moves of the form $(w,w)$ correspond to conjugation.}
\label{fig:existance_bands}
\end{figure}
\end{proof}

\begin{procedure}[Braid movie from rainbow]\label{alg:movie_from_rainbow}
Let $\Tcal$ be a weak-rainbow diagram for a surface $F\subset S^4$. 

\textbf{Step 1.} Perform enough Markov stabilizations to $\Tcal$ to obtain a rainbow diagram; Lemma~\ref{lem:strong_rainbow}.

\textbf{Step 2.} Find a collection of braided bands $\nu$ for the tangle $T_2$ satisfying that $T_2[\nu]=T_3$, as braided tangles. This is possible by Lemma~\ref{lem:existence_bands_for_tangle}.

\textbf{Step 3.} Consider the $b$-stranded braid $L=T_1\cup \T_2$ and let $\omega=\overline{\nu}$ be braided bands for $L$. 
\end{procedure}

\begin{example}[Spun trefoil]
Figure~\ref{fig:spuntref_rain_to_band} depicts the output of Procedure~\ref{alg:movie_from_rainbow} on a diagram of the spun trefoil. The input is the Meier-Zupan triplane diagram for the spun trefoil~\cite[Fig 20]{MZ17Trans}. After mutual braid moves (shown in the second panel of the figure) and interior Reidemeister moves (not shown), we obtain a rainbow diagram for the spun trefoil $\Tcal=(T_1,T_2,T_3)$. We leave it to the reader to check that each unlink $T_i\cup \T_{i+1}$ is fully destabilizable. Notice that the crossings in $T_3$ can be turned into two bands $\nu$ satisfying $T_3=T_2[\nu]$. As in Step 3 above, the braided banded unlink is given by $L=T_1\cup \T_2$ and the bands are the mirors of $\nu$ as in the right panel of Figure~\ref{fig:spuntref_rain_to_band}.
\end{example}

\begin{figure}[ht]
\centering
\includegraphics[width=.65\textwidth]{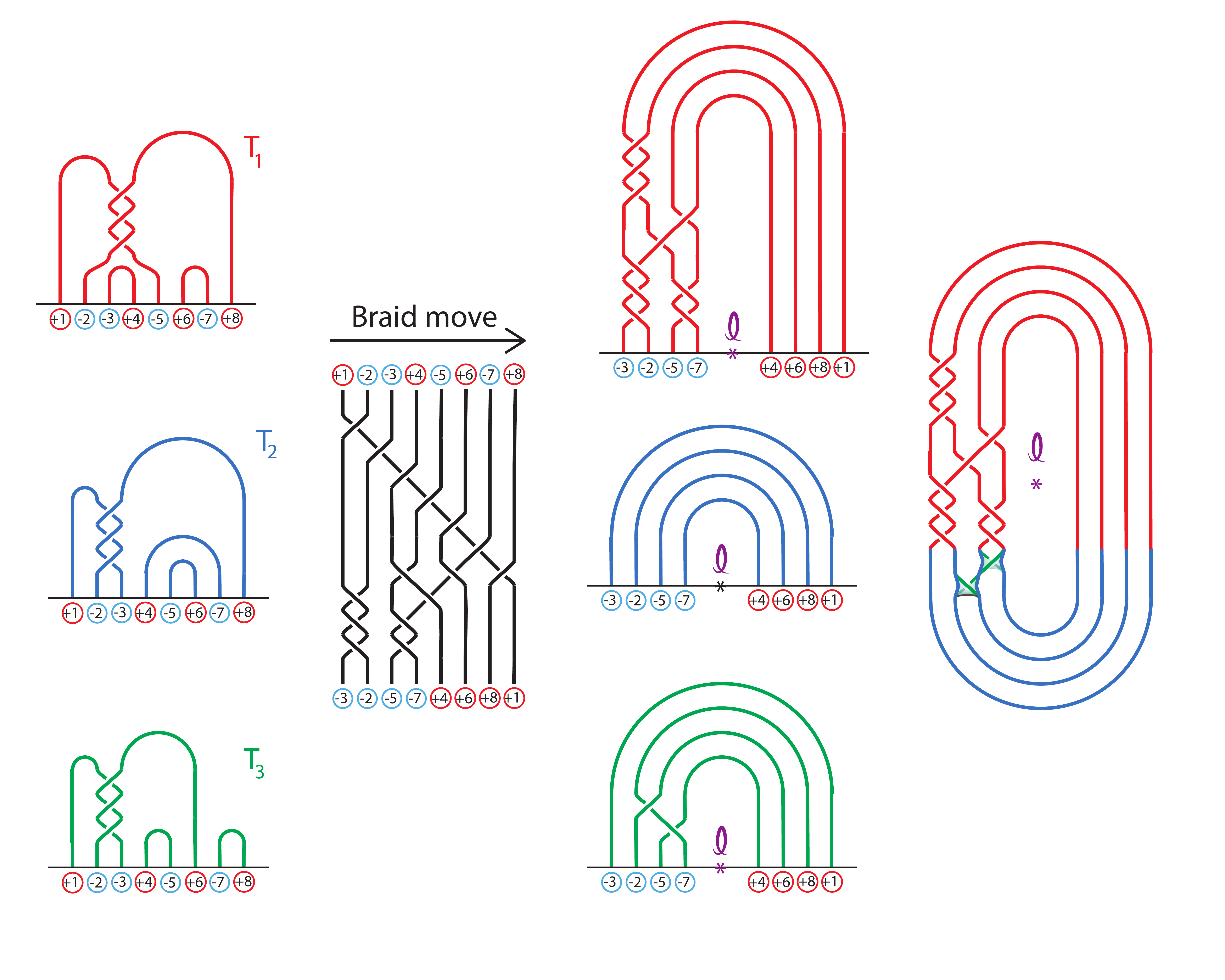}
\caption{(Left) A triplane diagram of the spun trefoil. (Middle) Rainbow diagram of spun trefoil together with bands $\nu$ such that $T_2[\nu]=T_3$. (Right) A banded unlink describing a braid movie of the same surface.}
\label{fig:spuntref_rain_to_band}
\end{figure}

\begin{proposition}
The output of Procedure~\ref{alg:movie_from_rainbow} is a banded unlink $(L,\omega)$ representing a movie for $F$.
\end{proposition}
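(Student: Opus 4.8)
The plan is to verify that the triple $(L,\omega)$ produced by Procedure~\ref{alg:movie_from_rainbow} satisfies the three conditions in the definition of a braided banded unlink diagram, and then to identify the surface $F(L,\omega)$ with $F$. First I would check that $L = T_1\cup \T_2$ is a fully destabilizable braided unlink. This is immediate from Step~1: after Step~1 the triple $\Tcal = (T_1,T_2,T_3)$ is a genuine rainbow diagram, so by definition each $T_i\cup\T_{i+1}$, in particular $T_1\cup\T_2$, is a fully destabilizable braid whose closure is an unlink. The bands $\omega = \overline{\nu}$ are, by construction (Lemma~\ref{lem:existence_bands_for_tangle} and Lemma~\ref{lem:existence_bands_for_unlink}), a collection of pairwise disjoint braided bands for the braid $L$: each band in $\nu$ is a half-twisted band whose core lies in a disk page of the axis and which realizes a conjugate of a standard generator, and mirroring preserves all of these properties. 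Since the $\nu$ are pairwise disjoint bands for $T_2$, their mirrors $\omega$ are pairwise disjoint bands for $\T_2$, hence for $L = T_1\cup\T_2$ once we note that the bands are supported in the $B_2$-side of the splitting and miss $T_1$.

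Next I would check condition~(3): $L[\omega]$ must again be a fully destabilizable braided unlink. Performing band surgery along $\omega=\overline{\nu}$ on $L = T_1\cup\T_2$ has the same effect as surgering $T_2$ along $\nu$ and taking the union with $T_1$; by the output of Step~2 we have $T_2[\nu] = T_3$ as braided tangles, so $L[\omega] = T_1\cup\overline{T_2[\nu]} = T_1\cup\T_3$. Since $\Tcal$ is a rainbow diagram, $T_1\cup\T_3$ (which is $T_3\cup\T_1$ read the other way, i.e. $T_i\cup\T_{i+1}$ for the appropriate cyclic index) is a fully destabilizable braided unlink. This gives condition~(3).

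Finally, to see that $(L,\omega)$ represents $F$, I would recall the standard correspondence between bridge trisections and banded unlink diagrams via the middle 3-ball: the surface $F$ of a bridge trisection with spine $(T_1,T_2,T_3)$ is recovered by taking the trivial disk system $\Dcal_2$ filling $L = T_1\cup\T_2$, pushing it through, and recording the trivial disk system $\Dcal_3$ filling $T_3\cup\T_1 = L[\omega]$; the bands $\omega$ trace exactly the saddle cobordism from $\Dcal_2$ to $\Dcal_3$. More concretely, Lemma~\ref{lem:existence_bands_for_unlink} says $U[\nu]$ bounds a trivial disk system and is braid isotopic to $T_3\cup\T_2$, so the cobordism traced by $\omega$ between $\Dcal_2$ (filling $L$) and the disk system filling $L[\omega]=T_1\cup\T_3$ is built out of trivial pieces; together with the fact that the union of the three tangles determines the embedding (Livingston's theorem, as cited after Definition~\ref{def:bridge_trisection}), this identifies $F(L,\omega)$ with $F_\Tcal = F$ up to isotopy. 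The main obstacle I expect is the bookkeeping in this last step: carefully matching the banded-unlink recipe's conventions (which disk system is ``before'' the bands and which is ``after'', and the orientation/handedness of $\omega=\overline{\nu}$ versus $\nu$) with the cyclic indexing $T_i\cup\T_{i+1}$ of the rainbow diagram, so that band surgery genuinely interpolates between two of the three trivial disk systems of the trisection rather than producing a mismatched pair.
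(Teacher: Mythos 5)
Your proposal is correct and follows essentially the same route as the paper: verify conditions (1)--(3) of a braided banded unlink diagram from the rainbow property together with Lemma~\ref{lem:existence_bands_for_unlink}, then identify the surface by cutting the movie into births, saddles, and deaths and invoking Livingston's uniqueness of trivial disk systems. The one slip is the indexing you yourself flag: with the paper's convention $\partial\Dcal_1=T_1\cup\T_2$, the disk system filling $L$ is $\Dcal_1$ (not $\Dcal_2$), and $\Dcal_2$ is recovered as the saddle cobordism from $\T_2$ to $\T_3$ after splitting the middle cobordism from $L$ to $L[\omega]$ into a product piece $T_1\times I$ (absorbed into $\Dcal_1$) and the band-surgery piece --- exactly the bookkeeping the paper's proof carries out.
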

\begin{proof}
By construction, $L[\omega]=T_1\cup \T_3$. As $\Tcal$ is a strong rainbow diagram, both $L$ and $L[\omega]$ are fully destabilizable braids. So, by Lemma~\ref{lem:existence_bands_for_unlink}, there exist braid movies that trace trivial disk systems bounded by $L$ and $L[\omega]$. Hence, $(L,\omega)$ is a braided banded unlink diagram.

It remains to see that the surface described by $(L,\omega)$ is, in fact, $F$. As $F$ is bridge trisected with spine $\Tcal$, $F$ is decomposed as the union of three trivial disk systems $\Dcal_1\cup \Dcal_2\cup \Dcal_3$. The movie described by this banded presentation can be divided into three cobordisms: (1) from the empty set to $L$, (2) from $L$ to $L[\omega]$, and (3) from $L[\omega]$ to the empty set. The first and third cobordisms correspond to the trivial disk systems $\Dcal_1$ and $\Dcal_3$ as they are bounded by $L=T_1\cup \T_2$ and $T_1\cup \T_3$. The second cobordism is, in turn, cut along a copy of $S^2\times I\subset S^3\times I$ into two parts: (a) one product region $T_1\times I$, and (b) the union of saddle cobordisms from $\T_2$ to $\T_2[\overline\nu]=\T_3$. The first piece can be regarded as a subset of $\Dcal_1$. By Lemma~\ref{lem:existence_bands_for_unlink}, the second piece is a trivial disk system with boundary $T_2\cup\T_3$. By uniqueness of trivial disk systems \cite{Livingston}, this piece is the trivial disk system $\Dcal_2$. Hence, $F$ is described by $(L,\omega)$.
\end{proof}

\subsection{Rainbow diagram from a braid movie}
In particular, this can used to turn a banded unlink diagram into a rainbow diagram using an algorithm due to Kamada that braids banded unlink diagrams \cite{kamada1994characterization}. By keeping track of the braid index of our braid movies, we obtain an inequality between the rainbow number and the braid index of a surface link (Theorem~\ref{thm:main_ineq}). 

\begin{procedure}[Rainbows from braided banded unlinks]\label{alg:rain_from_movies}
Let $(L,\nu)$ be a braided banded unlink for an oriented surface $F\subset S^4$; $L$ is a braided unlink and $\nu$ are braided bands so that $L[\nu]$ is also a braided unlink. 

\textbf{Step 1.} Divide the braid into two braided tangles $L=T_1\cup \T_2$ satisfying the following properties: 
\begin{enumerate}
    \item $T_1$ contains all the crossings of $L$, and 
    \item $T_2$ contains all the bands in $\nu$. 
\end{enumerate}
One may need to conjugate the braid $L$ and to slide the endpoints of the bands of $\nu$ along $L$. Denote by $\Sigma$ the sphere dividing $T_1\cup \T_2$. The braid axis $\ell$ is a subset of $\Sigma$ and divides it into two disks $\Sigma_-$ and $\Sigma_+$. We think of $\Sigma_\pm$ as disks punctured by the braid $L$ and tangles $T_1$ and $T_2$.  

\textbf{Step 2.} As $\nu$ is formed by braided bands, the cores of $\nu$ are embedded arcs in different disk pages (see Definition~\ref{def:braided_bands}). Project these cores onto $\Sigma_+$ to get an immersed graph $\Gamma$ with vertices forming a subset of $L\cap \Sigma_+$. See Figure~\ref{fig:spuntref_movie_to_rain1} for an example. 

\begin{definition}\label{def:banded_bridge_position}
We say that $(L,\nu)$ as above is in \emph{banded braided-bridge position} if the graph $\Gamma\subset \Sigma_+$ 
(a) is embedded (one edge per core of $\nu$), and 
(b) acyclic (a disjoint union of trees). In this case, we say that $\nu$ is \emph{dual to $L$}. 
\end{definition}
Figure~\ref{fig:spuntref_movie_to_rain2}(B) shows an example of a dual set of bands. It can be seen that $(L,\nu)$ is in a banded braided-bridge position if $|\nu|=1$. 
The goal of the following steps is to find a sequence of Markov stabilizations to put $(L,\nu)$ into banded braided-bridge position when $|\nu|>1$. The process will be iterative: Starting with one band, which is dual to $L$, we will modify $L$ so that two bands are dual to $L$, then modify $L$ again so that three bands are dual, and so on. 

\textbf{Step 3 (Iterations).} Order the bands in $\nu$ in terms of height, with $v_1$ being the lowest. Set $\omega=\{v_1\}$; at the moment $(L,\omega)$ is in banded-braided bridge position. 

\textbf{Step 3(a).} Let $x\in \nu$ be the lowest band in $
\nu$ above all the bands in $\omega$. If no such $x$ exists, we have then exhausted $\nu$ and move to Step 4. We will put a tilde on top of a band to denote the projection of its core onto $\Sigma_+$. In particular, the arcs of $\wt \omega$ have pairwise disjoint interiors and may intersect $\wt x$. 

\textbf{Step 3(b).} Find an arc $\alpha$ in $\Sigma_+$ with (i) one endpoint equal to an endpoint of $\wt x$ and the other in $\ell$ such that (ii) the interior of $\alpha$ is disjoint from $\wt \omega$. See bottom of Figure~\ref{fig:spuntref_movie_to_rain1}(A) for an example of such an $\alpha$. If $\alpha$ exists, move to Step 3(c). 
If such an $\alpha$ does not exist, we do the following: Pick any arc $\beta$ satisfying (i) that may cross $\wt \omega$; by construction, the union $\beta\cup \wt x$ has one puncture of $L$ in its interior. Push $\beta\cup \wt x$ off this puncture to get an arc $\beta'$, and use it to guide a Markov stabilization of $L$. As you can see in Figure~\ref{fig:movie_to_rain1}, after stabilization and pushing the new crossings of the braid above $\Sigma$, we will be able to find $\alpha$. 

It is important to note the following caveat. There were two possible ways to obtain $\beta'$ from $\beta\cup \wt x$, and the choice we made in Figure~\ref{fig:movie_to_rain1} requires a negative stabilization for the sliding process to be successful. In particular, a different push off $\beta'$ requires a positive Markov stabilization.  

\begin{figure}[ht]
\centering
\includegraphics[width=.9\textwidth]{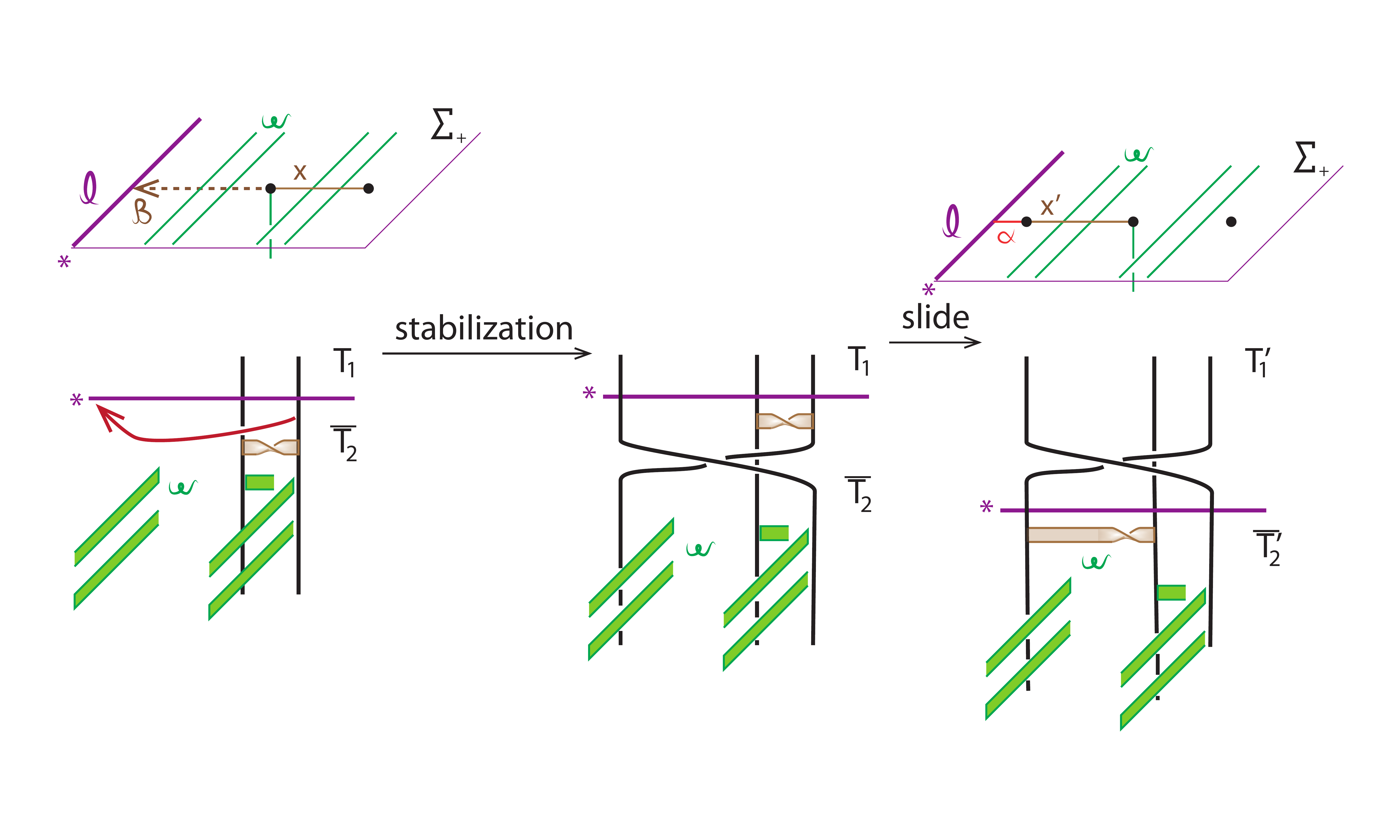}
\caption{Step 3(b): how to alter $L$ so that we can find $\alpha$. We use any arc $\beta$ disjoint from $\wt x$ to guide a Markov stabilization (left). Then, we push the new crossing of the braid above $\Sigma$ while keeping the bands below it. The resulting (braid, bands) tuple will contain the desired arc (top right). The top figures show a 3-dimensional view of the disk $\Sigma_+$, while the top pictures show the respective braid and bands near $\Sigma_+$.}
\label{fig:movie_to_rain1}
\end{figure}

\textbf{Step 3(c).} Consider $\alpha$ from Step 3(b). By construction, the endpoint union $\alpha\cup \wt x$ intersects one puncture of $L$ in its interior: $\alpha \cap \wt x$. Push $\alpha\cup \wt x$ off this puncture to get an arc $\alpha'$, and use it to guide a Markov stabilization of $L$. Push the new crossings above $\Sigma_+$ while keeping the bands in $\nu$ below it. As you can see in Figure~\ref{fig:movie_to_rain2}, the resulting tuple of (braid, bands) has the property that the arc corresponding to $\wt x$ is now disjoint from $\wt \omega$. In fact, the graph $\Gamma_\nu$ given by the edges of $\wt \nu=\wt \omega\cup \wt x$ is acyclic. 

The same caveat from Step 3(b) applies in this step. There were two possible ways to obtain $\alpha'$ from $\alpha\cup \wt x$, and the choice we made in Figure~\ref{fig:movie_to_rain2} requires a negative stabilization for the sliding process to be successful. In particular, a different push off $\alpha'$ requires a positive Markov stabilization. 

\textbf{Step 3(d) Repeat.} Add $x$ to the set $\omega$ and go to Step 3(a). 

\begin{figure}[ht]
\centering
\includegraphics[width=.8\textwidth]{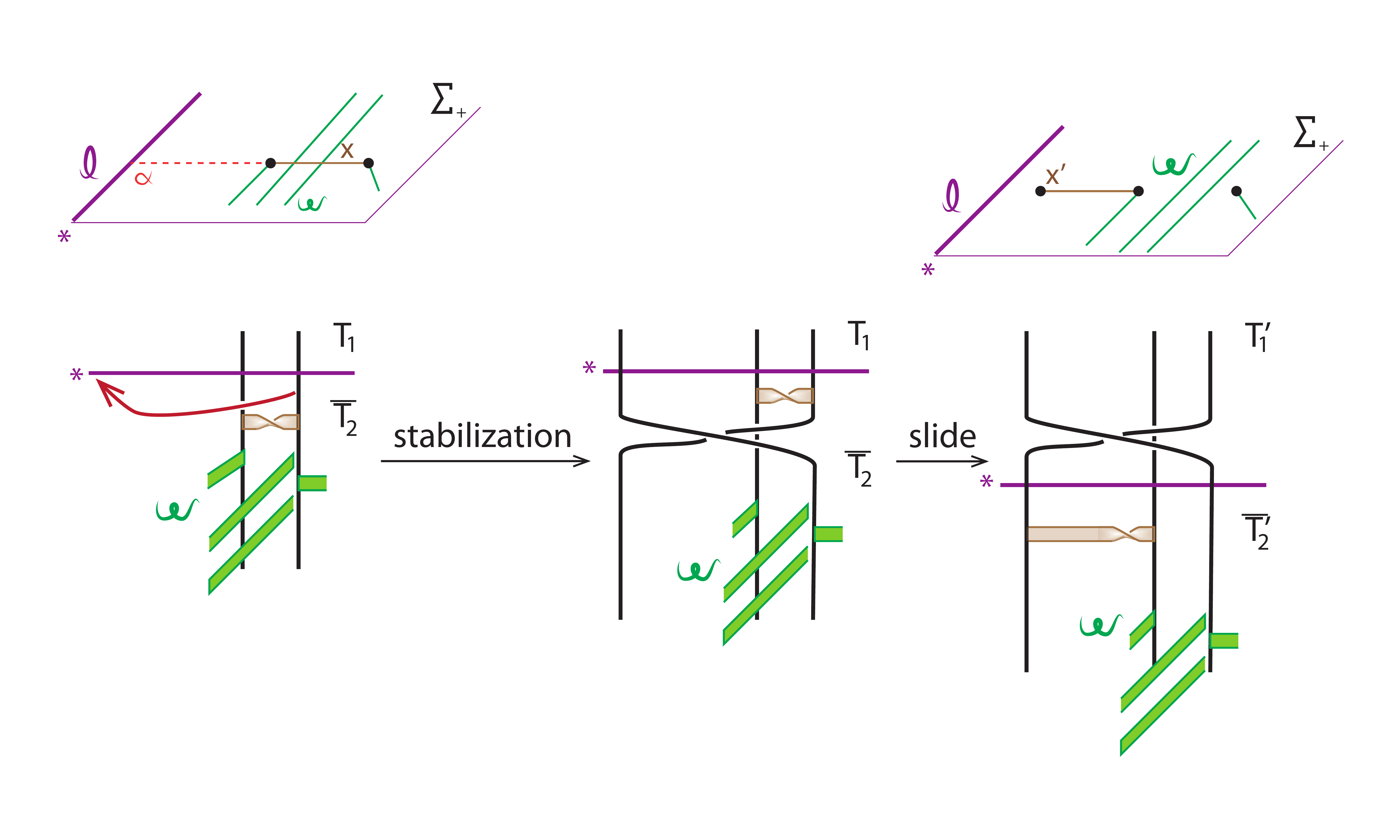}
\caption{Step 3(c): how to use $\alpha$ to make $(L,\omega\cup \{x\})$ in banded braided bridge position. The bottom figures show a 3-dimensional view of the disk $\Sigma_+$, while the top pictures show the respective braid and bands near $\Sigma_+$.}
\label{fig:movie_to_rain2}
\end{figure}

\textbf{Step 4.} At this point, we have a braided banded unlink $(L,\nu)$ is in braided banded bridge position (Def~\ref{def:banded_bridge_position}). In particular, $L=T_1\cup \T_2$, $T_2$ is crossingless, and the bands $\nu$ have endpoints on the tangle $\T_2$. Let $\T_3=T_2[\nu]$; so $T_3$ is the mirror image of the band surgery of the identity braid along the bands $\nu$. We output the tuple $\Tcal=(T_1,T_2,T_3)$. 
\end{procedure}

\begin{example}[Spun trefoil]\label{ex:trefoil_movie_to_rain}
Figures~\ref{fig:spuntref_movie_to_rain1}, \ref{fig:spuntref_movie_to_rain3}, and~\ref{fig:spuntref_movie_to_rain2} depict Procedure~\ref{alg:rain_from_movies} on a braided banded unlink for a diagram of the spun trefoil. The input is the braided banded unlink in Figure~\ref{fig:spuntref_movie_to_rain1}(A), which is the output of Figure~\ref{fig:spuntref_chart_to_movie}. We delay the explanation of why this is a braided banded unlink diagram for the spun trefoil until Section~\ref{sec:charts_from_rain} and Figure~\ref{fig:spuntref_chart_to_movie}. The signed arcs in Figure~\ref{fig:spuntref_movie_to_rain1} represent braided bands: conjugations of positive or negative powers of the generators of the braid group. Figure~\ref{fig:spuntref_movie_to_rain1}(A) is already the output of Step 2, but the bands $\{v_1,v_2\}$ are not dual to $L$; recall we label the bands in $\nu$ from lowest to highest. In this case, there is an arc $\alpha$ as in Step 3(b). In Figure~\ref{fig:spuntref_movie_to_rain1}(B)-(C), we use $\alpha$ to perform a Markov stabilization and slide the bands down while pushing the new crossings of $L$ above $\Sigma_+$. Figure~\ref{fig:spuntref_movie_to_rain3} is meant to help the reader see how the bands change when sliding down. At this point, notice that $\{v_1,v_2,v_3\}$ is dual to $L$ so we only need to run Steps 3(a)-(d) one more time, which we do in Figure~\ref{fig:spuntref_movie_to_rain2}(A)-(B). The rainbow diagram $(T_1,T_2,T_3)$ is then extracted in Figure~\ref{fig:spuntref_movie_to_rain2}(B)-(C).
\end{example}

\begin{figure}[ht]
\centering
\includegraphics[width=.55\textwidth]{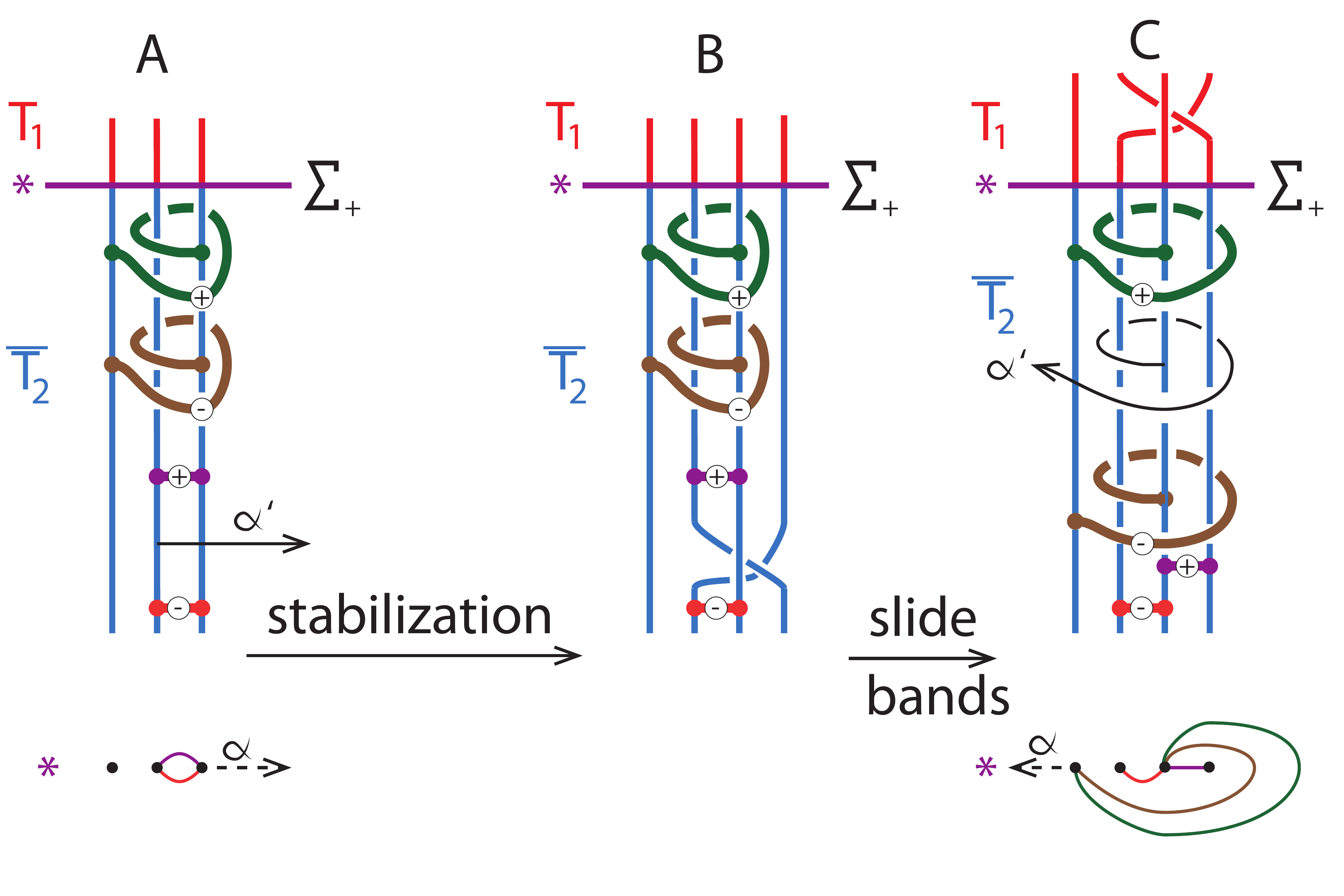}
\caption{Rainbow of spun trefoil from movie (1/3): We start with the braided banded unlink diagram from Figure~\ref{fig:spuntref_chart_to_movie}. After one stabilization, the resulting graph $\Gamma$ is already embedded, but it is not acyclic (right). So we perform Step 3(a) again and find an arc $\alpha$ connecting an endpoint of the green band with the binding. }
\label{fig:spuntref_movie_to_rain1}
\end{figure}
\begin{figure}[ht]
\centering
\includegraphics[width=.45\textwidth]{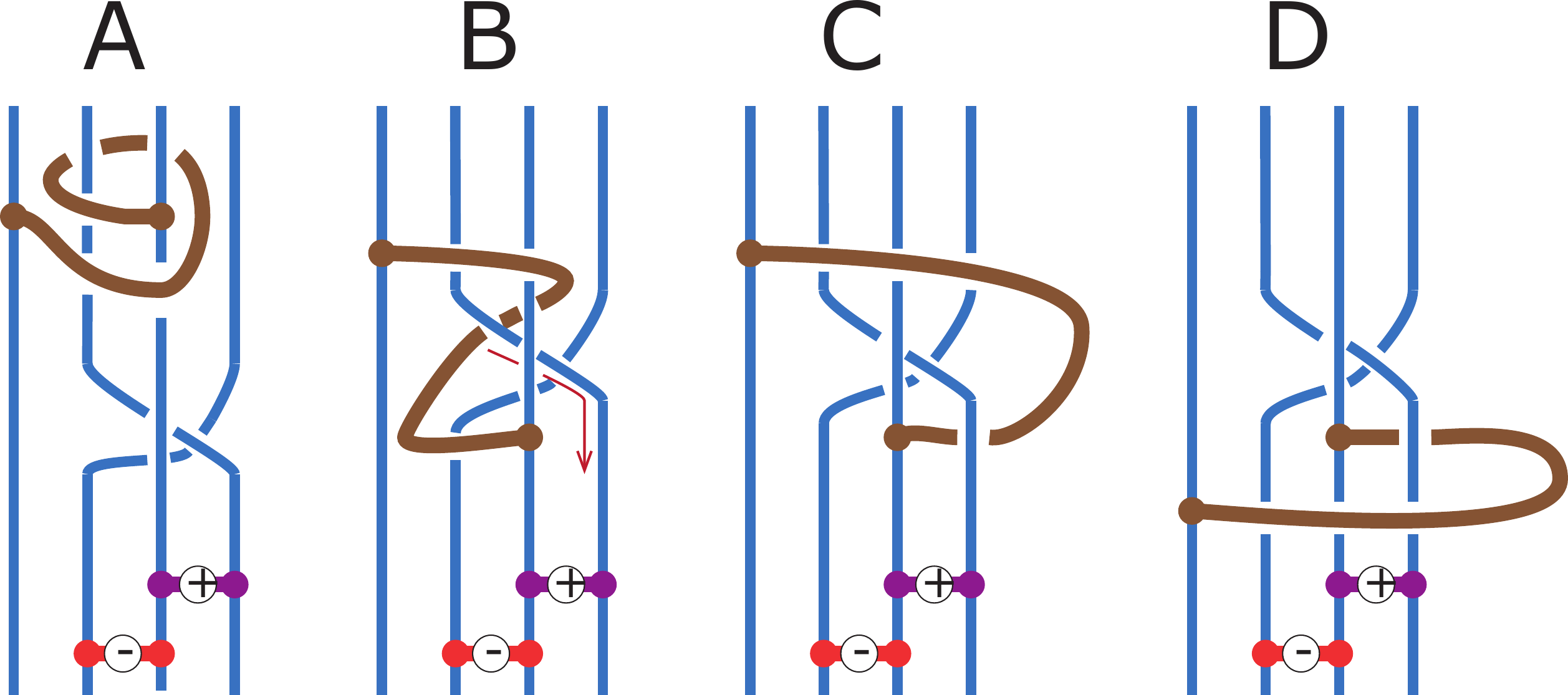}
\caption{Rainbow of spun trefoil from movie (2/3): Isotopy that slides down the red band from Figure~\ref{fig:spuntref_movie_to_rain1}. }
\label{fig:spuntref_movie_to_rain3}
\end{figure}
\begin{figure}[ht]
\centering
\includegraphics[width=.5\textwidth]{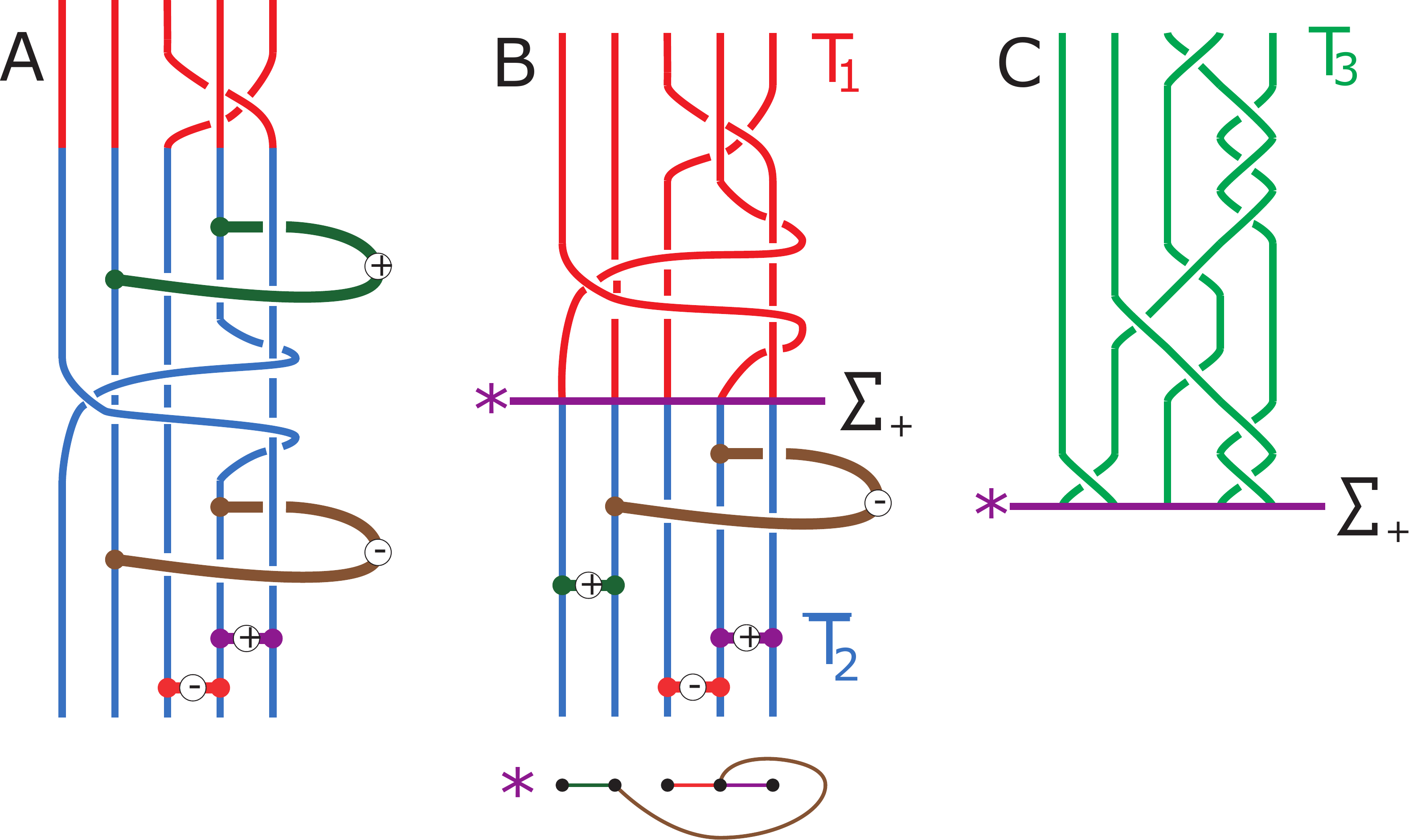}
\caption{Rainbow of spun trefoil from movie (3/3): (left) result of a negative Markov stabilization to the rightmost braid in Figure~\ref{fig:spuntref_movie_to_rain1}. After sliding the bands down $\Sigma_+$ and the crossings above it, we obtain a rainbow diagram $(T_1,T_2,T_3)$; $T_2$ is the crossingless braid and $T_3$ is the mirror image of surgery of $T_2$ along the four bands in $\nu$.}
\label{fig:spuntref_movie_to_rain2}
\end{figure}

\begin{proposition}\label{prop:rain_from_movies}
The output of Procedure~\ref{alg:rain_from_movies} is a rainbow diagram $(T_1,T_2,T_3)$ for the input surface.
\end{proposition}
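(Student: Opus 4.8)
The statement splits into two claims: that the output $\Tcal=(T_1,T_2,T_3)$ is a rainbow diagram, and that the surface $F_\Tcal$ it determines is the input surface $F(L,\nu)$. The plan is to establish the first by exhibiting braided trivial disk systems $\Dcal_1,\Dcal_2,\Dcal_3$ bounded by the three pairwise unions $T_i\cup\T_{i+1}$, and the second by the movie-decomposition argument already used after Procedure~\ref{alg:movie_from_rainbow}. First I would record that every operation performed in Procedure~\ref{alg:rain_from_movies} --- a Markov (de)stabilization of $L$ together with the induced sliding of the band feet along $L$, plus ambient isotopy of $S^3$ --- carries a braided banded unlink to a braided banded unlink: Markov stabilization preserves being fully destabilizable and changes neither the link type of $L$ nor that of $L[\nu]$, and sliding the feet of the bands is an isotopy. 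Hence $L$ and $L[\nu]$ stay fully destabilizable braided unlinks throughout, and by Step~4 we have $T_2$ crossingless, $T_1\cup\T_2=L$, $T_1\cup\T_3=L[\nu]$, and (since $\T_3=T_2[\nu]$) $T_2\cup\T_3=T_2[\nu]$ as a braided link, with the bands $\nu$ dual to $L$.

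Next I would build the disk systems. Applying Lemma~\ref{lem:existence_bands_for_unlink} to the fully destabilizable braided unlinks $L$ and $L[\nu]$ produces braided trivial disk systems $\Dcal_1$ and $\Dcal_3$ with $\partial\Dcal_1=T_1\cup\T_2$ and $\partial\Dcal_3=T_3\cup\T_1$ --- the latter link differs from $L[\nu]=T_1\cup\T_3$ only by inverting the underlying braid word, which preserves full destabilizability. For $\Dcal_2$ I would use that $\nu$ is dual to $L$: by Definition~\ref{def:banded_bridge_position} the projected graph $\Gamma\subset\Sigma_+$ is an embedded forest, so peeling off its leaves one at a time realizes $T_2[\nu]$ as an iterated Markov stabilization of the crossingless braid on $|T_2[\nu]|$ components; hence $T_2[\nu]$ is a fully destabilizable braided unlink and the surface $F(T_2,\nu)$, assembled from the $b$ trivial disks bounded by $T_2$ and the bands $\nu$, is a braided trivial disk system $\Dcal_2$ with $\partial\Dcal_2=T_2\cup\T_3$. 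Together with the common braid axis, $(X_1,\Dcal_1)\cup(X_2,\Dcal_2)\cup(X_3,\Dcal_3)$ is then a braided-bridge trisection of a closed surface with spine $\Tcal$, so $\Tcal$ is a rainbow diagram.

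To see that this surface is $F(L,\nu)$ I would run the argument from the proposition following Procedure~\ref{alg:movie_from_rainbow}. The braid movie read off from $(L,\nu)$ is $\emptyset\to L\to L[\nu]\to\emptyset$, and $F_\Tcal=\Dcal_1\cup\Dcal_2\cup\Dcal_3$ is cut by the bridge sphere, together with a parallel copy of it, into exactly three pieces: the cap $\Dcal_1$ on $L=T_1\cup\T_2$, the cap $\Dcal_3$ on $L[\nu]=T_1\cup\T_3$, and a middle cobordism $L\to L[\nu]$ that splits as a product region $T_1\times I\subset\Dcal_1$ glued to the saddle cobordism from $\T_2$ to $\T_2[\nu]=\T_3$ traced by $\nu$; by uniqueness of trivial disk systems this middle piece coincides with $\Dcal_2$ and with the band cobordism of $(L,\nu)$. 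Since every operation in the procedure only stabilized $L$ or applied an isotopy, the isotopy type of the represented surface never changed, so $F_\Tcal$ is isotopic to the input $F(L,\nu)$.

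The main obstacle is the $\Dcal_2$ step. One must verify that the purely combinatorial ``banded braided-bridge position'' condition --- the graph $\Gamma$ embedded and acyclic --- is precisely what makes $T_2[\nu]$ fully destabilizable and $F(T_2,\nu)$ a \emph{braided} trivial disk system: the leaf-peeling induction has to be arranged so that each destabilization is compatible with the fixed axis, and one should confirm that the sign choices flagged in Steps~3(b)--(c) are exactly those that make the sliding succeed. Closely tied to this is the well-definedness of the procedure itself --- that the arcs $\alpha$ (resp.\ $\beta$) always exist in $\Sigma_+$, possibly after one extra Markov stabilization, so that the iteration of Step~3 terminates with $\nu$ dual to $L$. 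By contrast, the surface-preservation bookkeeping, though it must be stated carefully because the stabilizations in Step~3 are guided by arcs meeting the bands, is routine.
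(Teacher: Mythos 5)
Your proposal is correct and follows essentially the same route as the paper's proof: surface preservation under the procedure's stabilizations and isotopies, trivial disk systems $\Dcal_1,\Dcal_3$ coming from the full destabilizability of $L$ and $L[\nu]$, and a leaf-peeling induction on the acyclic graph $\Gamma$ to show $T_2\cup\T_3$ is fully destabilizable so that the middle cobordism is the trivial disk system $\Dcal_2$. The only difference is that the paper actually carries out the leaf-peeling step (destabilizing along the disk $D_\gamma$ swept out by an arc from a degree-one endpoint to the braid axis) where you flag it as the remaining obstacle, but the induction you outline is exactly the one the paper performs.
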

\begin{proof}
Procedure~\ref{alg:rain_from_movies} performs isotopies and braid stabilizations to turn a braided banded unlink for $F$ into one in banded braided bridge position (Definition~\ref{def:banded_bridge_position}). In particular, the final braided banded presentation still represents the original surface $F$. In what follows, we will show that a banded unlink diagram in banded braided-bridge position determines a braided bridge splitting for its underlying surface. 

Let $(L,\nu)$ be in banded braided bridge position, and let $T_1$, $T_2$, and $T_3$ be the tangles in Step 4 above. By construction, each $T_i$ is a braided tangle or rainbow. Thus, it is enough to check that $\Tcal=(T_1,T_2,T_3)$ is the spine of a bridge trisection of $F$. In other words, we need to describe trivial disk systems $\Dcal_i$ with $F=\Dcal_1\cup \Dcal_2\cup \Dcal_3$ and $\Dcal_i\cap \Dcal_{i-1}=T_i$. The first disk system corresponds to a neighborhood of the index-zero critical points of $F$. These are cobordisms from the empty set to $L$ tracing an isotopy from $|L|$ small unknots to $L$. Similarly, $\Dcal_3$ corresponds to a neighborhood of the index-two critical points. We can take $\Dcal_1$ and $\Dcal_3$ so that $\Dcal_1\cap \Dcal_3=T_1$. 

It remains to check that the leftover surface $\Dcal_2=\overline{F-(\Dcal_1\cup \Dcal_3)}$ is a trivial disk system. By construction, $\Dcal_2$ can be thought of as a cobordism from $T_2$ to $T_3$ with saddles induced by the bands in $\nu$. We will show that the bands in $\nu$ can be used to fully destabilize $T_2\cup \T_3$. This way, the cobordism $\Dcal_2$ ought to be a trivial disk system. 

By construction, $T_2$ is crossingless and $T_3=\overline{T_2[\nu]}$. So $L_{23}=T_2\cup \T_3$ is equal to the band surgery on the crossingless braid along the bands in $\nu$. We also know that the braided bands $\nu$ lie near $\Sigma_+$ and the projections of their cores form an acyclic graph $\Gamma$. Let $x\in \nu$ be a band corresponding to a leaf of $\Gamma$ and let $p\in \Sigma_+$ be an endpoint of $x$ of degree one. As $\Gamma_+$ is an acyclic graph in a disk $\Sigma_+$, there is an arc $\gamma$ with interior disjoint from $\Gamma$ that connects $p$ with a point in the braid axis $\ell=\partial \Sigma_+$. As the braid $L_{23}$ is crossingless, sliding the arc $\gamma$ along $L_{23}$ traces an embedded disk $D_\gamma$ in $S^3$ intersecting $\ell$ once and with interior disjoint from $L_{23}$. In the band surgered braid $L_{23}[\nu]$, we can use $D_\gamma$ and the band $x$ to perform a Markov destabilization as in Figure~\ref{fig:movie_to_rain3}. 
The resulting braid is equal to $L'_{23}[\nu']$ where $\nu'=\nu-\{x\}$ and $L'_{23}$ is the crossingless braid resulted from $L_{23}$ by removing the boundary of $D_\gamma$ (that is the unknot traced by the puncture $p$). As the new tuple $(L'_{23},\nu')$ is also in banded braided-bridge position, we can continue destabilizing the link $L'_{23}[\nu']$ until there are no bands left. In conclusion, $L_{23}[\nu]$ fully destabilizes as desired. 
\end{proof}
\begin{figure}[ht]
\centering
\includegraphics[width=.5\textwidth]{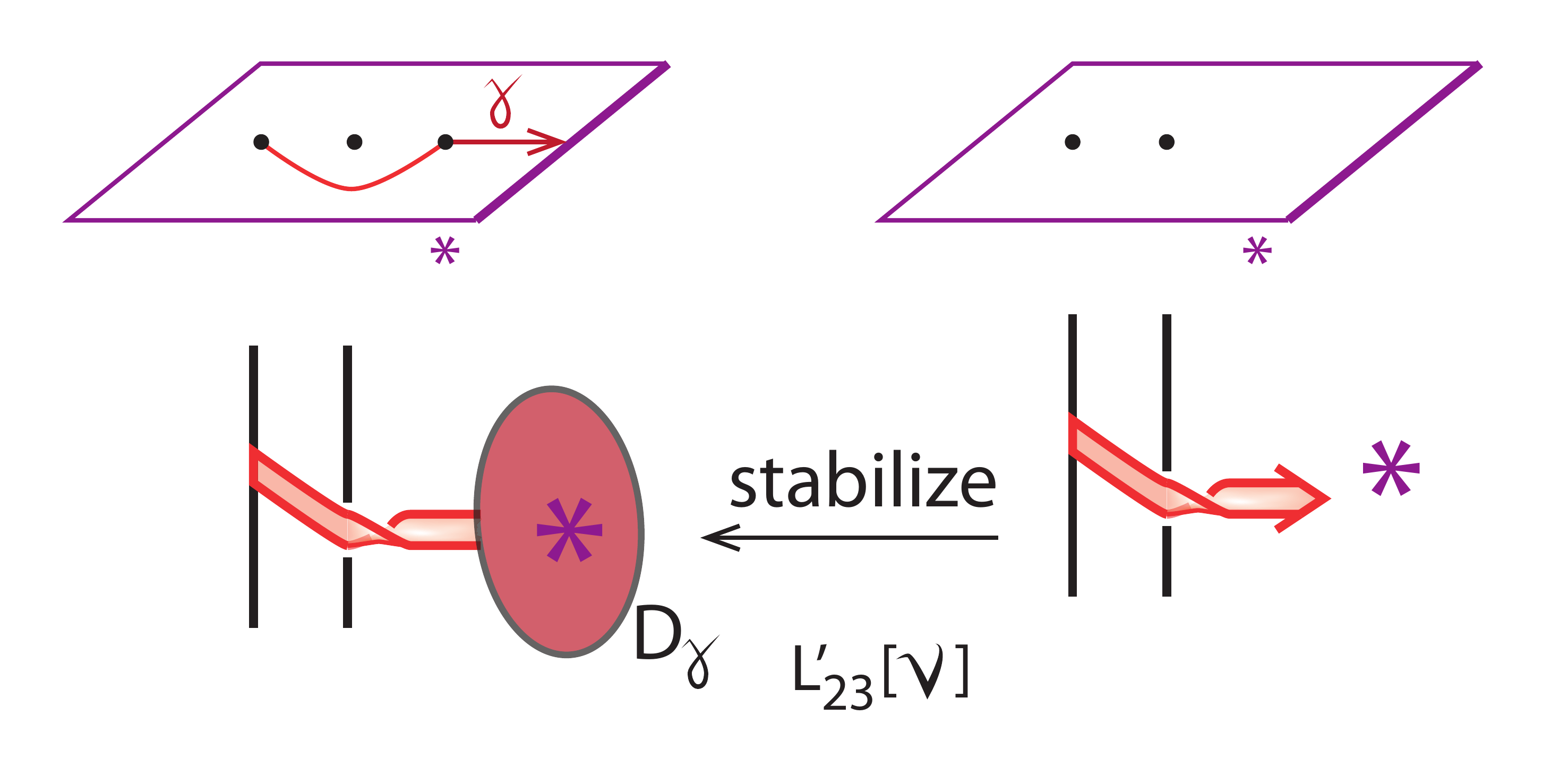}
\caption{How $D_\gamma$ and the band $x$ can be used to find a destabilization for $L_{23}[\nu]$ (left). The top figures show a ``bird's-eye view" of the disk $\Sigma_+$ while the bottom pictures show the respective braid and bands near $\Sigma_+$.}
\label{fig:movie_to_rain3}
\end{figure}

\section{Rainbows and braid charts}\label{sec:rainbows_and_charts}

\subsection{Review on braid charts}

A \emph{braid chart of degree $n$} is an edge-oriented, labeled graph that is embedded in a disk and that has three types of vertices. First, the edge labels are taken from the set \mbox{$\{\pm 1, \pm 2, \ldots , \pm (n-1) \}.$} The integer $n$ corresponds to the {\emph{braid index of the chart}}.  Edges usually run horizontally. Edges that run left-to-right have positive integer labels. Edges that run right-to-left have negative integer labels. The label upon an edge coincides with the braid generator or its inverse that corresponds to the edge. \emph{Black vertices} are monovalent, and they can be classified as sources or sinks.  
\emph{White vertices} have three incoming and three outgoing edges. The labels upon the incident edges at a white vertex alternate as indicated in Figure~\ref{BraidChart}. We no longer draw small white circles around white vertices, but in the tradition of red herrings\footnote{According to Lewis Carroll, a red herring neither is red nor is it a fish}, we keep calling them white vertices. As indicated in Figure~\ref{brokenAndChart}, white vertices correspond to Reidemeister type-III moves. and they project to triple points when a surface in $4$-space is projected into $3$-space.
A {\it crossing} is a $4$-valent vertex at which the incident edges alternate cyclically $\pm i, \pm j, \pm i, \pm j$ and $||i|-|j||>2.$ Crossings are drawn in a fashion so that the edge whose label is larger (in absolute value) is depicted as an over-crossing. 

In most of our braid charts, we add color and thickness redundancies: Edges whose labels are smaller in absolute value are drawn with thinner edges. Different colors correspond to different labels. Consequently, black vertices are seldom colored black. The sign of the label on an edge is a redundant indicator. For graphical and notational ease, $-i$ indicates the braid generator $\sigma_i^{-1}$. 

 \begin{figure}[ht]
 \centering
\includegraphics[width=.85\textwidth]{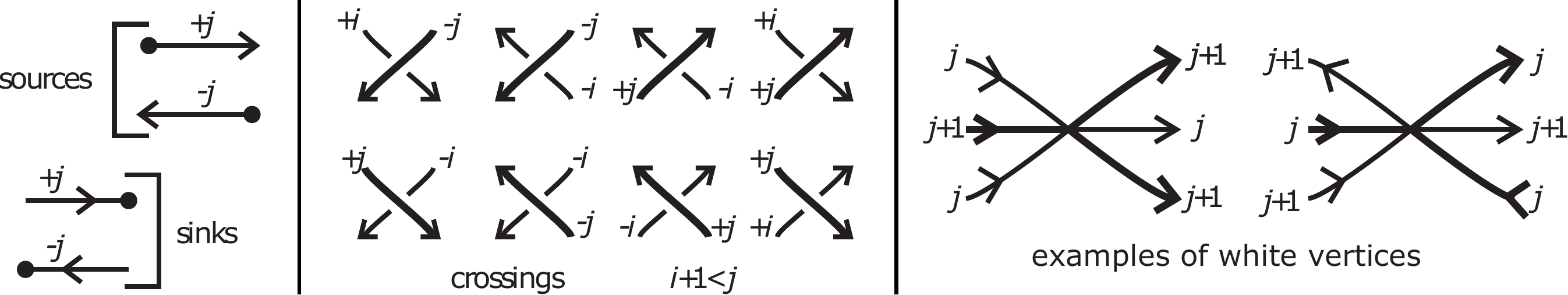}
\caption{Types of vertices in a braid chart: black vertices, crossings, and white vertices.}
\label{BraidChart}
\end{figure}

Figure~\ref{brokenAndChart} gives a correspondence between braid charts, local movies, and broken surface diagrams \cite{KamBook}. When thinking of the left-to-right direction of the disk in which a chart appears, edges have optimal points (minima and Maxima). At such, a positive and a negative edge converge. Such a critical event corresponds to the cancellation of a braid generator and its inverse. We refer the reader to \cite{KamBook} for more background on braid charts.
 
\begin{figure}[ht]
 \centering
\includegraphics[width=.55\textwidth]{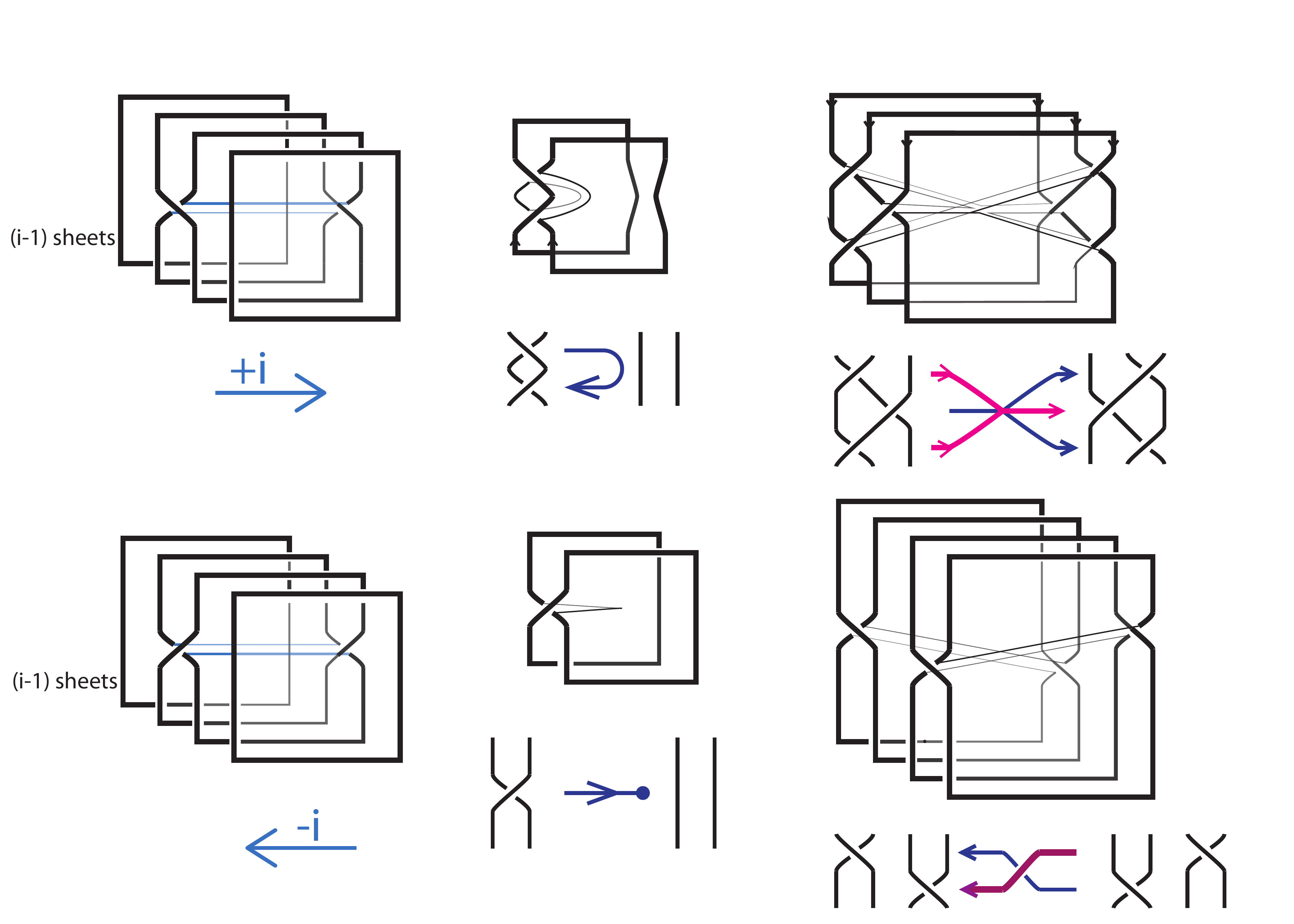}
\caption{Broken surfaces, local movies,  and chart vertices}
\label{brokenAndChart}
\end{figure}

\subsection{Braid chart from a rainbow diagram}
Joseph, Meier, Miller, and Zupan described a procedure to obtain a broken surface diagram from a triplane \cite[Thm 3.2]{Joseph_Classical_knot_theory}. Briefly explained, one draws the spine of the trisection (respecting the crossings) inside $\R^3$. As each pair forms an unlink, a sequence of Reidemeister moves turning it into a crossingless diagram traces one third of a broken surface diagram. In the context of rainbows, a strong rainbow diagram can be encoded with three braid words. As each pair of words forms a fully destabilizable unlink (by multiplying one word by the other's inverse), a sequence of braid moves and destabilizations turning this to a trivial word will give us a braid movie to a crossingless braid. Thus, tracing a third of a braid chart. 

\begin{procedure}[Braid chart from rainbow]\label{alg:chart_from_rainbow}
Let $\Tcal$ be a weak-rainbow diagram for a surface $F\subset S^4$. 

\textbf{Step 1.} Perform enough Markov stabilizations to $\Tcal$ to obtain a rainbow diagram; Lemma~\ref{lem:strong_rainbow}. Let $x_1,x_2,x_3\in B_b$ be elements in the braid group that represent the tangles $T_1$, $T_2$, and $T_3$, respectively. Recall that, in this paper, we read braids from top to bottom so that $T_i\cup \T_j$ is the braid closure of $x_i\overline{x_j}$.

\textbf{Step 2.} For each $j=1,2,3$, let $\Gamma_j=\{re^{k\pi i}:r\geq 0, k=1/2+2(j-1)/3\}$ be a ray in the plane. Following the convention in Figure~\ref{fig:spuntref_rain_to_chart1} (third column), draw lines in each $\Gamma_j$ so that traversing $\Gamma_j$ from the origin reads the braid word $\overline {x_j}$ once.  Note that, if we travel $\Gamma_i\cup \Gamma_j$, passing through the origin, we will then read the braid word for $T_i\cup \T_j$. Thus, the added marks on $\Gamma_1\cup \Gamma_2\cup \Gamma_3$ describe the spine of $\Tcal$ embedded in $\R^4$. 

\textbf{Step 3.} As we have a rainbow diagram, the braids $T_i\cup \T_{i+1}$ are fully destabilizable. So there is a sequence of braid and saddle moves that turn $x_i\overline{x}_{i+1}$ into a trivial braid that traces a trivial disk system bounded by $T_i\cup\T_{i+1}$; Lemma~\ref{lem:existence_bands_for_unlink}. Using Figure~\ref{brokenAndChart} as a reference, translate these moves to a braid chart. See Figure~\ref{fig:spuntref_rain_to_chart1} for an example. The union of these three charts is the output braid chart.
\end{procedure}

\begin{proposition}\label{prop:chart_from_rainbow}
The output of Procedure~\ref{alg:chart_from_rainbow} is a braid chart for the input surface. 
\end{proposition}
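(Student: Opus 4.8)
The plan is to identify the surface described by the output chart with the bridge trisected surface $F_\Tcal$, by exhibiting it as a union of three trivial disk systems glued along the spine $\Tcal$. This is the braided incarnation of the triplane-to-broken-surface-diagram construction of \cite[Thm 3.2]{Joseph_Classical_knot_theory}, and the bookkeeping runs parallel to the proof of Proposition~\ref{prop:rain_from_movies}; one could instead route through Procedure~\ref{alg:movie_from_rainbow} together with the chart--movie dictionary of \cite{KamBook}, but the direct route seems cleaner.

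First I would dispose of Step 1: the Markov stabilizations promoting the weak-rainbow diagram to a rainbow diagram are rainbow moves (Lemma~\ref{lem:strong_rainbow}) and hence do not change $F_\Tcal$, so we may assume $\Tcal=(T_1,T_2,T_3)$ is a $b$-strand rainbow diagram with braid words $x_1,x_2,x_3$. Next, note that the rays $\Gamma_1,\Gamma_2,\Gamma_3$ cut the base disk into three sectors, the $i$-th one bounded by $\Gamma_i$ and $\Gamma_{i+1}$; by Step 2 the marks on $\Gamma_i\cup\Gamma_{i+1}$ read the braided unlink $T_i\cup\T_{i+1}$, the closure of $x_i\overline{x}_{i+1}$. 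By Step 3 and Figure~\ref{brokenAndChart}, the portion $\Ccal_i$ of the output graph in the $i$-th sector is the braid chart of a braid movie realizing a sequence of braid isotopies and saddle moves that takes $T_i\cup\T_{i+1}$ to a crossingless braid; such a movie exists precisely because $\Tcal$ is a rainbow diagram, and by Lemma~\ref{lem:existence_bands_for_unlink} the braided surface it traces is a trivial disk system $\Dcal_i$ with $\partial\Dcal_i=T_i\cup\T_{i+1}$. Since the sub-charts $\Ccal_{i-1}$ and $\Ccal_i$ restrict along $\Gamma_i$ to the same marks (those prescribed in Step 2), the pieces $\Dcal_1,\Dcal_2,\Dcal_3$ glue along the tangles $T_1,T_2,T_3$ into a closed surface with $\Dcal_i\cap\Dcal_{i-1}=T_i$; that is, a bridge trisection with spine $\Tcal$. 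By Livingston's uniqueness of trivial disk systems bounding an unlink \cite{Livingston} (equivalently \cite[Lem 2.5]{MZ17Trans}), this surface is $F_\Tcal=F$. Finally one checks that the output graph satisfies the defining conditions of a chart (edge orientations, trivalent white vertices, $4$-valent crossings, monovalent black vertices), which holds because each elementary move in the movies translates, via Figure~\ref{brokenAndChart}, to a chart-legal configuration.

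The main obstacle I anticipate is bookkeeping rather than conceptual: one must pin down the identification of the base disk and its three sectors with the trisection $X_1\cup X_2\cup X_3$ of $S^4$, and verify that reading the chart as a braided surface over each sector genuinely reproduces $\Dcal_i$ with the claimed boundary. Concretely this amounts to checking that the chart--movie correspondence of \cite{KamBook} is applied with consistent conventions across the three sectors and that the gluing data along $\Gamma_1,\Gamma_2,\Gamma_3$ matches the gluing of tangles in Definition~\ref{def:bridge_trisection}; once that is in place, the identification with $F_\Tcal$ is forced by the construction, exactly as in \cite[Thm 3.2]{Joseph_Classical_knot_theory}.
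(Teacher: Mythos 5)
Your proposal is correct and follows essentially the same route as the paper's proof: both identify the three sectors of the chart with braid movies tracing trivial disk systems bounded by $T_i\cup\T_{i+1}$ (via Lemma~\ref{lem:existence_bands_for_unlink}), and then invoke Livingston's uniqueness of trivial disk systems rel boundary to conclude the chart represents $F$. Your version merely spells out the gluing along the rays $\Gamma_i$ and the chart-legality check in more detail than the paper does.
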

\begin{proof}
Recall that $F$ comes equipped with a braided bridge splitting; $F=\Dcal_1\cup \Dcal_2\cup \Dcal_3$. As each braid $T_i\cup \T_{i+1}$ is completely destabilizable, Lemma~\ref{lem:existence_bands_for_unlink} ensures that the movie of braids in Step 3 of the procedure is one for a disk system. As trivial disk systems are unique rel boundary, such disks are isotopic to $\Dcal_i$ \cite{Livingston}. Thus, the braid chart represents $F$.
\end{proof}

\begin{remark}
For the experts, one could think of braid charts as a braided surface version of the Morse 2-functions on 4-manifolds \cite{GK}. Thus, the output of Procedure~\ref{alg:chart_from_rainbow} can be regarded as a \emph{trisected braid chart}. The question then becomes: Can every braid chart be modified into a trisected braid chart? One possible answer to this question is given in Section~\ref{sec:charts_from_rain}.
\end{remark}

\begin{example}[Spun trefoil Version 1]
Figure~\ref{fig:spuntref_rain_to_chart1} depicts the output of Procedure~\ref{alg:chart_from_rainbow}; the input diagram is from Figure~\ref{fig:spuntref_rain_to_band}. Figure~\ref{fig:spuntref_rain_to_chart2} shows the braid movies that make each braid $x_i\cup \overline{x}_{i+1}$ into a trivial braid; this is needed in Step 3 of the procedure. For the readers familiar to braid charts, Figure~\ref{fig:spuntref_simplifying_chart} shows a sequence of braid chart moves that turn our chart into the standard chart for the spun trefoil. 
\end{example}
\begin{figure}[ht]
\centering
\includegraphics[width=.9\textwidth]{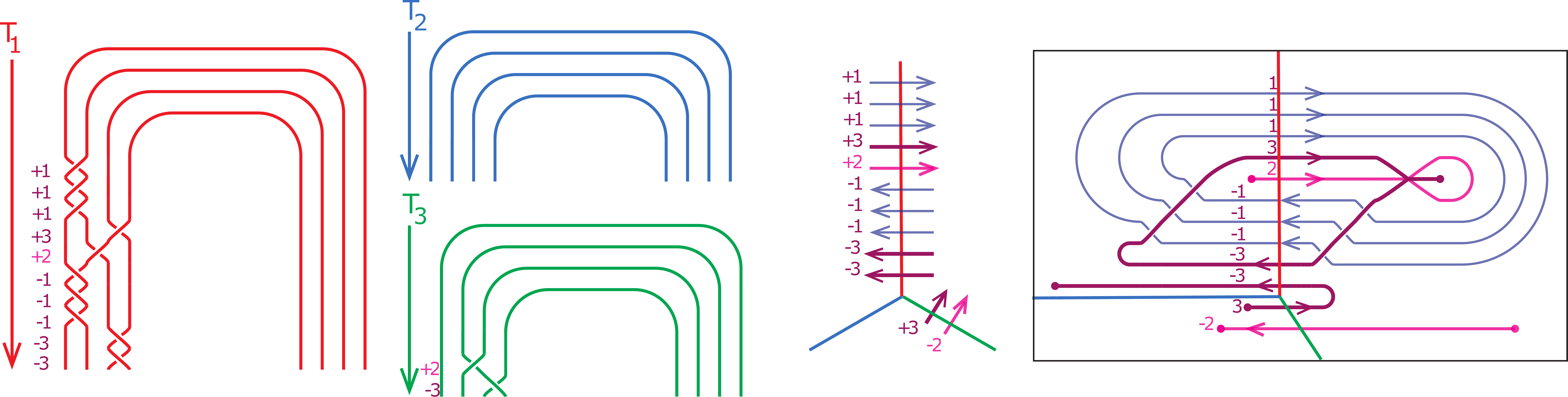}
\caption{Procedure~\ref{alg:chart_from_rainbow} applied to a triplane diagram of the spun trefoil from Figure~\ref{fig:spuntref_rain_to_band}.}
\label{fig:spuntref_rain_to_chart1}
\end{figure}
\begin{figure}
    \centering
    \includegraphics[width=0.75\linewidth]{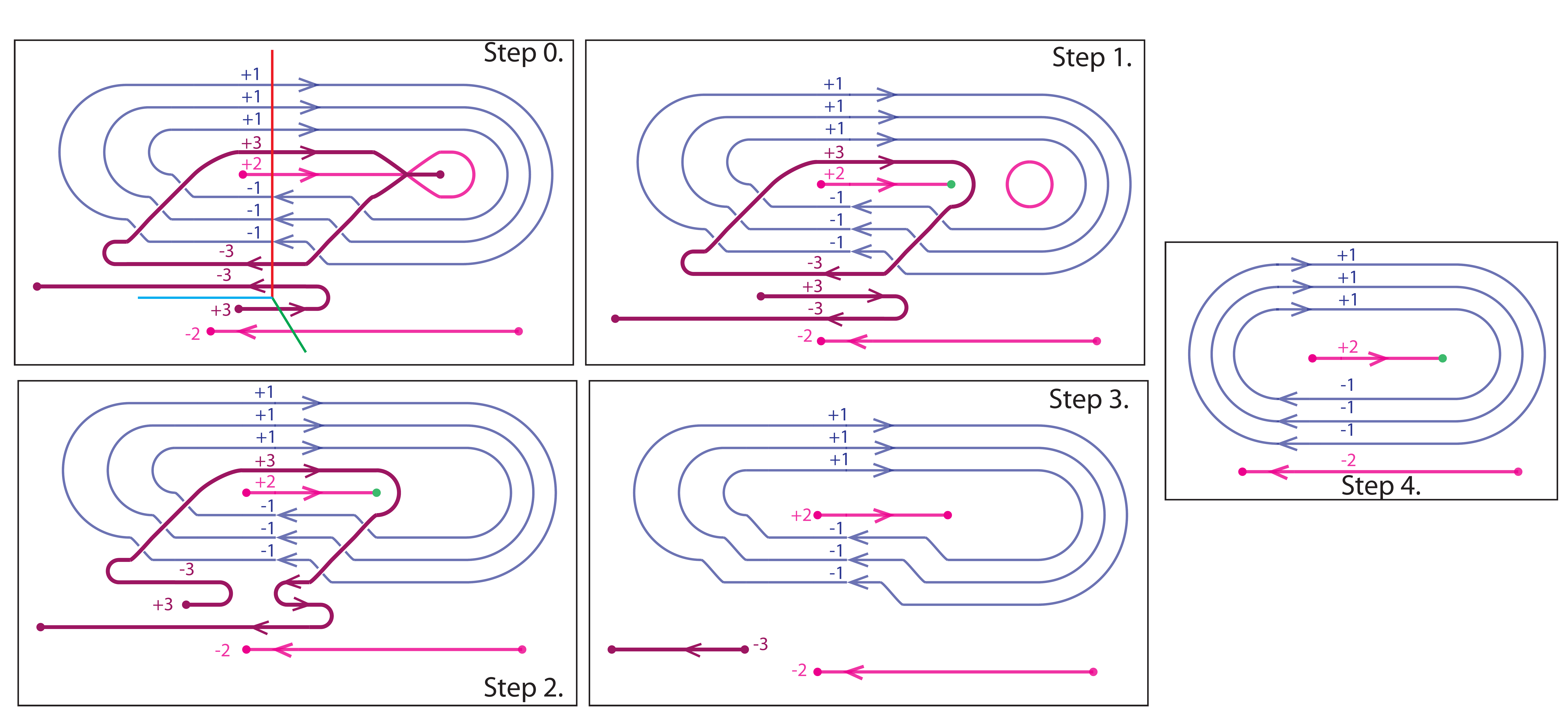}
    \caption{The output of Figure~\ref{fig:spuntref_rain_to_chart1} can be simplified to get a Kamada's \cite{KamBook} diagram for the spun trefoil.}
    \label{fig:spuntref_simplifying_chart}
\end{figure}
\begin{figure}
    \includegraphics[width=0.85\linewidth]{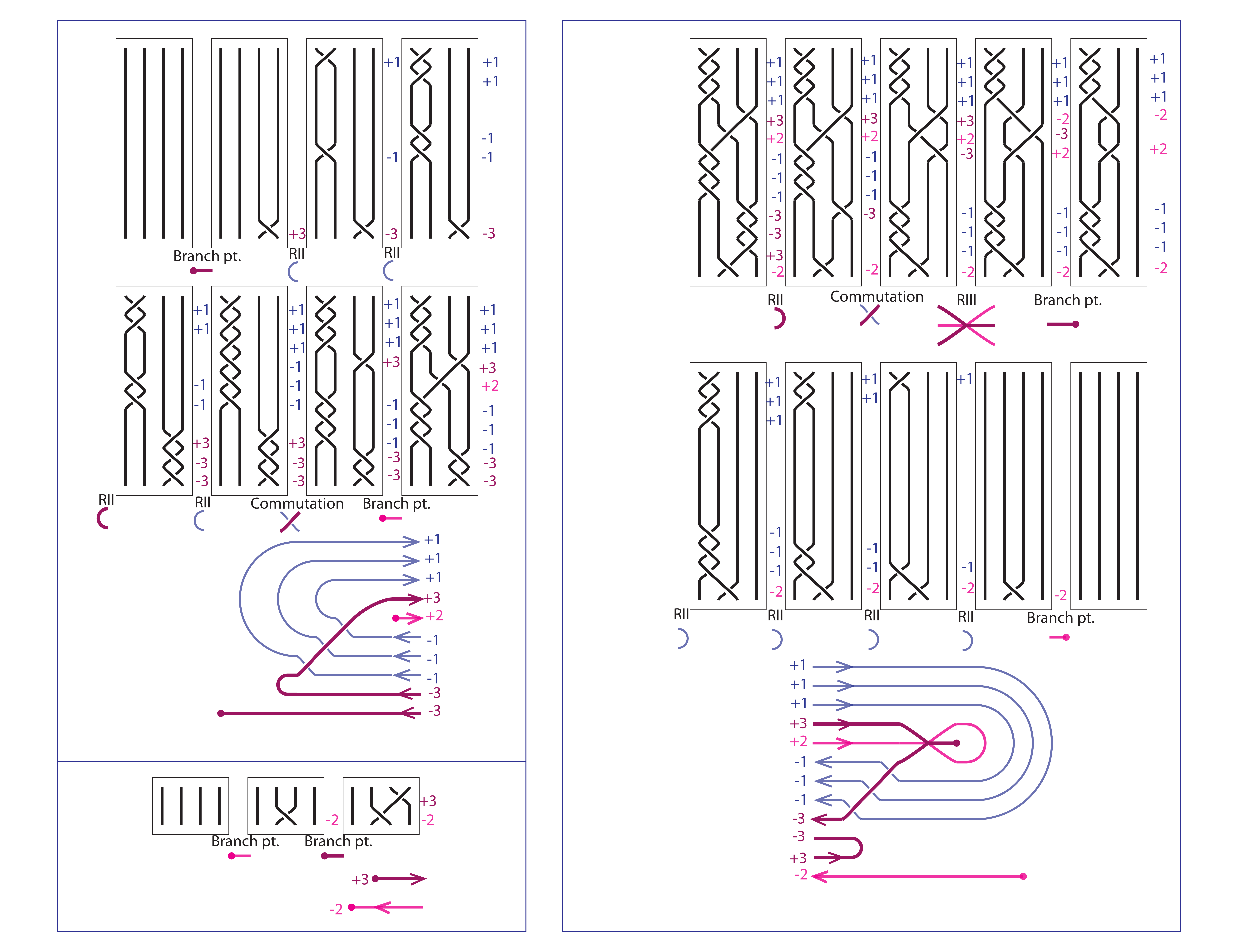}
    \caption{Each frame contains (1) a sequence of braid isotopies and saddle moves that untie $T_i\cup \T_{i+1}$ while tracing a trivial disk system bounded by it, and (2) a braid chart corresponding to such a movie. The tangles we used are those in Figure~\ref{fig:spuntref_rain_to_chart1}.}
    \label{fig:spuntref_rain_to_chart2}
\end{figure}

\subsection{Rainbows from braid charts}\label{sec:charts_from_rain}
The first half of Procedure~\ref{alg:rain_from_chart} is to turn a braid chart into a braided banded unlink diagram. Figure~\ref{fig:spuntref_chart_to_movie} shows how to do this for the spun trefoil. 

\begin{example}[Journey of the spun trefoil]
A rainbow diagram for the spun trefoil was obtained following Procedure~\ref{alg:rain_from_movies} as outlined in Example~\ref{ex:trefoil_movie_to_rain} and Figures~\ref{fig:spuntref_movie_to_rain1}--\ref{fig:spuntref_movie_to_rain2}. Then Procedure~\ref{alg:rain_from_chart} was applied to obtain a chart. The result is  the chart depicted in Figure~\ref{Fig_result_4_spunTref}. It is quite a bit more baroque than the chart in Figure~\ref{fig:spuntref_chart_to_movie} from whence it came. We leave it as an exercise to the reader to find chart moves that reduce it to the chart in Figure~\ref{fig:spuntref_chart_to_movie}.
\end{example}

\begin{figure}[ht]
\centering
\includegraphics[width=.45\textwidth]{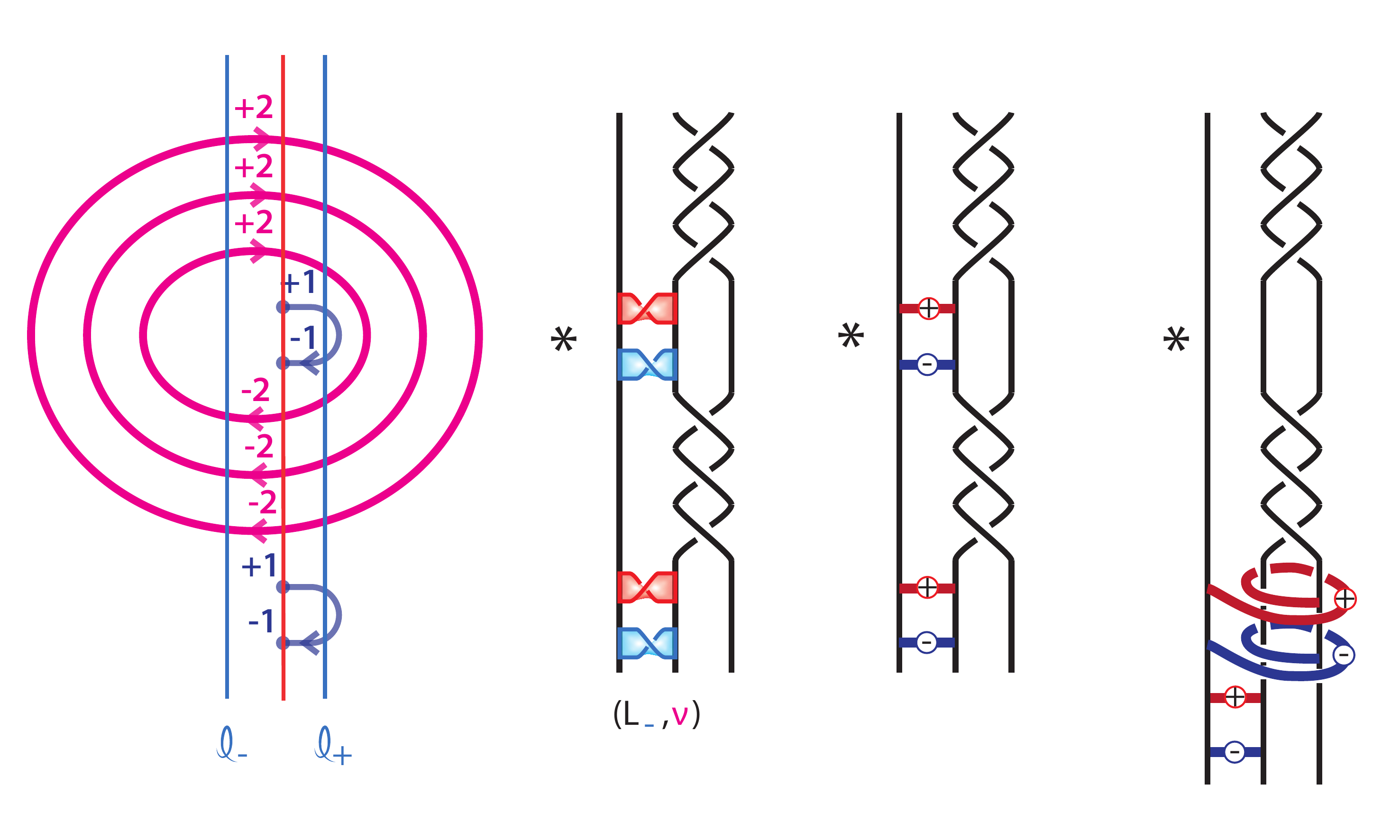}
\caption{Spun trefoil: braided banded unlink (right) from a braid chart (left). The symbol $\star$ denotes the binding of the braids, and the arcs in the rightmost figure correspond to the cores of the braided bands of $\nu$. Also compare to Figure~\ref{fig:spuntref_movie_to_rain1}.}
\label{fig:spuntref_chart_to_movie}
\end{figure}

\begin{figure}[ht]
\centering
\includegraphics[width=.3\textwidth, angle = 90]{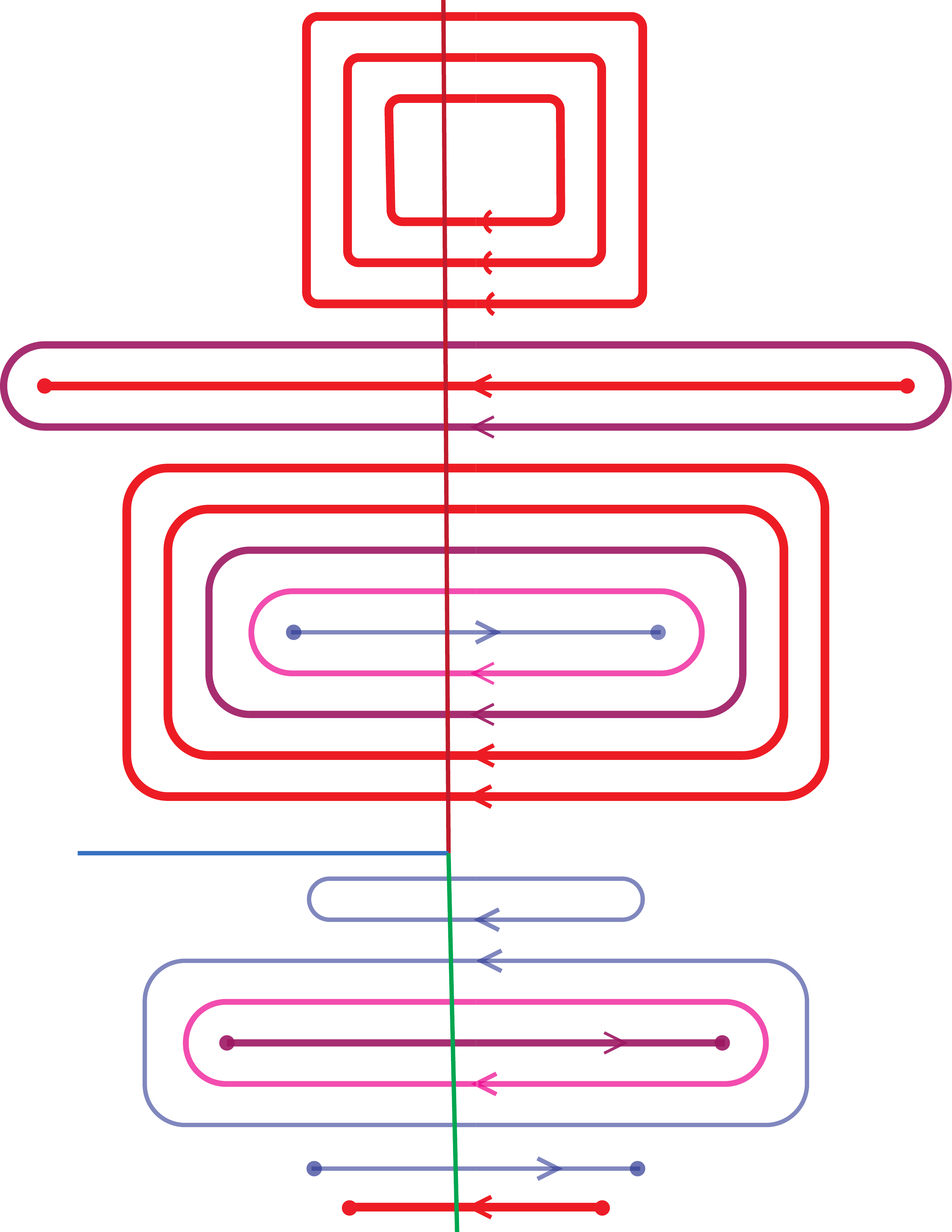}
\caption{Spun trefoil: Applying  Figure~\ref{fig:spuntref_movie_to_rain1}, to Figure~\ref{fig:spuntref_chart_to_movie}, this chart is obtained.}
\label{Fig_result_4_spunTref}
\end{figure}

\begin{procedure}[Rainbow from braid chart]\label{alg:rain_from_chart}
Let $\Ccal$ be a braid chart for a surface $F$. As above, this is a compact, labeled, oriented planar graph with vertices as in Figure~\ref{BraidChart}. 

\textbf{Step 1.} Find an embedded line $\ell$ in the plane, dividing it into two connected components, that satisfies the following conditions: 
\begin{enumerate}
\item $\ell$ avoids all white vertices of $\Ccal$, 
\item away from the saddle vertices, $\ell$ intersects $\Ccal$ transversely, 
\item for each saddle vertex $v$ of $\Ccal$, there is a small disk neighborhood $E_v$ for which $\ell\cap E_v$ is a diameter and $\Ccal\cap E_v$ is a radius, and 
\item $\Ccal\cap E_v$ is always on the same side of $\ell$ for all saddle vertices; see left panel of Figure~\ref{fig:spuntref_chart_to_movie}. 
\end{enumerate}

\textbf{Step 2.} Using the disks $E_v$ from (4) above, we can push $\ell$ off the saddle vertices in two ways. One arc is denoted by $\ell_-$ and satisfies that $\ell_-\cap E_v \cap \Ccal$ is empty for all saddle vertices. The other arc is denoted by $\ell_+$ and intersects each $E_v\cap \Ccal$ in one point. 

\textbf{Step 3.} By construction, $\ell_\pm$ satisfies (1)-(3) above. So we can use them to read off braids denoted by $L_-$ and $L_+$. Notice that each saddle vertex of $\Ccal$ gives us braided bands, denoted by $\nu$, so that $L_+=L_-[\nu]$. The pair $(L_-,\nu)$ is a braided banded unlink for $F$.

\textbf{Step 3.} Use Procedure~\ref{alg:rain_from_movies} to turn $(L_-,\nu)$ into  a rainbow.
\end{procedure}

\begin{proposition}
The output of Procedure~\ref{alg:rain_from_chart} is a rainbow diagram for the input surface.
\end{proposition}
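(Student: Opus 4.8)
The plan is to observe that Step~3 of Procedure~\ref{alg:rain_from_chart} merely feeds the pair $(L_-,\nu)$ produced in Steps~1--3 into Procedure~\ref{alg:rain_from_movies}, so by Proposition~\ref{prop:rain_from_movies} it is enough to prove that $(L_-,\nu)$ is a \emph{braided banded unlink diagram representing $F$}. Accordingly I would establish three things in turn: (i) $L_-$ and $L_+=L_-[\nu]$ are fully destabilizable braided unlinks; (ii) $\nu$ is a collection of pairwise disjoint braided bands for $L_-$; and (iii) the surface $F(L_-,\nu)$ is isotopic to the surface $F$ described by $\Ccal$. The running example to keep in mind is the spun trefoil of Figure~\ref{fig:spuntref_chart_to_movie}.

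For (i) I would invoke the chart--movie dictionary (Figure~\ref{brokenAndChart} and \cite[Sec.~21]{KamBook}). Conditions~(1)--(4) of Step~1 force every saddle (black) vertex of $\Ccal$ to lie on $\ell$ with its incident radial edge on one fixed side, so neither component of the complement of $\ell_-$ (nor of $\ell_+$) contains a black vertex. Sweeping $\ell_-$ across the chart to the boundary of the supporting disk then produces only cancelling pairs of generators (Reidemeister~II), far commutations at crossings, and braid relations at white vertices---all relations in $B_b$---and the word read off at the boundary is empty. Hence $L_-$ represents the identity of $B_b$, so its closure is the $b$-component unlink, which is vacuously fully destabilizable; the same holds for $L_+$, and reading across the thin strip between $\ell_-$ and $\ell_+$ records exactly the band surgery $L_+=L_-[\nu]$.

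For (ii), in the strip between $\ell_-$ and $\ell_+$ the chart appears as the disjoint radial arcs $\Ccal\cap E_v$, one per black vertex $v$; locally a black vertex labelled $\pm i$ is, in the movie, the birth or death of a single generator $\sigma_i^{\pm1}$. Following Kamada's normal braided form \cite[Sec.~21]{KamBook}, I would argue that such an event is precisely surgery along a band whose core lies in a disk page of the axis and whose surgery keeps the braid braided---that is, a braided (pre-simple) band in the sense of Definition~\ref{def:braided_bands}. Since the neighborhoods $E_v$ are pairwise disjoint, the resulting bands can be chosen pairwise disjoint, and performing all of them at once realizes $L_-[\nu]=L_+$, so that $(L_-,\nu)$ is a braided banded unlink diagram. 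This is the step I expect to be the main obstacle: turning the heuristic ``a black vertex is a braided band'' into a rigorous statement relative to the chosen axis $\ell$ via Kamada's chart calculus, and checking that the bands stay pairwise disjoint and can be performed simultaneously while the complementary movies remain pure braid isotopies.

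For (iii) I would slice the braided surface described by $\Ccal$ along the two levels $\ell_-$ and $\ell_+$: the portion beyond $\ell_-$, capped off by the closure disks, is a movie of braid isotopies from the crossingless braid to $L_-$ and hence a trivial disk system bounded by $L_-$; the middle portion is the band cobordism from $L_-$ to $L_+$ along $\nu$; and the portion beyond $\ell_+$ is a trivial disk system capping $L_+$. This union is exactly $F(L_-,\nu)$, and since $\Ccal$ represents $F$ it is isotopic to $F$; uniqueness of trivial disk systems rel boundary \cite{Livingston} pins the outer pieces to the standard ones, exactly as in the proof of Proposition~\ref{prop:rain_from_movies}. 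Applying Procedure~\ref{alg:rain_from_movies} to $(L_-,\nu)$ then yields a rainbow diagram for $F$, and the remaining content beyond (ii) is just a reassembly of pieces already analyzed in Section~\ref{sec:rains_and_movies}.
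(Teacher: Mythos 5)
Your proposal is correct and follows essentially the same route as the paper: conditions (1)--(4) of Step 1 confine all black (saddle) vertices to the strip between $\ell_-$ and $\ell_+$, so the outer regions give saddle-free braid movies taking $L_\pm$ to crossingless braids, the saddle vertices become the braided bands $\nu$ with $L_+=L_-[\nu]$, and the conclusion then follows by feeding the braided banded unlink $(L_-,\nu)$ into Proposition~\ref{prop:rain_from_movies}. Your write-up simply makes explicit the chart-sweeping and Livingston-uniqueness details that the paper's shorter proof leaves implicit or delegates to that proposition.
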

\begin{proof}
Condition (3) in Step 1 ensures that $\ell_{-} \cup l_{+}$ cuts two planes from the braid chart $\Ccal$ that contain no saddle vertices. Thus, there is a movie of braids consisting of braid moves only (no saddles) that turns each $L_\pm$ into a crossingless braid. As all the saddles of $F$ are contained in the cobordism from $L_-$ to $L_+$ given by $\nu$, $(L_-,\nu)$ is a braided banded unlink for $F$. To end, Proposition~\ref{prop:rain_from_movies}, the output is a rainbow diagram for $F$. 
\end{proof}

\begin{theorem}\label{thm:main_ineq}
Let $F$ be an orientable surface, then 
\[ 
\braid(F) \leq \rain(F)\leq 5\cdot\braid(F) - 2\cdot\chi(F) -2, 
\]
where $\chi(F)$ is the Euler characteristic of $F$. 
\end{theorem}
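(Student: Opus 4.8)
The plan is to prove the two inequalities separately. The left inequality $\braid(F)\le\rain(F)$ is essentially immediate: a $b$-stranded rainbow diagram $\Tcal$ for $F$ yields, via Procedure~\ref{alg:chart_from_rainbow} (or Procedure~\ref{alg:movie_from_rainbow} followed by the chart construction), a braid chart for $F$ whose degree is $b$; since the braid index is the minimum of such degrees over all chart presentations of $F$, we get $\braid(F)\le b$, and minimizing over all rainbow diagrams gives $\braid(F)\le\rain(F)$. Alternatively, one observes directly that each $T_i\cup\T_{i+1}$ being a fully destabilizable $b$-braid bounds a trivial disk system traced by a $b$-stranded braid movie (Lemma~\ref{lem:existence_bands_for_unlink}), so gluing these three movies produces a braided surface on $b$ strands representing $F$.

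For the right inequality, the idea is to start from a minimal braided surface presentation of $F$ and run it through the ``chart $\to$ rainbow'' machinery while carefully bounding the strand count. Concretely: take a braid chart $\Ccal$ for $F$ of degree $n=\braid(F)$. Apply Procedure~\ref{alg:rain_from_chart}: choosing a suitable line $\ell$, we extract a braided banded unlink $(L_-,\nu)$ for $F$ where $L_-$ is an $n$-stranded braided unlink and $L_+=L_-[\nu]$ is also an $n$-stranded braided unlink; the number of bands $|\nu|$ equals the number of saddle vertices of $\Ccal$, which we may take to be $-\chi(F)$ after isotoping $\Ccal$ to have the minimal number of saddles (index-1 critical points) for a genus-$g$ or nonorientable surface — i.e., $-\chi(F)$ saddles when $F$ is connected, and in general exactly $-\chi(F)$ saddles can be arranged relative to the fixed numbers of minima and maxima. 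Then apply Procedure~\ref{alg:rain_from_movies} to $(L_-,\nu)$: this requires putting $(L_-,\nu)$ into banded braided-bridge position, which is achieved by Markov stabilizations in Steps 3(b) and 3(c). The key bookkeeping is that each band beyond the first costs at most two stabilizations (one in Step 3(b) to create the arc $\alpha$, one in Step 3(c) to achieve acyclicity), so the total stabilization count is at most $2(|\nu|-1)$. Combined with the observation that $L_-=T_1\cup\T_2$ decomposes with $T_2$ crossingless and we may also need to stabilize to separate crossings from bands in Step 1 of Procedure~\ref{alg:rain_from_chart}, the final rainbow diagram has at most $n + 2|\nu| = n - 2\chi(F)$ strands times a constant; tracking the constant carefully against the claimed bound $5n - 2\chi(F) - 2$ is the arithmetic heart of the argument.

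The main obstacle, and where most of the work lies, is the strand-count accounting: one must show that the combined effect of (i) separating crossings from bands in the initial division $L=T_1\cup\T_2$, (ii) the per-band stabilizations in Procedure~\ref{alg:rain_from_movies}, and (iii) any stabilizations needed to make the braids $T_i\cup\T_{i+1}$ fully destabilizable, adds up to no more than $4\braid(F) - 2\chi(F) - 2$ extra strands beyond the initial $\braid(F)$. I expect the cleanest route is to observe that a degree-$n$ chart can be arranged so that $L_-$ already requires no further destabilization-stabilization beyond what the bands force, so the only cost is the $\le 2|\nu|$ stabilizations from banded braided-bridge positioning plus a fixed additive slack; the ``$5$'' coefficient presumably absorbs a factor from the fact that passing through the banded unlink and re-braiding can, in the worst case, require up to roughly $4n$ additional strands when the chart's black and white vertices are unfavorably arranged relative to $\ell$. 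One then minimizes: $\rain(F)\le$ (strands of output rainbow) $\le 5\braid(F) - 2\chi(F) - 2$, and since this holds for the minimizing chart, $\rain(F)\le 5\braid(F)-2\chi(F)-2$. The subtlety I would flag is ensuring the line $\ell$ in Step 1 of Procedure~\ref{alg:rain_from_chart} can always be chosen so that the number of transverse intersections $\ell\cap\Ccal$ — which governs the strand count of $L_\pm$ — is controlled linearly by $n$; this may require a preliminary isotopy of the chart (a chart move) to standard position, and verifying that such a standard position exists with the right complexity bound is the part of the proof I expect to require the most care.
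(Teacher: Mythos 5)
Your overall architecture matches the paper's: the first inequality follows from Procedure~\ref{alg:chart_from_rainbow} preserving the strand count, and the second from running Procedure~\ref{alg:rain_from_chart} followed by Procedure~\ref{alg:rain_from_movies}, at the cost of at most $2(n-1)$ Markov stabilizations for $n$ bands. But there is a genuine gap in the band count, which is exactly the ``arithmetic heart'' you flag and then get wrong. You claim $|\nu|$ can be taken to be $-\chi(F)$ by minimizing the number of saddles. That is the count for a handle decomposition of the abstract surface $F$; it is not available for a braided surface of degree $b=\braid(F)$. In a braided banded unlink of degree $b$ arising from a chart, the cobordism from $\emptyset$ to $L_-$ must produce all $b$ strands, so there are exactly $b$ index-zero critical points, and dually $b$ index-two critical points; hence $\chi(F)=2b-|\nu|$, i.e.\ $|\nu|=2\braid(F)-\chi(F)$ (equivalently, Riemann--Hurwitz for the degree-$b$ branched cover $F\to S^2$, the branch points being the black vertices of the chart). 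Substituting into $\rain(F)\le \braid(F)+2|\nu|-2$ gives exactly $5\braid(F)-2\chi(F)-2$. With your count you would obtain the stronger (and unjustified) bound $\braid(F)-2\chi(F)-2$, and your subsequent guess that the coefficient $5$ ``absorbs a factor'' from unfavorable positioning of the chart's vertices relative to $\ell$ is not where it comes from and is unsupported.

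A secondary misunderstanding: the number of transverse intersections $\ell\cap\Ccal$ does not govern the strand count of $L_\pm$; it governs the length of the braid word. The braids read off along $\ell_\pm$ are $n$-strand braids no matter how many edges $\ell$ crosses, so no ``standard position'' lemma controlling $|\ell\cap\Ccal|$ is needed. The only source of extra strands is the stabilizations in Steps 3(b)--(c) of Procedure~\ref{alg:rain_from_movies}, which you bound correctly.
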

\begin{proof}
Procedure~\ref{alg:chart_from_rainbow} turns a rainbow diagram into a braid chart of the same braid index. Thus, $\braid(F) \leq \rain(F)$. We now discuss the second inequality. 
The first three steps of Procedure~\ref{alg:rain_from_chart} turn a braid chart into a braided banded unlink of the same braid index and same surface. Then, Procedure~\ref{alg:rain_from_movies} produces a rainbow diagram of the same surface from the braided banded unlink. When running this, for braided banded unlinks with $n$ bands, one may need to perform at most $2(n-1)$ Markov stabilizations; see Steps 3(b)-(c) of Procedure~\ref{alg:rain_from_movies}. Thus, 
$$ \rain(F)\leq \braid(F)+2n-2.$$ Now, as the braided banded presentation arose from a braid chart, we know that the number of index-zero and index-two critical points is $\braid(F)$ each. Thus, $\chi(F)=2\cdot\braid(F)-n$. The result follows. 
\end{proof}

\begin{proof}[Proof of Theorem~\ref{thm:bridge_and_braid_index}]
By Equation~\eqref{eq:inequality1} back in Section~\ref{sec:rainbow_diags}, the bridge index is bounded from above by the rainbow number. Thus, the result follows from Theorem~\ref{thm:main_ineq}.
\end{proof}

\section{Fox's Example 12}\label{sec:Ex12}

In this section, various algorithms for navigating among the descriptions of the knotted surface, specifically the $2$-twist spun trefoil, are presented. 
Historical and background comments are interspersed. At times the example will be called ``Example 12" or $t_2(3_1)$ 
--- the $2$-twist-spun trefoil.

\begin{figure}[ht]
\centering\includegraphics[width=.7\textwidth]{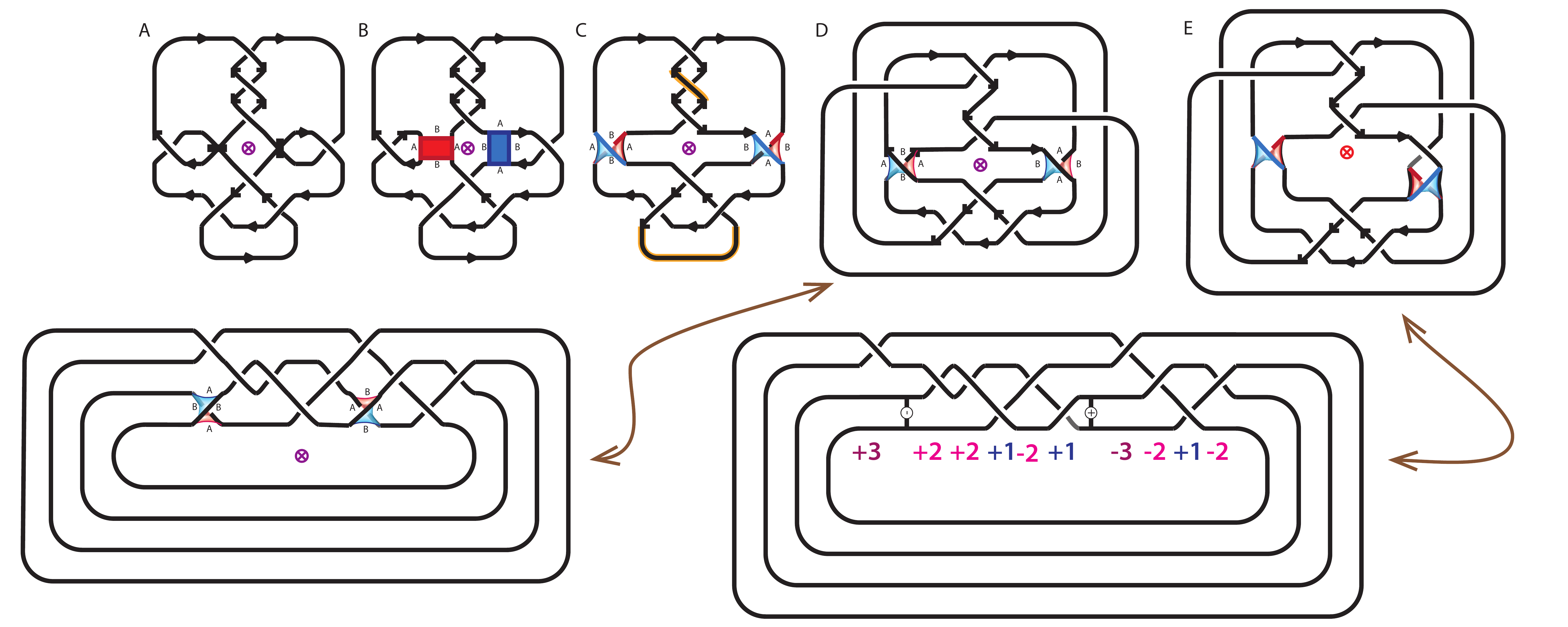}
\caption{Descriptions of Fox's Example 12: (A) marked vertex diagram, (B)-(D) banded unlink diagrams, and (E) braided banded unlink diagram.}
\label{fig:ex12_1}
\end{figure}

Figure~\ref{fig:ex12_1}(A) indicates the marked vertex diagram that is associated to Fox's Example 12 \cite{Fox61}. This diagram represents a $2$-knot ({\it knotted sphere in $\R^4$}) that is now known as the $2$-twist-spun trefoil \cite{Kanenobu83}. R. Litherland first noticed this for Example 12 in a letter to Cameron Gordon. Curiously, Fox's examples predate Zeeman's general description of twist-spinning \cite{Zeeman}. Both Fox and Zeeman were in attendance at the Georgia Topology Conference  whose proceedings contained ``A Quick Trip."  It is not uncommon to also indicate the example as a banded unlink diagram as in Fig.~\ref{fig:ex12_1}(B). 

We show how to follow Kamada's algorithm (\cite{KamBook}, Section 21.4 p. 164-167) in order to construct a closed surface braid of this example. Figure~\ref{fig:ex12_1}(C)-(E) serves our purpose by moving the bands into a twisted position, and moving the subsequent twisted band diagram into a braid position following Kamada's method of applying the Alexander isotopy. In this case, the Alexander isotopy is fairly easy to implement. The second-from-the-top overarc, which is highlighted in item C, runs anticlockwise around the purported braid axis  $\otimes$ since arcs run predominantly clockwise about it. It is lifted and wrapped around to run along the bottom of the diagram. Then the bottom highlighted arc is pushed behind the diagram and over it, and the resulting banded braid is rearranged into braid form. Notice that both the braid and the band surgered braid are fully destabilizable unlinks; thus Figure~\ref{fig:ex12_1}(E) is a braided banded unlink diagram for $t_2(3_1)$. It is redrawn on the bottom right of the figure as a braid with its two bands drawn as if they were wrist watches. Their signs indicate the twisting directions.

\begin{figure}[ht]
\centering
\includegraphics[width=.9\textwidth]{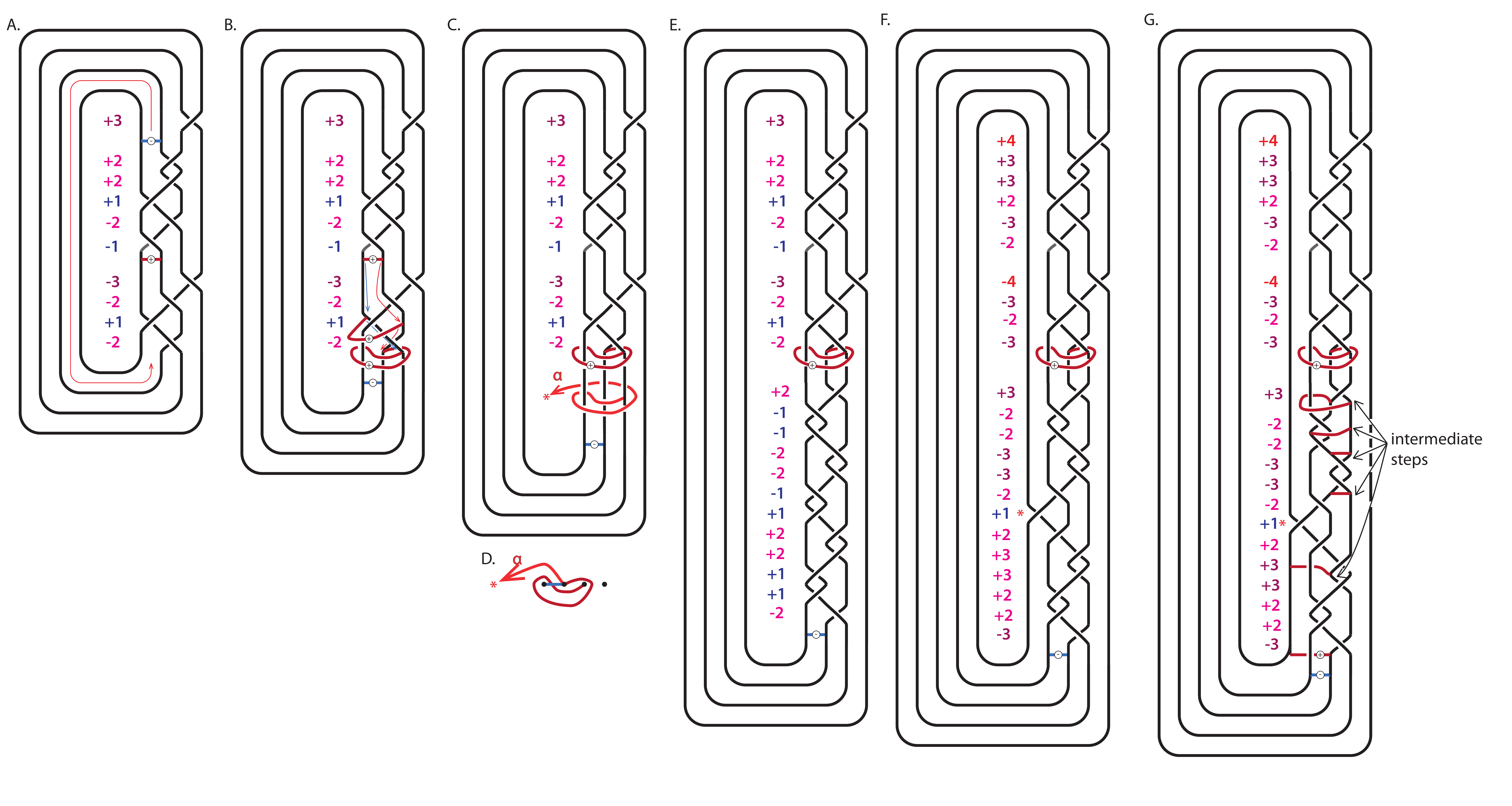}
\caption{Running Procedure~\ref{alg:rain_from_movies} on the braided banded unlink diagram of $t_2(3_1)$ from Figure~\ref{fig:ex12_1}(E). The output (G) is in braided banded bridge position.}
\label{fig:ex12_2}
\end{figure}

We now show how to follow Procedure~\ref{alg:rain_from_movies} to construct a rainbow diagram of this example. We keep track of the braided bands by drawing signed arcs instead. From Fig.~\ref{fig:ex12_2}A to B, the negative blue band has moved to the band on the bottom by rolling it over the braid closure. Meanwhile, B indicates how the red band can move down. When they are at the same disk level, they intersect (Fig.~\ref{fig:ex12_2}D). This intersection violates Condition~(a) in Definition~\ref{def:banded_bridge_position}.
Following Step 3(a) of the procedure, we find an arc $\alpha$ inside the disk-page that connects one common endpoint of the arcs with the boundary of the disk (also Fig.~\ref{fig:ex12_2}D). Then, as explained in Step 3(b), the arc $\alpha$ guides a Markov stabilization of the braid  (Fig.~\ref{fig:ex12_2}E-F). The red arc then slides downwards to lie near the blue arc (Fig.~\ref{fig:ex12_2}G). At this point, we have a diagram satisfying Definition~\ref{def:banded_bridge_position}.

In Figure~\ref{fig:ex12_4} the bridge splitting of the $2$-twist-spun trefoil is indicated via the diagram on the left. The resulting rainbow diagram for this $2$-knot is indicated upon the right. One can check that $\Tcal$ is, in fact, a strong rainbow diagram. Diagram $T_3$ indicates the result of removing the band, and diagram $T_2$ indicates the result of the surgery along the bands. The two closed braids, demonstrated below, represent trivial link diagrams. Indeed, a braid chart will be constructed from the simplifications thereof.

\begin{figure}[ht]
\centering
\includegraphics[width=.55\textwidth]{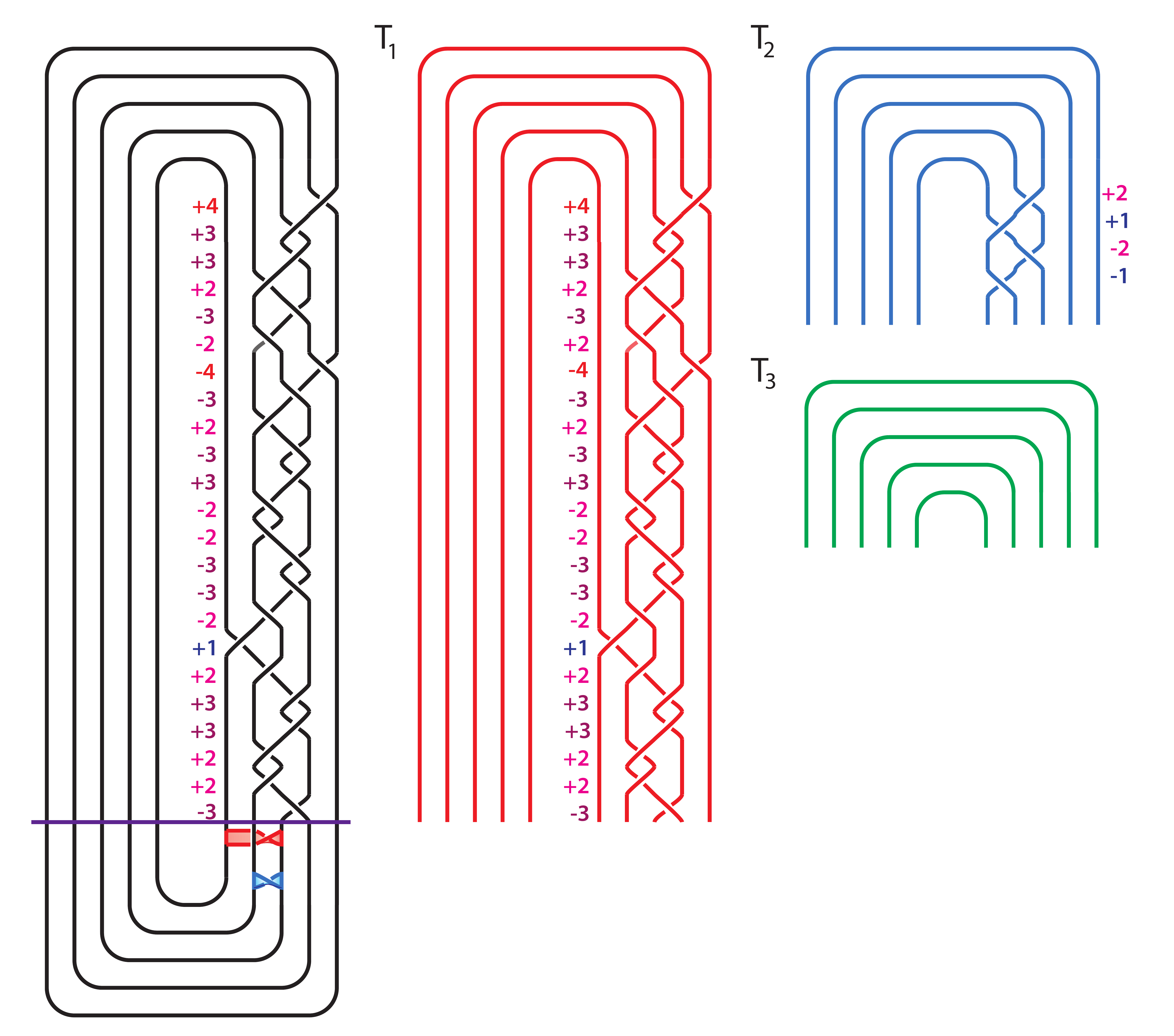}
\caption{(Left) Banded braided-bridge position for $t_2(3_1)$. (Right) Rainbow diagram for $t_2(3_1)$.}
\label{fig:ex12_4}
\end{figure}

A braid chart for $t_2(3_1)$ is obtained by using Procedure~\ref{alg:chart_from_rainbow} which, recall, explains how to go from a rainbow to a braid chart. For each pair of distinct indices $i,j\in\{1,2,3\}$, sequences of braid and saddle moves that turn $T_i\cup \T_{i+1}$ into a crossingless braid while tracing a trivial disk system bounded by $T_i\cup \T_{i+1}$ are shown in Figures~\ref{fig:ex12_T13_2} and~\ref{fig:ex12_T12_2}. In particular, Figure~\ref{fig:ex12_T13_2}  contains a movie and a chart for $T_2 \cup \T_3$ in the upper box while the main portion contains a movie $T_1\cup \T_3$; Figure~\ref{fig:ex12_T12_2} contains the movie for $T_1\cup \T_2$ and Figure~\ref{fig_chart_T12} contains the corresponding chart. The union of these three charts is the desired braid chart for $t_2(3_1)$ shown in Figure~\ref{DraftFullChart}.

As a sanity check, we can verify that our chart is indeed one for $t_2(3_1)$ by performing chart moves until we obtain one that matches an existing chart for $t_2(3_1)$, the one shown in Figure 21.15 of \cite{KamBook}. This is done in Figures~\ref{FullChart_Simplification_1}, \ref{FullChart_Simplification_2}, and \ref{FullChart_Simplification_3}. We refer the reader to Section 18.11 of \cite{KamBook} for details on chart moves. 
\begin{figure}[ht]
\centering
\includegraphics[width=.8\textwidth]{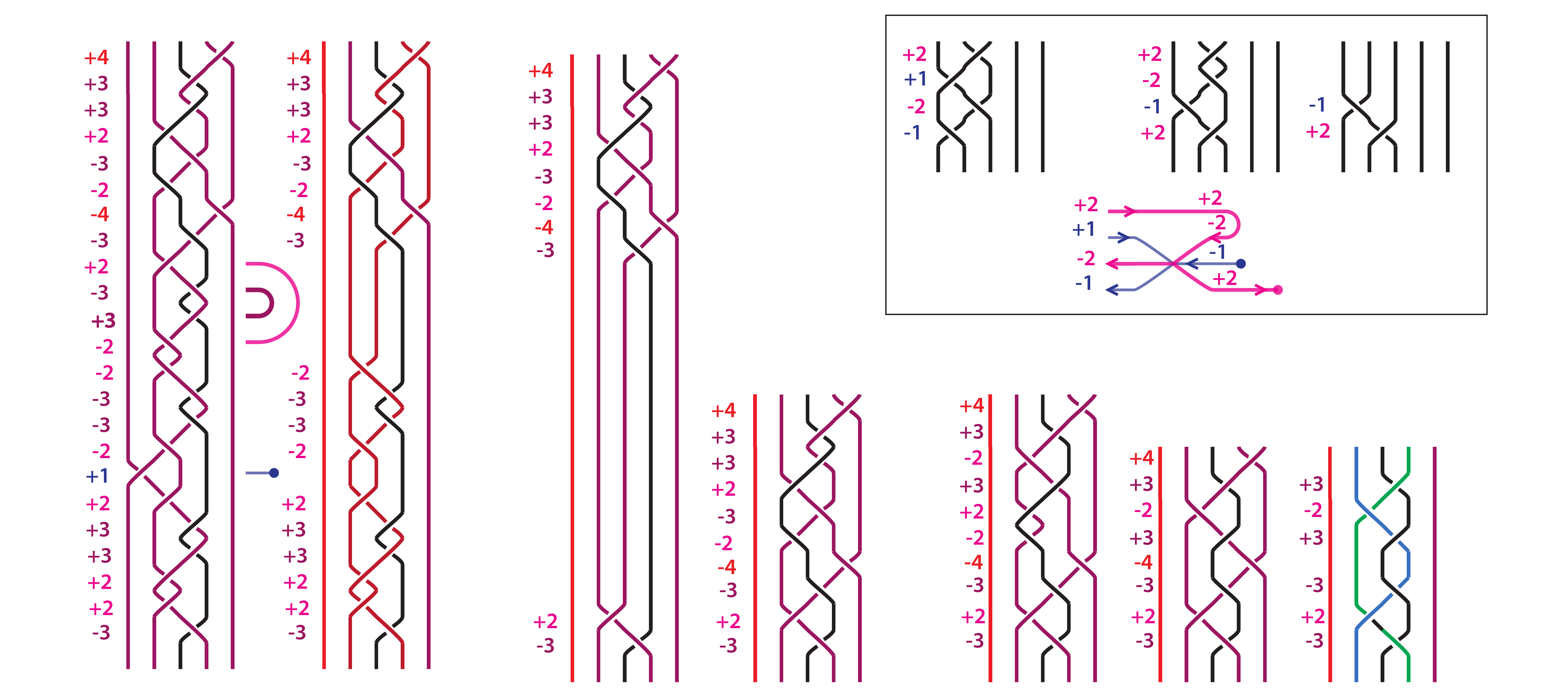}
\caption{Top box: Braid isotopy and resulting chart for a trivial disk system bounded by $T_2 \cup \T_3$. Main area: Sequence of braid isotopies and saddle moves that untie $T_1\cup \T_3$ while tracing a trivial disk system bounded by $T_1\cup \T_3$. Right area: the corresponding braid chart.}
\label{fig:ex12_T13_2}
\end{figure}
\begin{figure}[ht]
\centering
\includegraphics[width=.8\textwidth]{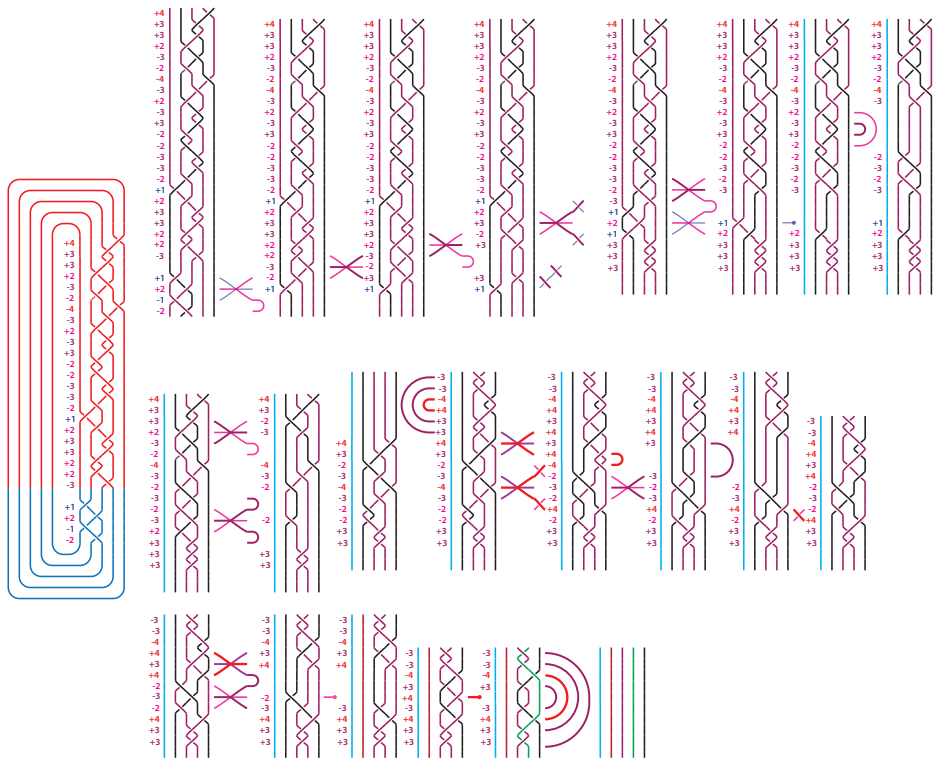}
\caption{Sequence of braid isotopies and saddle moves that untie $T_1\cup \T_2$ while tracing a trivial disk system bounded by $T_1\cup \T_2$.}
\label{fig:ex12_T12_2}
\end{figure}

\begin{figure}[ht]
\centering
\includegraphics[width=.8\textwidth]{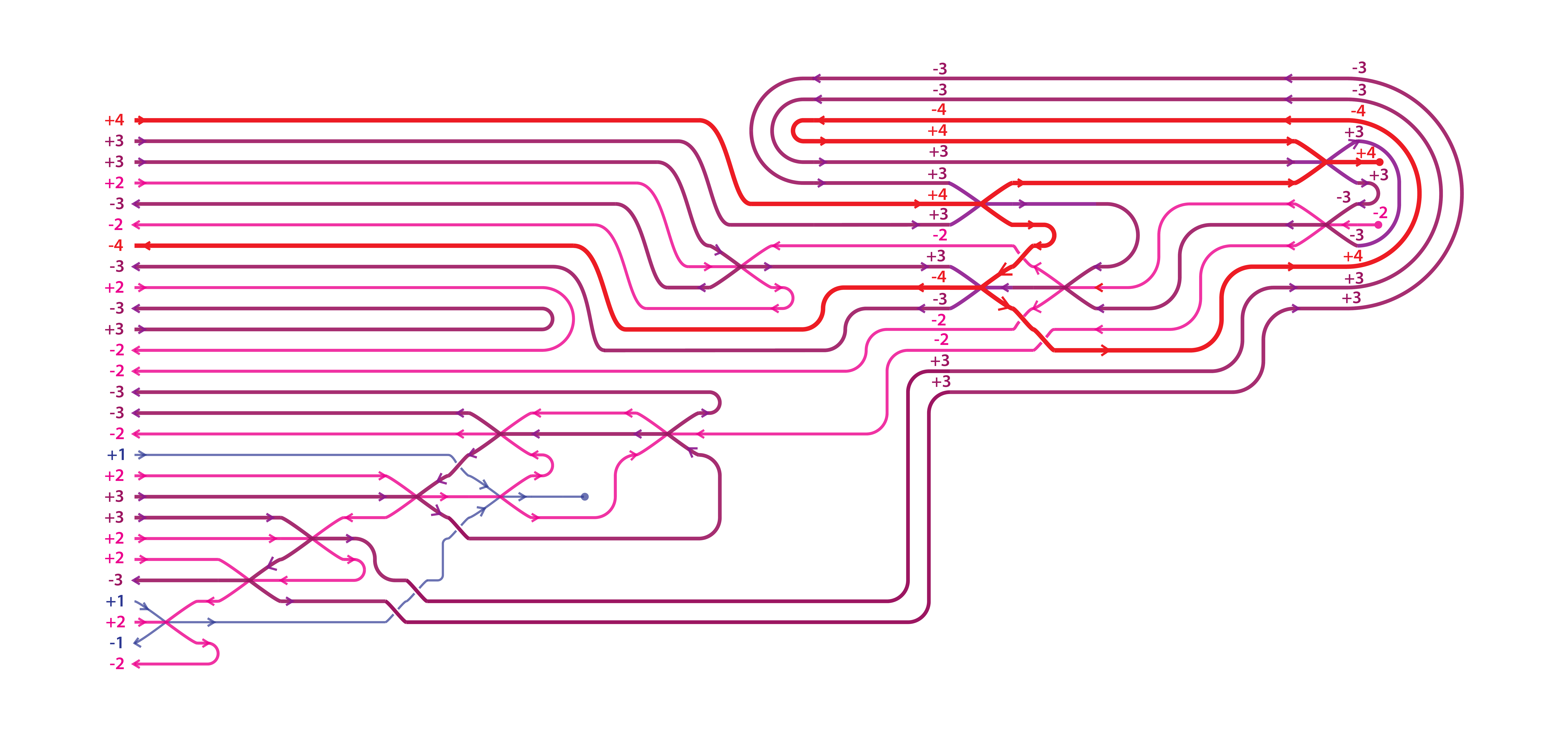}
 \caption{The chart that results from Fig.~\ref{fig:ex12_T12_2}.}
 \label{fig_chart_T12}
 \end{figure}

\begin{figure}[ht]
\centering
\includegraphics[width=.8\textwidth]{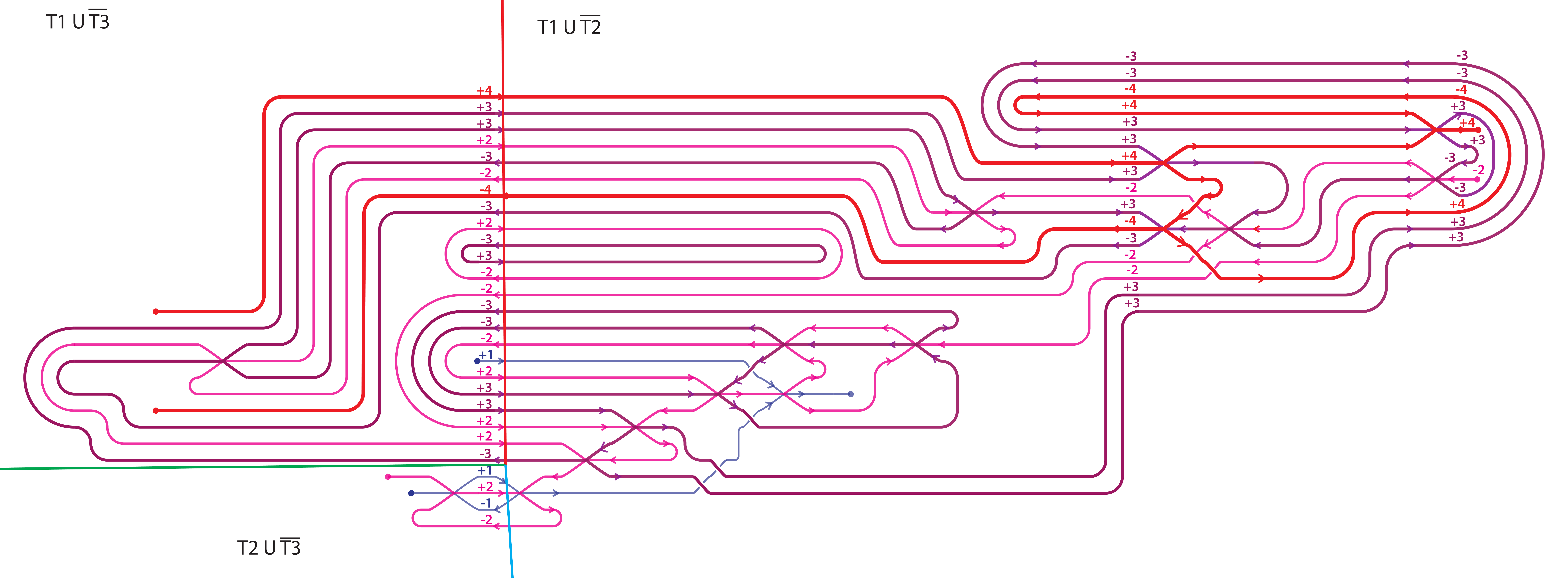}
 \caption{Braid chart for $t_2(3_1)$: The partial charts from Figs.~\ref{fig:ex12_T12_2} and~\ref{fig_chart_T12} are glued together.}
 \label{DraftFullChart}
 \end{figure}

\begin{figure}[ht]
\centering
\includegraphics[width=.8\textwidth]{Images/fig_simplifying_SCOTT_1.pdf}
\caption{Simplifying braid chart from Figure~\ref{DraftFullChart} (1/3): The shaded rectangles indicate the regions that will be modified in the following step.}
\label{FullChart_Simplification_1}
\end{figure}

\clearpage

\begin{figure}[ht]
\centering
\includegraphics[width=.8\textwidth]{Images/fig_simplifying_SCOTT_2.pdf}
\caption{Simplifying braid chart from Figure~\ref{DraftFullChart} (2/3): The shaded rectangles indicate the regions that will be modified in the following step.}
\label{FullChart_Simplification_2}
\end{figure}

\begin{figure}[ht]
\centering
\includegraphics[width=.6\textwidth]{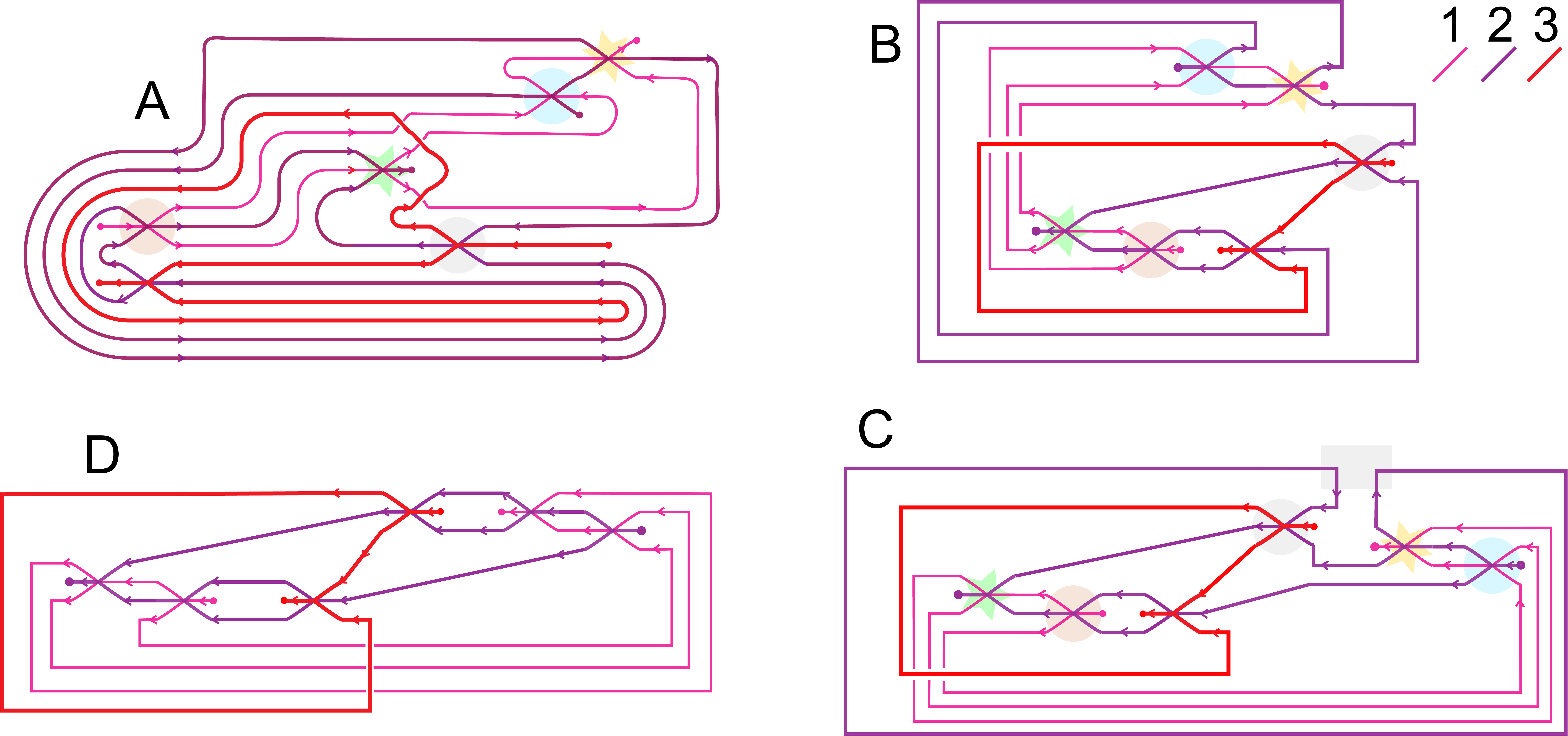}
\caption{Simplifying braid chart from Figure~\ref{DraftFullChart} (3/3): The shaded circles mark the same vertices throughout the different stages of the simplification process. Figure D is equal to Kamada's braid chart of $t_2(3_1)$ from \cite[Fig 21.15]{KamBook}.}
\label{FullChart_Simplification_3}
\end{figure}

\subsection{Rainbow number of Fox's Example 12}
In the previous section, we built an index-five rainbow diagram for $t_2(3_1)$. In this subsection, we compute the exact rainbow number. 

\begin{theorem}
    The rainbow number of the 2-twist spun trefoil is equal to four.
\end{theorem}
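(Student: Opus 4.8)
The plan is to prove the two bounds $\rain(t_2(3_1))\ge 4$ and $\rain(t_2(3_1))\le 4$ separately; the previous subsection already supplies an index-five rainbow diagram, so the content here is one unit of improvement plus a sharp lower bound.

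\emph{Lower bound.} Since $F=t_2(3_1)$ is a $2$-sphere we have $\chi(F)=2$, so Lemma~\ref{lem:mrk_ineq} reads $3\cdot\mrk(F)-2\le\bridge(F)$, and by Equation~\eqref{eq:inequality1} we have $\bridge(F)\le\Wrain(F)\le\rain(F)$. Hence it suffices to show $\mrk(t_2(3_1))\ge 2$, i.e.\ that $\pi_1(S^4\setminus t_2(3_1))$ is not cyclic. By Zeeman's twist-spinning theorem \cite{Zeeman}, the complement fibers over $S^1$ with fiber the once-punctured double branched cover of $(S^3,3_1)$, which is $L(3,1)\setminus B^3$; therefore $\pi_1(S^4\setminus t_2(3_1))\cong\Z/3\rtimes\Z$. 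This group is infinite and contains an element of order three, hence is not cyclic, so $\mrk(t_2(3_1))\ge 2$ and $\rain(t_2(3_1))\ge\bridge(t_2(3_1))\ge 3\cdot 2-2=4$.

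\emph{Upper bound.} It remains to produce a $4$-stranded rainbow diagram of $t_2(3_1)$. I would obtain it by reducing the index-five rainbow from the previous subsection: first look for a braided-deperturbation (one of the local models of Figure~\ref{fig:rain_perturbs} with $m=0$), and if the diagram is not visibly perturbed, apply an exchange move followed by a braided-deperturbation, in the spirit of the simplification of Morton's unknot discussed after Lemma~\ref{lem:unperturbed_rain}. Equivalently, one may display a fresh $(4;2,2,2)$-rainbow diagram $\Tcal=(T_1,T_2,T_3)$ with all $T_i$ braided about a common axis and then verify two things: (i) each of the three closed $4$-braids $T_i\cup\T_{i+1}$ is fully destabilizable --- which must be checked by exhibiting the destabilizing sequences, since (cf.\ the Morton and Rudolph examples) having unlinked closure is not enough; and (ii) $F_\Tcal\cong t_2(3_1)$, which I would confirm by running Procedure~\ref{alg:movie_from_rainbow} to turn $\Tcal$ into a braided banded unlink and reducing it to the banded unlink of Figure~\ref{fig:ex12_1}, or by reading a presentation of $\pi_1(S^4\setminus F_\Tcal)$ off $\Tcal$ and matching it with $\Z/3\rtimes\Z$. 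Combined with the lower bound this gives $\rain(t_2(3_1))=4$; in fact the whole chain of inequalities collapses, so $\bridge(t_2(3_1))=\Wrain(t_2(3_1))=\rain(t_2(3_1))=4$.

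The main obstacle is the upper bound. The lower bound is a short algebraic argument, but producing the four-stranded rainbow requires an explicit diagram together with two genuine verifications: that all three of its associated closed $4$-braids destabilize completely (a Markov-theorem-without-stabilization-type statement, which one does by hand), and that the surface it carries really is $t_2(3_1)$ and not another $2$-knot --- the latter being where a clean reduction to an already-identified diagram is essential.
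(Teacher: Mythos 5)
Your lower bound is correct and is in fact more self-contained than the paper's: the paper simply cites the known computation $\bridge(t_2(3_1))=4$ and invokes Equation~\eqref{eq:inequality1}, whereas you rederive the bound from Lemma~\ref{lem:mrk_ineq} by noting that $\pi_1(S^4\setminus t_2(3_1))\cong\Z/3\rtimes\Z$ is not cyclic, so $\mrk\ge 2$ and $\bridge\ge 3\cdot 2-\chi=4$. That chain of reasoning is sound.

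The upper bound, however, is where the theorem actually lives, and your proposal stops at a plan rather than a proof. You correctly identify what must be done --- exhibit a $4$-stranded weak-rainbow diagram, verify that all three closed $4$-braids $T_i\cup\T_{i+1}$ are fully destabilizable (not merely unlinked closures, as the Morton/Rudolph examples warn), and identify the underlying surface as $t_2(3_1)$ --- but you do not produce the diagram or carry out any of these verifications. Your first suggested route, reducing the index-five rainbow by a braided-deperturbation or an exchange move followed by a deperturbation, is not known to succeed: nothing in the paper guarantees that that particular five-strand diagram admits such a reduction, and in general deciding this is exactly the content of the open MTWS-type conjecture. The paper instead sidesteps the five-strand diagram entirely: it starts from Meier's explicit four-bridge \emph{triplane} diagram for $t_2(3_1)$ (Figure~\ref{fig:Ex12_Meier}), clusters and braids it by mutual braid moves and interior Reidemeister moves to get a four-strand weak-rainbow, exhibits explicit destabilizing braid movies for each pair $T_i\cup\T_{i+1}$ (Figures~\ref{fig:Ex12_Meier_chart1}--\ref{fig:Ex12_Meier_chart2}) to upgrade it to a genuine rainbow, and then identifies the surface by converting the rainbow to a braid chart and matching it with a known chart for $t_2(3_1)$. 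These explicit diagrammatic computations are the substance of the argument, and without them your write-up establishes only $4\le\rain(t_2(3_1))\le 5$.
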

\begin{proof} 
It is known that the bridge index of $t_2(3_1)$ is equal to four; e.g. \cite[Ex 5.10]{BoundsKT2023}. Thus, by Equation~\refeq{eq:inequality1} in Section~\ref{sec:rainbow_diags}, we get that $4\leq \rain\left(t_2(3_1)\right)$. On the other hand, Figure~\ref{fig:Ex12_Meier}(A) shows a four-bridge triplane diagram of the 2-twist spun trefoil due to Meier~\cite{Meier_personal}. In the figure, we see how, after mutual braid moves and interior Reidemeister moves, we obtain an index-four weak-rainbow diagram $\Tcal$. Figures~\ref{fig:Ex12_Meier_chart1} and~\ref{fig:Ex12_Meier_chart2} show braid movies that make each braid $T_i\cup \T_{i+1}$ of $\Tcal$ into a trivial braid. Thus, $\Tcal$ is a rainbow diagram and $\rain\left(t_2(3_1)\right)=4$ as desired. 
\end{proof}
The careful reader may want an explanation of why Figure~\ref{fig:Ex12_Meier} describes Fox's Example 12. We verified this by drawing a braid chart from the rainbow diagram in Figure~\ref{fig:Ex12_Meier}(C) via Procedure~\ref{alg:chart_from_rainbow}. This is done in Figures~\ref{fig:Ex12_Meier_chart1} and~\ref{fig:Ex12_Meier_chart2}, and the final braid chart is shown in Figure~\ref{fig:Ex12_Meier_chart3}. At this point, one can check that our chart is equivalent, via a planar isotopy, to the braid chart for $t_2(3_1)$ in Figure~\ref{FullChart_Simplification_3}(A). To convince ourselves, we made the color schemes of both figures agree and marked corresponding vertices with shaded regions.  
\begin{figure}[h]
\centering
\includegraphics[width=.8\textwidth]{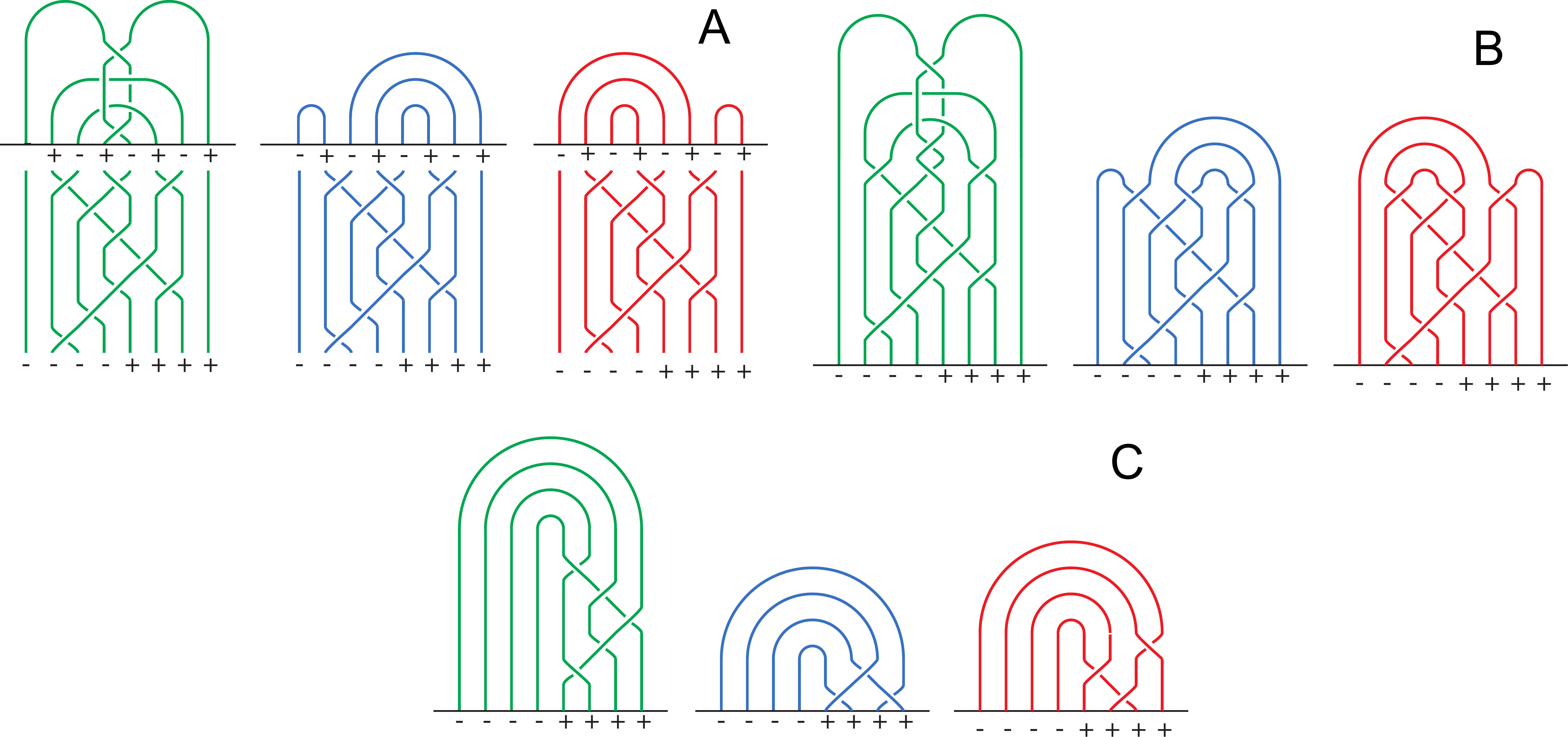}
\caption{2-twist spun trefoil. (A) Triplane diagram found by Meier \cite{Meier_personal}. (B) Mutual braid moves to cluster all the negative punctures to the left of the positive punctures. One can check that the result of interior Reidemeister moves (C) is a rainbow diagram.}
\label{fig:Ex12_Meier}
\end{figure}
\begin{figure}[h]
\centering
\includegraphics[width=.7\textwidth]{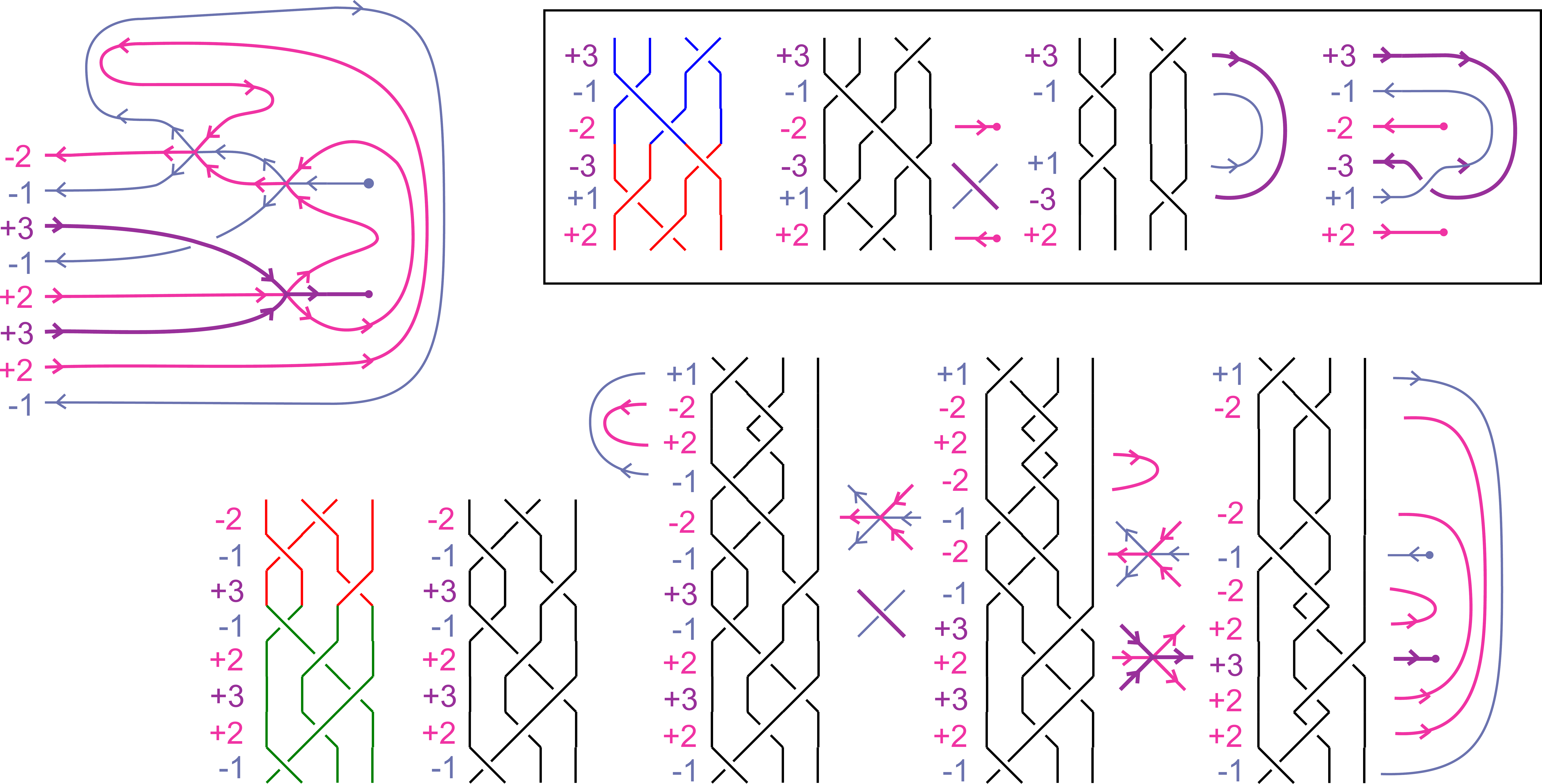}
\caption{Drawing a braid chart for rainbow in Figure~\ref{fig:Ex12_Meier}(C) (1/2). Braid isotopy and resulting chart for a trivial disk system bounded by the braid $\text{\color{red}red}\cup \overline{\text{\color{green}green}}$ and $\text{\color{blue}blue}\cup \overline{\text{\color{red}red}}$.}
\label{fig:Ex12_Meier_chart1}
\end{figure}

\begin{figure}[h]
\centering
\includegraphics[width=.85\textwidth]{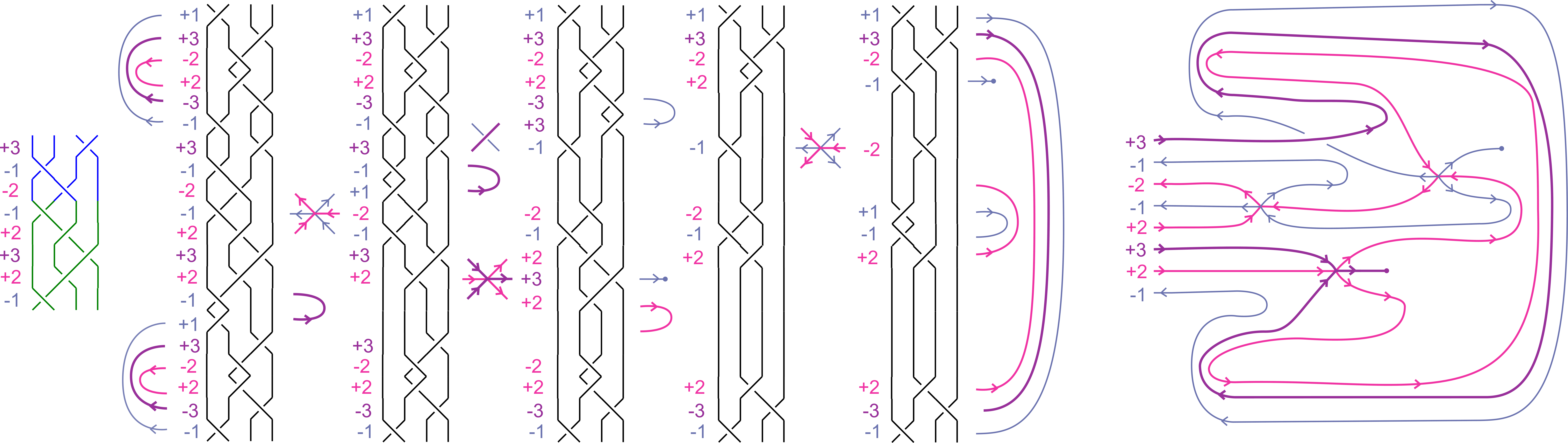}
\caption{Drawing a braid chart for rainbow in Figure~\ref{fig:Ex12_Meier}(C) (2/2). Braid isotopy and resulting chart for a trivial disk system bounded by the braid $\text{\color{blue}blue}\cup \overline{\text{\color{green}green}}$.}
\label{fig:Ex12_Meier_chart2}
\end{figure}

\begin{figure}[h]
\centering
\includegraphics[width=.85\textwidth]{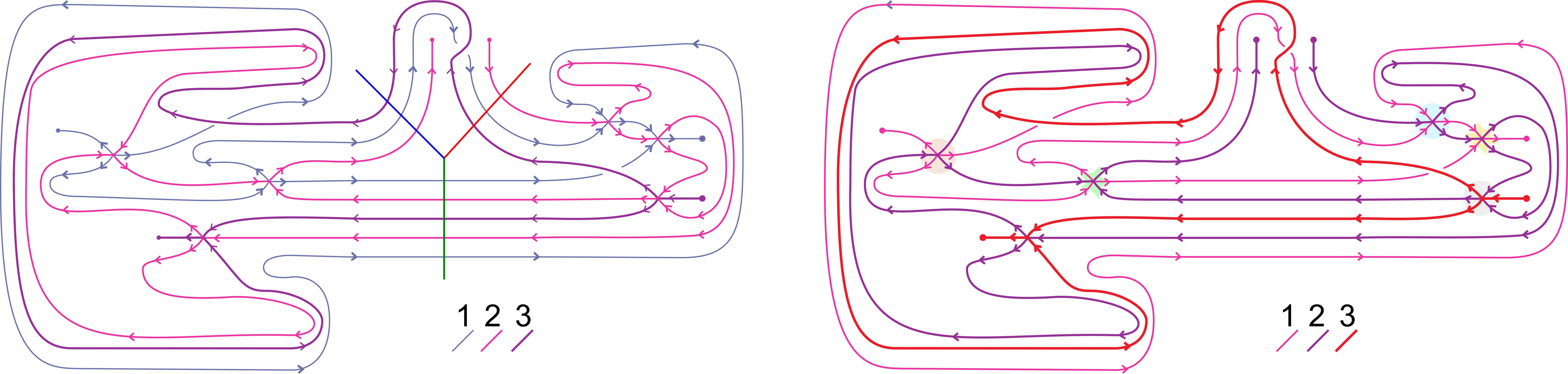}
\caption{Braid chart for rainbow in Figure~\ref{fig:Ex12_Meier}(C). In the right, we changed the color scheme to match that of Figure~\ref{FullChart_Simplification_3}. This way, our chart is equivalent (by a planar isotopy) to the chart in Figure~\ref{FullChart_Simplification_3}(A), we shaded the vertices of different colors to aid the reader.}
\label{fig:Ex12_Meier_chart3}
\end{figure}

\appendix

\section{Satellite surfaces}\label{appen:satellite}
Let $F$ be an orientable surface. Let $P$ be an embedded surface in $F\times D^2$ and let $C:F\rightarrow S^4$ be an embedding. Consider a diffeomorphism $\wt C:F\times D^2 \rightarrow N(C)$ such that the restriction to $F\times \{0\}$ is $C$. We call the image of $\wt C$ a \textit{satellite knot} with pattern $P$ and companion $C$. For more information on satellite surfaces, see \cite[Sec 2.4.2]{CKS_Surfaces_in_4space}. In this note, we present a procedure for drawing a triplane diagram for a satellite surface with a companion surface equal to a 2-sphere. 

The case when the companion has a positive genus is still open. The following appears to be a natural continuation of the work presented in this paper. 

\begin{problem}
Find a procedure to draw a triplane diagram for a cable surface of positive genus. 
\end{problem}

We briefly review what is known about cables of positive genus surfaces. The definition of cables of surfaces in the 4-sphere mimics that of knots in the 3-sphere: a cable surface for $F$ is a satellite surface with a companion surface equal to $F$ and a pattern that is a toroidal knotted surface. Following Hirose, a \emph{toroidal knotted surface surrounding $\Sigma_g$} is an incompressible horizontal surface inside $\Sigma_g\times S^1$, the boundary of a tubular neighborhood of a genus-$g$ unknotted surface in $S^4$ \cite{Hiro03}. It was shown that toroidal knotted surfaces surrounding tori are isotopic to $k$-twist spun torus knots with $k=0,1$\footnote{Although 1-twist spun knots are unknotted in $S^4$, they are not as embeddings in $T^2\times S^2$.}~\cite{Iwa88}. Hirose showed in \cite[Thm 1.2]{Hiro03} that toroidal surfaces surrounding higher genus surfaces are isotopic to a special connected sum of trivial $\Sigma_{g-1}$-knots and toroidal surfaces surrounding tori. 

\subsection{Triplane diagram of satellites with 2-knot companion}

A triplane diagram can be thought of as a triple of $b$-string tangle diagrams $\Pcal=(P_1,P_2,P_3)$ such that each $P_i$ is a collection of arcs properly embedded in the upper half plane $H$, where each crossing is decorated as an overcrossing or undercrossing. The boundaries of these arcs are $2b$ points on $\partial H$. An \textit{axis} of a triplane diagram is an embedding of the 0-sphere into $\partial H$ separating it into two arcs, each containing $b_i$ endpoints, where $2b=b_1+b_2$. An axis of a triplane diagram corresponds to a simple closed curve $\ell$ in the bridge sphere $\Sigma$. By embedding our triplane diagram into 4-space, we obtain the spine of a bridge trisection with bridge sphere $\Sigma\subset S^4$. In particular, an axis for $\Pcal$ determines a loop in $S^4$ that is disjoint from the bridge trisected surface. As $S^4-N(\ell)\approx D^2\times S^2$, we conclude the following result. As a consequence, we can use \emph{axed triplane diagrams} to represent patterns of satellite surfaces inside $S^2\times D^2$. 

\begin{lemma}\label{lem:axed_triplanes}
A triplane diagram, together with an axis, determines an embedded surface in $S^2\times D^2$. 
\end{lemma}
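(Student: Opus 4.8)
The plan is to assemble the observations recorded in the paragraph preceding the statement into a complete argument. Starting from an axed triplane diagram $(\Pcal,\ell)$, one first embeds the triplane diagram $\Pcal=(P_1,P_2,P_3)$ into $S^4$ as the spine of a bridge trisection, obtaining the bridge-trisected surface $F_\Pcal\subset S^4$ together with its bridge sphere $\Sigma$. By construction the axis is realized as a simple closed curve $\ell\subset\Sigma$ that misses the $2b$ punctures $\Sigma\cap F_\Pcal$, and hence is disjoint from $F_\Pcal$ itself.

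The next step is to identify the complement of $\ell$. Since $\ell$ is a simple closed curve on the $2$-sphere $\Sigma$, it bounds an embedded disk in $\Sigma\subset S^4$; a loop in $S^4$ that bounds an embedded disk is unknotted, so the exterior of a tubular neighborhood of $\ell$ is the standard one, $S^4-N(\ell)\approx S^2\times D^2$. Because $F_\Pcal$ is compact and disjoint from $\ell$, one may shrink $N(\ell)$ so that it is also disjoint from $F_\Pcal$; then $F_\Pcal\subset S^4-N(\ell)\approx S^2\times D^2$, which is the asserted embedded surface.

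I do not expect a genuinely hard step here; the one point that deserves care is the well-definedness of the identification $S^4-N(\ell)\approx S^2\times D^2$. This follows from uniqueness of tubular neighborhoods together with the fact that an unknotted circle in $S^4$ has an essentially unique framing, so that its exterior carries a product structure $S^2\times D^2$ that is unique up to isotopy. Fixing this structure so that $\ell$ serves as the binding of the half-open-book decomposition of each tangle ball --- as described in the sentence following the statement --- makes the resulting surface in $S^2\times D^2$ canonical up to ambient isotopy and in the form needed for the satellite construction of the appendix.
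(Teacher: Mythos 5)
Your argument is correct and is essentially the same as the paper's, which gives this reasoning in the paragraph immediately preceding the lemma (embed the triplane as the spine of a bridge trisection, note that the axis becomes a loop $\ell$ in the bridge sphere disjoint from the surface, and use $S^4-N(\ell)\approx S^2\times D^2$). Your added remarks on unknottedness of $\ell$ and uniqueness of the tubular neighborhood just make explicit what the paper leaves implicit.
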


For a tangle $T$ inside a 3-ball $B^3$, a \emph{framing} $f$ is a collection of embedded strips $[-1,1]\times [-1,1]$ in $B^3$ with cores $\{0\}\times [-1,1]$ equal to the strands of $T$ and top and bottom sides $[-1,1]\times \{-1,1\}$ contained in $\partial B^3$. Equivalently, a framing $T$ is a choice of normal vector at each point of the tangle so the normal vector is tangent to $\partial B^3$ at the endpoints of $T$; the blackboard framing is an example. Two framed tangles with the same set of endpoints, and equal framings in $\partial B^3$, glue together to form a framed link. We say that a framing $f$ for the strands of a triplane diagram $\Ccal=(C_1,C_2,C_3)$ is \emph{a good framing for $\Ccal$} if for each $i\in \Z/3\Z$, each component of $C_i\cup \overline{C}_{i+1}$ is a zero-framed unknot. The reader can check that the blackboard framing is a good framing for Meier-Zupan triplane diagrams of spun knots; see \cite[Fig 20]{MZ17Trans}. Lemma~\ref{lem:frames_existence} states that every triplane diagram of an orientable surface admits good framings. 

\begin{figure}[ht]
\centering
\includegraphics[width=.75\textwidth]{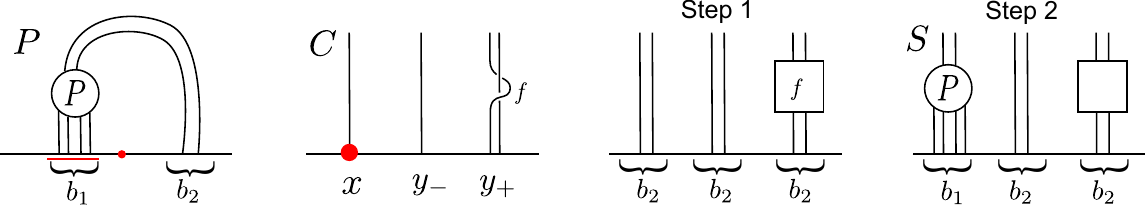}
\caption{How to draw a tangle $P$ inside a tangle $C$.}
\label{fig:satellite_1}
\end{figure}

We can embed axed tangles into other tangles using framings as follows. Let $(P, \ell)$ be a $b_P$-string tangle together with an axis, and let $C$ be an oriented $b_C$-string tangle. Pick a framing $f$ for the strands of $C$ and a positive endpoint $x$. Figure~\ref{fig:satellite_1} depicts a process to build a new tangle $S$ using the data $(P, \ell; C,f,x)$, which we explain now. The axis $\ell$ divides the $2b_P$ endpoints of $P$ into $b_1$ and $b_2$. First, you take $b_2$ parallel copies of $C$ framed with $f$ to get a $b_2b_C$-stranded tangle. We make the convention to add the $b_2$-stranded full twists (arising from the framing $f$) near the negative endpoints of $C$. Near the endpoint $x$, we now see $b_2$ vertical strands. Append in there the tangle $P$, we now see $b_1$ vertical endpoints near the boundary of the tangle. We obtain a $\left((b_2(b_C-1)+b_P\right)$-string tangle $S$ which we denote by $S(P, \ell; C,f,x)$. In Section~\ref{sec:examples} we give concrete examples of this procedure. 

\begin{definition} 
Let $(\Pcal,\ell)$ be a triplane with an axis. Let $\Ccal$ be an oriented triplane diagram with a framing $f$ and a fixed positive puncture $x$. Define $\Scal(\Pcal,\ell;\Ccal,f,x)$ to be the triplet of tangles $(S_1,S_2,S_3)$ where each $S_i$ is equal to $S(P_i,\ell;C_i,f,x)$. 
\end{definition}

We are ready to state the main theorem of this note. 

\begin{theorem}\label{thm:sat}
Let $\Ccal$ be a triplane diagram for a 2-sphere $C$ and let $\Pcal$ be a triplane diagram with axis $\ell$. Given a good framing $f$ of $\Ccal$, the tuple $\Scal(\Pcal,\ell;\Ccal,f,x)$ is a satellite knot with pattern $P$ and companion $C$, for any puncture $x$ of $\Ccal$.
\end{theorem}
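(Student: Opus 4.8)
The plan is to produce an honest bridge trisection of the satellite $\Sigma(P;C)=\widetilde C(P)$ whose spine is $\Scal(\Pcal,\ell;\Ccal,f,x)$, by intersecting $\Sigma(P;C)$ with the trisection of $S^4$ that underlies $\Ccal$. First I would fix the bridge trisection $S^4=X_1\cup X_2\cup X_3$ with spine $\Ccal$, so that $X_i\cap X_{i-1}=B_i\supset C_i$, $\partial B_i=\Sigma$, and $C=\Dcal_1\cup\Dcal_2\cup\Dcal_3$. Since $C$ is a $2$-sphere its normal bundle is trivial, and I would use the good framing $f$ to choose a parametrization $\widetilde C\colon S^2\times D^2\to N(C)$ that carries the pieces $\Dcal_i\times D^2$ onto $N(C)\cap X_i$, the pieces $C_i\times D^2$ onto $N(C)\cap B_i$, and the $D^2$-slices over the $2b_C$ punctures onto $N(C)\cap\Sigma$. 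Here the good-framing hypothesis does the essential work: the requirement that each component of $C_i\cup\overline C_{i+1}$ be a $0$-framed unknot is exactly what makes the product structure on $C_i\times D^2$ seen from sector $X_i$ agree with the one seen from sector $X_{i-1}$, so the local products over the disk systems $\Dcal_i$ assemble into a global product over $C$. (Existence of such an $f$ is Lemma~\ref{lem:frames_existence}.)

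Next I would analyze the pattern. By Lemma~\ref{lem:axed_triplanes} the axed triplane $(\Pcal,\ell)$ determines $P\subset S^2\times D^2$; unwinding that lemma, $\ell$ is the binding of a half-open-book structure on each $3$-ball $B_i^P\cong D^2\times[0,1]$ of $\Pcal$, the $b_2$ punctures on one side of $\ell$ lying on an incoming face and the remaining $b_1$ on the other becoming boundary. Using the identification furnished by $\widetilde C$, I would then verify that $P$ can be isotoped in $S^2\times D^2$ so that each $P\cap(\Dcal_i\times D^2)$ is a trivial disk system in the union of $4$-balls $\Dcal_i\times D^2$, and so that the tangle $P\cap(C_i\times D^2)$ is precisely $b_2$ parallel copies of $C_i$ framed by $f$ (the framing twists accumulated near the negative endpoints) with the tangle $P_i$ spliced into the $b_2$ vertical strands next to a chosen positive puncture $x$ --- that is, $S(P_i,\ell;C_i,f,x)$; compare Figure~\ref{fig:satellite_1}. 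This is the combinatorial heart of the argument.

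Granting this, pushing forward by $\widetilde C$ gives $\Sigma(P;C)\cap X_i=\widetilde C\big(P\cap(\Dcal_i\times D^2)\big)$, which is a trivial disk system (the image of a trivial disk system in a union of $4$-balls), and $\Sigma(P;C)\cap B_i=S(P_i,\ell;C_i,f,x)=S_i$; moreover $\cap_i X_i=\Sigma$ meets $\Sigma(P;C)$ in $2\big(b_2(b_C-1)+b_P\big)$ points. In particular each $S_i$ is a trivial tangle and each $S_i\cup\overline S_{i+1}=\partial\big(\widetilde C(P\cap(\Dcal_i\times D^2))\big)$ is an unlink, so $(S_1,S_2,S_3)=\Scal(\Pcal,\ell;\Ccal,f,x)$ is the spine of a genuine bridge trisection. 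Its underlying surface is $\widetilde C(P)=\Sigma(P;C)$ by construction, and by the theorem of Livingston \cite{Livingston} (see also \cite[Lem 2.5]{MZ17Trans}) the three tangles already determine this surface.

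The hard part is the middle step: proving that reading $(\Pcal,\ell)$ along $\ell$ and then tubing along the arcs of $C_i$ by means of $f$ genuinely recovers $P$ inside $N(C)$. Concretely one must identify the half-open-book structure on $B_i^P$ cut out by $\ell$ with the restriction of the product neighborhood $C_i\times D^2$, and confirm that the number of times $P$ wraps around the $D^2$-direction --- as recorded by the $b_2$ punctures lying on one side of $\ell$ --- is faithfully reproduced by the $b_2$ parallel copies of $C_i$. This is a careful but essentially combinatorial bookkeeping of framings and of how crossing a bridge trisection of $S^2$ with $D^2$ dissects a neighborhood of a puncture of the bridge sphere, and Figure~\ref{fig:satellite_1} is the local model on which it rests.
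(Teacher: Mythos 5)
Your strategy is genuinely different from the paper's: you propose to build $\Sigma(P;C)$ geometrically, intersect it with the bridge trisection underlying $\Ccal$ transported into $N(C)$ by a framed tubular neighborhood, and read off the spine. The paper instead first proves combinatorially that $\Scal(\Pcal,\ell;\Ccal,f,x)$ is a triplane diagram (Proposition~\ref{prop:TPC_is_triplane}), observes that the trivial disk systems it bounds can be taken inside a neighborhood of the disk systems of $\Ccal$ (so the resulting surface is a satellite with companion $C$), and then identifies the pattern by applying crossing changes and triplane moves to the companion until $\Ccal$ becomes the one-bridge diagram of an unknotted sphere, where $\Scal(\Pcal,\ell;\Ucal_1,f_1,x_1)=\Pcal$ tautologically.

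The problem is that the step you yourself call the ``combinatorial heart'' --- that $P$ can be isotoped in $S^2\times D^2$ so that each $P\cap(\Dcal_i\times D^2)$ is a trivial disk system and each $P\cap(C_i\times D^2)$ equals $S(P_i,\ell;C_i,f,x)$ --- is asserted rather than proved, and it is essentially the entire content of the theorem. The difficulty is concrete: $P$ is naturally bridge-trisected with respect to the decomposition of $S^2\times D^2$ induced by the standard trisection of $S^4$ underlying $\Pcal$ (equivalently, the product decomposition coming from the dual unknotted sphere in \emph{one-bridge} position, with one $4$-ball and one solid cylinder per sector), whereas the decomposition you pull back via $\wt C$ from $\Ccal$ has $c_i$ four-balls and $b_C$ solid cylinders per sector. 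Showing that, after passing from the one-bridge to the $b_C$-bridge product decomposition, the pattern is cut into exactly $b_2$ parallel $f$-framed copies of $C_i$ with $P_i$ spliced in near the single chosen puncture $x$ is not routine bookkeeping; it is where all the work lives, and the proposal gives no argument for it. It is also where the good framing must actually be used (as in the writhe analysis of Proposition~\ref{prop:TPC_is_triplane}), rather than through the heuristic that ``the local products glue.'' The paper's reduction to the unknotted one-bridge companion is precisely a device for avoiding this direct identification; if you want to keep your direct route, you must supply that isotopy explicitly.
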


\subsection{Examples}\label{sec:examples}
We draw three satellite surfaces for the same companion 2-knot and different patterns. The companion surface will be Yoshikawa's $10_2$ surface \cite{Yoshikawa} --- also known as the 2-twist spun trefoil. The oriented triplane diagram $\Ccal$ for $10_2$ we use is one from \cite[Fig 19]{ZupanREU2021}. In Figure~\ref{fig:102_framing}, we note that the blackboard framing on $\Ccal$ is not good as one component of $C_1\cup \overline{C}_2$ (resp. $C_2\cup \overline{C}_3$) has writhe equal to $+1$ (resp. $-1$). This can be fixed by setting the framing of one arc of $C_2$ to be $+1$ instead of blackboard; we drew this special arc in a thicker red font. 
With this framing $f$ in mind, Figure~\ref{fig:fig_rp2_satell} draws the triplane $\Scal(\Pcal, \ell;\Ccal,f)$ where $\Pcal$ represents an unknotted projective plane. 

\begin{figure}[ht]
\centering
\includegraphics[width=.6\textwidth]{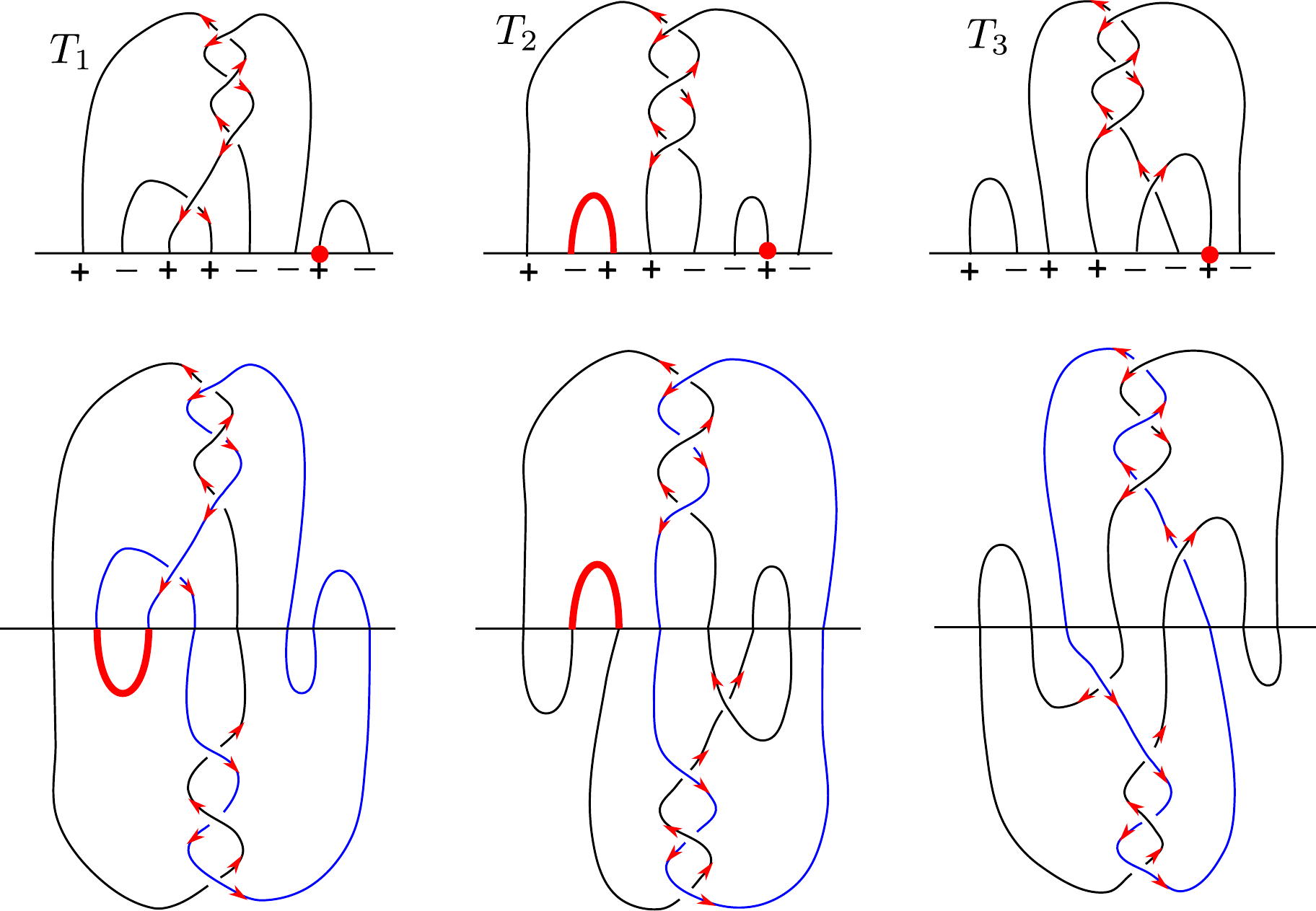}
\caption{(top) Triplane diagram $\Ccal$ for $10_2$. (bottom) Unlink diagrams of $C_i\cup \overline{C}_{i+1}$; for the blackboard framing to be good, the writhe of each component of each link should be zero.}
\label{fig:102_framing}
\end{figure}

\begin{figure}[ht]
\centering
\includegraphics[width=.7\textwidth]{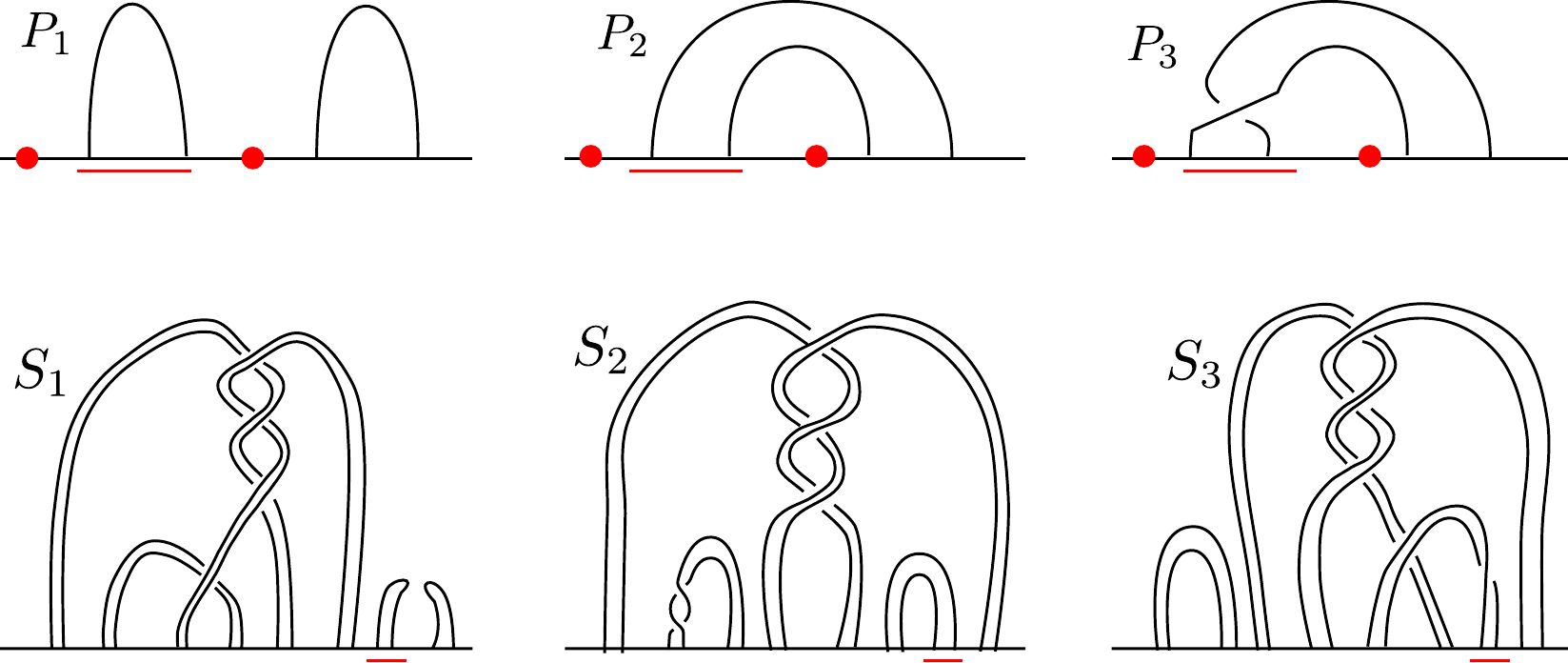}
\caption{(top) A triplane diagram with an axis (red dots) for an unknotted projective plane. (bottom) Satellite surface for $\Sigma(\mathbb{RP}^2,10_2)$.}
\label{fig:fig_rp2_satell}
\end{figure}

Sometimes, the pattern surface may be described as a banded unlink diagram $(L,\nu)$, together with a marked unknot $\ell$. As with triplane diagrams, $(L,\nu)$ describes a surface in $S^4-N(\ell)\approx S^2\times D^2$. When this happens, we can follow the procedure in the proof of Theorem 1.3 of \cite{MZ17Trans} to make the given banded unlink into a triplane diagram in such a way that the loop $\ell$ is an axis for the resulting triplane diagram. See Figure~\ref{fig:whitehead} for an example of this. 
\begin{example}[Whitehead torus pattern]\label{ex:whitehead}
Consider the Whitehead virtual link in Figure~\ref{fig:whitehead} and tube one of its components to get an unknotted torus $W$ in $S^4$. The second component will be a loop $\ell$ in the complement of $W$. This surface is drawn in the second panel of the figure as a banded unlink diagram. Following the discussion on trisecting tube-maps from \cite[Sec 3.4]{Joseph_Classical_knot_theory}, the virtual diagram can be turn into the triplane diagram $\mathcal{W}$ from Figure~\ref{fig:whitehead}(D). The red dots correspond to the curve $\ell$, which is now an axis for the triplane. Figure~\ref{fig:whitehead_satel} is the diagram $\Scal(\mathcal{W},\ell; \Ccal,f)$ for the satellite torus $\Sigma(W,10_2)$.
\end{example}

\begin{figure}[ht]
\centering
\includegraphics[width=.7\textwidth]{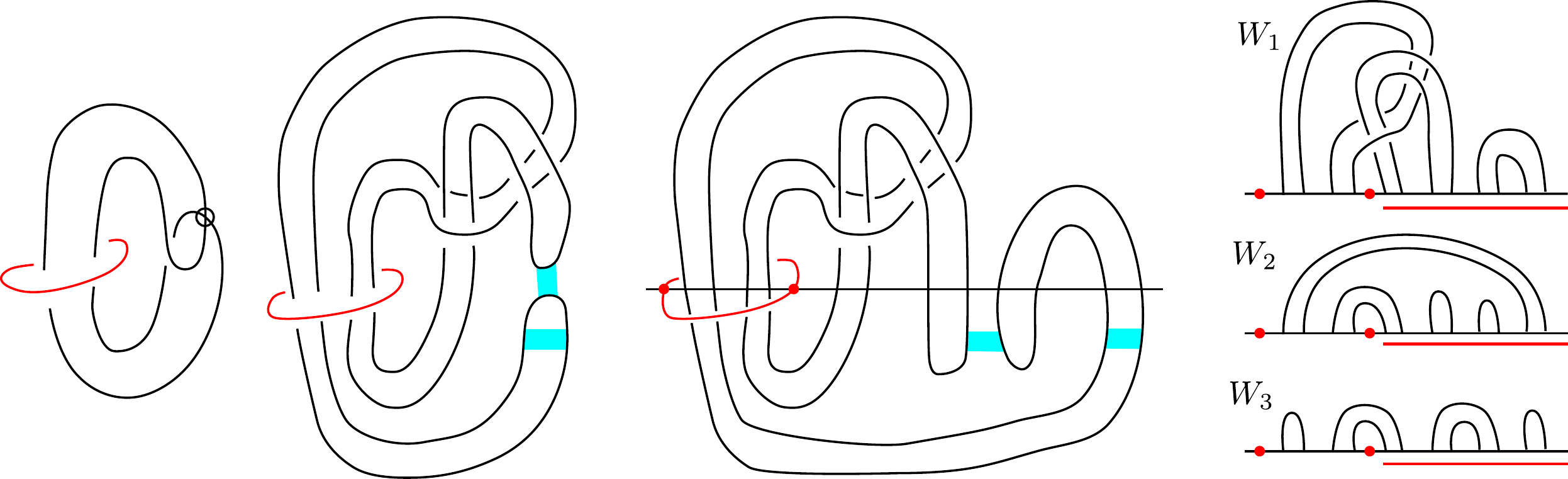}
\caption{Pattern of torus $W$ in $S^2\times D^2$ obtained from a virtual Whitehead link. (B)-(C) Banded unlink diagrams for $F$. (D) Triplane diagram for $W$ obtained from the banded presentation in (C); see \cite[Sec 3.4]{Joseph_Classical_knot_theory} for details.}
\label{fig:whitehead}
\end{figure}

\begin{figure}[ht]
\centering
\includegraphics[width=.9\textwidth]{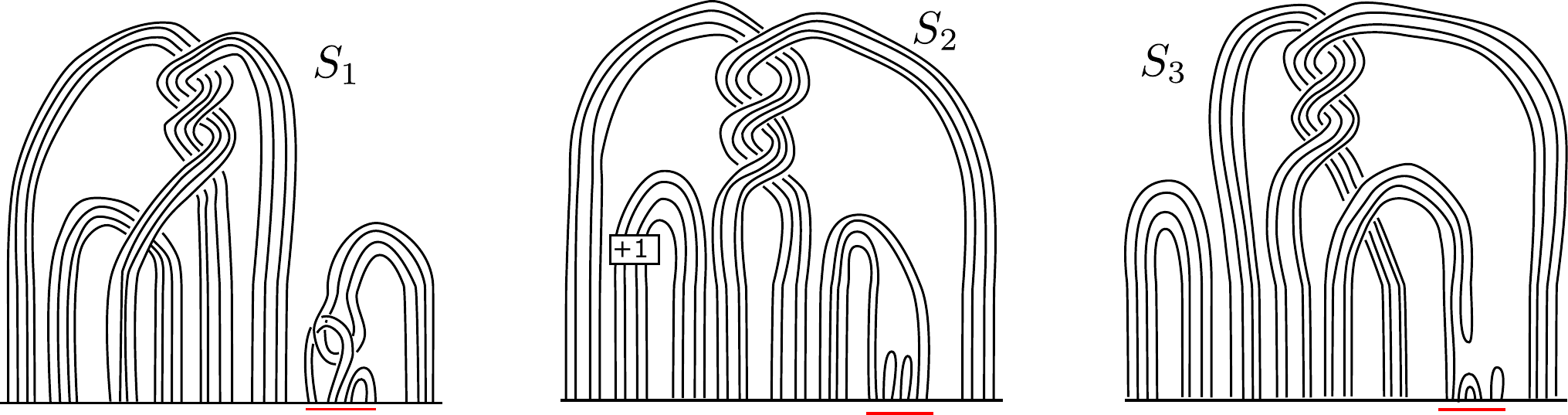}
\caption{Satellite surface for $\Sigma(W,10_2)$ from Example~\ref{ex:whitehead}.}
\label{fig:whitehead_satel}
\end{figure}

Following Kim, a cable 2-knot is a satellite surface with a 2-knot $C$ as a companion and a pattern equal to an unknotted 2-sphere $U$ in $S^4$~\cite{kim20glucktwist}. As before, $U$ is a surface in $S^4$ disjoint from a distinguished loop $\ell$. Cables of $C$ are parametrized by the set of integers~\cite[Lem 2.8]{kim20glucktwist}: the $m$-cable pattern is an unknotted sphere $U$ in which $[\ell]=m\in \pi_1\left(S^4-U\right)$. 

\begin{example}[Triplane of a cable 2-knot]
Figure~\ref{fig:cable} contains an example of a triplane diagram for a 3-cable 2-knot. To see why the trisected surface for the pattern in Figure~\ref{fig:cable}(A) represents a 3-cable pattern, the reader must check that (1) this triplane diagram simplifies with triplane moves to a 1-bridge diagram (thus $U$ is unknotted in $S^4$), and (2) the loop $\ell$ is homotopic to the product of three meridians of $U$ (which are equal to meridians of the tangles in the triplane). This generalizes to any $m$-cable 2-knots. 
\end{example}

\begin{figure}[ht]
\centering
\includegraphics[width=.7\textwidth]{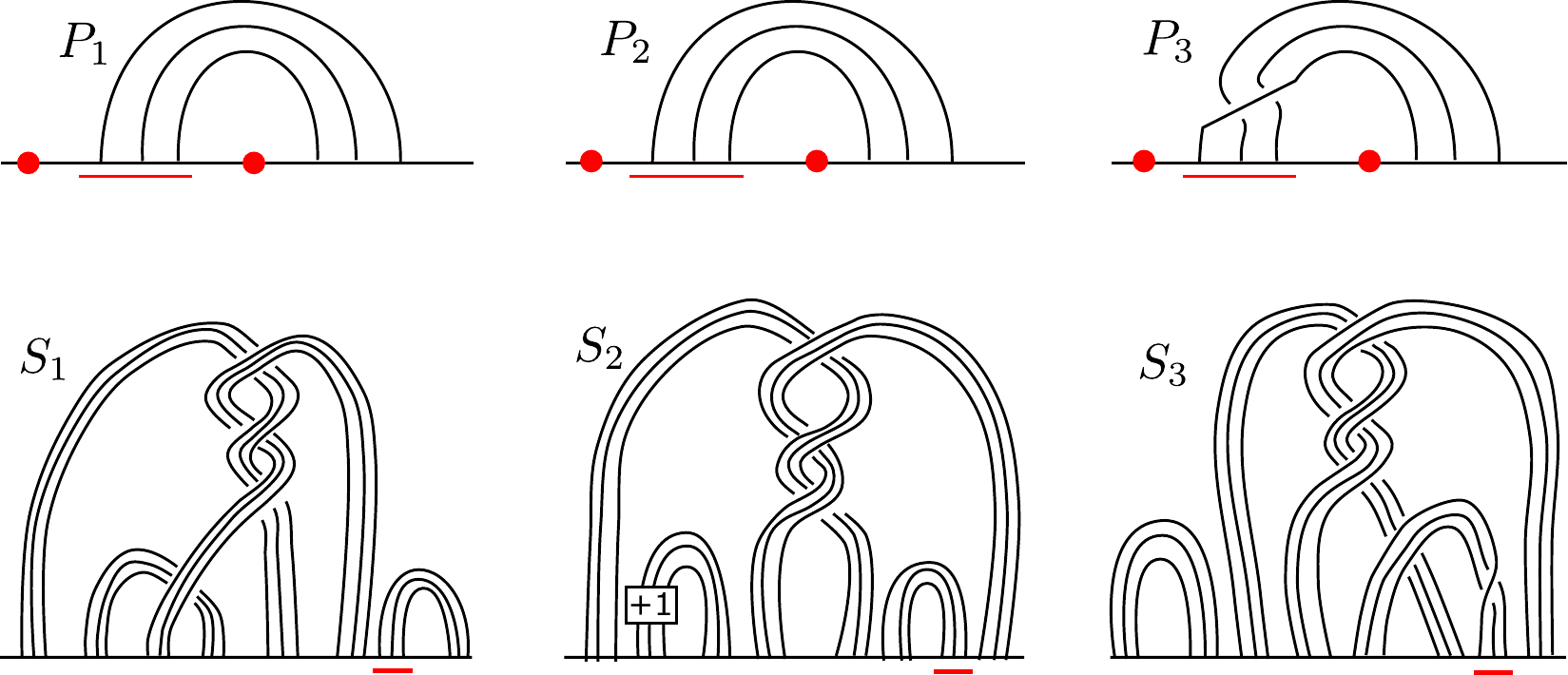}
\caption{(left) A triplane diagram with an axis (red dots) for a 3-cable pattern: this is an unknotted 2-knot $U$ in which the axis $\ell$ winds three times around the meridians of $U$. (right) Triplane diagram for the 3-cable 2-knot of $10_2$, this is the satellite surface $\Sigma(U,10_2)$.}
\label{fig:cable}
\end{figure}

\subsection{Proofs}
\begin{proposition}\label{prop:TPC_is_triplane}
Suppose that $f$ is a good framing. Then $\Scal(\Pcal,\ell;\Ccal,f,x)$ is a triplane diagram. 
\end{proposition}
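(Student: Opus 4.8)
The plan is to verify the two defining conditions of a triplane diagram for $\Scal(\Pcal,\ell;\Ccal,f,x)=(S_1,S_2,S_3)$: that each $S_i$ is a trivial tangle, and that $S_i\cup\overline{S_{i+1}}$ is an unlink for each $i\in\Z/3\Z$. The guiding principle throughout is that a satellite operation with a \emph{zero-framed, unknotted companion} does not alter the isotopy type of the pattern, and that a good framing is precisely what makes all the relevant companions zero-framed unknots; both conditions will be instances of this.

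First I would dispatch triviality. By construction $S_i$ is obtained from $b_2$ framed parallel copies of the tangle $C_i$ by grafting the tangle $P_i$ into a small ball $B$ that meets $\partial B^3$ in a disk around $x$. Since $C_i$ is a trivial tangle, an isotopy rel boundary pushes $C_i$, and hence all of its parallel copies, into $\partial B^3$; the full twists coming from $f$ are supported near the (boundary) negative endpoints of $C_i$, and being mutual twists of a bundle of parallel \emph{arcs} rather than loops they carry no linking obstruction, so they too can be absorbed into $\partial B^3$. Thus the pre-grafting tangle is trivial. Grafting the trivial tangle $P_i$ into the boundary ball $B$ then preserves triviality: one can choose the system of boundary-parallel disks for $P_i$ inside $B$ and for the complementary tangle in $B^3\setminus B$ so that they agree near $\partial B$, and glue them into a boundary-parallel disk system for $S_i$.

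The heart of the argument is the unlink condition. Gluing $S_i$ to $\overline{S_{i+1}}$ along the central sphere, the $b_2$ framed parallel copies of $C_i$ and those of $\overline{C_{i+1}}$ assemble into $b_2$ framed parallel copies of the link $L_i:=C_i\cup\overline{C_{i+1}}$, and the grafted tangles $P_i$ and $\overline{P_{i+1}}$ assemble, along their $b_1$-sides (the side of $\ell$ opposite the grafting), into a copy of the link $P_i\cup\overline{P_{i+1}}$ inserted in place of $b_2$ parallel strands of the component $L_x$ of $L_i$ that runs through $x$. In other words, $S_i\cup\overline{S_{i+1}}$ is the satellite of $L_i$ whose pattern in $N(L_x)$ is the one built from $P_i\cup\overline{P_{i+1}}$ via the axis $\ell$, and whose pattern in every other component of $L_i$ is simply $b_2$ parallel copies of the core. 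Now invoke goodness of $f$: each component of $L_i$ is a zero-framed unknot, so the $b_2$ parallel copies of the components other than $L_x$ form an unlink split from everything else, while $N(L_x)$ is an unknotted solid torus in which the zero framing is carried by the product structure. Consequently the satellite reproduces its pattern unchanged, and the $L_x$-part of $S_i\cup\overline{S_{i+1}}$ is isotopic to $P_i\cup\overline{P_{i+1}}$, which is an unlink because $\Pcal$ is a triplane diagram. Hence $S_i\cup\overline{S_{i+1}}$ is a split union of unlinks, i.e.\ an unlink, as required.

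The step I expect to be the main obstacle is the precise identification underlying the previous paragraph: verifying that closing up the $b_2$-strand side of the grafted copies of $P_i\cup\overline{P_{i+1}}$ through the unknotted, zero-framed solid torus $N(L_x)$ yields exactly the link determined by the axed triplane $(\Pcal,\ell)$, with no residual twisting. This is where the convention of placing the framing full twists near the negative endpoints of $C$, together with the definition of the axis as a curve that splits the $2b_P$ punctures into the correct $b_1$ and $b_2$, has to be matched against the product structure on $N(L_x)$. Once that bookkeeping is carried out the identification is forced, and the remaining steps — triviality, and passing from ``split union of unlinks'' to ``unlink'' — are routine.
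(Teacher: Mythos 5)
Your argument for the unlink condition is essentially the paper's: isotope $C_i\cup\overline{C}_{i+1}$ to a crossingless unlink, note that goodness of $f$ makes the framing become the blackboard framing there, and identify $S_i\cup\overline{S}_{i+1}$ as the split union of $P_i\cup\overline{P}_{i+1}$ (recovered intact inside the $0$-framed unknotted solid torus around the component through $x$) with some $0$-framed cables of unknots. Your additional verification that each $S_i$ is a trivial tangle is a point the paper leaves implicit, so this is a slightly more complete write-up of the same proof rather than a different route.
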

\begin{proof}
Denote by $(S_1,S_2,S_3)$ the tangles of $\Scal(\Pcal,\ell;\Ccal,f,x)$. We will show that each pairwise union $S_i \cup \overline{S}_{i+1}$ is an unlink. Consider the tri-plane diagram $(C_1,C_2,C_3)$ for the companion. By assumption. $C_i \cup \overline{C}_{i+1}$, is an unlink. In particular, after some mutual braid and interior Reidemeister moves, we can turn each pairwise union $C_i \cup \overline{C}_{i+1}$ into a crossing-less diagram of an unlink $U$. As $S_i\cup \overline{S}_{i+1}$ lies in a neighborhood of $C_i \cup \overline{C}_{i+1}$, this isotopy extends to an ambient isotopy of $S^3$ which we use to move $S_i\cup \overline{S}_{i+1}$ around. As each component of $C_i \cup \overline{C}_{i+1}$ is 0-framed, the framing $f$ becomes the blackboard framing on $U$. In particular, $S_i\cup \overline{S}_{i+1}$ is equal to the distant sum of $P_i\cup \overline{P}_{i+1}$ and some 0-framed cables of the unlink; see Figure~\ref{fig:satellite_2}. As $P_i\cup \overline{P}_{i+1}$ is an unlink, the result follows. 
\end{proof}
\begin{figure}[ht]
\centering
\includegraphics[width=.65\textwidth]{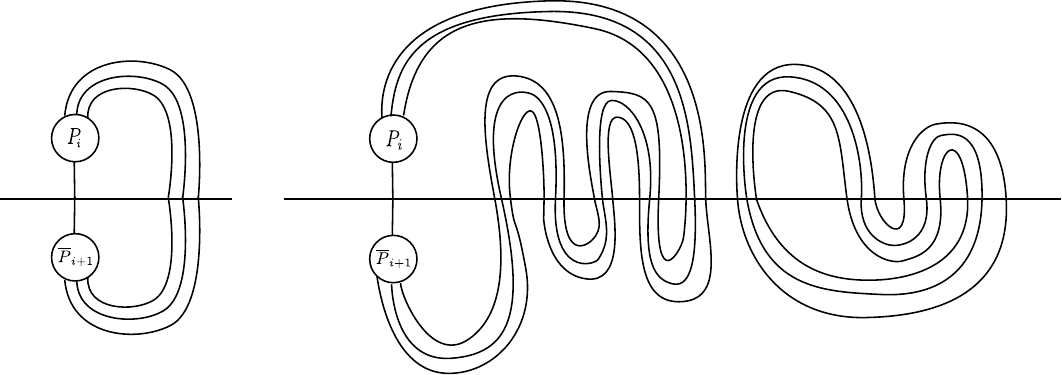}
\caption{(Left) A schematic diagram of $P_i\cup \overline{P}_{i+1}$. (Right) A schematic diagram of $U$: the union of 0-framed cables of the unlink and a 0-framed satellite of the unknot with pattern $P_i\cup \overline{P}_{i+1}$.}
\label{fig:satellite_2}
\end{figure}
\begin{proof}[Proof of Theorem~\ref{thm:sat}]
By Proposition~\ref{prop:TPC_is_triplane}, $\Scal(\Pcal,\ell;\Ccal,f,x)=(S_1,S_2,S_3)$ is a triplane diagram for an embedded surface $S$ in $S^4$. We now explain that $S\subset N(C)$.
One way to visualize an embedding of a trivial disk system bounded by an unlink $L$ is to consider a movie of isotopies that transform $L$ into a crossingless diagram, and then fill in this diagram with the obvious disks it bounds. As we saw in the proof of Proposition~\ref{prop:TPC_is_triplane}, there is an isotopy turning $S_i\cup \overline{S}_{i+1}$ into a crossingless diagram by first performing ambient isotopies that untangle a neighborhood of $C_i\cup \overline{C}_{i+1}$, followed by isotopies that untangle $P_i\cup \overline{P}_{i+1}$. The second round of isotopies (from $P_i\cup \overline{P}_{i+1}$ to a crossingless diagram) can be chosen to occur in a neighborhood of the disks bounded by the crossingless diagram for $C_i\cup \overline{C}_{i+1}$ (see Figure~\ref{fig:satellite_2}). Hence, the disk system bounded by $P_i\cup \overline{P}_{i+1}$ lies inside a neighborhood of the disk system for $C_i\cup \overline{C}_{i+1}$. Therefore, $S\subset N(C)$ and $S$ is a satellite surface with companion $C$. 

To show that $S$ has pattern $P$, we need to exhibit a diffeomorphism from $N(C)$ to $S^2 \times D^2$ sending $S$ to $P$. We do this by finding a map between the spines of $\Scal(\Pcal,l;\Ccal,f,x)$ and $\Pcal$ that sends a meridian of $C$ (containing the strands of $\Pcal$) to the axis $\ell$. As the bridge trisected surfaces are determined by their spines (or triplane diagrams), we can extend our map to the desired diffeomorphism $N(C)\to S^2 \times D^2$.

By Corollary 1.2 of \cite{MTZ_cubic_graphs}, $\Ccal$ can be converted into a triplane diagram $\Ucal_0$ for an unknotted 2-sphere by a sequence of interior Reidemeister moves and crossing changes. By taking the tubular neighborhood of the arcs in $\Ccal$, we can extend such a sequence to a sequence of interior Reidemeister moves and crossing changes that make $\Scal(\Pcal,\ell;\Ccal,f,x)$ into a triplane diagram for $\Scal(\Pcal,\ell;\Ucal_0,f_0,x_0)$. Notice that we may need to choose a different good framing $f_0$ that works for $\Ucal_0$; this is possible by Lemma~\ref{lem:frames_existence}. Now, as $\Ucal_0$ is a triplane for an unknotted 2-sphere, Theorem 1.7 of \cite{MZ17Trans} implies that $\Ucal_0$ can be turned into a one-bridge trisection $\Ucal_1$ by a finite sequence of triplane moves. Again, we can take the tubular neighborhood of the tangles for $\Ucal_0$ and extend this sequence of moves to a sequence of triplane moves taking $\Scal(\Pcal,\ell;\Ucal_0,f_0,x_0)$ into $\Scal(\Pcal,\ell;\Ucal_1,f_1,x_1)$ for some new good framing $f_1$ for $\Ucal_1$. This last diagram $\Scal(\Pcal,\ell;\Ucal_1,f_1,x_1)$ agrees with $\Pcal$. 
\end{proof}

\subsection{Writhes}
The goal of this subsection is to prove the existence of good framings on an orientable triplane diagram. The proof of Lemma~\ref{lem:frames_existence} provides an algorithm to find a good framing.  
\begin{lemma}\label{lem:frames_existence}
    Let $\Tcal$ be a triplane diagram for an orientable surface $F\subset S^4$. There exist framings for the tangles of $\Tcal$ such that each component of $T_i\cup \T_{i+1}$ is a 0-framed unknot. 
\end{lemma}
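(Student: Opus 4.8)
The plan is to obtain a good framing for free from a normal pushoff of the whole surface, rather than adjusting the three tangle framings by hand. The conceptual heart — and the only place orientability is used — is that an orientable closed surface $F\subset S^4$ has trivial normal bundle: the normal Euler number of $F$ equals the self-intersection $[F]\cdot[F]$, which vanishes because $[F]=0$ in $H_2(S^4;\Z)=0$; since $\nu_{S^4}(F)$ is an oriented rank-$2$ bundle over $F$ with zero Euler class, it is trivial. (For $\mathbb{RP}^2\subset S^4$ the normal Euler number is $\pm 2$, so the lemma genuinely fails in the non-orientable case.) Fix a trivialization and let $F'$ be the resulting parallel copy of $F$, disjoint from $F$.

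Next I would restrict $F'$ to the tangles. By the definition of a bridge trisection, $T_i=\Dcal_i\cap\Dcal_{i-1}$ is literally a sub-collection of arcs of $F$, and $\partial\Dcal_i=T_i\cup\T_{i+1}=:L_i$ is the $c_i$-component unlink bounded by the trivial disk system $\Dcal_i$ inside the $4$-ball $X_i$. Set $f_i:=F'|_{T_i}$, a framing of $T_i$. Since all three $f_i$ are restrictions of the single section $F'$, they agree along the bridge sphere, so $(f_1,f_2,f_3)$ is a framing of the triplane diagram in the sense of the appendix; in particular, for each $i$ the pair $(f_i,f_{i+1})$ glues to a framing of $L_i=\partial\Dcal_i$, namely $F'|_{\partial\Dcal_i}$.

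It remains to see this induced framing is the $0$-framing on every component of $L_i$. Over $\Dcal_i\subset \mathrm{int}(X_i)$ the bundle $\nu_{S^4}(F)$ agrees with $\nu_{X_i}(\Dcal_i)$, so $F'|_{\Dcal_i}$ is a nowhere-zero normal section of $\Dcal_i$ in $X_i$. Restricting to a single disk component $D$ with $\partial D=K$, we get a framing of the unknot $K\subset S^3=\partial X_i$ that extends over the disk $D$ in $X_i\cong B^4$. Pushing $K$ off with this framing and dragging $D$ along gives a parallel disk $D'$ disjoint from $D$, so the framing coefficient $\mathrm{lk}_{S^3}(K,\partial D')$ equals the relative self-intersection of $D$ computed with the section $F'$, which is the number of zeros of $F'|_D$, namely $0$. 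Hence every component of $T_i\cup\T_{i+1}$ is a $0$-framed unknot, as required.

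I expect the main obstacle to be bookkeeping rather than ideas: checking that the framing data at the $2b$ punctures induced by $F'$ really are mutually consistent (so that the $f_i$ glue as claimed), and that $\nu_{S^4}(F)|_{\Dcal_i}=\nu_{X_i}(\Dcal_i)$ up to the boundary after a small isotopy making $\Dcal_i$ meet $\partial X_i$ transversally; both should be routine. For the promised algorithm one unwinds this at the diagram level: the blackboard framing assigns to each component $K$ of $L_i$ the coefficient $w(K)$, twisting a strand of $T_i$ changes the coefficients of the unique components of $L_i$ and $L_{i-1}$ through that strand by $\pm 1$, and the resulting integer linear system in the strand-twist counts is solvable precisely because the only obstruction is the total $\sum_i w(L_i)$, which is the normal Euler number $[F]\cdot[F]=0$.
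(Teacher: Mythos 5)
Your proposal is correct, but it proves the lemma by a genuinely different route than the paper. The paper works entirely diagrammatically: it starts from the blackboard framing, encodes framing changes as integer labels on the edges of the cell decomposition of $F$ induced by the bridge trisection, zeroes out the weights of two of the three families of $2$-cells directly, and then redistributes the remaining weights among the third family by a local relabeling move; the global obstruction to finishing is $\sum_i \mathrm{writhe}(L_i)$, which vanishes because it computes the normal Euler number $e(F)=0$. You instead use $e(F)=[F]\cdot[F]=0$ at the source: the normal bundle of an orientable $F\subset S^4$ is a trivial oriented plane bundle, so a global pushoff $F'$ exists, and restricting it to the tangles yields framings for which each component $K=\partial D$ of $L_i$ bounds, together with its pushoff $\partial D'$, a pair of disjoint disks in $X_i\cong B^4$, forcing $\mathrm{lk}(K,\partial D')=0$. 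The two arguments rest on the same vanishing, accessed once through the writhe formula (Corollary 3.8 of \cite{Joseph_Classical_knot_theory}) and once through the definition of the Euler class. Your version is conceptually cleaner, isolates exactly where orientability enters, and explains why the statement fails for $\mathbb{RP}^2$; the paper's version is the one that yields the explicit edge-relabeling algorithm advertised before the lemma, and your closing sketch of the ``integer linear system'' is essentially a compressed description of that algorithm (the paper's redistribution move is what makes the solvability claim precise). The transversality and gluing checks you defer --- arranging the section to be tangent to the bridge sphere at the $2b$ punctures so the three restricted framings agree and the parallel disks stay inside $X_i$ --- are indeed routine.
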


Let $\Tcal=(T_1,T_2,T_3)$ be a fixed triplane diagram for an orientable surface $F$ and denote by $L_i=T_i\cup \T_{i+1}$. Fix an orientation of $F$ which induces an orientation on the tangles of $\Tcal$ by \cite[Lem 2.1]{MTZ_cubic_graphs}. Recall that the writhe of a link diagram is the sum of the signs of all crossings in the diagram. Crossings between different components of the link also contribute to the linking number of such a pair of circles. Now, as $L_i$ is an unlink, the pairwise linking number between its components is always zero. Thus, we can rewrite the write of $L_i$ as 
\begin{equation}\label{eq:writhes}
    writhe(L_i) = \displaystyle \sum_{j=1}^{c_i} writhe(l_{ij}), 
\end{equation}
where $\{ l_{ij}\}_{j=1}^{c_i}$ are the connected components of $L_i$. 
Consider the blackboard framing of each tangle of $\Tcal$. 
To prove Lemma~\ref{lem:frames_existence}, we will show that we can change the framing of some strands of $\Tcal$ by adding an even number of minor kinks (think of the belt-trick) so that the writhe of each component of each $L_i$ is zero. An immediate obstacle is that changing the framing of one arc in $T_i$ alters the writhe in two circles: one in $L_i$ and one in $L_{i-1}$. 

Consider the abstract surface $F$ together with the cell decomposition induced by its bridge trisection $F=\Dcal_1\cup \Dcal_2\cup \Dcal_3$. The edges are the pairwise intersections of the disks, colored red, blue, and green for $\Dcal_3\cap \Dcal_1$, $\Dcal_1\cap \Dcal_2$, and $\Dcal_2\cap \Dcal_3$, respectively. We assign integer labels to each edge and 2-cell of $F$. The 2-cell label will be equal to the writhe of its corresponding boundary component, $writhe(l_{ij})$. Recall that the edges of $F$ are oriented as $\Tcal$ is, so traversing the edge in the opposite direction will assign the negative of its edge label. We define the \emph{weight of a 2-cell} to be the sum of its 2-cell label and the edge labels of its boundary (traversed using the boundary orientation of the 2-cell).

An edge label of $a\in \Z$ will correspond to adding $a$ full twists to the blackboard framing of the corresponding strand; negative numbers translate to negative twists. This way, choosing new framings on $\Tcal$ amounts to choosing an edge labeling of $F$. The writhe of the newly framed link $L_i$ will be equal to the weight of the corresponding 2-cell of $F$. Equipped with this notation, we are ready to prove Lemma~\ref{lem:frames_existence}.

\begin{proof}[Proof of Lemma~\ref{lem:frames_existence}]
From the discussion above, it is enough to find a set of edge labels for $F$ such that all the 2-cell weights are equal to zero. Start by assigning zero to all edge labels of $F$; this is the same as starting with the blackboard framing on $\Tcal$. We can change labels for some blue edges such that the blue-green disks have zero weight. Similarly, by selecting one green edge per green-red 2-cell, we can modify some of the green edge labels so that the green-red disks have zero weight. We are left with all blue-green and green-red 2-cells having a weight equal to zero. 

Figure~\ref{fig:framings} depicts a modification on the set of red edge labels that preserves the blue-green and green-red weights while changing the weights of the red-blue disks in a controlled way: one disk gains one weight unit while another disk loses one unit. 
Precisely, the modification goes as follows: consider two red-blue disks $D$ and $D'$ that are adjacent (share an edge) to the same disk $D''$. Let $a$ and $b$ be the red labels of one edge of $D\cap D''$ and $D'\cap D''$, respectively. Then, the edge label modification involves replacing $a$ with $a-1$ and $b$ with $b+1$, while keeping the rest of the labels unchanged. 
\begin{figure}[ht]
\centering
\includegraphics[width=.6\textwidth]{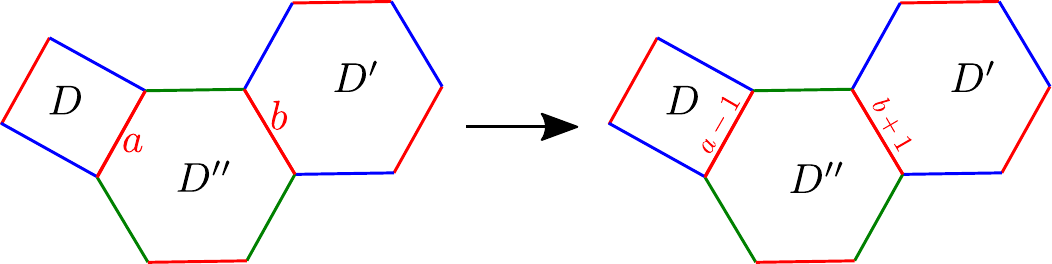}
\caption{How to relabel the edges of $F$.}
\label{fig:framings}
\end{figure}
Using the edge label modification above, we can distribute the red-blue weights to minimize the number of red-blue disks with non-zero weight. On the other hand, from Equation~\eqref{eq:writhes}, we get that 
\[ 
\sum_{i=1}^3 \sum_{D\subset \Dcal_i} weight(D) = \sum_{i=1}^3 writhe(L_i) = e(F) = 0,
\]
where the last equation is Corollary 3.8 of \cite{Joseph_Classical_knot_theory}. In particular, the net-weight of the red-blue disks is equal to zero, 
\[ 
\sum_{D\subset \Dcal_1} weight(D) = -\sum_{D\subset \Dcal_2} weight(D) - \sum_{D\subset \Dcal_3} weight(D) = 0.
\]
We conclude that all the weights of the red-blue 2-cells can made equal to zero, while preserving the blue-green and green-red zero weights. 
\end{proof}

\bibliographystyle{alpha}
\bibliography{biblio}
\end{document}